\documentclass[10pt]{amsart}

\usepackage{amsmath, amsfonts, amsthm, amstext, amssymb,bbm,enumerate,fullpage,mathrsfs, stix2 , tikz-cd}
\usepackage{mathtools}

\usepackage[all]{xy}

\usepackage[hidelinks,backref]{hyperref}

\usepackage[alphabetic]{amsrefs}


\newcommand{\ALICE}[1]%
{\begin{quotation}{\footnotesize\textcolor{violet}{\textbf{AH:} #1}}\end{quotation}}
\newcommand{\TASOS}[1]%
{\begin{quotation}{\footnotesize\textcolor{purple}{\textbf{TM:} #1}}\end{quotation}}

\newcommand{\longto}{\longrightarrow}

\theoremstyle{plain}
\newtheorem{theorem}{Theorem}[section]
\newtheorem{lemma}[theorem]{Lemma}
\newtheorem{proposition}[theorem]{Proposition}

\theoremstyle{definition}
\newtheorem{definition}[theorem]{Definition}

\newtheorem{notation}[theorem]{Notation}

\newtheorem{example}[theorem]{Example}

\newtheorem{rmk}[theorem]{Remark}
\newtheorem{warning}[theorem]{Warning}
\newtheorem{const}[theorem]{Construction}


\DeclareMathOperator*{\colim}{colim}
\DeclareMathOperator{\fib}{fib}


\DeclareMathOperator{\Ab}{Ab}

\DeclareMathOperator{\Mod}{Mod}

\DeclareMathOperator{\Fix}{Fix}
\DeclareMathOperator{\Int}{Int}

\DeclareMathOperator{\QCoh}{QCoh}

\DeclareMathOperator{\Sp}{Sp}
\DeclareMathOperator{\Fun}{Fun}

\DeclareMathOperator{\CAlg}{CAlg}
\DeclareMathOperator{\GL}{GL}

\DeclareMathOperator{\SynSp}{SynSp}

\def\on{\operatorname}

\DeclareMathOperator{\Fil}{fil}
\DeclareMathOperator{\fil}{fil}
\DeclareMathOperator{\gr}{gr}

\DeclareMathOperator{\Map}{Map}

\DeclareMathOperator{\map}{map}

\DeclareMathOperator{\Fin}{Fin}

\DeclareMathOperator{\HH}{HH}
\DeclareMathOperator{\MU}{MU}
\DeclareMathOperator{\Gr}{Gr}
\DeclareMathOperator{\Filt}{Fil}
\DeclareMathOperator{\Hyp}{Hyp}

\newcommand{\perf}{\mathrm{perf}}
\newcommand{\St}{\mathrm{St}}
\newcommand{\id}{\mathrm{id}}
\newcommand{\oop}{\mathrm{op}}

\newcommand{\syn}{\mathrm{syn}}

\newcommand{\cn}{\mathrm{cn}}

\newcommand{\ev}{\mathrm{ev}}
\newcommand{\an}{\mathrm{an}}



\newcommand{\bC}{\mathbb{C}}
\newcommand{\bE}{\mathbb{E}}
\newcommand{\bH}{\mathbb{H}}
\newcommand{\bF}{\mathbb{F}}
\newcommand{\bG}{\mathbb{G}}
\newcommand{\bN}{\mathbb{N}}

\newcommand{\bQ}{\mathbb{Q}}
\newcommand{\bA}{\mathbb{A}}
\newcommand{\bR}{\mathbb{R}}

\newcommand{\bS}{\mathbb{S}}
\newcommand{\bT}{\mathbb{T}}
\newcommand{\bW}{\mathbb{W}}
\newcommand{\bZ}{\mathbb{Z}}

\newcommand{\cC}{\mathcal{C}}

\newcommand{\cM}{\mathcal{M}}
\newcommand{\cO}{\mathcal{O}}

\newcommand{\Syn}{\on{Syn}}

\newcommand{\cS}{\mathcal{S}}
\newcommand{\cF}{\mathcal{F}}

\newcommand{\Spec}{\mathrm{Spec}}

\newcommand{\Spf}{\mathrm{Spf}}

\newcommand{\formalgroup}{\widehat{\mathbb{G}}}

\newcommand{\THH}{\mathrm{THH}}

\usepackage{relsize}
\usepackage[bbgreekl]{mathbbol}
\usepackage{amsfonts}
\DeclareSymbolFontAlphabet{\mathbb}{AMSb} 
\DeclareSymbolFontAlphabet{\mathbbl}{bbold}

\newcommand{\filstack}{\bA^1 / \bG_m}

\newcommand{\och}{\quad \text{and} \quad}

\author{Alice Hedenlund}
\author{Tasos Moulinos}

\title{The Synthetic Hilbert Additive Group Scheme}
\date{\today}

\begin{document}

\begin{abstract}
We construct a lift of the degree filtration on the integer-valued polynomials to (even $\MU$-based) synthetic spectra.
Namely, we construct a bialgebra in modules over the evenly filtered sphere spectrum which base-changes to the degree filtration on the integer-valued polynomials.
As a consequence, we may lift the Hilbert additive group scheme to a spectral group scheme over $\filstack$.
We study the cohomology of its deloopings, and show that one obtains a lift of the filtered circle, studied in \cite{moulinos2019universal}.
At the level of quasi-coherent sheaves, one obtains synthetic lifts of the $\bZ$-linear $\infty$-categories of $S^1_{\fil}$-representations.
Our constructions crucially rely on the use of the even filtration of Hahn--Raksit--Wilson; it is linearity with respect to the even filtered sphere that powers the results of this work.
\end{abstract}

\maketitle

\setcounter{tocdepth}{1}
\tableofcontents

\section*{Introduction}

\subsection*{Background and Aim}

The ring of integer--valued polynomials $\Int(\bZ) = \{f \in \bQ[x] | f(\bZ) \subset \bZ \}$ is the free binomial ring on one generator.
This carries a natural bicommutative bialgebra structure, and so we obtain a group scheme
\[
\bH = \Spec(\Int(\bZ))
\]
over $\Spec(\bZ)$ which we will refer to as the \emph{Hilbert additive group scheme}.
This object was studied by To\"{e}n in \cite{Toe20} in the context of the schematization program of Grothendieck, and more recently in the works of Antieau, Horel, and Kubrak--Shuklin--Zakharov in the context of integral models for spaces (cf. \cites{antieau2023spherical, horel2024binomial, kubrak2023derived}).
It has been suggested that $\bH$ along with its additional structure as a sheaf of rings could play the role of a structure sheaf  for a deeper geometric base than $\Spec(\bZ)$.
This is partially substantiated by the existence of a canonical ring map
\[
\bH \to \cO
\]
of fpqc sheaves over $\Spec(\bZ)$, where $\cO$ denotes the standard structure sheaf representing global sections.

The ring of integer--valued polynomials admits a natural filtration; as a subring of the polynomial ring in one variable over $\bQ$, it inherits the filtration by degree.
We will denote this filtration by $\fil^*_{\deg} (\Int(\bZ))$.
We remark that this filtration has already played a rather significant role in the literature.
Indeed, via the dictionary between filtrations and quasi-coherent sheaves over $\filstack$, a paradigm proposed by Simpson~\cite{simpson1990nonabelian} and later set up in the modern $\infty$-categorical setting by the second author~\cite{geometryoffiltr}, one obtains a stack
\[
\bH_{\fil} = \mathrm{fSpec}(\fil_{\deg}^* \Int(\bZ))
\]
over $\filstack$.
We refer to this as the \emph{filtered Hilbert additive group scheme}.
The classifying stack
\[
S^1_{\fil} = B_{\filstack} \bH_{\fil}
\]
was considered in~\cite{moulinos2019universal} under the moniker the \emph{filtered circle}\footnote{This was considered in the $p$-local setting in loc. cit.}.
The formalism of derived algebraic geometry provides a geometrization of the Hochschild--Konsant--Rosenberg (HKR) filtration, together with its compatible $S^1$-equivariant structure. Moreover, for an arbitrary derived scheme, its HKR-filtered Hochschild homology acquires a universal property as a filtered $\bZ$-algebra with an $S^1_{\fil}$-action.

Suppose now that we want to endow the motivic filtration on \emph{topological Hochschild homology} with a similar universal property.
A key step in doing so entails lifting the filtered circle, or equivalently the filtered Hilbert additive group, to some setting of algebraic geometry over the sphere spectrum.
This is exactly what we do in this paper.
Namely, we lift the filtered Hilbert additive group to an abelian group object in spectral algebraic geometry; we call this the \emph{synthetic Hilbert additive group scheme}.

Forgetting filtrations for a moment, the ring of integer--valued polynomials certainly has a lift to the spectral setting.
Indeed, such a lift is given by using the spherical Witt vectors~\cites{elliptic2,BSY22,antieau2023spherical}.
Recall that the spherical Witt vectors functor
\[
\mathbb{SW} : \CAlg^{\perf}_{\bF_p} \longto \CAlg_{\bS_p}^{\cn , \wedge}
\]
assigns to a perfect $\bF_p$-algebra a connective $p$-complete $\bE_\infty$-algebra.
At this point, we note that the ring of integer--valued polynomials is a perfect commutative ring in the sense that $\Int(\bZ)/p$ is perfect over $\bF_p$ for all primes $p$.
Hence, we can apply the spherical Witt vector construction and glue this over all primes via the pullback square
\[
\begin{tikzcd}
  \bS_{\Int(\bZ)} \arrow[r] \arrow[d] & \Int(\bZ) \arrow[d] \\
  \prod_{p} \mathbb{SW}(\Int(\bZ)/p) \arrow[r] & \prod_p \Int(\bZ)^{\wedge}_{p} \,.
\end{tikzcd}
\]
We refer to the pullback $\bS_{\Int(\bZ)}$ as the \emph{spherical integer--valued polynomials}.
The construction $\bS_{(-)} : \CAlg_{\bZ}^{\perf} \to \CAlg(\Sp)$ which assigns an $\bE_\infty$-ring to a perfect commutative ring was already mentioned and discussed in in~\cite{antieau2023spherical}*{Section 7}.
The spherical integer--valued polynomials is a flat lift of the integer--valued polynomials which also admits a bicommutative bialgebra structure in $\bE_\infty$-rings.
We can hence make the definition
\[
\bH_{\bS} = \Spec(\bS_{\Int(\bZ)})
\]
which determines an affine group scheme over $\Spec(\bS)$ that we will refer to as the \emph{spherical Hilbert additive group scheme}.

Let us now return to the question of lifting the degree filtration $\fil^*_{\deg}(\Int(\bZ))$.
By the works of \cites{Dri23, Mou24Cartier}, the degree filtration is intimately connected, by way of Cartier duality\footnote{Roughly speaking, Cartier duality is an equivalence of formal groups with affine group schemes; this can be lifted to the spectral setting.}, with the degeneration of the formal multiplicative group $\widehat{\bG_m}$ to the formal additive group $\widehat{\bG_a}$.
Thus, a necessary condition for lifting the degree filtration, equivalently the degeneration $\widehat{\bG_m} \leadsto \widehat{\bG_a}$, would be the existence of lifts of the two formal groups over the sphere spectrum.
While the multiplicative group lifts, the additive formal group manifestly does not lift to a formal group over the sphere, as is shown in \cite[Proposition 1.6.20]{elliptic2}.
Thus, it seems futile to try to find a lift of this degeneration, as the second author muses in~\cite{Mou24Cartier}*{Section 10}.

We now describe the core insight that allows us to bypass the issue outlined above.
By, for example \cite{geometryoffiltr}, we can think of the stable $\infty$-category of filtered spectra as a $\bG_m$-equivariant family of stable $\infty$-categories, varying along a parameter $t$. When $t=0$, one recovers graded spectra. We instead replace this with another family, where the generic fiber is still spectra, but where, at $t=0$,  the fiber $\infty$-category is $\bZ$-linear and where consequently, the additive formal group is still well--defined.
This perspective is naturally implemented by the $\infty$-category of \emph{synthetic spectra}.

Without going too much into specifics, (even) synthetic spectra is a deformation of the algebraic  category of quasi-coherent sheaves on the moduli stack of formal groups to the $\infty$-category of spectra.
We refer the reader to \cites{pstrkagowski2023synthetic, GIKR22} for more on synthetic spectra and their various incarnations.
In this paper, we will specifically work with ``even $\MU$-based synthetic spectra", and we will not think of them in a more complicated manner than as modules over the even filtration on the sphere spectrum
\[
\mathrm{SynSp} = \Mod_{\fil^*_{\ev}(\bS)}(\Filt(\Sp)) \,.
\]
Recall from~\cite{HRW23} that the even-filtration of an $\bE_\infty$-ring $A$ is defined as the right Kan extension of the double-speed Postnikov filtration along the inclusion of even $\bE_\infty$-rings into all $\bE_\infty$-rings:
\[
\fil^*_{\ev}(A) = \lim_{\substack{A \to B \\ B \text{ even}}} \tau_{\geq 2 \ast}(B) \,.
\]
As usual, a spectrum is called even if its homotopy groups are concentrated in even degrees.
The even filtration of the sphere spectrum is sometimes called the synthetic sphere spectrum and we will denote it by $\bS^{\syn}$.
We will refer to the natural map
\[
\bS^{\syn} \longto \bZ^{\syn} = \fil_{\ev}^*(\bZ) \simeq L_0 \bZ
\]
as the \emph{synthetic Hurewicz map}, where we have noted that the even filtration on $\bZ$ can be identified with the trivial complete filtration on $\bZ$.
One of the first hints that the additive Hilbert group scheme might be possible to lift over the synthetic sphere spectrum is that fact that the associated graded of the latter is $\bZ$-linear.
This is the conceptual starting point for this project.


\subsection*{Main Results}

Without further ado, we describe the main results of this work.

\subsubsection*{Lifting the degree filtration}

The first goal of this paper is to lift the degree filtration on the integer-valued polynomials to the spectral setting, and hence give a spectral lift of the filtered Hilbert additive group scheme.
The inspiration comes in two steps.
Let us use the notation $\bT = \bZ[S^1]$ and $\bT_{\fil} = \tau_{\geq \bullet} \bT$.
Firstly, we observe the equivalences
\[
\Int(\bZ) \simeq \bZ \otimes_{\bT^{\vee}} \bZ \och \fil^*_{\deg} \Int(\bZ) \simeq \bZ \otimes_{(\bT_{\fil})^{\vee}} \bZ \,,
\]
the first one exhibited in \cite{Toe20} and the second one is proven $p$-locally in \cite{moulinos2019universal}; we refine the latter to an integral equivalence in Proposition~\ref{prop:toen_Z_ZS1_Z_is_IntZ}.
Secondly, we observe that the Postnikov filtration on $\bZ[S^1]$ coincides with its even filtration\footnote{We issue a warning for a potential pitfall here.
While it is true that
\[
\bT^{\vee} \simeq \bZ^{S^1} \och (\bT_{\fil})^{\vee} \simeq \tau_{\geq \bullet} \bZ^{S^1} \,,
\]
the even filtration on $\bZ^{S^1}$ does not coincide with its Postnikov filtration.
Instead, the even filtration on $\bZ^{S^1}$ coincides with its trivial filtration.}.
Inspired by these two facts, we replace all instances $\bZ$ with $\bS$ and switch from Postnikov filtrations to the even filtration, and make the definitions
\[
\bT_{\syn} = \fil_{\ev}^* \bS[S^1] \och \bT_{\syn}^{\vee} = \hom_{\bS^{\syn}}(\bT_{\syn},\bS^{\syn}) \,,
\]
in the category of synthetic spectra.
Both of these objects are bicommutative bialgebras in synthetic spectra, as we show in Proposition~\ref{prop: bialgebras}.
The bar construction on the latter provides us with what we refer to as the synthetic lift of the integer-valued polynomials
  \[
  \bS_{\Int(\bZ)}^{\syn} = \bS^{\syn} \otimes_{\bT^{\vee}_{\syn}} \bS^{\syn}\,.
  \]
This is a non-trivial filtration on $\bS_{\Int(\bS)}$ which we show base-changes to the degree filtration on the integer--valued polynomials in the following sense.

\begin{theorem}
There is an equivalence
  \[
  \bS_{\Int(\bZ)}^{\syn} \otimes_{\bS^{\syn}} \bZ^{\syn} \simeq \fil^*_{\deg} \Int(\bZ)
  \]
  of bicommutative bialgebras in $\Mod_{\bZ^{\syn}}(\Filt(\Sp))$.
\end{theorem}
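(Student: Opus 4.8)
The plan is to base change the bar presentation $\bS_{\Int(\bZ)}^{\syn}=\bS^{\syn}\otimes_{\bT_{\syn}^{\vee}}\bS^{\syn}$ along the synthetic Hurewicz map $\bS^{\syn}\to\bZ^{\syn}$ and to recognise the result as the bar presentation $\fil^*_{\deg}\Int(\bZ)\simeq\bZ\otimes_{(\bT_{\fil})^{\vee}}\bZ$ recalled in the introduction, in its integrally refined form established earlier in the paper. Since the base-change functor $-\otimes_{\bS^{\syn}}\bZ^{\syn}\colon\mathrm{SynSp}\to\Mod_{\bZ^{\syn}}(\Filt(\Sp))$ is strong symmetric monoidal and preserves colimits, it commutes with the two-sided bar construction, giving
\[
\bS_{\Int(\bZ)}^{\syn}\otimes_{\bS^{\syn}}\bZ^{\syn}\;\simeq\;\bZ^{\syn}\otimes_{\,\bT_{\syn}^{\vee}\otimes_{\bS^{\syn}}\bZ^{\syn}}\,\bZ^{\syn}\,,
\]
and, as the image of the bialgebras of Proposition~\ref{prop: bialgebras} under a strong symmetric monoidal functor, this is an equivalence of bicommutative bialgebras. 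It therefore suffices to produce an equivalence of bicommutative bialgebras $\bT_{\syn}^{\vee}\otimes_{\bS^{\syn}}\bZ^{\syn}\simeq(\bT_{\fil})^{\vee}$ in $\Mod_{\bZ^{\syn}}(\Filt(\Sp))$.

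Next I would pass from $\bT_{\syn}^{\vee}$ to $\bT_{\syn}$. Since $\bS[S^1]=\Sigma^{\infty}_{+}S^1$ is a finite spectrum, $\bT_{\syn}=\fil^*_{\ev}\bS[S^1]$ is a perfect $\bS^{\syn}$-module, assembled from finitely many twisted shifts of $\bS^{\syn}$. Dualization of a perfect module commutes with base change and is compatible with algebra and coalgebra structures, so
\[
\bT_{\syn}^{\vee}\otimes_{\bS^{\syn}}\bZ^{\syn}\;\simeq\;\bigl(\bT_{\syn}\otimes_{\bS^{\syn}}\bZ^{\syn}\bigr)^{\vee}
\]
as bicommutative bialgebras. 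It is essential here that the dual is formed \emph{after} base change, inside $\Mod_{\bZ^{\syn}}(\Filt(\Sp))$: by the warning in the introduction, the even filtration of the cochain algebra $\bZ^{S^1}$ is the trivial filtration rather than the Postnikov one, so $(\bT_{\fil})^{\vee}$ is not $\fil^*_{\ev}\bZ^{S^1}$; only the homology side $\bS[S^1]$ interacts well with the even filtration.

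It then remains to identify $\bT_{\syn}\otimes_{\bS^{\syn}}\bZ^{\syn}$ with $\bT_{\fil}=\tau_{\geq\bullet}\bZ[S^1]$, which I would do in two steps. First, a base-change statement for the even filtration: from $\bS[S^1]\otimes_{\bS}\bZ\simeq\bZ[S^1]$ and the lax symmetric monoidality of $\fil^*_{\ev}$ one obtains a comparison map
\[
\fil^*_{\ev}\bS[S^1]\otimes_{\fil^*_{\ev}\bS}\fil^*_{\ev}\bZ\;\longto\;\fil^*_{\ev}\bigl(\bZ[S^1]\bigr)\,,
\]
and I would show it is an equivalence, using that $\bS[S^1]$ is the suspension spectrum of the $\bE_{\infty}$-group $S^1$, so that its even filtration is controlled cellularly and is stable under base change along $\fil^*_{\ev}\bS\to\fil^*_{\ev}\bZ=\bZ^{\syn}$. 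Second, the equivalence recalled in the introduction that the even filtration of $\bZ[S^1]$ agrees with its Postnikov filtration, i.e.\ $\fil^*_{\ev}\bZ[S^1]\simeq\tau_{\geq\bullet}\bZ[S^1]=\bT_{\fil}$. All of the maps involved respect the bicommutative bialgebra structures --- $\bS[S^1]\to\bZ[S^1]$ is a map of bicommutative bialgebras, $\fil^*_{\ev}$ is lax symmetric monoidal, and $\bZ[S^1]$ is dualizable so that its coalgebra structure survives applying $\fil^*_{\ev}$ --- so the composite identification is through bicommutative bialgebras. Feeding this into the two reductions above and combining with $\fil^*_{\deg}\Int(\bZ)\simeq\bZ\otimes_{(\bT_{\fil})^{\vee}}\bZ$ finishes the proof.

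The main obstacle is the base-change statement for the even filtration in the previous paragraph. The even filtration is a right Kan extension and does not commute with base change in general; the footnote in the introduction flags exactly this, since the analogous statement is false for $\bZ^{S^1}$. Hence the argument cannot be purely formal: it has to exploit a genuine structural feature of $\bS[S^1]$ --- that it is the suspension spectrum of an $\bE_{\infty}$-group, so that its synthetic analogue is ``polynomial'' and is preserved by $-\otimes_{\bS^{\syn}}\bZ^{\syn}$ --- or else compute $\fil^*_{\ev}\bS[S^1]$ outright, as $\bS^{\syn}$ in weight $0$ extended by a weight-one twist of $\bS^{\syn}$, and base-change it by hand. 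Carrying this out, and tracking the bialgebra structures through it, is where the real work lies.
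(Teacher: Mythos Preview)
Your plan is correct and mirrors the paper's proof almost exactly: the paper uses symmetric monoidality of $(-)\otimes_{\bS^{\syn}}\bZ^{\syn}$ to reduce to identifying $\bT_{\syn}^{\vee}\otimes_{\bS^{\syn}}\bZ^{\syn}\simeq\bT_{\fil}^{\vee}$, which in turn follows from dualizability of $\bT_{\syn}$ (Proposition~\ref{prop: dualizability of stuff}) together with the base-change result $\bT_{\syn}\otimes_{\bS^{\syn}}\bZ^{\syn}\simeq\bT_{\fil}$ (Proposition~\ref{prop:Tsyn_basechange_to_Post}), and then invokes the bar presentation $\fil^*_{\deg}\Int(\bZ)\simeq L_0\bZ\otimes_{\bT_{\fil}^{\vee}}L_0\bZ$ (Proposition~\ref{prop:toen_Z_ZS1_Z_is_IntZ}(2)). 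The ``main obstacle'' you flag is handled in the paper precisely by your second suggested route --- computing $\fil^*_{\ev}\bS[S^1]\simeq\bS^{\syn}\oplus\bS^{\syn}[1](1)$ explicitly (Lemma~\ref{lemma : module structure on T}) and base-changing by hand.
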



Having found the proper spectral lift of the degree filtration, the following definition gives us the correct spectral lift of the filtered Hilbert additive group scheme.

\begin{notation}
  Let us write
  \[
  \mathrm{fSpec} : \CAlg(\Fil(\Sp)) \overset{\simeq}\longto \CAlg(\QCoh(\filstack)) \overset{\Spec}\longto \mathrm{sStk}_{\filstack}
  \]
  and
  \[
  \mathrm{grSpec} : \CAlg(\Mod_{\bS}^{\gr}) \overset{\simeq}\longto \CAlg(\QCoh(B\bG_m)) \overset{\Spec}\longto \on{sStk}_{B\bG_m}
  \]
  where we have used the t-exact symmetric monoidal equivalences set up in~\cite{geometryoffiltr}.
\end{notation}

\begin{definition}
  The \emph{synthetic Hilbert additive group scheme} is
  \[
  \bH_{\syn} = \mathrm{fSpec}\left(\bS^{\syn}_{\Int(\bZ)}\right) \,.
  \]
  This is a relatively affine spectral scheme over $\mathrm{fSpec}(\bS^{\syn})$.
  Similiarly, we define the \emph{graded spherical Hilbert additive group scheme} to be
  \[
  \bH_{\syn}^{\gr} = \mathrm{grSpec}\left(\bS^{\syn}_{\Int(\bZ)}\right) \, .
  \]

\end{definition}

Because of the above theorem, this object base-changes to the filtered Hilbert additive groups scheme.

\begin{rmk}
To give the reader a sense of how the above objects fit together, we note, following Remark \ref{rmk pullback}, that the synthetic Hilbert additive group fits into the following diagram of pullback squares:
 \[
  \begin{tikzcd}
   \bH_{\bS} \arrow[r] \arrow[d] & \bH_\syn \arrow[d] & \arrow[l]  \bH^{\gr}_{\syn} \arrow[d] \\
    \Spec(\bS) \arrow[r] \arrow[d,equals] & \mathrm{fSpec}(\bS^{\syn}) \arrow[d] & \arrow[l] \mathrm{grSpec}(\bS^{\syn}) \arrow[d] \\
    \Spec(\bS) \arrow[r] & \filstack & \arrow[l] B\bG_m \, .
  \end{tikzcd}
  \]
In particular, we recover the spherical Hilbert additive group as the fiber of $\bH_{\syn}$ over $\Spec(\bS)$.
\end{rmk}

\subsubsection*{Deloopings of the synthetic Hilbert additive group}

Our next goal is to give a version of the filtered circle, and its further deloopings, in the spectral setting.
Let $B\bH_{\syn}$ denote the classifying stack over $\on{fSpec}(\bS^{\syn})$. As taking classifying stacks is compatible with base-change, this gives a lift of the filtered circle to the setting of spectral algebraic geometry.
The following result concerns the cohomology of the stack $B\bH_{\syn}$.

\begin{theorem}
\begin{enumerate}
    \item (\emph{Cohomology of the generic fiber}) Let   $\underline{S^1}$ denote the constant spectral stack associated to the circle $S^1$.
    Then there is a map $\underline{S^1} \to B \bH_{\bS}$ which induces an equivalence
    \[
    R\Gamma( B \bH_{\bS}, \cO) \simeq \bS^{S^1}
    \]
    of bicommutative bialgebras between the cohomology of $B\bH_{\bS}$ and the spherical cochain algebra $\bS^{S^1}$.
    \item (\emph{Cohomology of the special fiber}) There is an equivalence
    \[
    R\Gamma(B \bH^{\gr}_{\syn}, \cO^{\gr}) \simeq \gr^*_{\ev}(\bS) \oplus \gr^*_{\ev}(\bS)[-1](-1)
    \]
    of $\bE_\infty$-algebras in $\Mod_{\gr^*_{\ev}(\bS)}(\Mod_{\bS}^{\gr})$.
    Here, $\gr^*_{\ev}(\bS)$ denotes the associated graded of the even filtration on the sphere and $\gr^*_{\ev}(\bS)[-1](-1)$ the graded spectrum obtained by desuspending and shifting this.
\end{enumerate}
\end{theorem}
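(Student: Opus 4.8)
Both statements compute the cohomology of the classifying stack of a relatively affine bialgebra group scheme, and the plan in each case is the same three moves: identify $R\Gamma$ of the classifying stack with a cobar construction, recognize the input of that cobar as itself a bar construction, and invoke bar--cobar duality. The structural fact that powers this is the very definition $\bS^{\syn}_{\Int(\bZ)}=\bS^{\syn}\otimes_{\bT^{\vee}_{\syn}}\bS^{\syn}=\mathrm{Bar}(\bS^{\syn},\bT^{\vee}_{\syn},\bS^{\syn})$. Since $B\bH_{\syn}$ is the geometric realization of the bar simplicial stack $[n]\mapsto\bH_{\syn}^{\times n}$ and $R\Gamma$ carries this colimit to a totalization, one gets $R\Gamma(B\bH_{\syn},\cO)\simeq\mathrm{Cobar}\bigl(\bS^{\syn},\mathrm{Bar}(\bS^{\syn},\bT^{\vee}_{\syn},\bS^{\syn}),\bS^{\syn}\bigr)$, and bar--cobar duality collapses this to $\bT^{\vee}_{\syn}$, as an $\bE_\infty$-algebra in $\Mod_{\bS^{\syn}}(\Filt(\Sp))$ (all the rings in play being commutative).

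For part (2): applying the symmetric monoidal functor $\gr^{*}(-)$ turns the bar presentation into $\gr^{*}\bS^{\syn}_{\Int(\bZ)}=\mathrm{Bar}(\gr^{*}_{\ev}\bS,\gr^{*}\bT^{\vee}_{\syn},\gr^{*}_{\ev}\bS)$, and rerunning the argument over $\gr^{*}_{\ev}\bS$ gives $R\Gamma(B\bH^{\gr}_{\syn},\cO^{\gr})\simeq\gr^{*}\bT^{\vee}_{\syn}$. It then remains to compute the right-hand side. I would first establish $\gr^{*}_{\ev}(\bS[S^1])\simeq\gr^{*}_{\ev}(\bS)\oplus\gr^{*}_{\ev}(\bS)[1](1)$ as a trivial square-zero $\gr^{*}_{\ev}\bS$-algebra --- the associated-graded shadow of the fact, recalled in the introduction, that the even and Postnikov filtrations agree on $\bZ[S^1]$, so that the fundamental class of $S^1$ sits in weight one. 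Since $\bS[S^1]$ is a finite spectrum, $\fil^{*}_{\ev}\bS[S^1]$ is dualizable over $\bS^{\syn}$ and $\gr^{*}$ commutes with the internal hom defining $\bT^{\vee}_{\syn}$; dualizing over $\gr^{*}_{\ev}\bS$ then yields $\gr^{*}\bT^{\vee}_{\syn}\simeq\gr^{*}_{\ev}(\bS)\oplus\gr^{*}_{\ev}(\bS)[-1](-1)$, which is the claim.

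For part (1): restriction along $\{t=1\}=\Spec\bS\hookrightarrow\filstack$ is symmetric monoidal and commutes with the bar construction, so $\bS_{\Int(\bZ)}\simeq\bS\otimes_{(\bT^{\vee}_{\syn})|_{t=1}}\bS$, and the same argument identifies $R\Gamma(B\bH_{\bS},\cO)$ with the underlying $\bE_\infty$-ring $(\bT^{\vee}_{\syn})|_{t=1}$; using dualizability and the generic-fibre identification $(\fil^{*}_{\ev}\bS[S^1])|_{t=1}\simeq\bS[S^1]$, this is $\bS^{S^1}$. The comparison map $\underline{S^1}\to B\bH_{\bS}$ comes from delooping the morphism of group schemes $\underline{\bZ}\to\bH_{\bS}$ associated to the bialgebra map $\bS_{\Int(\bZ)}\to\prod_{\bZ}\bS$ --- a spherical lift of the tautological inclusion $\Int(\bZ)\hookrightarrow\Fun(\bZ,\bZ)$, functorial through the spherical Witt-vector pullback square that defines $\bS_{\Int(\bZ)}$ --- and one checks that the induced map on $R\Gamma$ is the equivalence above by comparing the \v{C}ech nerves of $\mathrm{pt}\hookrightarrow\underline{S^1}$ and $\mathrm{pt}\hookrightarrow B\bH_{\bS}$, both pulled back from $\mathrm{pt}\hookrightarrow S^1$.

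The main obstacle is the bar--cobar collapse: one must verify that the bar filtration on $\bS^{\syn}_{\Int(\bZ)}$ (by number of tensor factors) is complete as a weight/$t$-adic filtration, equivalently that $\bT^{\vee}_{\syn}$ is \emph{connected} as an augmented algebra in the graded-complete sense. This is exactly where the synthetic setting is indispensable: on underlying spectra the reduced dual $\overline{\bT^{\vee}_{\syn}}$ is coconnective (it is dual to $\bS[1]$-type data), so bar--cobar fails naively; but its associated graded runs off to $+\infty$ in weight, making the filtration complete, so that the cobar of the bar recovers $\bT^{\vee}_{\syn}$. It is precisely this weight-completeness --- unavailable for the honest additive formal group over $\bZ$, but reinstated once everything is $\gr^{*}_{\ev}\bS$-linear --- that produces the circle, with its single degree-$(-1)$, weight-$(-1)$ cohomology class, in place of a divided-power additive group whose classifying stack would carry far larger cohomology. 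A secondary point, used repeatedly above, is the dualizability of $\fil^{*}_{\ev}\bS[S^1]$ over $\bS^{\syn}$, underwriting the interactions of the internal hom with $\gr^{*}$ and with restriction to $\{t=1\}$; this should follow from a termwise analysis of the even filtration of the finite spectrum $\bS[S^1]=\bS\oplus\bS[1]$.
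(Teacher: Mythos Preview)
Your bar--cobar strategy is a genuinely different route from the paper's, and for part~(2) it works: in graded $\gr^*_{\ev}(\bS)$-modules the coaugmentation coideal of $\gr^*\bS^{\syn}_{\Int(\bZ)}$ lives in strictly positive weight, so the cobar totalization is pro-constant in each weight and the collapse $\mathrm{Cobar}\,\mathrm{Bar}(\gr^*\bT^{\vee}_{\syn})\simeq\gr^*\bT^{\vee}_{\syn}$ is legitimate. The paper instead identifies $\bH^{\gr}_{\syn}$ with the base change of $\mathsf{Ker}$ to $\gr\Spec(\gr^*_{\ev}\bS)$, invokes finite cohomological dimension of $B\mathsf{Ker}$ to commute $R\Gamma$ with base change, and imports the known $\bZ$-linear answer $R\Gamma(B\mathsf{Ker},\cO^{\gr})\simeq\bZ\oplus\bZ[-1](-1)$. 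Your argument is more self-contained; theirs leverages the $\bZ$-linear theory already built in \cite{moulinos2019universal}.

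For part~(1), however, there is a real gap. You correctly observe that bar--cobar fails na\"ively over $\bS$ because the augmentation ideal of $\bS^{S^1}$ is coconnective, and your fix---weight-completeness---is only available at the filtered or graded level. But then ``the same argument identifies $R\Gamma(B\bH_{\bS},\cO)$ with $(\bT^{\vee}_{\syn})|_{t=1}$'' cannot mean running bar--cobar at $t=1$. The only remaining reading is that you restrict the synthetic equivalence $R\Gamma(B\bH_{\syn},\cO^{\fil})\simeq\bT^{\vee}_{\syn}$ along $t=1$; but restriction to $t=1$ is a \emph{colimit} along the filtration, and you have not shown that it commutes with the cobar \emph{totalization} computing $R\Gamma$. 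Concretely, you need
\[
\colim_n \lim_{\Delta}\bigl(\bS^{\syn}_{\Int(\bZ)}\bigr)^{\otimes\bullet}\;\simeq\;\lim_{\Delta}\colim_n\bigl(\bS^{\syn}_{\Int(\bZ)}\bigr)^{\otimes\bullet},
\]
which is exactly the kind of interchange that fails without further input. The paper's substitute for bar--cobar here is a finite-cohomological-dimension argument: one shows (via the classical truncation and \cite{moulinos2019universal}) that $B\bH_{\bS}\to\Spec(\bS)$ has finite cohomological dimension, which makes $R\Gamma$ commute with base change along $\bS\to\bZ$; then conservativity of $(-)\otimes_{\bS}\bZ$ on bounded-below spectra reduces the claim to To\"en's affinization equivalence $R\Gamma(B\bH,\cO)\simeq\bZ^{S^1}$ over $\bZ$. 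This is precisely the missing ingredient in your proposal. (A smaller point: the map $\underline{\bZ}\to\bH_{\bS}$ in the paper is produced by Cartier-dualizing the inclusion $\widehat{\bG_m}\hookrightarrow\bG_m$, not via an evaluation map $\Int(\bZ)\to\prod_{\bZ}\bZ$.)
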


In addition, we show that global sections of the $n$th delooping of $\bH^\syn$ recovers $\bS^{K(\bZ, n)}$, the spherical cochains on the Eilenberg--MacLane space $K(\bZ, n)$.
In particular, we obtain $\bS^\syn$-linear filtrations of $\bS^{K(\bZ, n)}$, for all positive integers $n$.
We remark that these filtrations are not the same as the even filtration applied to these objects.

\subsubsection*{Representations of $B\bH_{\syn}$}

We then embark on a study of the $\infty$-category of $B \bH_{\syn}$-representations.
Our main results in this direction can be summarized by the following statement:

\begin{theorem}
    The map $B \underline{\bZ} \to B \bH_{\bS} $ induces a symmetric monoidal equivalence
    \[
    \QCoh(B^2 \bH_{\bS}) \simeq \Fun(BS^1, \Sp) \,.
    \]
    On the special fiberwe obtain a symmetric monoidal equivalence
    \[
    \QCoh(B^2 \bH_{\syn}^{\gr}) \simeq  \on{coMod}_{\gr^*(\bT_{\syn}^\vee)}(\Mod_{\gr^*_\ev(\bS)})\,,
    \]
    with graded comodules over $\gr^*(\bT_{\syn}^\vee)$.
\end{theorem}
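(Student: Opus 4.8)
The plan is to descend $\QCoh$ of each double delooping along a bar construction, plug in the cohomology computations of the previous theorem, and recognise the resulting totalisation as a comodule category by bar--cobar duality.

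I would treat both assertions by the same mechanism. Let $\bH$ be either $\bH_{\bS}$ (over $\cS = \Spec(\bS)$) or $\bH^{\gr}_{\syn}$ (over $\cS = \mathrm{grSpec}(\bS^{\syn})$), and write $\cO(\bH)$ for its coordinate ring. Since $\bH$ is relatively affine and $\cO(\bH)$ is flat (a flat lift of a piece of $\Int(\bZ)$, resp.\ of its graded refinement), one has $(B\bH)^{\times_{\cS}n}\simeq B(\bH^{\times_{\cS}n})$, $\cO(\bH^{\times_{\cS}n})\simeq\cO(\bH)^{\otimes_{\cS}n}$, and hence $\QCoh\big((B\bH)^{\times_{\cS}n}\big)\simeq\on{coMod}_{\cO(\bH)^{\otimes_{\cS}n}}(\QCoh(\cS))$. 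Writing $B^2\bH$ as the geometric realisation of the bar construction of the group stack $B\bH$ and using that $\QCoh$ sends this colimit to a limit,
\[
\QCoh(B^2\bH)\;\simeq\;\lim_{[n]\in\Delta}\QCoh\big((B\bH)^{\times_{\cS}n}\big)\;\simeq\;\lim_{[n]\in\Delta}\on{coMod}_{\cO(\bH)^{\otimes_{\cS}n}}(\QCoh(\cS)).
\]
Unwinding the simplicial structure maps identifies the right-hand cosimplicial $\infty$-category as the cobar construction of $\QCoh(B\bH)$, viewed as an augmented commutative algebra in $\mathrm{Pr}^L_{\QCoh(\cS)}$ with augmentation restriction along the unit section $\cS\to B\bH$; thus $\QCoh(B^2\bH)$ depends on $\bH$ only through $\QCoh(B\bH)$ as an augmented symmetric monoidal $\infty$-category, and, when $B\bH$ is an \emph{affine stack} over $\cS$, this cobar collapses to $\on{coMod}_{R\Gamma(B\bH,\cO)}(\QCoh(\cS))$.

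The key input is therefore that $B\bH$ is an affine stack. For the special fiber this is because, over the associated graded, $\bH^{\gr}_{\syn}$ is Cartier dual to the additive formal group $\widehat{\bG_a}$ over $\gr^*_{\ev}(\bS)$ — which exists here, this being the whole point of passing to synthetic spectra — so $B\bH^{\gr}_{\syn}$ is the classifying stack of a unipotent affine group scheme; for the generic fiber it follows from part~(1) of the previous theorem, which identifies $R\Gamma(B\bH_{\bS},\cO)$ with the coconnective cochain algebra $\bS^{S^1}$. Granting this, the special fiber is done: part~(2) of the previous theorem identifies $R\Gamma(B\bH^{\gr}_{\syn},\cO^{\gr})$ — with its coalgebra structure from the group structure on $B\bH^{\gr}_{\syn}$ — with $\gr^*(\bT^{\vee}_{\syn})\simeq\gr^*_{\ev}(\bS)\oplus\gr^*_{\ev}(\bS)[-1](-1)$, so $\QCoh(B^2\bH^{\gr}_{\syn})\simeq\on{coMod}_{\gr^*(\bT^{\vee}_{\syn})}(\Mod_{\gr^*_{\ev}(\bS)})$ with the transported symmetric monoidal structure. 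For the generic fiber the mechanism gives $\QCoh(B^2\bH_{\bS})\simeq\on{coMod}_{\bS^{S^1}}(\Sp)$, the coalgebra structure on $\bS^{S^1} = R\Gamma(B\bH_{\bS},\cO)$ being dual to the group structure on $B\bH_{\bS}$; and since $B\underline{\bZ}\to B\bH_{\bS}$ realises the coalgebra equivalence $R\Gamma(B\bH_{\bS},\cO)\simeq R\Gamma(\underline{S^1},\cO)$, this identification is pullback along $\underline{BS^1}=B^2\underline{\bZ}\to B^2\bH_{\bS}$. Finally I would identify $\on{coMod}_{\bS^{S^1}}(\Sp)$ with $\Fun(BS^1,\Sp)$: as $S^1$ is a finite complex, $\bS^{S^1}$ is the Spanier--Whitehead dual of the commutative ring spectrum $\bS[S^1]=\Sigma^{\infty}_{+}S^1 = C_*(S^1;\bS)$ and its coalgebra structure is dual to the multiplication of $\bS[S^1]$, so $\on{coMod}_{\bS^{S^1}}(\Sp)\simeq\Mod_{\bS[S^1]}(\Sp)$ symmetric monoidally; and $\Mod_{\bS[S^1]}(\Sp)=\Mod_{\bS[\Omega BS^1]}(\Sp)\simeq\Fun(BS^1,\Sp)$ by Barr--Beck applied to the conservative, colimit-preserving, symmetric monoidal functor ``evaluate at the basepoint'' (conservative since $BS^1$ is connected).

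The main obstacle is exactly the affineness step. Part~(1) of the previous theorem only says that $\underline{S^1}\to B\bH_{\bS}$ is a cohomology equivalence; it is \emph{not} an equivalence on $\QCoh$, since $\underline{S^1}$ is not an affine stack over $\bS$, so one cannot simply transport the first assertion from $\underline{S^1}$. What rescues the argument is that $B\bH_{\bS}$ itself \emph{is} an affine stack — making the cobar of $\QCoh(B\bH_{\bS})$ collapse — together with the fact that the relevant $\fib(\bS^{S^1}\to\bS)\simeq\bS[-1]$-adic completions are automatic because that ideal is square-zero; informally, it is one further delooping (and this square-zero-ness) that upgrades the cohomology equivalence to an equivalence on quasi-coherent sheaves. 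Pinning down this affineness and collapse — equivalently, showing that $\QCoh(B^2(-))$ of an affine abelian group stack sees only the cohomology coalgebra of its delooping, and reconciling that with the non-affine comparison stack $\underline{S^1}$ — is where I expect the substance of the proof to sit; descent, Spanier--Whitehead duality, Barr--Beck, and the bookkeeping of monoidal structures are then routine.
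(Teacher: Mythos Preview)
Your overall plan—descend $\QCoh(B^2\bH)$ along the bar resolution, identify each level via an ``affineness'' equivalence $\QCoh((B\bH)^{\times n})\simeq\Mod_{R\Gamma(B\bH,\cO)^{\otimes n}}$, and then invoke the cosimplicial presentation of comodule categories—is exactly the paper's approach for the special fiber.  For the generic fiber the paper takes a shorter route: rather than passing through $\on{coMod}_{\bS^{S^1}}$ and then identifying that with $\Fun(BS^1,\Sp)$, it directly compares the cosimplicial diagrams for $B^2\bH_{\bS}$ and $B^2\underline{\bZ}$ levelwise, using that the unit in $\Fun((S^1)^{\times n},\Sp)$ is compact (since $(S^1)^n$ is a finite complex) to verify full faithfulness of each $u_n^*$, with essential surjectivity following formally.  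Your detour via Spanier--Whitehead duality and Barr--Beck is valid but adds a step the paper avoids.

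The genuine gap is your justification of the affineness input $\QCoh(B\bH)\simeq\Mod_{R\Gamma(B\bH,\cO)}$.  There is no notion of affine stack in spectral algebraic geometry to appeal to, and for the generic fiber you cannot deduce this merely from the cohomology computation $R\Gamma(B\bH_{\bS},\cO)\simeq\bS^{S^1}$: knowing the endomorphisms of the unit does not show the unit generates.  The paper proves this in a separate proposition: $B\bH_{\bS}$ has finite cohomological dimension over $\Spec(\bS)$ (this can be checked on the classical truncation, where it reduces to the $\bZ$-linear statement from \cite{moulinos2019universal}), so the unit is compact and $\pi_*$ preserves filtered colimits; conservativity of $\pi_*$ on bounded below objects then follows by base-changing along $\bS\to\bZ$ (using right-adjointability from finite cohomological dimension) and invoking conservativity of $(-)\otimes_{\bS}\bZ$ on bounded below spectra; right-completeness of the $t$-structure then extends generation to all objects.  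For the special fiber your appeal to Cartier duality with $\widehat{\bG_a}$ is also not quite right—the paper's Warning notes that the $\bE_\infty$-group structure on $\bH_{\syn}^{\gr}$ is twisted by the relation $\eta d=d^2$, so it is only the base-change of $\mathsf{Ker}$ at the level of $\bE_1$-groups.  The paper's actual argument uses that base-change identification together with the known finite cohomological dimension of $B\mathsf{Ker}$ over $\bZ$ and dualizability of $\Mod_{\gr^*_{\ev}(\bS)}$ in $\Pr^L_{\St}(\Mod_{\bZ}^{\gr})$ to commute the limit defining $\QCoh$ with the tensor product.  You correctly flag affineness as ``where the substance sits,'' but the mechanisms you propose for it are not the ones that work.
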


Note that by construction, $\gr^*(\bT_{\syn}^\vee)$ base-changes to $H^*(S^1;\bZ) \simeq \bZ \oplus \bZ[-1](-1)$.  On the special fiber, we see therefore that we obtain a lift of the graded mixed complexes of \cite{moulinos2019universal}.

\begin{rmk}
    We wonder if there is a way, given these equivalences, to geometrize the theory of cyclotomic spectra and synthetic cyclotomic spectra as studied in \cite{antieaucyclotomic}.
\end{rmk}

\subsection*{Some questions and further directions}
We now list some questions and ideas that arose during the writing of this paper.

\subsubsection*{Lifting $\delta$- and binomial rings}
In \cite{Toe20}, the Hilbert additive group scheme was given a Witt vector model as the fixed points of the Frobenii on the Witt vector ring scheme.
By amalgamating constructions from \cite{Toe20} and \cite{moulinos2019universal}, we extend this to a Witt vector model of the filtered Hilbert additive group in Section \ref{sec: witt vector model}.
One might wonder if Witt vector models exist over the sphere spectrum as well.
Even in the $p$-local setting, this is quite subtle; since $\bW_{p} = \Spec(\bZ_{(p)}\{x\})$ for the free $\delta$-ring $\bZ_{(p)}\{x\}$, we remark that this is equivalent to asking if the free $\delta$-ring admits a lift to the sphere spectrum.
This seems to be an open question, cf. \cite{carmeli2024maps}.
This question is interesting as it seems to be an instance of a recognizable pattern: when the Frobenii are trivialized, it is possible to lift.
In a related direction, the study of binomial rings, such as the integer-valued polynomials, has been extended to the derived setting in \cite{kubrak2023derived}.
Moreover, the degree filtration on $\Int(\bZ)$ is mentioned in loc. cit. in the context of degenerations of free binomial algebras to divided power algebras.
We are lifting this degeneration to synthetic spectra for the free binomial ring on one generator.
We wonder therefore if the binomial ring monad lifts to a monad on synthetic spectra.

\subsubsection*{Affine stacks in spectral algebraic geometry}

We remark that the above definition  of the synthetic integer-valued polynomials as  a relative tensor product is  directly inspired by \cite[Corollaire 3.5]{Toe20}.
The equivalence of loc. cit. is a direct consequence of the theory of affine stacks, c.f. \cite{toen2006champs}.
Moreover, the cohomological computations for $B \bH_{\bS}$ can be viewed as an instance of a bar--cobar equivalence.
Both of these phenomena underlie another key motivation of this paper: understanding what affine stacks over the sphere spectrum should be.
While there is currently no theory of affine stacks in spectral algebraic geometry, we will see that the spectral stacks we construct do exhibit some of the behavior of affine stacks, and this is what powers many of the results here.
Hence, this paper represents some steps in the direction of honing in on the right notion of affine stacks in spectral algebraic geometry.
For more on this, see the discussion in Section \ref{sec: deloopings and discussion}.

\subsubsection*{Motivic filtrations and prismatic homotopy theory}

Let us remark that we stop short of providing the motivic filtration on topological Hochschild homology with an analogous universal property to that of HKR-filtered Hochschild homology.
While the constructions of this paper do give rise to a filtration on topological Hochschild homology of any connective $\bE_\infty$-algebra, this filtration is not expected to coincide with the motivic filtration.
Indeed, we expect that spectral algebraic geometry, over connective $\bE_\infty$-rings, to be the wrong setting to formulate this.
See Section \ref{sec: filtration on THH} for a description of what goes wrong.
Nevertheless, we expect that there will exist an avatar of our synthetic Hilbert additive group with its group structure in the correct setting and that it would play an essential role in the formulation of this universal property.

\subsection*{Terminology and Notation}

\begin{itemize}
  \item We use the notation $\cS$ for the $\infty$-category of spaces and $\Sp$ for the stable $\infty$-category of spectra.
  \item We use the notation $\Map$ for mapping spaces, $\map$ for mapping spectra, and $\hom$ for internal function objects. In particular, if $R$ is an $\bE_\infty$-ring and if $M$ and $N$ are $R$-modules, then $\hom_R(M,N)$ denoted the $R$-module given by the universal property
  \[
  \Map(L \otimes_R M , N) \simeq \Map(L, \hom_R(M,N))
  \]
  in the category $\Mod_R$.
\end{itemize}

\subsection*{Acknowledgements}

We want to thank Ben Antieau, Sanath Devalapurkar, Jeremy Hahn, Jacob Lurie, Marius Nielsen, Arpon Raksit, and Marco Robalo for helpful conversations and comments related to this work.
We also want to thank Ben Antieau and Noah Riggenbach for looking through a draft of this paper and pointing out a seemingly innocuous, but important, mistake.
Lastly, we thank an anonymous referee for comments that has improved the readability of the paper.

\section{The Even Filtration and Even $\MU$-based Synthetic Spectra}
\label{sec:even_filt_syn_sp}

In this section, we introduce one of the key tools and perspectives  used in this paper, namely that of the functorial even filtration on $\bE_\infty$-rings and synthetic spectra.
We take the point of view of even $\MU$-based synthetic spectra as modules over the even filtration on the sphere spectrum.
We begin in \S 1.1 with a discussion of terminology and conventions for filtrations and graded objects.
In \S 1.2, we introduce the even filtration following \cite{HRW23}, move on to the definition of the even filtration, and then in \S 1.3 we discuss even $\MU$-based synthetic spectra from this perspective.

\subsection{Filtered Spectra}
\label{sec:filt}

Consider the poset $\bZ$ with its standard ordering.
We consider this as a category: its objects are integers, and the mapping set $\bZ(m,n)$ has a single element if $m \leq n$ and is empty otherwise.
The $\infty$-category of filtered spectra is simply the functor category
\[
\Filt(\Sp) = \Fun(\bZ^{\oop},\Sp) \,.
\]
Some notations and constructions related to this category:
\begin{itemize}
  \item If $X$ is a filtration, then the object in filtration degree $p$ is denoted $X^p$.
  Consider the functor $\Filt(\Sp) \to \Sp$ which just sends a filtration to the spectrum in filtration degree $p$.
  This admits a fully faithful left adjoint which we denote by $L_p : \Sp \to \Filt(\Sp)$.
  On objects this functor is given by
  \[
  (L_p A)^n = \begin{cases} A & \text{for $n \leq p$} \\ 0 & \text{otherwise.} \end{cases}
  \]
  We will sometimes refer to filtrations on this form as \emph{trivial}.
  In particular, $L_0 A$ will sometimes be referred to as the \emph{trivial filtration} on $A$.
  \item The $\infty$-category $\Filt(\Sp)$ is symmetric monoidal when endowed with the Day convolution tensor product.
  This is given by the formula
  \[
  (X \otimes Y)^n \simeq \colim_{i + j \leq n} X^i \otimes Y^j \,.
  \]
  The unit of the symmetric monoidal structure is the trivial filtration $L_0 \bS$.
  A \emph{filtered $\bE_\infty$-ring} is a commutative algebra object in $\Filt(\Sp)$ with respect to this symmetric monoidal structure.
  We denote the $\infty$-category of filtered $\bE_\infty$-rings by $\CAlg(\Filt)$.
  \item We will write $(k) : \Filt(\Sp) \to \Filt(\Sp)$ to denote the functor that shifts a filtered object $k$ times:
  \[
  (X(k))^n = X^{n-k} \,.
  \]
  \item The $\infty$-category $\Filt(\Sp)$ can be endowed with a number of different t-structures.
  In this paper, we will often mention the so-called \emph{neutral} t-structure. The connective objects in this t-structure are filtrations $X$ such that $X^n \in \Sp_{\geq 0}$ for all $n$.
  \item A filtration $X$ is called \emph{complete} if $\lim_n X^n \simeq 0$.
\end{itemize}

\begin{example}[The double-speed Postnikov filtration]
  A common example of a filtration is the Postnikov filtration on a spectrum $X$, which is assembled using the covers functors in the t-structure on the category of spectra:
  \[
  \tau_{\geq \bullet} X = (\cdots \to \tau_{\geq n + 1} X \to \tau_{\geq n} X \to \tau_{\geq n-1} X \to \cdots) \,.
  \]
  The speeded up version
  \[
  \tau_{\geq 2 \bullet} X = (\cdots \to \tau_{\geq 2n+2} X \to \tau_{\geq 2n} X \to \tau_{\geq 2n-2} X \to \cdots)
  \]
  is referred to as the \emph{double--speed Postnikov filtration}.
  Note that what we call the Postnikov filtration here is sometimes called the Whitehead filtration in other places to remind the reader that we are using the covers rather than the truncations.
  However, as the rest of the community seems intent on using the terminology Postnikov filtration for both, we surrender to the peer-pressure.
\end{example}

We will also consider the integers as a discrete space, which we will denote as $\bZ^{\delta}$ to distinguish it from $\bZ$ with its poset structure.
The $\infty$-category of \emph{graded spectra} is simply the functor category
\[
\Mod_{\bS}^{\gr} = \Fun(\bZ^{\delta}, \Sp) \,.
\]
Some constructions and notations on this category:
\begin{itemize}
  \item If $X$ is a graded spectrum, then we denote the object in weight $p$ by $X^p$ .
  Consider the functor $\mathrm{ev}_p : \Mod_{\bS}^{\gr} \to \Sp$ which picks out the $p$th weight part of our graded spectrum.
  This admits a fully faithful left adjoint $\mathrm{ins}_p : \Sp \to \Mod_{\bS}^{\gr}$.
  On objects it is given by
  \[
  (\mathrm{ins}_p(A))^n = \begin{cases} A & \text{if $n=p$} \\ 0 & \text{otherwise.} \end{cases}
  \]
  The functor $\mathrm{ins}_p$ is also right adjoint to $\mathrm{ev}_p$ .
  Often, if it is implicitly clear that we are working in the graded setting, we will simply write $A$ for the graded object $\mathrm{ins}_0(A)$.
  \item The $\infty$-category $\Mod_{\bS}^{\gr}$ is symmetric monoidal with the Day convolution tensor product. This is given by the formula
  \[
  (X \otimes Y)^n \simeq \bigoplus_{i+j = n} X^i \otimes Y^j \,.
  \]
  The unit of the symmetric monoidal structure is simply $\bS$ concentrated in degree $0$. That is, the unit is~$\mathrm{ins}_0(\bS)$.
  \item The inclusion $\bZ^{\delta} \to \bZ$ induces a restriction functor $\mathrm{Res} : \Filt(\Sp) \to \Mod_{\bS}^{\gr}$ which simply forgets the structure maps of the filtration.
  This functor admits a left adjoint $\mathrm{spl} : \Mod_{\bS}^{\gr} \to \Filt(\Sp)$ which on objects are given by the formula
  \[
  \mathrm{spl}(X)^n = \bigoplus_{i \geq n} X^i \,.
  \]
  \item We will write
  \[
  \gr^q(X) = X^q/X^{q+1}
  \]
  and call this the \emph{$q$th associated graded}.
  Together, these assemble into a \emph{total associated graded} functor
  \[
  \gr : \Filt(\Sp) \longto \Mod_{\bS}^{\gr} \,, \quad X \mapsto (\gr^q X)_{q \in \bZ} \,.
  \]
  This functor is symmetric monoidal.
  Moreover, maps between complete filtrations are equivalences if they are equivalences on the total associated graded.
  \item We will write $(k) : \Mod_{\bS}^{\gr} \to \Mod_{\bS}^{\gr}$ to denote the functor that shifts a graded object $k$ times:
  \[
  (X(k))^n = X^{n-k} \,.
  \]
  For example, $X = \bZ \oplus \bZ[1](1)$ is the graded spectrum that is given as
  \[
  X^n = \begin{cases} \bZ & \text{for $n =0$} \\ \bZ[1] & \text{for $n=1$} \\ 0 & \text{otherwise.} \end{cases}
  \]
\end{itemize}

\subsection{The even filtration}
\label{sec:even_filt}

For us, an \emph{even ring spectrum} is an $\bE_\infty$-ring $R$ with the property that homotopy groups in odd degrees vanish.
We follow~\cite{HRW23}*{Definition 1.1.1}, and define for any $\bE_\infty$-ring $A$, the  filtration
\[
\fil_{\mathrm{ev}}^{n}(A) = \lim_{\substack{A \to R \\ R \text{ even}}} \tau_{\geq 2n} R \,.
\]
This is a filtered $\bE_\infty$-ring that we will refer to as the \emph{even filtration} on $A$.
In particular, the even filtration is the right Kan extension of the double-speed Postnikov filtration functor $\tau_{\geq 2 \bullet} : \CAlg^{\mathrm{ev}} \to \CAlg(\Filt)$ along the inclusion of even ring spectra into all $\bE_\infty$-rings.
The even filtration construction is hence regarded as a functor
\[
\fil^*_{\ev} : \CAlg \longto \CAlg(\Filt) \,.
\]

\begin{proposition} The following are true for the functor $\fil^*_{\ev} : \CAlg \to \CAlg(\Filt)$:
  \begin{enumerate}
    \item The even filtration of an even ring spectrum is the double--speed Postnikov filtration.
    \item The even filtration construction is lax symmetric monoidal.
  \end{enumerate}
  \begin{proof}
    \leavevmode
    \begin{enumerate}
      \item Follows from $A = A$ being initial among all maps $A \to R$ with $R$ even, whenever $A$ is itself even.
      \item The two symmetric monoidal structures are cocartesian and by~\cite{lurie2017higher}*{Proposition 2.4.3.8} any functor between categories with cocartesian symmetric monoidal structures is lax symmetric monoidal.
    \end{enumerate}
  \end{proof}
\end{proposition}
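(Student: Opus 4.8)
The plan is to treat the two assertions independently, each by a short formal argument.

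\textbf{Part (1).} By construction $\fil^n_{\ev}(A) = \lim \tau_{\geq 2n} R$, where the limit ranges over the $\infty$-category $\mathcal{E}_A$ of even $\bE_\infty$-rings $R$ equipped with a map $A \to R$; since being even is a property, $\mathcal{E}_A$ is just the coslice $(\CAlg^{\ev})_{A/}$ and $\CAlg^{\ev} \subseteq \CAlg$ is full. When $A$ is itself even, I would first observe that $(A, \id_A)$ lies in $\mathcal{E}_A$ and is in fact \emph{initial} there: the mapping space in $\mathcal{E}_A$ from $(A,\id_A)$ to any $(R, A \to R)$ is contractible, selecting the structure map $A \to R$. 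A limit over an $\infty$-category with an initial object is computed by evaluating at that object, so $\fil^n_{\ev}(A) \simeq \tau_{\geq 2n} A$. The transition maps and the $\bE_\infty$-structure are carried along for free, since the identification is induced by evaluation at the initial object, which is symmetric monoidal and compatible with the maps $\tau_{\geq 2n+2} \to \tau_{\geq 2n}$; hence this is an equivalence of filtered $\bE_\infty$-rings with the double-speed Postnikov filtration.

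\textbf{Part (2).} The key point is that both the source $\CAlg = \CAlg(\Sp)$ and the target $\CAlg(\Filt(\Sp))$ carry their cocartesian symmetric monoidal structures: for $\bE_\infty$-rings in any symmetric monoidal $\infty$-category the tensor product agrees with the coproduct. By \cite{lurie2017higher}*{Proposition 2.4.3.8}, any functor between $\infty$-categories equipped with cocartesian symmetric monoidal structures is canonically lax symmetric monoidal, the structure maps $F(X)\otimes F(Y) = F(X)\sqcup F(Y) \to F(X\sqcup Y) = F(X\otimes Y)$ being supplied by the universal property of the coproduct. Applying this with $F = \fil^*_{\ev}$ gives the claim. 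As an alternative, one could instead use that $\fil^*_{\ev}$ is the right Kan extension of the symmetric monoidal functor $\tau_{\geq 2\bullet}\colon \CAlg^{\ev}\to \CAlg(\Filt(\Sp))$ along the symmetric monoidal inclusion $\CAlg^{\ev}\hookrightarrow\CAlg$, together with the general principle that such right Kan extensions inherit a lax symmetric monoidal structure; but the cocartesian argument is more direct and avoids checking symmetric monoidality of $\tau_{\geq 2\bullet}$ by hand.

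I do not anticipate a genuine obstacle here; this is a warm-up lemma. The only points that deserve a moment's care are confirming that $\id_A$ is truly the initial object of the indexing category (which rests on the fullness of $\CAlg^{\ev}\subseteq\CAlg$) and recalling that $\CAlg(\Filt(\Sp))$ genuinely carries the cocartesian monoidal structure, so that the cited statement applies verbatim. Neither is serious.
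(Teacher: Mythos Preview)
Your proof is correct and matches the paper's approach exactly: Part~(1) via the initial-object observation for $(A,\id_A)$ in the indexing category, and Part~(2) via the cocartesian monoidal structures on both $\CAlg$ and $\CAlg(\Filt(\Sp))$ together with \cite{lurie2017higher}*{Proposition~2.4.3.8}. Your write-up simply expands on the same two one-line arguments the paper gives.
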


In many cases, we can compute the even filtration via descent properties, see~\cite{HRW23}*{Section 2.2}.

\begin{definition}

  A map $A \to B$ of $\bE_\infty$-ring is \emph{evenly faithfully flat}, or \emph{eff} for short, if for any even $\bE_\infty$-ring $C$ and a map $A \to C$ of $\bE_\infty$-rings, the pushout $B \otimes_A C$ is itself even and the induced map $\pi_* (C) \to  \pi_*( C \otimes_{A} B)$ is  faithfully flat.
\end{definition}

\begin{proposition}
  If $A \to B$ is eff, then the canonical map
  \[
  \fil^*_{\ev}(A) \to \lim_{\Delta}(\fil^*_{\ev}(B^{\otimes_{A} \bullet + 1}))
  \]
  is an equivalence.
\end{proposition}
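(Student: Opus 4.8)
The plan is to deduce the statement from faithfully flat descent, carried out one filtration weight at a time. Since all the relevant maps are natural in the filtration weight, it suffices to fix an integer $n$ and show that the canonical map
\[
\fil^n_{\ev/A}(M) \longto \lim_{[k] \in \Delta} \fil^n_{\ev / B^{\otimes_A k+1}}\bigl(M \otimes_A B^{\otimes_A k+1}\bigr)
\]
is an equivalence. Unwinding definitions, the source is $\lim_{R \in \CAlg^{\mathrm{ev}}_A} \tau_{\geq 2n}(R \otimes_A M)$, the limit over the category of $\bE_\infty$-rings under $A$ with even underlying spectrum; and, using $S \otimes_{B^{\otimes_A k+1}} (M \otimes_A B^{\otimes_A k+1}) \simeq S \otimes_A M$, the $k$-th term of the cosimplicial object on the right is $\lim_{S \in \CAlg^{\mathrm{ev}}_{B^{\otimes_A k+1}}} \tau_{\geq 2n}(S \otimes_A M)$. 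Throughout I will use freely that eff maps are stable under base change and composition \cite{HRW23}; in particular $A \to B^{\otimes_A k+1}$ is eff for every $k$, and for any $A$-algebra $R$ both the map $R \to R' := R \otimes_A B$ and all coface maps of the cosimplicial ring $[k] \mapsto R'^{\otimes_R k+1} \simeq R \otimes_A B^{\otimes_A k+1}$ are eff.

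Fix $R \in \CAlg^{\mathrm{ev}}_A$. Evaluating the definition of eff on the even $R$-algebra $R$ itself shows that $R \otimes_A B^{\otimes_A k+1} \simeq R'^{\otimes_R k+1}$ is even and faithfully flat over $R$ for every $k$, so by faithfully flat descent for modules $R \otimes_A M \simeq \lim_{[k] \in \Delta}\bigl((R \otimes_A B^{\otimes_A k+1}) \otimes_A M\bigr)$. The step I expect to be the main obstacle is upgrading this to the assertion that $\tau_{\geq 2n}$ commutes with this totalization:
\[
\tau_{\geq 2n}(R \otimes_A M) \;\simeq\; \lim_{[k] \in \Delta} \tau_{\geq 2n}\bigl((R \otimes_A B^{\otimes_A k+1}) \otimes_A M\bigr).
\]
To prove it, let $Y^\bullet$ be the cosimplicial spectrum $[k] \mapsto (R \otimes_A B^{\otimes_A k+1}) \otimes_A M \simeq (R \otimes_A M) \otimes_R (R \otimes_A B^{\otimes_A k+1})$. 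Since every $R \otimes_A B^{\otimes_A k+1}$ is flat over $R$, the homotopy groups of $Y^\bullet$ form, in each internal degree, the Amitsur complex of the graded $\pi_* R$-module $\pi_*(R \otimes_A M)$ along the faithfully flat graded ring map $\pi_* R \to \pi_*(R \otimes_A B)$; by classical faithfully flat descent this complex is a resolution of $\pi_*(R\otimes_A M)$, so it has no higher cohomotopy. Hence the $E_2$-page of the Bousfield--Kan spectral sequence of $Y^\bullet$, and likewise of its levelwise $2n$-connective cover $\tau_{\geq 2n}Y^\bullet$, is concentrated in cohomological degree $0$ — and for $\tau_{\geq 2n}Y^\bullet$ also in internal degrees $\geq 2n$. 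Both spectral sequences collapse; in particular $\lim_{[k]\in\Delta} \tau_{\geq 2n} Y^\bullet$ is $2n$-connective, and the map $\lim_{[k]\in\Delta} \tau_{\geq 2n} Y^\bullet \to \lim_{[k]\in\Delta} Y^\bullet \simeq R \otimes_A M$ induced by the cover maps exhibits its source as $\tau_{\geq 2n}(R \otimes_A M)$, which is the displayed equivalence.

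Granting this, the proof concludes formally. Swapping the order of the two limits gives
\[
\fil^n_{\ev/A}(M) \;\simeq\; \lim_{[k] \in \Delta}\ \lim_{R \in \CAlg^{\mathrm{ev}}_A} \tau_{\geq 2n}\bigl((R \otimes_A B^{\otimes_A k+1}) \otimes_A M\bigr),
\]
so it remains to identify, for each $k$, the inner limit with $\fil^n_{\ev / B^{\otimes_A k+1}}(M \otimes_A B^{\otimes_A k+1}) = \lim_{S \in \CAlg^{\mathrm{ev}}_{B^{\otimes_A k+1}}} \tau_{\geq 2n}(S \otimes_A M)$. For this, observe that the extension-of-scalars functor $\CAlg^{\mathrm{ev}}_A \to \CAlg^{\mathrm{ev}}_{B^{\otimes_A k+1}}$, $R \mapsto R \otimes_A B^{\otimes_A k+1}$ — well defined since $A \to B^{\otimes_A k+1}$ is eff — admits a right adjoint, namely restriction of scalars, and is therefore coinitial. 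Since $\tau_{\geq 2n}\bigl((R \otimes_A B^{\otimes_A k+1}) \otimes_A M\bigr)$ depends on $R$ only through $R \otimes_A B^{\otimes_A k+1}$, coinitiality identifies the inner limit with $\lim_{S \in \CAlg^{\mathrm{ev}}_{B^{\otimes_A k+1}}} \tau_{\geq 2n}(S \otimes_A M)$, as desired. All these identifications are natural in $n$ and in $[k]$, so they assemble into the asserted equivalence of filtered spectra, and a diagram chase shows the composite realizes the canonical comparison map.
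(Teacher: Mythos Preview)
The paper does not supply its own proof of this proposition; it records the statement and refers the reader to \cite{HRW23}*{Section 2.2} for the underlying descent theory. So there is nothing in the paper to compare your argument against.

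That said, your proof is correct and follows the expected pattern. For each even $A$-algebra $R$ you run faithfully flat descent along $R \to R \otimes_A B$; the Bousfield--Kan spectral sequence degenerates because on homotopy groups one sees the Amitsur resolution of $\pi_*(R \otimes_A M)$ along the faithfully flat map $\pi_* R \to \pi_*(R \otimes_A B)$, so the connective cover $\tau_{\geq 2n}$ commutes with the totalization. After swapping the two limits, the coinitiality of the base-change functor $R \mapsto R \otimes_A B^{\otimes_A k+1}$ (being left adjoint to restriction of scalars, which manifestly preserves evenness) identifies the inner limit with $\fil^n_{\ev/B^{\otimes_A k+1}}(M \otimes_A B^{\otimes_A k+1})$. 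One small slip in wording: where you write ``the even $R$-algebra $R$ itself'' you mean ``the even $A$-algebra $R$.''
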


In particular, we will use the \emph{eff}ness of the following maps extensively in this paper.

\begin{example}
  The map $\bS \to \MU$ is eff.
  Hence, by the proposition, we have that the even filtration on the sphere spectrum is
  \begin{align*}
  \fil^*_{\ev} \bS &\simeq \lim_{\Delta}\left(\fil_{\ev}^*(\MU^{\otimes \bullet + 1})\right) \\
  &\simeq \lim_{\Delta} \left( \tau_{\geq 2\ast}(\MU^{\otimes \bullet +1}) \right)
\end{align*}
where we have used that tensor powers of $\MU$ are even.
\end{example}


\begin{proposition}
  \label{prop:SS1Seff}
  The augmentation $\bS[S^1] \to \bS$ is eff. In particular, $k[S^1] \to k$ is eff for any $\bE_\infty$-ring $k$.
  \begin{proof}
    We need to check that the pushout along any map $\bS[S^1] \to C$ with $C$ even is even and free over $C$.
    Note that
    \[
    C \otimes_{\bS[S^1]} \bS \simeq C[\mathbb{CP}^{\infty}]
    \]
    whose homotopy groups are well-known to be even and free by an argument using the Atiyah--Hirzebruch spectral sequence.
  \end{proof}
\end{proposition}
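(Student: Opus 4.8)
The claim is that $\bS[S^1]\to\bS$ is eff, i.e.\ that for every even $\bE_\infty$-ring $C$ and every $\bE_\infty$-ring map $\phi\colon\bS[S^1]\to C$ the pushout $C\otimes_{\bS[S^1]}\bS$ is even and faithfully flat over $C$. The plan is to identify this pushout, as a $C$-module, with $C[\mathbb{CP}^{\infty}]=C\otimes_{\bS}\Sigma^{\infty}_{+}\mathbb{CP}^{\infty}$, and then to read off evenness and flatness from the cell structure of $\mathbb{CP}^{\infty}$.

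For the identification, I would first base-change along $\bS\to C$ to rewrite $C\otimes_{\bS[S^1]}\bS\simeq C\otimes_{C[S^1]}C$, where $C[S^1]=C\otimes_{\bS}\bS[S^1]$, the right-hand copy of $C$ carries the standard augmentation $\epsilon$, and the left-hand copy carries the augmentation $\psi$ classifying $\phi$ (using that $\bS[S^1]\to C$ factors through $C[S^1]$); the underlying $C$-module of the pushout thus depends only on the $C[S^1]$-module $C_{\psi}$. The one genuinely nontrivial input is that, since $C$ is even, $C_{\psi}$ is equivalent to the trivial $C[S^1]$-module $C_{\epsilon}$. To see this, use that $C[S^1]=C[\Omega\mathbb{CP}^{\infty}]$, so $C$-linear $C[S^1]$-modules are the same as $C$-linear local systems on $\mathbb{CP}^{\infty}$; the rank-one module $C_{\psi}$ then corresponds to a local system classified by a map $\mathbb{CP}^{\infty}\to B\GL_1(C)$, and I claim every such map is nullhomotopic. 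This is a parity argument: as $C$ is even, $\pi_n\GL_1(C)=\pi_n C$ vanishes for all odd $n\ge 1$, so $B\GL_1(C)$ has reduced homotopy above degree one concentrated in odd degrees, whereas $\mathbb{CP}^{\infty}$ has cells only in even degrees; obstruction theory then gives $[\mathbb{CP}^{n},B\GL_1(C)]=0$ for every finite skeleton, and the $\lim^1$-term obstructing $[\mathbb{CP}^{\infty},B\GL_1(C)]=0$ also vanishes because the restriction maps along $\mathbb{CP}^{n}\hookrightarrow\mathbb{CP}^{n+1}$ are surjective (the relevant obstructions lying in odd homotopy groups of $C$, which vanish). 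Granting this, $C\otimes_{\bS[S^1]}\bS\simeq C\otimes_{C[S^1]}C$ with both augmentations standard, and the two-sided bar construction, together with the fact that $C[-]$ is symmetric monoidal and colimit-preserving, identifies this with $C[\,\mathrm{Bar}_{\bullet}(\ast,S^1,\ast)\,]=C[BS^1]=C[\mathbb{CP}^{\infty}]$, where we use $S^1=B\bZ$ so that $BS^1=K(\bZ,2)=\mathbb{CP}^{\infty}$.

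It then remains to check that $C[\mathbb{CP}^{\infty}]$ is even and faithfully flat over $C$. Since $\mathbb{CP}^{\infty}$ has a CW structure with exactly one cell in each even dimension, $\Sigma^{\infty}_{+}\mathbb{CP}^{\infty}$ carries a cellular filtration with associated graded $\bigoplus_{n\ge 0}\Sigma^{2n}\bS$. After $-\otimes_{\bS}C$, each attaching map is a $C$-linear map $\Sigma^{2n-1}C\to M$ whose target $M$ is a finite wedge of even suspensions of $C$; since $C$ is even, $\pi_{2n-1}M=0$, so every attaching map is nullhomotopic and the filtration splits, giving $C[\mathbb{CP}^{\infty}]\simeq\bigoplus_{n\ge 0}\Sigma^{2n}C$ as $C$-modules. (Equivalently, the Atiyah--Hirzebruch spectral sequence $H_{*}(\mathbb{CP}^{\infty};\pi_{*}C)\Rightarrow\pi_{*}C[\mathbb{CP}^{\infty}]$ is concentrated in even total degree, hence degenerates.) This module is even, free, and contains $C$ as a retract, hence is faithfully flat over $C$; this completes the proof that $\bS[S^1]\to\bS$ is eff.

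The crux is the middle step: a priori the $\bS[S^1]$-module structure on an even ring $C$ could be twisted, and one must use evenness to trivialize it — this is the only place evenness is used essentially, the rest being a routine cell-by-cell computation. As an alternative to the local-systems argument, one can instead run the bar (or Tor) spectral sequence computing $\pi_{*}(C\otimes_{\bS[S^1]}\bS)$ directly and observe that $\phi_{*}$ annihilates the degree-one generator of $\pi_{*}(\bS[S^1])$ because $\pi_1 C=0$, which already forces that spectral sequence to agree with the untwisted one; this is essentially the route taken in the proof of Proposition~\ref{prop:kS1keff}.
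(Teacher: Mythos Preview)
Your proof is correct and follows the same overall skeleton as the paper's: identify the pushout with $C[\mathbb{CP}^{\infty}]$ and then read off evenness and freeness from the even cell structure (equivalently, the Atiyah--Hirzebruch spectral sequence). Where you differ is in the level of care. The paper simply asserts the equivalence $C\otimes_{\bS[S^1]}\bS\simeq C[\mathbb{CP}^{\infty}]$ for an \emph{arbitrary} map $\bS[S^1]\to C$, whereas you correctly flag that a priori this pushout depends on the $\bS[S^1]$-module structure on $C$ and supply an explicit trivialization: rank-one $C$-linear $S^1$-representations are classified by $[\mathbb{CP}^{\infty}, B\GL_1(C)]$, which vanishes by a parity/obstruction-theory argument since $C$ is even. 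This is a genuine (and clean) justification of a step the paper glosses over. Your closing remark is also on point: the alternative via the Tor spectral sequence---observing that $\pi_1 C=0$ forces the degree-one generator to map to zero, so the $E_2$-page agrees with the untwisted case---is exactly the mechanism used in Proposition~\ref{prop:kS1keff} and is presumably the shortcut the paper has in mind here as well.
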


The following proposition, while not immediately important, will be crucial in making the constructions made later in the paper work out.

\begin{proposition}
 The associated graded of the even filtration on the sphere spectrum admits a $\bZ$-linear structure.
\end{proposition}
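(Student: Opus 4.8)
The plan is to exhibit the associated graded $\gr^*_{\ev}(\bS)$ as an $\bE_\infty$-algebra over $\gr^*_{\ev}(\bZ)$, using the fact that $\gr^*_{\ev}(\bZ) \simeq \bZ$ concentrated in weight zero, so that $\bZ$-linearity is precisely the existence of a unital $\bE_\infty$-ring map $\bZ \to \gr^*_{\ev}(\bS)$. Since $\fil^*_{\ev}$ is lax symmetric monoidal (Proposition 1.2.2(2)) and $\gr$ is symmetric monoidal, $\gr^*_{\ev}(\bS)$ is an $\bE_\infty$-algebra in graded spectra; what we need is to produce the structure map from $\bZ$.

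**Key steps.** First I would use the \emph{eff}ness of $\bS \to \MU$ (the Example following Proposition 1.2.6) to write
\[
\fil^*_{\ev}(\bS) \simeq \lim_{\Delta}\left(\tau_{\geq 2\ast}(\MU^{\otimes \bullet + 1})\right),
\]
and then apply the total associated graded functor. Since $\gr$ is symmetric monoidal and commutes with limits of this shape (the cosimplicial limit is a limit of complete filtrations, and $\gr$ detects equivalences between complete filtrations, so one checks termwise), this gives
\[
\gr^*_{\ev}(\bS) \simeq \lim_{\Delta}\left(\gr^* \tau_{\geq 2\ast}(\MU^{\otimes \bullet + 1})\right) \simeq \lim_{\Delta}\left(\pi_{2\ast}(\MU^{\otimes \bullet + 1})\right),
\]
where the last identification uses that the associated graded of the double-speed Postnikov filtration of an even ring $R$ is $\pi_{2\ast}(R)$ placed in the corresponding weights (a graded ring concentrated in even total degree, viewed as a discrete graded object). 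Now each $\pi_{2\ast}(\MU^{\otimes n+1})$ is a graded \emph{commutative ring in abelian groups}, hence canonically a graded $\bE_\infty$-$\bZ$-algebra, and the cosimplicial structure maps are ring maps; so the cosimplicial object lifts through $\CAlg(\Mod_{\bZ}^{\gr})$. Taking the limit there, and using that the forgetful functor $\CAlg(\Mod_{\bZ}^{\gr}) \to \CAlg(\Mod_{\bS}^{\gr})$ preserves limits, produces the desired $\bZ$-algebra structure on $\gr^*_{\ev}(\bS)$ compatibly with its ambient $\bE_\infty$-ring structure.

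**Alternative, cleaner route.** Rather than manipulating the cosimplicial resolution, I might instead argue directly: $\gr^0_{\ev}(\bS) = \fil^0_{\ev}(\bS)/\fil^1_{\ev}(\bS)$, and since $\fil^0_{\ev}(\bS)$ is connective with $\pi_0 = \bZ$ while $\fil^1_{\ev}(\bS)$ is $1$-connective (both visible from the $\MU$-resolution, as $\tau_{\geq 2\ast}$ has these connectivity properties), we get $\gr^0_{\ev}(\bS) \simeq \bZ$ as an $\bE_1$-ring in weight $0$ — in fact as an $\bE_\infty$-ring, since $\pi_0$ of a connective $\bE_\infty$-ring is an $\bE_\infty$-ring and the truncation map is an $\bE_\infty$-map. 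This weight-zero part is the unit component of the graded $\bE_\infty$-ring $\gr^*_{\ev}(\bS)$, so the composite $\bZ \simeq \gr^0_{\ev}(\bS) \hookrightarrow \gr^*_{\ev}(\bS)$ is a map of $\bE_\infty$-rings, which is exactly a $\bZ$-linear structure. I would present this version as the main argument and relegate the $\MU$-resolution to justifying the connectivity claims.

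**Main obstacle.** The technical heart is the identification of the weight-zero associated graded of $\fil^*_{\ev}(\bS)$ with $\bZ$ as an $\bE_\infty$-ring, which amounts to a connectivity estimate on $\fil^1_{\ev}(\bS)$ — equivalently, that the even filtration on $\bS$ is ``effective'' in low degrees. This should follow from the $\MU$-based cosimplicial resolution together with the fact that $\tau_{\geq 2n}(\MU^{\otimes \bullet+1})$ is $(2n-1)$-connected for $n \geq 1$ and the cosimplicial limit does not lower connectivity below that bound; but one must be slightly careful that passing to the limit over $\Delta$ preserves the relevant connectivity (it does, since $\Delta$-limits of uniformly $k$-connective spectra are $k$-connective). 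Everything else — lax monoidality giving the $\bE_\infty$ structure, $\gr$ being symmetric monoidal, $\gr^*_{\ev}(\bZ) \simeq \bZ$ — is already available in the excerpt.
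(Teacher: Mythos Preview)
Your first approach (the ``Key steps'') is essentially the paper's proof: use the eff cover $\bS \to \MU$ to express $\gr^*_{\ev}(\bS)$ as a totalization of the discrete graded $\bZ$-algebras $\pi_{2\ast}(\MU^{\otimes\bullet+1})$ and take the limit in $\CAlg(\Mod_{\bZ}^{\gr})$. The paper phrases this by focusing on the weight-zero piece, computing $\gr^0_{\ev}(\bS)\simeq\lim_\Delta\pi_0(\MU^{\otimes\bullet+1})\simeq\lim_\Delta\bZ\simeq\bZ$ and invoking non-negativity of the grading, but the substance is the same. One correction: your stated justification for commuting $\gr$ past the totalization (``$\gr$ detects equivalences between complete filtrations'') is not the right reason---conservativity does not imply preservation of limits. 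The correct reason is that each $\gr^n$ is a cofiber, hence a shifted fiber in a stable $\infty$-category, and therefore commutes with all limits.

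Your preferred ``alternative route,'' however, has a genuine gap. Knowing only that $\fil^0_{\ev}(\bS)$ is connective with $\pi_0=\bZ$ and that $\fil^1_{\ev}(\bS)$ is $1$-connective yields merely that $\gr^0_{\ev}(\bS)$ is connective with $\pi_0=\bZ$, not that $\gr^0_{\ev}(\bS)\simeq\bZ$; and a connective $\bE_\infty$-ring $R$ with $\pi_0 R=\bZ$ need not receive an $\bE_\infty$-map from $\bZ$ (take $R=\bS$ itself). Separately, the assertion that $\Delta$-limits of uniformly $k$-connective spectra are $k$-connective is false in general: in the Bousfield--Kan spectral sequence $E_2^{s,t}\Rightarrow\pi_{t-s}$ of the totalization, terms with $t\geq k$ and $s>0$ can and do contribute below degree $k$. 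So the identification $\gr^0_{\ev}(\bS)\simeq\bZ$ genuinely requires the $\MU$-resolution (or, as the paper notes in a subsequent remark, the d\'ecalage description via the Beilinson $t$-structure), not a bare connectivity bound. Stick with your first argument.
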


\begin{proof}
Note that the even filtration is non-negatively graded, so that its total associated graded is itself non-negatively graded.
Moreover, the associated graded in filtration degree $0$ is just equivalent to $\bZ$, so that total associated graded admits a canonical $\bZ$-action.
To see this, we can use the description of the even filtration as the totalization
\[
\fil^*_{\ev}(\bS)  \simeq \lim_{\Delta} (\tau_{\geq 2 *}(\MU^{\otimes \bullet +1} )) \,.
\]
The associated graded in filtration degree $0$ will be written as a limit
\[
\gr^{0}_{\ev}(\bS) \simeq \lim_{\Delta} (\pi_{0}(\MU^{\otimes \bullet +1} )) \simeq \lim_{\Delta} \bZ \simeq \bZ.
\]
\end{proof}

\begin{rmk}
  An alternative proof of the above is to use the fact that $\fil^*_{\ev}(\bS)$ is the décalage of Adams--Novikov filtration together with the equivalence
  \[
  \gr^q \mathrm{Dec}(X) \simeq H \pi^{\mathrm{Bei}}_q(X)[q]
  \]
  established in~\cite{Hed21}*{Theorem II.2.20}, where the object on the right hand side is $\bZ$-linear, as the heart of the Beilinson t-structure is equivalent to the category of chain complexes of abelian groups.
\end{rmk}

\begin{rmk}
We remark that $\gr^*_{\ev}(\bS)$ is an object built out of quasi-coherent sheaves on the moduli stack of formal groups $\cM_{\on{fg}}$. In particular, out of the filtration $\fil^*_{\ev}(\bS)$, we recover the (re-indexed) Adams--Novikov spectral sequence
\[
E^2_{n, 2a-n} \cong \pi_n(\gr^a_{\ev}(\bS))\Longrightarrow \pi_n(\bS) \,.
\]
Let us furthermore remark that, since this spectral sequence is concentrated in the first quadrant, we have $\pi_n (\gr^a_{\ev}(\bS)) \cong 0$ for $n <0$.
In particular, $\gr^*_{\ev}(\bS)$, and  by completeness, $\fil^*_{\ev}(\bS)$ will be connective in the neutral t-structure  on $\Fil(\Sp)$.
\end{rmk}

\subsection{Even $\MU$-based Synthetic Spectra}
\label{sec:synth_spectra}

By work of Gheorghe--Isaksen--Krause--Ricka, the category of modules over the even filtration of the sphere spectrum can be identified with what is known as the category of even $\MU$-based synthetic spectra~\cite{GIKR22}.
In this paper, we will simply treat this as the definition of synthetic spectra.

\begin{definition}
We  set
  \[
  \mathrm{SynSp} = \Syn_{\MU}^{\ev} = \Mod_{\fil_{\mathrm{ev}}^*(\bS)}(\Filt(\Sp))
  \]
  to denote the stable $\infty$-category of (even $\MU$-based) synthetic spectra.
  We use the notation
  \[
  \Mod_{\gr^*_{\ev}(\bS)} = \Mod_{\gr^*_{\ev}(\bS)}(\Mod_{\bZ}^{\gr})
  \]
  to denote graded modules over the associated graded  $\gr^*_{\ev}(\bS)$ of the even filtration on $\bS$.
\end{definition}

In what follows, we will typically drop the descriptions ``even'' and ``$\MU$-based'' and simply speak of \emph{synthetic spectra}.
Since we will take this perspective on synthetic spectra, we allow ourselves to talk about the associated graded and the underlying object of a synthetic spectrum, without further explanation.
Note that the even filtration on any spectrum is an example of a synthetic spectrum.
Since the even filtration on the sphere spectrum is a filtered $\bE_\infty$-ring, the category of synthetic spectra is a closed symmetric monoidal category.
The even filtration on the sphere spectrum is evidently the unit for the monoidal structure, and because of this we will often speak of $\fil_{\mathrm{ev}}^*(\bS)$ as the \emph{synthetic sphere spectrum} and denote it by
\[
\bS^{\syn} = \fil^*_{\ev}(\bS) \,.
\]
Similar to our notation for the synthetic sphere spectrum, the even filtration on $\bZ$ will also be denoted $\bZ^{\syn}$.
Since~$\bZ$ is already even, the even filtration on $\bZ$ is just equivalent to the double--speed Postnikov filtration on $\bZ$, which is itself just the trivial filtration on $\bZ$.
That is
\[
\bZ^{\syn} = \fil^*_{\ev}(\bZ) \simeq L_0 \bZ \,.
\]
Much of what is to follow regards what happens to certain constructions when base-changed along the \emph{synthetic Hurewicz map} $\bS^{\syn} \to \bZ^{\syn}$.

\section{Synthetic Chains and Cochains on the Circle} \label{Section synthetic cochains}

In this section, we describe a canonical filtration on spherical chains and cochains on  the circle, from the point of view even filtrations.
In \S \ref{sec:filt_int_(co)chain}, we review Postnikov filtrations on integral chains and cochains of the circle following \cite{Rak20}.
In \S \ref{sec:even_filt_(co)chain}, we note that the Postnikov filtration on the former actually coincides with its even filtration and use this observation to construct the correct filtrations on spherical chains and cochains of the circle for our purposes.

\subsection{Filtered integral chains and cochains}
\label{sec:filt_int_(co)chain}
We begin with a brief discussion of what happens $\bZ$-linearly.
Let us write $\underline{\bZ} : S^1 \to \Mod_{\bZ}$ for the constant functor with value~$\bZ$.
We follow notation of~\cite{Rak20} and write
\[
\bT = \bZ[S^1] = \colim_{S^1} \underline{\bZ} \och \bT^{\vee} = \bZ^{S^1} = \lim_{S^1} \underline{\bZ} \,.
\]
Both of these objects admit bicommutative bialgebra structures that are dual to one-another.
We want to study the structure of the filtered variants
\[
\bT_{\fil} = \tau_{\geq \bullet} \bT \och \bT_{\fil}^\vee = \tau_{\geq \bullet} \bT^{\vee} \,.
\]

\begin{proposition} [Raksit] \label{prop blah}
  There are unique bicommutative bialgebra structures on $\bT_{\fil}$ and $\bT^{\vee}_{\fil}$ that promote the bicommutative bialgebra structure on $\bT$ and $\bT^{\vee}$, respectively.
  Note that the statement of uniqueness guarantees that the bicommutative bialgebra structure on $\bT_{\fil}$ and $\bT_{\fil}^{\vee}$ are dual.
  \begin{proof}
    The Postnikov filtration functor is lax symmetric monoidal and fully faithful.
    Consider the full subcategory of $\Mod_{\bZ}$ spanned by those objects that are coproducts of suspensions of $\bZ$.
    In particular, this is a symmetric monoidal subcategory of $\Mod_{\bZ}$ and the restriction of the functor $\tau_{\geq \bullet}(-)$ to this full subcategory is (strictly) symmetric monoidal.
    Since $\bT = \bZ[S^1]$ is an object of this subcategory, it follows that $\bT_{\fil} = \tau_{\geq \bullet } \bT$ admits a bicommutative bialgebra structure.
  \end{proof}
\end{proposition}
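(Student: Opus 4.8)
The plan is to exploit the gap between lax and strong symmetric monoidality of the Postnikov filtration functor $\tau_{\geq\bullet}\colon\Mod_{\bZ}\to\Fun(\bZ^{\oop},\Mod_{\bZ})$. It is lax symmetric monoidal in general (this is essentially \cite{Rak20}), which is only enough to transport commutative algebras; but a bicommutative bialgebra in a symmetric monoidal $\infty$-category $\cC$ is equivalently a commutative algebra object in the $\infty$-category of cocommutative coalgebras of $\cC$ (whose tensor product, being that of coalgebras, is the Cartesian product), so the assignment $\cC\mapsto\mathrm{biAlg}(\cC)$ is functorial only in \emph{strong} symmetric monoidal functors. Moreover, for a full symmetric monoidal subcategory $\cC_0\subseteq\cC$ the functor $\mathrm{biAlg}(\cC_0)\to\mathrm{biAlg}(\cC)$ is fully faithful, with image the bialgebras whose underlying object lies in $\cC_0$. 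So the real work is to find the right $\cC_0$.

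I would take $\cC_0\subseteq\Mod_{\bZ}$ to be the full subcategory on coproducts of shifts $\Sigma^a\bZ$ ($a\in\bZ$). Since $\bZ$ is flat over itself, $\Sigma^a\bZ\otimes_{\bZ}\Sigma^b\bZ\simeq\Sigma^{a+b}\bZ$ and $\otimes_{\bZ}$ preserves coproducts, so $\cC_0$ is symmetric monoidal; and on $\cC_0$ the canonical lax structure map $\tau_{\geq\bullet}M\otimes\tau_{\geq\bullet}N\to\tau_{\geq\bullet}(M\otimes_{\bZ}N)$ is an equivalence: $\tau_{\geq k}$ commutes with coproducts, so by additivity it suffices to treat $M=\Sigma^a\bZ$, $N=\Sigma^b\bZ$, for which both sides are visibly the trivial filtration on $\Sigma^{a+b}\bZ$ concentrated in filtration degrees $\leq a+b$. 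Thus $\tau_{\geq\bullet}|_{\cC_0}$ is strong symmetric monoidal (and carries the unit to the unit); a similar, easier computation shows it is moreover fully faithful. Now $\bT=\bZ[S^1]\simeq\bZ\oplus\Sigma\bZ$ and $\bT^{\vee}=\bZ^{S^1}\simeq\bZ\oplus\Sigma^{-1}\bZ$ lie in $\cC_0$, and so do the unit $\bZ$ and the tensor squares $\bT^{\otimes 2}$, $(\bT^{\vee})^{\otimes 2}$; hence $\bT$ and $\bT^{\vee}$, with the mutually dual bicommutative bialgebra structures coming from the abelian group object $S^1=B\bZ$, upgrade to objects of $\mathrm{biAlg}(\cC_0)$. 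Applying $\tau_{\geq\bullet}|_{\cC_0}$ produces bicommutative bialgebra structures on $\bT_{\fil}=\tau_{\geq\bullet}\bT$ and on $\bT^{\vee}_{\fil}=\tau_{\geq\bullet}\bT^{\vee}$.

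For the refinement and uniqueness claims I would compose with the symmetric monoidal realization functor $\colim\colon\Fun(\bZ^{\oop},\Mod_{\bZ})\to\Mod_{\bZ}$: it satisfies $\colim\circ\tau_{\geq\bullet}\simeq\id$, so it restricts to a quasi-inverse of the equivalence $\tau_{\geq\bullet}\colon\cC_0\xrightarrow{\ \sim\ }\tau_{\geq\bullet}\cC_0$, where $\tau_{\geq\bullet}\cC_0$ is a full symmetric monoidal subcategory of $\Fun(\bZ^{\oop},\Mod_{\bZ})$ closed under $\otimes$ and containing $\bT_{\fil}$ and $\bT^{\vee}_{\fil}$. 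Consequently, passing to $\mathrm{biAlg}$, the realization induces an equivalence between the space of bicommutative bialgebra structures on $\bT_{\fil}$ (in filtered $\bZ$-modules) and the space of such structures on $\bT$, under which the structure we constructed maps to the given one; the fiber over the given structure on $\bT$ is therefore contractible, which is the desired uniqueness, and likewise for $\bT^{\vee}_{\fil}$. Finally, duality is automatic: a strong symmetric monoidal, unit-preserving functor preserves dualizable objects, their duals, and the operation of transposing a bialgebra, and $\bT^{\vee}$ with its standard structure is the bialgebra dual of $\bT$, so $(\bT_{\fil})^{\vee}\simeq\bT^{\vee}_{\fil}$ compatibly with the structures — or, more cheaply, the dual of $\bT_{\fil}$ is a bialgebra refinement of $\bT^{\vee}$ and hence equals $\bT^{\vee}_{\fil}$ by the uniqueness just shown. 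The one point that is not purely formal is the strong monoidality (and full faithfulness) of $\tau_{\geq\bullet}$ on $\cC_0$, together with the bookkeeping that every structure morphism of $\bT$ and $\bT^{\vee}$ stays inside $\cC_0$; granting those, existence, uniqueness, and duality all follow formally.
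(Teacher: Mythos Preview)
Your proof is correct and follows essentially the same approach as the paper: restrict to the full subcategory of $\Mod_{\bZ}$ spanned by coproducts of suspensions of $\bZ$, on which the Postnikov filtration functor becomes strongly symmetric monoidal and fully faithful, and transport the bialgebra structure along it. You supply more detail than the paper does on the uniqueness and duality claims (via the inverse equivalence given by $\colim$), but the core idea is identical.
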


\subsection{The even filtration on chains and cochains}
\label{sec:even_filt_(co)chain}

Our goal for now is to generalize the above section, but using $\bS$ in place of $\bZ$ and the even filtration in the place of the Postnikov filtration.
This is indeed a reasonable thing to do, and a direct generalization, since the even filtration on $\bZ[S^1]$ coincides with its Postnikov filtration, as the following results show.

\begin{lemma}
  There is a natural equivalence
  \[
  \fil^*_{\ev}(\bZ[S^1]) \simeq \tau_{\geq *}(\bZ[S^1]) \,.
  \]
\end{lemma}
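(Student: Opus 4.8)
The plan is to compute $\fil^*_{\ev}(\bZ[S^1])$ by \emph{eff}-descent along the augmentation $\bZ[S^1]\to\bZ$ and to recognise the answer as the Postnikov filtration. Since $\bZ$ is even, Proposition~\ref{prop:kS1keff} with $k=\bZ$ shows that $\bZ[S^1]\to\bZ$ is \emph{eff}, so every term of the Amitsur complex $\bZ^{\otimes_{\bZ[S^1]}\bullet+1}$ is even; concretely, writing $S^1\simeq B\bZ$ and using the bar construction one gets $\bZ\otimes_{\bZ[S^1]}\bZ\simeq\bZ[B^2\bZ]\simeq\bZ[\mathbb{CP}^{\infty}]$, and more generally $\bZ^{\otimes_{\bZ[S^1]}p+1}\simeq\bZ[\mathbb{CP}^{\infty}]^{\otimes_{\bZ}p}$, with homotopy the $p$-fold $\bZ$-tensor power of $H_*(\mathbb{CP}^{\infty};\bZ)$, a free graded abelian group concentrated in even degrees. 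Applying the descent property of the even filtration along \emph{eff} maps (with the module $\bZ[S^1]$ over itself), together with the fact that the even filtration of an even ring is its double-speed Postnikov filtration, yields
\[
\fil^*_{\ev}(\bZ[S^1]) \;\simeq\; \lim_{\Delta}\,\fil^*_{\ev}\!\big(\bZ^{\otimes_{\bZ[S^1]}\bullet+1}\big) \;\simeq\; \lim_{\Delta}\,\tau_{\geq2\ast}\!\big(\bZ^{\otimes_{\bZ[S^1]}\bullet+1}\big).
\]

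Next I compute the associated graded. In a stable setting $\gr^n$ is a shifted fibre, hence commutes with totalisations, and since each Amitsur term is even this gives
\[
\gr^n_{\ev}(\bZ[S^1]) \;\simeq\; \Big(\lim_{\Delta}\,\pi_{2n}\!\big(\bZ^{\otimes_{\bZ[S^1]}\bullet+1}\big)\Big)[2n],
\]
the shift by $[2n]$ of the totalisation of the cosimplicial abelian group $\pi_{2n}(\bZ^{\otimes_{\bZ[S^1]}\bullet+1})$. Now $\bZ^{\otimes_{\bZ[S^1]}\bullet+1}$ is the cobar construction over $\bZ$ on the coalgebra $\bZ\otimes_{\bZ[S^1]}\bZ\simeq\bZ[\mathbb{CP}^{\infty}]$, with homotopy the divided-power coalgebra $H_*(\mathbb{CP}^{\infty};\bZ)$ on a degree-$2$ class; so the displayed totalisation is, in each internal degree $2n$, a shift of the reduced cobar complex of this coalgebra. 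A direct inspection of that cobar complex — equivalently, the collapsing homology Eilenberg--Moore spectral sequence for $\Omega\mathbb{CP}^{\infty}\simeq S^1$ — shows its cohomology is $\bZ$ in bidegree (cohomological degree $0$, internal degree $0$), $\bZ$ in bidegree (cohomological degree $1$, internal degree $2$), and zero otherwise, consistent with $H_*(S^1;\bZ)$. Reading this off internal degree by internal degree, the complex $\pi_{2n}(\bZ^{\otimes_{\bZ[S^1]}\bullet+1})$ has cohomology $\bZ$ in degree $0$ for $n=0$, cohomology $\bZ$ in degree $1$ for $n=1$, and is acyclic for $n\geq2$. Therefore $\gr^*_{\ev}(\bZ[S^1])\simeq\bZ\oplus\bZ[1](1)$, i.e.\ $\bZ$ in weight $0$ and $\bZ[1]$ in weight $1$ and nothing else.

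To conclude I compare filtrations. Completeness of $\fil^*_{\ev}(\bZ[S^1])$ and $\gr^{n}_{\ev}(\bZ[S^1])=0$ for $n\geq2$ force $\fil^{n}_{\ev}(\bZ[S^1])=0$ for $n\geq2$; hence $\fil^1_{\ev}(\bZ[S^1])\simeq\gr^1_{\ev}(\bZ[S^1])\simeq\bZ[1]$, while $\fil^0_{\ev}(\bZ[S^1])$ is the underlying object $\bZ[S^1]$ (the even filtration being exhaustive on connective rings), and $\fil^n_{\ev}(\bZ[S^1])=\fil^0_{\ev}(\bZ[S^1])$ for $n\leq0$ since the even filtration is non-negatively graded. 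In particular each $\fil^n_{\ev}(\bZ[S^1])$ is $n$-connective, so the map to the underlying object $\bZ[S^1]$ factors canonically through $\tau_{\geq n}(\bZ[S^1])$; by uniqueness of this factorisation it is multiplicative, so one obtains a map of filtered $\bE_\infty$-rings
\[
\fil^*_{\ev}(\bZ[S^1]) \longto \tau_{\geq\ast}(\bZ[S^1]),
\]
which is an equivalence on associated gradeds by the previous paragraph (both sides having graded $\bZ\oplus\bZ[1](1)$) and hence, both sides being complete, an equivalence. Naturality is built into all of these constructions.

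The main obstacle is the associated-graded computation: pinning down the Amitsur complex of $\bZ[S^1]\to\bZ$ — identifying it with the cobar construction on $H_*(\mathbb{CP}^{\infty};\bZ)$ — and checking the degeneration of the resulting Eilenberg--Moore spectral sequence, which here is forced by the cobar cohomology being concentrated in total degrees $0$ and $1$, leaving no room for higher differentials.
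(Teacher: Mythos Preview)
Your proof is correct and follows essentially the same route as the paper: both use \emph{eff} descent along the augmentation $\bZ[S^1]\to\bZ$, identify the Amitsur terms with tensor powers of $\bZ[\mathbb{CP}^\infty]$, and compute the associated graded as the cobar construction on the divided-power coalgebra $H_*(\mathbb{CP}^\infty;\bZ)$, obtaining $\bZ\oplus\bZ[1](1)$. You are in fact more explicit than the paper in one respect: the paper simply says the result ``follows from comparing the associated graded of both sides'' without constructing a comparison map, whereas you deduce the connectivity of each $\fil^n_{\ev}(\bZ[S^1])$ from the graded computation and then use the universal property of the Postnikov tower to produce the map. One small quibble: your justification that $\fil^0_{\ev}(\bZ[S^1])\simeq\bZ[S^1]$ via ``the even filtration being exhaustive on connective rings'' is not a general fact you can invoke off the shelf; here it follows directly from your cosimplicial presentation, since $\fil^0_{\ev}(\bZ[S^1])\simeq\lim_\Delta\bZ^{\otimes_{\bZ[S^1]}\bullet+1}$ is the totalization of the Amitsur complex, which recovers $\bZ[S^1]$ by the bar--cobar computation you have already done.
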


\begin{proof}
  The result will follow from comparing the associated graded of both sides. Indeed, the Postnikov filtration on a (bounded below) spectrum $E$ is the unique filtration with its associated graded, up to equivalence. The map realizing the equivalence can be built recursively in each degree, starting with the lower bound on the non-trivial homotopy of $E$.
  With this in mind, recall that the total associated graded of~$\tau_{\geq \ast} \bZ[S^1]$ is an exterior algebra over $\bZ$ with a generator in degree $1$ and weight $1$. On the other hand, to understand the associated graded of the even filtration, note that the augmentation map $\bZ[S^1] \to \bZ$ is evenly faithfully flat by Proposition~\ref{prop:SS1Seff}.
  We are thus trying to understand the limit
  \[
  \gr^*_{\ev}(\bZ[S^1]) \simeq \lim_{\Delta}\left(\begin{tikzcd}
    \bZ \arrow[r, shift left]
  \arrow[r, shift right]
  & \gr^{*}_{\ev}\left(\bZ \otimes_{\bZ[S^1]} \bZ\right)   \arrow[r]
  \arrow[r, shift left=2]
  \arrow[r, shift right=2]
  & \cdots
  \end{tikzcd} \right) \,.
  \]
  For this, note that we may identify $\bZ \otimes_{\bZ[S^1]} \bZ $ with $\bZ[\mathbb{CP}^{\infty}]$ which is even.
  It is well--known that
  \[
  \gr^*_{\ev}(\bZ \otimes_{\bZ[S^1]} \bZ) \simeq \gr^*_{\mathrm{post}}(\bZ[\mathbb{CP}^\infty])
  \]
  is a free divided power coalgebra over $\bZ$ with a generator in degree $2$ and weight $1$.
  Hence, the cosimplicial diagram of graded objects we have described above is precisely the cobar construction on this graded object over~$\bZ$.
  The cobar construction applied to such an object in known to be an exterior algebra over $\bZ$ with a generator in degree $1$ and weight $1$.
  This is exactly the associated graded of $\tau_{\geq \ast} \bZ[S^1]$ as well.
\end{proof}

Note that a similar result and proof work with $\bZ$ replaced with some other even $\bE_\infty$-ring $k$.

\begin{lemma} \label{lemma natural equiv of even filtration}
For any even $\bE_\infty$-ring $k$, there is a natural equivalence
\[
\fil^*_{\ev}(k[S^1]) \simeq \fil^*_{\ev}(k) \oplus \fil^*_{\ev}(k)[1](1).
\]
\end{lemma}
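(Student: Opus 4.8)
The plan is to reduce the statement to a comparison of associated gradeds. Both sides are complete filtered spectra: the even filtration of any $\bE_\infty$-ring is complete, and completeness is preserved under finite direct sums and under the shift $[1](1)$. Hence, by the criterion recalled in \S\ref{sec:filt} that a map of complete filtrations is an equivalence as soon as it is one on total associated gradeds, it suffices to produce a natural map $\phi\colon \fil^*_{\ev}(k) \oplus \fil^*_{\ev}(k)[1](1) \to \fil^*_{\ev}(k[S^1])$ and check that $\gr(\phi)$ is an equivalence.

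I would assemble $\phi$ from two pieces. On the first summand, take the map $\fil^*_{\ev}(k) \to \fil^*_{\ev}(k[S^1])$ obtained by applying $\fil^*_{\ev}$ to the unit $k \to k[S^1]$. On the second summand, I want a $\fil^*_{\ev}(k)$-linear map $\fil^*_{\ev}(k)[1](1) \to \fil^*_{\ev}(k[S^1])$; since $\fil^*_{\ev}(k)[1](1)$ is the free $\fil^*_{\ev}(k)$-module on the filtered spectrum $L_1(\bS[1])$, this is the same as a class in $\pi_1\big(\fil^*_{\ev}(k[S^1])^1\big)$. The associated-graded computation below shows that $\gr^0_{\ev}(k[S^1])$ is concentrated in homological degree $0$, so the map $\pi_1\big(\fil^*_{\ev}(k[S^1])^1\big) \to \pi_1\big(\fil^*_{\ev}(k[S^1])^0\big) = \pi_1(k[S^1])$ is an isomorphism; take the preimage of the fundamental class $[S^1] \in \pi_1(k[S^1])$. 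As this lift is forced, the construction is natural in $k$, and combining the two pieces gives $\phi$.

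It remains to identify $\gr^*_{\ev}(k[S^1])$, and here the argument is the one in the proof of the previous lemma, with $\bZ$ replaced throughout by the base ring $\gr^*_{\ev}(k)$. Since the augmentation $k[S^1] \to k$ is eff by Proposition~\ref{prop:kS1keff}, $\fil^*_{\ev}(k[S^1])$ is the totalization of $\fil^*_{\ev}(k^{\otimes_{k[S^1]}\bullet+1}) = \tau_{\geq 2\ast}(k^{\otimes_{k[S^1]}\bullet+1})$ (the cosimplicial terms being even), and passing to associated gradeds — which commutes with this limit, the weights being non-negative and everything connective in the neutral t-structure — exhibits $\gr^*_{\ev}(k[S^1])$ as the cobar construction over $\gr^*_{\ev}(k)$ of $\gr^*_{\ev}(k \otimes_{k[S^1]} k)$. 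Now $k \otimes_{k[S^1]} k \simeq k[\mathbb{CP}^{\infty}]$ is even with $\pi_\ast$ a free $\pi_\ast k$-module on classes in even degrees, so $\gr^*_{\ev}(k[\mathbb{CP}^{\infty}])$ is the free divided power coalgebra over $\gr^*_{\ev}(k)$ on a generator in degree $2$ and weight $1$; applying the cobar computation of loc.\ cit., base-changed along $\bZ \to \gr^*_{\ev}(k)$, gives the exterior algebra over $\gr^*_{\ev}(k)$ on a generator in degree $1$ and weight $1$, namely $\gr^*_{\ev}(k) \oplus \gr^*_{\ev}(k)[1](1)$. Tracing through the identifications, $\gr(\phi)$ is the evident splitting — the first summand included by the unit and split off by the augmentation, the second summand mapping isomorphically onto the complementary exterior part via the leading term of the lifted class — so $\gr(\phi)$ is an equivalence.

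The main obstacle I anticipate is the bookkeeping in this last step: checking that $\gr^*_{\ev}$ commutes with the eff-descent totalization, and, more delicately, that the divided-power-to-exterior cobar identity invoked in the previous lemma survives base change from $\bZ$ to the (non-flat) ring $\gr^*_{\ev}(k)$ — this is where the weight grading earns its keep, forcing the relevant reduced cobar complex to be weightwise finite so that base change commutes with its totalization. Granting that, the rest is formal.
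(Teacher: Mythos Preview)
Your approach is essentially the same as the paper's: both use the eff cover $k[S^1]\to k$ to present $\fil^*_{\ev}(k[S^1])$ as a totalization, invoke completeness to reduce to associated gradeds, and identify the associated graded via the cobar construction on the divided power coalgebra $\gr^*_{\ev}(k[\mathbb{CP}^\infty])$ over $\gr^*_{\ev}(k)=\pi_{2*}(k)$. You are in fact more careful than the paper in one respect: you construct an explicit comparison map $\phi$ before passing to associated gradeds, whereas the paper only abstractly identifies the two associated gradeds and leaves the existence of a map implicit.
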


\begin{proof}
Recall that if $k$ is even, then the augmentation map $k[S^1] \to k$ is evenly faithfully flat by Proposition~\ref{prop:SS1Seff}.
Thus, proving the theorem amounts to identifying the limit
\[
\fil^*_{\ev}(k[S^1]) \simeq \lim_{\Delta}\left(\begin{tikzcd}
  \fil^*_{\ev}(k) \arrow[r, shift left]
\arrow[r, shift right]
& \fil^*_{\ev}\left(k \otimes_{k[S^1]} k\right) \arrow[r]
\arrow[r, shift left=2]
\arrow[r, shift right=2]
& \cdots
\end{tikzcd} \right)
\]
with $\fil^{*}_{\ev}(k) \oplus \fil^{*}_{\ev}(k)[1](1)$.
For this, note that we may identify $k \otimes_{k[S^1]} k $ with $k[BS^1]$.
Each term in the above cosimplicial diagram is then just the even filtration on an even $\bE_\infty$-ring, i.e. the double speed Postnikov filtration, which is complete.
Since complete filtrations are closed under limits, we conclude that $\fil^*_{\ev}(k[S^1])$ must necessarily be complete, as well.

We define a map $ \fil^*_{\ev}(k[S^1]) \to  \fil^*_{\ev}(k) \oplus \fil^*_{\ev}(k)[1](1)$ of complete filtered spectra which we show is an equivalence upon identifying the associated graded of the left hand side.  Clearly there is a map $\fil^*_{\ev}(k[S^1]) \to  \fil^*_{\ev}(k) $, so we focus our attention on the construction of a map $\fil^*_{\ev}(k[S^1]) \to  \fil^*_{\ev}(k)[1](1) $. For this note that we have a map
\[
\colim( \fil^*_{\ev}(k[S^1])) \to k[S^1] \simeq k \oplus k[1] \to k[1]
\]
of $k$-module spectra. This admits as an adjoint the map $\fil^*_{\ev}(k[S^1]) \to \tau_{\geq 2*}(k[1])$. We are implicitly viewing this as an adjunction between $k$-modules and the connective part of the (double-speed) Postnikov t-structure on $\fil^*_{\ev}(k)$-modules. Note moreover that $\tau_{\geq 2*}(k[1])$ is by definition equivalent to  $\fil^*_{\ev}(k)[1](1)$. This gives the desired map.

Now, since equivalences of complete filtrations can be detected on the associated graded, it suffices to identify the associated graded of the cosimplicial diagram presentation of the source with the associated graded which in weight $n$ is given by
\[
\gr^n(\fil^*_{\ev}(k) \oplus \fil^*_{\ev}(k)[1](1)) \simeq \pi_{2n}(k)[2n] \oplus \pi_{2n-2}(k)[2n+1] \,.
\]
By definition, the associated graded given of the cosimplicial diagram in question is the cobar construction on the graded ring $\pi_{2*}(k[BS^1])$ over $\pi_{2*}(k)$.
It is well--known that $\pi_{2*}(k[BS^1])$ is a free divided power coalgebra over $\pi_{2*}(k)$ with a generator in degree $2$ and weight $1$.
The cobar construction applied to such an object is an exterior algebra over $\pi_{2*}(k)$ with a generator in degree $1$ and weight $1$ which is exactly $\pi_{2*}(k) \oplus \pi_{2*}(k)[1](1)$.
\end{proof}

\begin{lemma} \label{lemma : module structure on T}
There is an equivalence
\[
\fil^*_{\ev}(\bS[S^1]) \simeq \bS^{\syn} \oplus \bS^{\syn}[1](1)
\]
of $\bS^{\syn}$-modules in filtered spectra.
\end{lemma}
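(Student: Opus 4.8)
The plan is to reduce to the even case of Lemma~\ref{lemma natural equiv of even filtration} by descending along the eff map $\bS\to\MU$, while using a natural retraction to keep track of the unit summand $\bS^{\syn}$. First I would record the splitting of the unit summand: the basepoint $\mathrm{pt}\hookrightarrow S^1$ and the collapse $S^1\to\mathrm{pt}$ are maps of grouplike $E_\infty$-spaces whose composite is the identity, so applying $\bS[-]$ and then the lax symmetric monoidal functor $\fil^*_{\ev}$ produces maps of $\bS^{\syn}$-algebras
\[
\bS^{\syn}\overset{\eta}{\longto}\fil^*_{\ev}(\bS[S^1])\overset{\epsilon}{\longto}\bS^{\syn},\qquad\epsilon\circ\eta\simeq\id .
\]
Since $\Mod_{\bS^{\syn}}(\Filt(\Sp))$ is stable, this exhibits $\fil^*_{\ev}(\bS[S^1])\simeq\bS^{\syn}\oplus C$ as $\bS^{\syn}$-modules, with $C:=\fib(\epsilon)$, and the analogous splitting holds with $\bS$ replaced by any $\bE_\infty$-ring $k$, naturally in $k$; it remains to identify $C$ with $\bS^{\syn}[1](1)$.

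For this I would descend to the even case. Since $\bS\to\MU$ is eff and eff maps are closed under cobase change, $\bS[S^1]\to\MU[S^1]$ is eff; its associated cosimplicial object is $[n]\mapsto\MU^{\otimes n+1}[S^1]$, using $(\MU[S^1])^{\otimes_{\bS[S^1]}\bullet+1}\simeq\MU^{\otimes\bullet+1}[S^1]$, and since the relative even filtration of an $\bE_\infty$-ring over itself is its absolute even filtration, the descent property of the even filtration along eff maps gives an equivalence of $\bS^{\syn}$-modules
\[
\fil^*_{\ev}(\bS[S^1])\;\simeq\;\lim_{\Delta}\;\fil^*_{\ev}\!\big(\MU^{\otimes\bullet+1}[S^1]\big).
\]
Each $\MU^{\otimes n+1}$ is even, so Lemma~\ref{lemma natural equiv of even filtration} gives $\fil^*_{\ev}(\MU^{\otimes n+1}[S^1])\simeq\tau_{\geq 2*}(\MU^{\otimes n+1})\oplus\tau_{\geq 2*}(\MU^{\otimes n+1})[1](1)$; by the naturality asserted there, and compatibly with the retraction above, this upgrades to an equivalence of cosimplicial $\bS^{\syn}$-modules. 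Since $\lim_{\Delta}$ commutes with the finite direct sum and with the shift $[1](1)$, and since $\lim_{\Delta}\tau_{\geq 2*}(\MU^{\otimes\bullet+1})\simeq\fil^*_{\ev}(\bS)=\bS^{\syn}$ by descent for $\bS$ itself, we conclude $\fil^*_{\ev}(\bS[S^1])\simeq\bS^{\syn}\oplus\bS^{\syn}[1](1)$ as $\bS^{\syn}$-modules. Alternatively, once each term of the cosimplicial object is seen to be a complete filtration, one checks the identification $C\simeq\bS^{\syn}[1](1)$ on associated gradeds, where it is the cobar computation behind Lemma~\ref{lemma natural equiv of even filtration}.

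The step I expect to be the main obstacle is the cosimplicial naturality used above: one must know that the decomposition of Lemma~\ref{lemma natural equiv of even filtration} is carried along by the coface and codegeneracy maps of $\MU^{\otimes\bullet+1}$, so that $\lim_{\Delta}$ interacts with it as claimed. The retraction $\mathrm{pt}\to S^1\to\mathrm{pt}$, being a natural transformation of functors of the ground $\bE_\infty$-ring, settles this for the unit summand $\fil^*_{\ev}(k)$; the complement is then the canonical fibre $\fib(\epsilon_k)$, hence natural, and Lemma~\ref{lemma natural equiv of even filtration} identifies it with $\fil^*_{\ev}(k)[1](1)$ --- on associated gradeds this is a natural cobar computation, and completeness of the filtrations upgrades it to an equivalence. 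A secondary technical point, that associated graded commutes with eff-descent totalizations, is part of the standard package for the even filtration.
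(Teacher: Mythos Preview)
Your proof is correct and follows essentially the same approach as the paper: both descend along the eff map $\bS\to\MU$ to a cosimplicial object with terms $\fil^*_{\ev}(\MU^{\otimes\bullet+1}[S^1])$, invoke Lemma~\ref{lemma natural equiv of even filtration} levelwise and naturally in the even base, and then totalize to recover $\bS^{\syn}\oplus\bS^{\syn}[1](1)$. Your added discussion of the retraction $\mathrm{pt}\to S^1\to\mathrm{pt}$ and the explicit treatment of the naturality issue for the cosimplicial splitting make precise what the paper absorbs into the word ``natural'' in Lemma~\ref{lemma natural equiv of even filtration}, but the argument is otherwise the same.
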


\begin{proof}
Recall that the map $\bS \to \MU$ is an eff map. This gives an equivalence
\[
\gr^*_{\ev}(\bS[S^1]) \simeq \lim_{\Delta}\left(\begin{tikzcd}
  \fil^*_{\ev}(\MU[S^1]) \arrow[r, shift left]
\arrow[r, shift right]
& \fil^{*}_{\ev}(\MU \otimes \MU[S^1])   \arrow[r]
\arrow[r, shift left=2]
\arrow[r, shift right=2]
& \cdots
\end{tikzcd} \right) \,.
\]
Now, Lemma \ref{lemma natural equiv of even filtration} displays a natural equivalence,
\[
\fil^*_{\ev}(k[S^1]) \simeq \fil^*_{\ev}(k) \oplus \fil^*_{\ev}(k)[1](1).
\]
for any even $\bE_{\infty}$-ring $k$.
In particular, we obtain an equivalence of cosimplicial objects between the one computing~$\fil^*_{\ev}(\bS[S^1])$ and one whose terms are of the form $\fil^*_{\ev}(\MU^{\otimes n +1})\oplus \fil^*_{\ev}(\MU^{\otimes n +1})[1](1)$, and which totalizes to $\fil^*_{\ev}(\bS) \oplus \fil^*_{\ev}(\bS)[1](1)$.  This equivalence therefore totalizes to an equivalence
\[
\bS^{\syn} \oplus \bS^{\syn}[1](1) \simeq \fil^*_{\ev}(\bS[S^1])\,,
\]
as desired.
\end{proof}

We now check that the even filtration on the $\bS[S^1]$ base-changes to the Postnikov filtration on $\bT$.

\begin{proposition} \label{prop:Tsyn_basechange_to_Post}
  The even filtration on spherical chains on the circle base changes to the Postnikov filtration along the synthetic Hurewicz map in the sense that
  \[
  \fil^*_{\ev} \bS[S^1] \otimes_{\bS^{\syn}} \bZ^{\syn} \simeq \tau_{\geq \bullet} \bZ[S^1] \,.
  \]
  \begin{proof}
    The even filtration is lax symmetric monoidal, so we certainly have a natural map
    \[
    \fil^*_{\ev} \bS[S^1] \otimes_{\bS^{\syn}} \bZ^{\syn} \longto \fil^*_{\ev}(\bZ[S^1]) \simeq \tau_{\geq \bullet} \bZ[S^1] \,.
    \]
    Then the proposition follows from Lemma \ref{lemma natural equiv of even filtration}, since, at the level of modules, the natural map then takes the form
    \[
   \left( \bS^{\syn}\oplus \bS^{\syn}[1](1) \right) \otimes_{\bS^{\syn}} \bZ^{\syn} \simeq \bZ^{\syn} \oplus \bZ^{\syn}[1](1) \simeq \tau_{\geq \bullet} \bZ[S^1] \,.
    \]
  \end{proof}
\end{proposition}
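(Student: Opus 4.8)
The plan is to construct a natural comparison map from lax monoidality of the even filtration and then show it is an equivalence, by computing both sides as $\bS^{\syn}$-modules and verifying a compatibility on associated gradeds. Since $\fil^*_{\ev}$ is lax symmetric monoidal and $\fil^*_{\ev}(\bZ) \simeq \bZ^{\syn}$, the lax structure map produces a natural map of filtered $\bE_\infty$-rings
\[
\fil^*_{\ev}\bS[S^1] \otimes_{\bS^{\syn}} \bZ^{\syn} \longto \fil^*_{\ev}(\bZ[S^1]),
\]
and $\fil^*_{\ev}(\bZ[S^1])$ is the Postnikov filtration $\tau_{\geq\bullet}\bZ[S^1]$ by the identification established above. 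It therefore remains to prove this map is an equivalence.

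By Lemma~\ref{lemma : module structure on T} the source is $(\bS^{\syn} \oplus \bS^{\syn}[1](1)) \otimes_{\bS^{\syn}} \bZ^{\syn} \simeq \bZ^{\syn} \oplus \bZ^{\syn}[1](1)$, while on the target side the splitting $\bZ[S^1] \simeq \bZ \oplus \bZ[1]$ together with additivity of connective covers gives $\tau_{\geq\bullet}\bZ[S^1] \simeq \tau_{\geq\bullet}\bZ \oplus (\tau_{\geq\bullet}\bZ)[1](1) \simeq \bZ^{\syn} \oplus \bZ^{\syn}[1](1)$ as well. Thus both sides are identified with $\bZ^{\syn} \oplus \bZ^{\syn}[1](1)$, and the content of the proposition is that the comparison map realizes this identification.

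To see that, I would pass to associated gradeds. This is legitimate since all filtrations in sight are complete — the source is a finite sum of trivial filtrations on $\bZ$-modules, and the target is the Postnikov filtration of the homotopically bounded spectrum $\bZ[S^1]$ — so equivalences are detected on $\gr$; and since $\gr$ is symmetric monoidal with $\gr(\bZ^{\syn}) \simeq \bZ$ in weight $0$, the map becomes
\[
\gr^*_{\ev}(\bS[S^1]) \otimes_{\gr^*_{\ev}(\bS)} \bZ \longto \gr^*_{\mathrm{post}}(\bZ[S^1]).
\]
Both sides are the exterior algebra over $\bZ$ on a class in degree $1$ and weight $1$ — the left by base change along $\gr^*_{\ev}(\bS) \to \bZ$ of the splitting of Lemma~\ref{lemma : module structure on T}, the right by the Cobar computation in the proof of Lemma~\ref{lemma natural equiv of even filtration} — and the map is one of graded bialgebras over $\bZ$, hence an equivalence as soon as it sends generator to generator. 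I expect this last check to be the only genuine obstacle: it reduces to verifying that the identifications of Lemmas~\ref{lemma natural equiv of even filtration} and~\ref{lemma : module structure on T} are natural in the coefficient $\bE_\infty$-ring along the augmentation $\MU^{\otimes\bullet+1} \to \bZ$, so that their Cobar presentations are matched compatibly. Granting that, the comparison map is an equivalence on associated gradeds and therefore an equivalence.
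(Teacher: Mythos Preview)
Your approach is essentially the paper's: construct the comparison map from lax monoidality, then identify both sides as $\bZ^{\syn}\oplus\bZ^{\syn}[1](1)$ via Lemmas~\ref{lemma natural equiv of even filtration} and~\ref{lemma : module structure on T}. The paper is terser at the last step, simply asserting that the natural map ``takes the form'' of the evident equivalence by invoking the \emph{naturality} in Lemma~\ref{lemma natural equiv of even filtration}; you unpack this by passing to associated gradeds and reducing to a generator-to-generator check, which you correctly trace back to the same naturality along $\MU^{\otimes\bullet+1}\to\bZ$. So there is no genuine difference in strategy---you are just being more explicit about the point the paper leaves implicit.
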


\begin{rmk}
 We do not expect that the even filtration on spherical chains base-changes to the Postnikov filtration on integral chains in general. Hence we wonder if there is a certain characterization of an $\bE_\infty$-monoid $X$ so that $\fil^*_{\ev}(\bS[X])$ base-changes to  the Postnikov filtration $\tau_{\geq \bullet}( \bZ[X])$.
\end{rmk}

Having settled that the generalization that we intend to do is indeed a reasonable thing to do, we now introduce the main players of this section:
\[
\bT_{\syn} = \fil^*_{\ev} \bS[S^1] \och \bT_{\syn}^{\vee} = \hom_{\bS^{\syn}}(\fil^*_{\ev} \bS[S^1],\bS^{\syn}) \,.
\]

\begin{warning}
  Note that, and this is a crucial point, that
  \[
  \bT^{\vee}_{\syn} \not\simeq \fil^*_{\ev} \bS^{S^1} \,,
  \]
  in stark difference to the definition  $\bT^{\vee}_{\fil} = \tau_{\geq \bullet} (\bZ^{S^1})$  in the integral context of  Section~\ref{sec:filt_int_(co)chain}, where the notation $\bT^{\vee}_{\fil}$ is also justified by the fact that the Postnikov filtration there preserves the duality of $\bZ[S^1]$ and $\bZ^{S^1}$, by Proposition \ref{prop blah}.
  Indeed, this failure  can be seen already on the level of integral chains and cochains.
  Since $\fil^*_{\ev} \bT \simeq \bT_{\fil}$, we have that
  \[
  (\fil^*_{\ev} \bT )^{\vee} \simeq (\bT_{\fil})^{\vee} \simeq \bT^{\vee}_{\fil} \,.
  \]
  However, the even filtration $\fil^{*}_{\ev}(\bT^{\vee})$ is equivalent to the trivial filtration on $\bT^{\vee}$.
  This can be seen by noting that the map $\bZ^{S^1} \to \bZ$ is eff, so that
  \begin{align*}
  \fil^*_{\ev} \bZ^{S^1} &\simeq \lim_{\Delta} \left( \fil^*_{\ev}( \bZ^{\otimes_{\bZ^{S^1} }\bullet + 1} )\right) \\ &\simeq \lim_{\Delta} \left( \fil^*_{\ev}( \Int(\bZ)^{\otimes_{\bZ} \bullet} )\right) \\ &\simeq \lim_{\Delta} \left( L_0( \Int(\bZ)^{\otimes_{\bZ} \bullet} )\right) \\ &\simeq L_0 \bZ^{S^1} \,.
  \end{align*}
\end{warning}

\begin{proposition} \label{prop: dualizability of stuff}
The underlying $\bS^{\syn}$-modules of $\bT_{\syn}$ and $\bT^\vee_{\syn}$ are dualizable.
\end{proposition}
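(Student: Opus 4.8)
\medskip
\noindent\emph{Proof proposal.} The plan is to read both statements off Lemma~\ref{lemma : module structure on T}, which already exhibits $\bT_{\syn} = \fil^*_{\ev}\bS[S^1]$ as the $\bS^{\syn}$-module $\bS^{\syn} \oplus \bS^{\syn}[1](1)$. Granting this, the task reduces to the purely formal observation that a finite direct sum of invertible $\bS^{\syn}$-modules is dualizable, together with the fact that $\bT^\vee_{\syn}$ is itself of this shape.

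First I would record two ingredients. (i) The shift–twist $(-)[1](1)$ on $\Mod_{\bS^{\syn}}(\Filt(\Sp))$ is tensoring with $\bS^{\syn}[1](1)$, which is invertible with inverse $\bS^{\syn}[-1](-1)$: the shift $[1]$ (categorical suspension) and the twist $(1)$ are each base-changed along the unit map $L_0\bS \to \bS^{\syn}$ from the corresponding invertible objects of $\Filt(\Sp)$, so both remain invertible on $\bS^{\syn}$-modules. In particular $\bS^{\syn}[1](1)$ is dualizable with dual $\bS^{\syn}[-1](-1)$. (ii) Since $\Mod_{\bS^{\syn}}(\Filt(\Sp))$ is a stable, presentably symmetric monoidal $\infty$-category, its tensor product preserves finite direct sums in each variable; hence dualizable objects are closed under finite direct sums, with $(X \oplus Y)^\vee \simeq X^\vee \oplus Y^\vee$ and the evident evaluation/coevaluation data. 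Combining (i) and (ii) with Lemma~\ref{lemma : module structure on T} shows that $\bT_{\syn}$ is dualizable, and moreover that
\[
\bT^\vee_{\syn} = \hom_{\bS^{\syn}}(\bT_{\syn},\bS^{\syn}) \simeq \bS^{\syn} \oplus \bS^{\syn}[-1](-1) \,.
\]
As the right-hand side is again a finite direct sum of invertible $\bS^{\syn}$-modules, the very same argument gives that $\bT^\vee_{\syn}$ is dualizable (alternatively, invoke that the dual of a dualizable object is dualizable).

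I do not expect a serious obstacle here. The only point needing care is that the equivalence of Lemma~\ref{lemma : module structure on T} is genuinely one of $\bS^{\syn}$-modules — which is precisely what that lemma asserts — so that the splitting may be transported into $\Mod_{\bS^{\syn}}(\Filt(\Sp))$, and that shifts and twists of the unit stay invertible after base change, which is immediate. (One could in principle avoid the explicit splitting by noting that $\bS[S^1] \simeq \Sigma^\infty_+ S^1$ is a dualizable $\bS$-module and applying the lax symmetric monoidal functor $\fil^*_{\ev}$, but verifying the triangle identities then forces one back onto exactly the computations packaged in Lemma~\ref{lemma : module structure on T}, so the route above is cleaner.)
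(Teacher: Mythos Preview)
Your proposal is correct and follows essentially the same approach as the paper: both invoke Lemma~\ref{lemma : module structure on T} to identify $\bT_{\syn}$ with $\bS^{\syn}\oplus\bS^{\syn}[1](1)$ and then read off dualizability from this explicit form. The paper phrases the conclusion as ``perfect, hence dualizable'' while you spell out the argument via finite sums of invertibles, but these are two packagings of the same observation.
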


\begin{proof}
    This follows directly from Lemma~\ref{lemma : module structure on T} where it was shown that
    \[
    \bT_{\syn} \simeq \bS^\syn \oplus \bS^\syn(1)[1]
    \]
    as a $\bS^{\syn}$-module. Thus it is perfect, hence dualizable, since we are working in a category of modules over a (filtered) ring spectrum.
\end{proof}

\begin{proposition}
We have that $\bT^{\vee}_{\syn}$ is a filtration on the $\bE_\infty$-algebra $\map(\bS[S^1],\bS)$.
\begin{proof}
 This statement follows from the fact that the underlying functor
 \[
 \colim : \Filt(\Sp) \to \Sp
 \] is symmetric monoidal, and so is
\[
\colim : \Mod_X (\Filt(\Sp)) \longto \Mod_{\colim X}(\Sp)
\]
for any filtered ring spectrum $X$.
In particular, the functor $\colim: \Mod_{\bS^{\syn}} \to \Sp$ is symmetric monoidal, and thus sends duals to duals.
Here, we use that the even filtration on $\bS^{S^1}$ is exhaustive, which follows for example from the fact that $\bS^{\syn} \oplus \bS^{\syn}[1](1) \simeq \fil^*_{\ev}(\bS[S^1])$.
\end{proof}
\end{proposition}

We now want to prove that $\bT_{\syn}$ and $\bT^{\vee}_{\syn}$ come equipped with the structure of bicommutative bialgebras in synthetic spectra.
To prove this, we first need the following lemma.

\begin{proposition}
Let $\cC$ be the full subcategory of $\CAlg$ spanned by the $\bE_{\infty}$-algebras 
which are of the form $\bS[(S^1)^n]$ for $n\geq 0$.
Then the restriction $\fil^*_{\ev}(-)|_{\cC}$ is strongly symmetric monoidal.
\end{proposition}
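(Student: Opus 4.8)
The aim is to upgrade the lax symmetric monoidal structure on $\fil^*_{\ev}$ to a strongly symmetric monoidal one after restricting to $\cC$. Since $\fil^*_{\ev}(\bS) = \bS^{\syn}$ is already the unit of $\mathrm{SynSp}$, this is exactly the assertion that for all $R, R' \in \cC$ the comparison map $\fil^*_{\ev}(R) \otimes_{\bS^{\syn}} \fil^*_{\ev}(R') \to \fil^*_{\ev}(R \otimes_{\bS} R')$ is an equivalence. As every object of $\cC$ is a tensor power $\bS[(S^1)^n] = \bS[S^1]^{\otimes_{\bS} n}$, a standard coherence argument for lax symmetric monoidal functors reduces this to showing that for each $n \geq 0$ the iterated comparison map
\[
\phi_n \colon \fil^*_{\ev}(\bS[S^1])^{\otimes_{\bS^{\syn}} n} \longrightarrow \fil^*_{\ev}(\bS[(S^1)^n])
\]
is an equivalence.

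The plan is to pass to even base rings. Since $\bS \to \MU$ is eff and eff maps are stable under base change, $\bS[(S^1)^n] \to \MU[(S^1)^n]$ is eff with Čech nerve $\MU^{\otimes \bullet + 1}[(S^1)^n]$, so $\fil^*_{\ev}(\bS[(S^1)^n]) \simeq \lim_{\Delta} \fil^*_{\ev}(\MU^{\otimes \bullet + 1}[(S^1)^n])$. On the source side, Lemma~\ref{lemma : module structure on T} identifies $\fil^*_{\ev}(\bS[S^1])$ with $\bS^{\syn} \oplus \bS^{\syn}[1](1)$ as $\bS^{\syn}$-modules, whence $\fil^*_{\ev}(\bS[S^1])^{\otimes_{\bS^{\syn}} n} \simeq \bigoplus_{T \subseteq \{1, \dots, n\}} \bS^{\syn}[|T|](|T|)$; writing $\bS^{\syn} = \lim_{\Delta} \fil^*_{\ev}(\MU^{\otimes \bullet + 1})$ and commuting this finite direct sum past the limit (finite direct sums are finite products in a stable category), the source too becomes a totalization over $\Delta$, and $\phi_n$ is thereby identified with $\lim_{\Delta}$ of the iterated comparison maps for the even rings $\MU^{\otimes m + 1}$. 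It is thus enough to prove the statement $(\star)$: for every even $\bE_\infty$-ring $k$, the iterated comparison map $\fil^*_{\ev}(k[S^1])^{\otimes_{\fil^*_{\ev}(k)} n} \to \fil^*_{\ev}(k[(S^1)^n])$ is an equivalence, naturally in $k$.

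I would prove $(\star)$ by induction on $n$, the cases $n \leq 1$ being trivial. Set $R := k[(S^1)^{n-1}]$. For the inductive step, factor the map through $\fil^*_{\ev}(R) \otimes_{\fil^*_{\ev}(k)} \fil^*_{\ev}(k[S^1])$: the map $\fil^*_{\ev}(k[S^1])^{\otimes (n-1)} \otimes_{\fil^*_{\ev}(k)} \fil^*_{\ev}(k[S^1]) \to \fil^*_{\ev}(R) \otimes_{\fil^*_{\ev}(k)} \fil^*_{\ev}(k[S^1])$ is an equivalence by the inductive hypothesis tensored with the identity, so it remains to see that the comparison map $m \colon \fil^*_{\ev}(R) \otimes_{\fil^*_{\ev}(k)} \fil^*_{\ev}(k[S^1]) \to \fil^*_{\ev}(R \otimes_k k[S^1]) = \fil^*_{\ev}(k[(S^1)^n])$ is an equivalence. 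Now $R$ is no longer even, but the augmentation $R[S^1] \to R$ is still eff (base change of $\bS[S^1] \to \bS$ along $\bS \to R$, using Proposition~\ref{prop:SS1Seff}) with Čech nerve $R[(\mathbb{CP}^\infty)^\bullet]$, and crucially $R[(\mathbb{CP}^\infty)^m] = k_m[(S^1)^{n-1}]$ where $k_m := k[(\mathbb{CP}^\infty)^m]$ is again even, its homotopy being $\pi_*(k)[x_1, \dots, x_m]$ with $|x_i| = 2$. Applying the inductive hypothesis to the even rings $k_m$, then Lemma~\ref{lemma natural equiv of even filtration}, then totalizing over $\Delta$ and re-assembling the cosimplicial object $k^{\otimes_{k[S^1]} \bullet + 1} = k[(\mathbb{CP}^\infty)^\bullet]$ via eff descent along $k[S^1] \to k$ (so that $\lim_{\Delta} \fil^*_{\ev}(k[(\mathbb{CP}^\infty)^\bullet]) \simeq \fil^*_{\ev}(k[S^1])$), one identifies $\fil^*_{\ev}(R[S^1])$ with $\fil^*_{\ev}(R) \otimes_{\fil^*_{\ev}(k)} \fil^*_{\ev}(k[S^1])$ compatibly with $m$. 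This closes the induction; together with the $\MU$-descent reduction it shows that each $\phi_n$ is an equivalence, and hence that $\fil^*_{\ev}(-)|_{\cC}$ is strongly symmetric monoidal.

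The main obstacle here is not conceptual but bookkeeping: at each step one must check that the equivalences furnished by eff descent, by Lemma~\ref{lemma : module structure on T}, and by Lemma~\ref{lemma natural equiv of even filtration} coincide with the canonical lax comparison maps, so that the displayed factorizations genuinely compose to $\phi_n$ (resp.\ to $m$). This is a coherence verification, made tractable by the naturality in $k$ built into $(\star)$ and by the dualizability of $\fil^*_{\ev}(k[S^1])$ and of $\fil^*_{\ev}(\bS[S^1])$ (Proposition~\ref{prop: dualizability of stuff}), which is precisely what lets one commute the relevant tensor products past the totalizations over $\Delta$.
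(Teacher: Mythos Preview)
Your argument is correct, but it takes a more circuitous path than the paper's. Both proofs begin with the same reduction via $\MU$-descent, but then diverge. You induct on $n$: writing $R = k[(S^1)^{n-1}]$, you apply a second eff descent along $R[S^1] \to R$, whose \v Cech nerve is $k_\bullet[(S^1)^{n-1}]$ with $k_m = k[(\mathbb{CP}^\infty)^m]$ again even, invoke the inductive hypothesis at each level, and then reassemble using the module decomposition of Lemma~\ref{lemma natural equiv of even filtration} and dualizability to pass tensor products through the totalization. The paper instead performs both descents simultaneously: for $M = \bS[(S^1)^n]$ it forms the bi-cosimplicial object $M \otimes \MU^{\otimes \bullet + 1} \to \MU^{\otimes \bullet + 1}$ (the second direction being the torus-to-point eff map), observes that in every bidegree one lands on an even $\bE_\infty$-ring whose underlying spectrum is quasi-free (a direct sum of shifts of $\bS$), and uses that the double-speed Postnikov filtration is strongly symmetric monoidal on quasi-free objects. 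This gives a level-wise equivalence of bi-cosimplicial diagrams whose totalization is the comparison map, with no induction and no need to separately invoke Lemmas~\ref{lemma natural equiv of even filtration} or~\ref{lemma : module structure on T}.

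What your approach buys is that it isolates the statement $(\star)$ for even $k$, which is of independent interest and makes the role of the circle explicit; what the paper's approach buys is that the coherence bookkeeping you flag in your final paragraph essentially evaporates, since one is directly comparing two explicit bi-cosimplicial diagrams term-by-term rather than chasing several identifications of the lax comparison map through successive rewritings.
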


\begin{proof} \label{strictmonoidality}
Let $A$ and $B$ be spherical chains on tori.
We want to show that the natural map
\begin{equation} \label{map strictmonoidality}
 \fil^*_{\ev}(A)\otimes_{\bS_\syn} \fil^*_{\ev}(B) \to \fil^*_{\ev}(A\otimes B)
\end{equation}
is an equivalence.
First, note that by~\cite{HRW23}*{Corollary 2.2.17}, for $M$ the spherical chains on some torus, we may compute the even filtration as
\[
\fil^*_{\ev}(M) \simeq \lim_{\Delta}(\fil^*_{\ev}(M \otimes \MU^{\otimes \bullet +1}))
\]
Now, each of the terms $M \otimes \MU^{\otimes n+1}$ admits an eff map
\[
M \otimes \MU^{\otimes n +1} \to \MU^{\otimes n+1},
\]
which may be seen by a simple Tor spectral sequence argument.
Moreover, these are all compatible with the cosimplicial structure maps. More precisely, we have a map
\[
M \otimes \MU^{\otimes \bullet + 1} \to \MU^{\otimes \bullet +1}
\]
of cosimplicial objects, essentially induced by taking compatible maps of spaces to the point.
Taking the conerve of this map of cosimplicial objects gives rise to a bi-cosimplicial object, on which applying the even filtration level-wise and then totalizing computes $\fil^*_{\ev}(M)$.
Note that in each bidegree $(n, m)$, the  filtration just reduces to the double-speed Postnikov filtration of an even $\bE_\infty$-algebra.
For example, for $\bS[S^1]$, the degree $(1,1)$ piece will be $\tau_{\geq 2 \bullet}((\MU \otimes \MU)[\bC P^{\infty}])$.

In this way, we may expand the map (\ref{map strictmonoidality}) to a map of (bi)cosimplicial objects
\[
 \fil^*_{\ev}(A)^{\bullet, \bullet}\otimes_{\bS_\syn} \fil^*_{\ev}(B)^{\bullet, \bullet} \to \fil^*_{\ev}(A\otimes B)^{\bullet , \bullet}.
\]
We claim that this map is levelwise on the bi-cosimplicial diagram, an equivalence of $\bE_{\infty}$-algebras. In each bidegree $(n,m)$ this is just the  lax symmetric monoidal structure map
\[
\tau_{\geq 2 \bullet}(A^{n, m}) \otimes \tau_{\geq 2 \bullet}(B^{n, m}) \to \tau_{\geq 2 \bullet}((A\otimes B)^{n,m}),
\]
of the double speed filtration. To prove the claim, let's for simplicity assume that $A= B =\bS[S^1]$, the general case follows with the obvious modifications. In this case, in degree $(n,m)$, the map will look like
\[
\tau_{\geq 2 \bullet }((\MU^{\otimes n +1}[{BS^1}])^{\otimes m}) \otimes \tau_{\geq 2 \bullet }((\MU^{\otimes n +1}[{BS^1}])^{\otimes m}) \to
\tau_{\geq 2 \bullet}((\MU^{\otimes n+1}[BS^1 \times BS^1])^{\otimes m}).
\]
Note that each of
\[
(\MU^{\otimes n +1}[{BS^1}])^{\otimes m} \, \, \, \on{and}  \, \, \,  (\MU^{\otimes n+1}[BS^1 \times BS^1])^{\otimes m}
\]
 are quasi-free over $\MU^{n+1}$, in the sense that they are each of the form $\oplus_{i \in \bZ} \MU^{n +1}[2k_i]$ as  $\MU^{n+1}$-modules. We further claim that the double speed Postnikov tower
 \[
 \tau_{\geq 2 \bullet}: \Mod_{\MU^{n+1}} \to \Mod_{ \tau_{\geq 2 \bullet}(\MU^{n+1})}(\Fil(\Sp))
 \]
is strongly symmetric monoidal on quasi-free objects. Since the (double-speed) Postnikov tower is a complete filtration, it amounts to checking the equivalence on associated graded objects, which further just amounts to showing that the homotopy groups of the tensor product are the tensor product of the associated gradeds. This follows directly from \cite[Proposition 7.2.1.17]{lurie2017higher}, thus proving the levelwise claim.

Summarizing, we have shown that the natural lax monoidal structure map on these objects can be expanded to a map of bi-cosimplicial objects, which is a levelwise equivalence. Taking the limit of this levelwise equivalence of bicosimplicial objects now recovers the map (\ref{map strictmonoidality}) and displays it as an equivalence.
\end{proof}

\begin{proposition} \label{prop: bialgebras}
  The synthetic spectra $\bT_{\syn}$ and $\bT^{\vee}_{\syn}$ are bicommutative bialgebras.
\end{proposition}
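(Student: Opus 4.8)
The plan is to deduce the bialgebra structures from the strong symmetric monoidality established in the previous proposition. Recall that a bicommutative bialgebra structure on an object $X$ in a symmetric monoidal $\infty$-category amounts to promoting $X$ to a commutative algebra object in the category of cocommutative coalgebras (equivalently, a cocommutative coalgebra in commutative algebras); concretely, we need a comultiplication, counit, multiplication, and unit that are mutually compatible. The key observation is that $\bS[S^1]$ is itself a bicommutative bialgebra in $\CAlg(\Sp)$: the group structure on $S^1$ (as a topological abelian group, or at least as a grouplike $\bE_\infty$-space) makes $S^1 \mapsto \bS[S^1]$ land in bicommutative bialgebras, with comultiplication induced by the diagonal $S^1 \to S^1 \times S^1$ and multiplication induced by the group structure $S^1 \times S^1 \to S^1$. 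Since the subcategory $\cC$ of the preceding proposition is closed under finite products (tori multiply to tori) and contains $\bS[S^1]$, and since $\fil^*_{\ev}(-)|_{\cC}$ is strong symmetric monoidal, it carries this bialgebra in $\cC$ to a bialgebra $\bT_{\syn} = \fil^*_{\ev}\bS[S^1]$ in $\mathrm{SynSp}$: the comultiplication is $\fil^*_{\ev}(\text{diagonal})$ followed by the (now invertible) monoidality map $\fil^*_{\ev}(\bS[S^1 \times S^1]) \simeq \bT_{\syn} \otimes_{\bS^{\syn}} \bT_{\syn}$, and similarly for the multiplication.

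First I would make precise the claim that $\bS[-] : \mathcal{S}^{\times} \to \CAlg(\Sp)$, restricted to (powers of) $S^1$, is a bicommutative bialgebra object, i.e. a symmetric monoidal functor out of a category of grouplike commutative monoids; this is standard but worth stating, so that the diagonal and group maps are organized into the correct coherent diagrams. Then I would invoke the preceding proposition to transport this structure along $\fil^*_{\ev}(-)|_{\cC}$, which is strong symmetric monoidal, hence preserves commutative algebra objects, cocommutative coalgebra objects, and their interactions; this immediately gives the bicommutative bialgebra structure on $\bT_{\syn}$. For $\bT^{\vee}_{\syn} = \hom_{\bS^{\syn}}(\bT_{\syn}, \bS^{\syn})$, I would use Proposition~\ref{prop: dualizability of stuff}: since $\bT_{\syn}$ is a dualizable $\bS^{\syn}$-module, linear duality $(-)^{\vee}$ is a (contravariant) symmetric monoidal equivalence on the full subcategory of dualizables, so it sends the bicommutative bialgebra $\bT_{\syn}$ to the bicommutative bialgebra $\bT^{\vee}_{\syn}$, with multiplication and comultiplication interchanged; in particular the two structures are dual, exactly as in the $\bZ$-linear case of Section~\ref{sec:filt_int_(co)chain}.

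The main obstacle is bookkeeping the coherences rather than any substantive difficulty: one must be careful that the monoidality equivalences of $\fil^*_{\ev}(-)|_{\cC}$ are compatible not only with single products but with the full diagrams (associativity, unitality, the bialgebra pentagon relating multiplication and comultiplication) that constitute a bicommutative bialgebra. This is handled cleanly by phrasing everything functorially: a symmetric monoidal functor applied to a bialgebra object in the source yields a bialgebra object in the target, so once the preceding proposition is in hand there is nothing further to check beyond confirming that $S^1$ and its powers, together with diagonals and group multiplications, genuinely assemble into such an object inside $\cC$. A minor point to address is that $\cC$ as defined contains the $\bE_\infty$-rings $\bS[(S^1)^n]$; one should note these are exactly the finite products of $\bS[S^1]$ in $\CAlg(\Sp)$ (since $\bS[-]$ is symmetric monoidal and $(S^1)^n$ is the $n$-fold product), so $\cC$ is the image of the torus diagram and the transported structure is well-defined.
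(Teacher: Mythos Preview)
Your proposal is correct and follows essentially the same approach as the paper: use the strong symmetric monoidality of $\fil^*_{\ev}(-)|_{\cC}$ from the preceding proposition to transport the bicommutative bialgebra structure on $\bS[S^1]$ to $\bT_{\syn}$, and then use dualizability (Proposition~\ref{prop: dualizability of stuff}) to conclude the same for $\bT_{\syn}^{\vee}$. Your write-up is more careful about the coherence bookkeeping than the paper's terse two-sentence proof, but the mathematical content is identical.
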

\begin{proof}
By the previous proposition, $\fil^*_{\ev}(\bS[S^1]) = \bT_{\syn}$ is a bicommutative bialgebra, since the symmetric monoidality guarantees that the coalgebra maps lift to the filtered level.
Furthermore, this will be a dualizable object, as it is simply a finite direct sum of invertible $\bS^\syn$-modules. Thus we conclude that its dual, $\bT_{\syn}$, is itself a bicommutative bialgebra.
\end{proof}

\begin{rmk}
 Note that the equivalence of Lemma~\ref{lemma : module structure on T} is only true as modules, and not as algebras.
 Indeed, already on the level of spectra we have the equivalence $\bS[S^1] \simeq \bS \oplus \bS[1]$ in the category of spectra.
 This equivalence is not true as algebras in spectra, though, given the relation $d^2 = \eta d$ in the homotopy groups
 \[
 \pi_* \bS[S^1] \cong \pi_*(\bS)[d]/(d^2 = \eta d)
 \]
 where $|d|=1$ and $\eta$ is the generator of $\pi_1(\bS) \cong \bZ/2$.
 However, the coalgebra structure on $\bS[S^1]$ is the expected one; in the homotopy groups the element $d$ is primitive.
 See for example~\cite{HR24}*{Proposition 3.3}.
 The synthetic spectrum $\bT_{\syn}$ inherits this property; the coalgebra structure is the expected one, but the algebra structure is not split.
 When we dualize, the situation is reversed; the algebra structure of $\bT^\vee_{\syn}$ is the expected one, but the coalgebra structure is not obvious.
\end{rmk}

\section{Derived Algebraic Geometry and HKR}

In this section, we give a recollection of various derived algebro-geometric preliminaries to our work.
In \S \ref{sec DAG vs. SAG}, we give a side by side reminder in broad strokes of derived and spectral algebraic geometry.
In \S \ref{sec : fpqc}- \ref{sec t-structures}, we recall some basic aspects of geometric stacks, and their categories of quasi-coherent sheaves.
In \S \ref{sec affine stacks}, we briefly review the theory of affine stacks, developed originally in \cite{toen2006champs}.
Finally, in \S \ref{sec: geom of filt}, we recall the yoga of filtrations and quasi-coherent sheaves on the stack $\filstack$.

\subsection{Derived vs. spectral algebraic geometry} \label{sec DAG vs. SAG}

We recall that there are essentially two potential candidates for what one means when one refers to a generalization of algebraic geometry to a higher categorical setting.
\begin{enumerate}
  \item There is \emph{spectral algebraic geometry}, where the affine objects are connective $\bE_{\infty}$-rings.
  This is the generalization of algebraic geometry that has been extensively studied by Lurie~\cite{SAG}.
  In this paper, we will mainly work in this setting.
  \item There is \emph{derived algebraic geometry}, where the affine objects are animated commutative rings\footnote{Also called \emph{simplicial commutative rings}.}.
  We recall that the $\infty$-category of animated rings is the completion of the category smooth ring with respect to sifted colimits.
  This is the setting used for example in the construction of the filtered circle in \cite{moulinos2019universal}.
\end{enumerate}

Fix a commutative ring $R$, in the classical sense.
Then we may consider any of the following categories:
\begin{enumerate}
  \item The 1-category of commutative algebras over $R$ denoted by $\CAlg_R^{\heartsuit}$.
  \item The $\infty$-category of connective commutative $R$-algebras denoted by $\CAlg_R^{\cn}$.
  \item The $\infty$-category of animated $R$-algebras denoted $\CAlg_R^{\an}$.
\end{enumerate}

Let $\cC \in \{\CAlg_R^{\heartsuit}, \on{CAlg}^{\on{cn}}_R, \CAlg_R^{\an} \}$ denote either of these categories.
One may define the notion of a stack via the functor of points perspective.
That is, a stack is a functor $\cC \to \mathcal{S}$ satisfying hyperdescent with respect to a suitable topology on $\cC^{\oop}$.
We will use the following terminology:
\begin{enumerate}
  \item If $\cC = \CAlg_R^{\heartsuit}$, we simply call the resulting object a \emph{stack}.
  We denote the $\infty$-category of stacks over $R$ by $\mathrm{Stk}_R$.
  \item If $\cC = \CAlg_R^{\cn}$, we call the resulting object a \emph{spectral stack}.
  We denote the $\infty$-category of spectral stacks over $R$ by $\mathrm{sStk}_R$.
  \item If $\cC = \CAlg_R^{\an}$, we call the resulting object a \emph{derived stack}. We denote the $\infty$-category of derived stacks over $R$ by $\mathrm{dStk}_R$.
\end{enumerate}

The $\infty$-category of stacks $\mathrm{Stk}_R$ embeds fully faithfully into both $\mathrm{sStk}_R$ and $\mathrm{dStk}_R$, but the relationship between spectral stacks and derived stacks is a bit more subtle.

In either case, one obtains an $\infty$-topos, which is Cartesian closed, so that it makes sense to talk about internal mapping objects:  given any two $X,Y \in \on{Fun}(\cC, \mathcal{S})$, one forms the mapping stack $\Map_{\cC}(X, Y)$.
 In various cases of interest, if the source or target is suitably representable by  a derived scheme or a derived Artin stack, then this is the case for $\Map_{\cC}(X, Y)$, as well.

\subsection{The fpqc topology } \label{sec : fpqc}
We recall the fpqc topology, which we will be using in this paper.

\begin{definition}
    Let $f: A \to B$ be a morphism of $\bE_{\infty}$-rings. We will say that $f$ is \emph{faithfully flat} if it satisfies the following conditions:
    \begin{itemize}
        \item The underlying map of classical rings $\pi_0(A) \to \pi_0(B)$ is faithfully flat in the classical sense.
        \item The morphism $f$ induces an isomorphism of graded rings
        \[
        (\pi_0 B)\otimes_{\pi_0 A} \pi_* A \cong \pi_* B \,.
        \]
    \end{itemize}
\end{definition}

The collection of faithfully flat maps defines a Grothendieck topology on $\CAlg^{\cn}$, cf. \cite[Proposition B.6.1.3]{SAG}

\begin{proposition}
There exists a Grothendieck topology on the $\infty$-category $(\CAlg^{\cn})^\oop$ which can be characterized as follows:
\begin{quote}
If $A$ is an $\bE_\infty$-algebra, then a sieve $\cC \subseteq (\CAlg^{\cn})^\oop_{/ A} \simeq (\CAlg_A^{\cn})^\oop$ is a covering if and only if it contains a finite collection of morphisms $\{A \to A_i\}_{1 \leq i \leq n}$ for which the induced map
\[
A \to \prod_{1 \leq i \leq n} A_i
\]
is faithfully flat.
\end{quote}
\end{proposition}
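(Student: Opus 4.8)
The plan is to obtain the desired Grothendieck topology as the one generated by an explicit Grothendieck pretopology on $(\CAlg^{\cn})^{\oop}$: for a connective $\bE_\infty$-ring $A$, I declare a finite family of ring maps $\{A \to A_i\}_{1 \leq i \leq n}$ --- which is a family over $A$ in $(\CAlg^{\cn})^{\oop}$ --- to be \emph{covering} when $A \to \prod_{1 \leq i \leq n} A_i$ is faithfully flat. Since a sieve $\cC \subseteq (\CAlg_A^{\cn})^{\oop}$ (i.e.\ a collection of $A$-algebras closed under further composition of ring maps) is covering for the topology generated by a pretopology precisely when it contains one of the pretopology's covering families, the characterization in the statement drops out once the pretopology axioms are verified. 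This is exactly \cite[Proposition B.6.1.3]{SAG}; below I indicate the shape of the argument.

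Everything rests on two closure properties of faithfully flat maps of $\bE_\infty$-rings. \emph{Closure under composition:} if $A \to B$ and $B \to C$ are faithfully flat, then $\pi_0 A \to \pi_0 C$ is a composite of classically faithfully flat maps, and chaining the defining isomorphisms gives $\pi_* C \cong \pi_0 C \otimes_{\pi_0 B} \pi_* B \cong \pi_0 C \otimes_{\pi_0 A} \pi_* A$. \emph{Closure under cobase change:} given $A \to C$ faithfully flat and $A \to B$ arbitrary, the point is that $\pi_* C$ is induced up from $\pi_0 C$ over $\pi_* A$, so a Tor spectral sequence collapses to give $\pi_*(B \otimes_A C) \cong \pi_* B \otimes_{\pi_0 A} \pi_0 C$; both conditions for $B \to B \otimes_A C$ to be faithfully flat then follow, the $\pi_0$-statement being ordinary faithfully flat base change. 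I expect this second property to be the only substantive step --- the flatness/Tor argument is where the real content lies --- and the rest of the proof to be formal.

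Granting these, the pretopology axioms are quick. The identity family $\{A \to A\}$ is covering since $\id_A$ is faithfully flat. For stability under base change, if $\{A \to A_i\}$ is covering and $A \to B$ is any map, then $B \otimes_A \prod_i A_i \simeq \prod_i (B \otimes_A A_i)$ --- over a finite index set the product ring is controlled by an orthogonal idempotent decomposition, which cobase change preserves --- so $B \to \prod_i(B \otimes_A A_i)$ is faithfully flat by the second closure property, and $\{B \to B \otimes_A A_i\}$ is covering. For local character, if $\{A \to A_i\}_i$ is covering and each $\{A_i \to A_{ij}\}_j$ is covering, then $\prod_i A_i \to \prod_i \prod_j A_{ij}$ is a finite product of faithfully flat maps, hence faithfully flat (checking the two conditions on homotopy groups, which split over the finite index set), and precomposing with the faithfully flat map $A \to \prod_i A_i$ shows $A \to \prod_{i,j} A_{ij}$ is faithfully flat, so $\{A \to A_{ij}\}_{i,j}$ is covering. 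The generated topology then has the advertised covering sieves, and restricting along $\CAlg^{\heartsuit} \hookrightarrow \CAlg^{\cn}$ recovers the classical fpqc topology.
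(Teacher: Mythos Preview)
Your proposal is correct and is essentially the standard argument; the paper does not give its own proof here but simply cites \cite[Proposition B.6.1.3]{SAG}, which is exactly the reference you invoke and whose argument you have sketched.
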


The above topology is known as the \emph{fpqc topology}.
It is compatible with the classical fpqc topology on classical commutative rings.

\subsection{Geometric stacks and {t}-structures} \label{sec t-structures}

Essentially all the stacks which appear in this work will be quasi-geometric stacks, whose definition we recall below.
We follow~\cite[Section 9.1]{SAG} and remind the reader that $\widehat{\cS}$ denotes the $\infty$-category of spaces which are not necessarily small; this is needed for set-theoretical issues.

\begin{definition}
A \emph{quasi-geometric stack} is a functor $X: \CAlg^{\cn} \to \widehat{\cS} $ which satisfies the following conditions:
\begin{enumerate}
    \item The functor $X$ satisfies descent with respect to the fpqc topology.
    \item The diagonal map $\delta: X \to X \times X$ is quasi-affine.
    \item There exists a connective $\bE_\infty$-ring $A$ and a faithfully flat morphism $f: \Spec(A) \to X$.
\end{enumerate}
\end{definition}

The quasi-geometric stacks which arise in this paper will be of the form expressed in the following proposition, where we refer the reader to \cite[Corollary 9.1.1.5]{SAG} for a proof and for definitions of the various terms which arise in the statement.

\begin{proposition}
    Let $X_\bullet$ be a simplicial object of $\Fun(\CAlg^{\cn}, \widehat{\cS})$ satisfying the following conditions:
\begin{itemize}
    \item The functor $X_0: \CAlg^{\cn} \to \widehat{\cS}$ is a quasi-geometric stack.
    \item  $X_\bullet$ is a groupoid object of $\Fun(\CAlg^{\cn}, \widehat{\cS})$.
    \item The face map $d_0: X_1 \to X_0$ is representable, quasi-affine, and faithfully flat.
\end{itemize}
Then, the geometric realization $X= | X_\bullet |$, formed in the $\infty$-category of fpqc sheaves, is a quasi-geometric stack.
\end{proposition}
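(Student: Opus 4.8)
The plan is to verify, for $X = |X_\bullet|$, the three conditions in the definition of a quasi-geometric stack. Condition (1) is immediate: $X$ is by construction the geometric realization of $X_\bullet$ formed in the $\infty$-topos of fpqc sheaves, hence satisfies fpqc descent. Conditions (2) and (3) will both be consequences of a single intermediate claim, which is the heart of the argument: the canonical map $p \colon X_0 \to X$ is representable, quasi-affine, and faithfully flat.

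To prove this claim I would use that groupoid objects in an $\infty$-topos are effective, so that $X_\bullet$ is the \v{C}ech nerve of $p$; in particular $p$ is an effective epimorphism and $X_1 \simeq X_0 \times_X X_0$, with the two projections identified with the face maps $d_0, d_1 \colon X_1 \to X_0$ (which differ by the inversion equivalence of the groupoid, so $d_1$ is representable, quasi-affine and faithfully flat as soon as $d_0$ is). Fix a point $\Spec R \to X$. Since $p$ is an effective epimorphism, the base change $X_0 \times_X \Spec R \to \Spec R$ is again an effective epimorphism of fpqc sheaves, hence admits a section after a faithfully flat base change $\Spec R' \to \Spec R$; choosing such a lift $\Spec R' \to X_0$ and using $X_1 \simeq X_0 \times_X X_0$, one identifies $X_0 \times_X \Spec R' \to \Spec R'$ with the base change of $d_1 \colon X_1 \to X_0$ along $\Spec R' \to X_0$, which is representable, quasi-affine and faithfully flat by hypothesis. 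As these three properties of morphisms satisfy fpqc descent, descent along $\Spec R' \to \Spec R$ shows $X_0 \times_X \Spec R \to \Spec R$ has them; since $\Spec R \to X$ was arbitrary, $p$ is representable, quasi-affine and faithfully flat.

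Condition (3) is then immediate: $X_0$ is quasi-geometric, so there is a faithfully flat $\Spec A \to X_0$, and composing with $p$ gives a faithfully flat $\Spec A \to X$. For condition (2), note that the base change of $\delta \colon X \to X \times X$ along $p \times p \colon X_0 \times X_0 \to X \times X$ is the map $(d_0, d_1) \colon X_1 \to X_0 \times X_0$; since $p \times p$ is representable and faithfully flat and quasi-affineness of a morphism descends along faithfully flat maps, it suffices to show $(d_0, d_1)$ is quasi-affine. Post-composing $(d_0, d_1)$ with a projection $\mathrm{pr} \colon X_0 \times X_0 \to X_0$ recovers a face map, which is quasi-affine; and the diagonal $\delta_{\mathrm{pr}}$ of $\mathrm{pr}$ is, up to reindexing, $\id_{X_0} \times \delta_{X_0}$, hence quasi-affine because $X_0$ has quasi-affine diagonal. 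The cancellation principle for quasi-affine morphisms — if $g \circ f$ is quasi-affine and $g$ has quasi-affine diagonal then $f$ is quasi-affine, obtained by factoring $f$ through its graph, which is a base change of $\delta_g$, followed by a base change of $g \circ f$ — then shows $(d_0, d_1)$ is quasi-affine, completing the verification.

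I expect the intermediate claim about $p \colon X_0 \to X$ to be the main obstacle: it is where one genuinely needs the effectivity of groupoid objects together with the fact that representability, flatness and quasi-affineness all satisfy fpqc descent in the strong form where the descent datum need not itself be representable a priori. By contrast, the deduction of conditions (2) and (3) — the descent argument for the diagonal and the graph/cancellation manipulation — is formal.
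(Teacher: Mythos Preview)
The paper does not supply its own proof of this proposition; it simply refers the reader to \cite[Corollary 9.1.1.5]{SAG}. Your argument is correct and is essentially the standard one underlying Lurie's result: use effectivity of groupoid objects in the fpqc $\infty$-topos to identify $X_\bullet$ with the \v{C}ech nerve of $p\colon X_0 \to X$, descend the hypotheses on $d_0$ along local sections of $p$ to conclude that $p$ itself is representable, quasi-affine and faithfully flat, and then deduce conditions (2) and (3) via the base-change and cancellation manipulations you describe. The only substantive inputs beyond formal topos theory are that representability, quasi-affineness and faithful flatness of a morphism are fpqc-local on the target even when the target is an arbitrary fpqc sheaf; you rightly flag these as the crux, and they are established in SAG.
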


\begin{const} \label{const: classifying stack of group}
Let $S$ be a fixed spectral stack. Let $G \to S$ be a commutative affine group scheme over $S$, namely a grouplike $\bE_\infty$-monoid in $\on{sStk}_{S}$.
The classifying stack  $B G $ of $G$ is given by geometric realization of the following simplicial object:
\[
\begin{tikzcd}
  S
& \arrow[l, shift left]
\arrow[l, shift right] G
& \arrow[l, shift left=2]
\arrow[l, shift right=2] \arrow[l] G \times_S G & \arrow[l,shift left=1.5] \arrow[l,shift left=0.5] \arrow[l,shift right=0.5] \arrow[l,shift right=1.5]\cdots
\end{tikzcd} \,.
\]
where the structure maps are the standard ones induced by the group structure on $G$.
\end{const}

We collect some key properties about the value of the functor $\QCoh$ restricted to the class of quasi-geometric stacks in the following proposition.

\begin{proposition}\cite[Corollary 9.1.3.2]{SAG} \label{prop geometricstack}
    Let $X :\CAlg^{\cn} \to \widehat{\cS} $ be a quasi-geometric stack.
    Then:
    \begin{enumerate}
        \item The stable $\infty$-category $\QCoh(X)$ is presentable.
        \item There exists a $t$-structure  $(\QCoh(X)_{\geq 0 },\QCoh(X)_{\leq 0 })$ on $\QCoh(X)$.
        \item The $t$-structure on $\QCoh(X)$ is compatible with filtered colimits, in the sense that $\QCoh(X)_{\leq 0}$ is closed under filtered colimits in $\QCoh(X)$.
        \item The $t$-structure on $\QCoh(X)$ is both left and right complete.
    \end{enumerate}
\end{proposition}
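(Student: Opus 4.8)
The plan is to deduce all four statements from faithfully flat descent along an atlas; the precise assertions are recorded in \cite{SAG}*{\S 9.1.3}, and I would reconstruct the argument as follows. By the definition of a quasi-geometric stack, I would fix a connective $\bE_\infty$-ring $A$ together with a faithfully flat map $f\colon\Spec(A)\to X$, and let $U_\bullet$ be its \v{C}ech nerve, so that $U_n\simeq\Spec(A)\times_X\cdots\times_X\Spec(A)$ with $n+1$ factors. Since the diagonal of $X$ is quasi-affine, each $U_n$ is a quasi-affine spectral scheme; in particular $\QCoh(U_n)$ is a presentable stable $\infty$-category, carrying a t-structure (inherited, via an open immersion, from the standard one on modules over its global sections) which is left and right complete and compatible with filtered colimits, and for which the flat pullback functors in the cosimplicial diagram $[n]\mapsto\QCoh(U_n)$ are $t$-exact. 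Faithfully flat descent for quasi-coherent sheaves then gives $\QCoh(X)\simeq\lim_{[n]\in\Delta}\QCoh(U_n)$ with transition functors these pullbacks, and I write $f^*\colon\QCoh(X)\to\Mod_A$ for the resulting pullback functor, noting that it is conservative since $f$ is faithfully flat.

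From here, statement (1) is immediate, since a limit of a diagram of presentable $\infty$-categories and accessible functors is again presentable. For (2), I would appeal to the general fact that a limit of presentable stable $\infty$-categories with t-structures, formed along $t$-exact accessible functors, inherits a t-structure in which $\QCoh(X)_{\geq 0}$ (respectively $\QCoh(X)_{\leq 0}$) is the full subcategory of objects that are connective (respectively coconnective) at every level; concretely $\cF\in\QCoh(X)_{\geq 0}$ iff $f^*\cF\in(\Mod_A)_{\geq 0}$, and likewise for coconnectivity, with truncations obtained by gluing the level-wise truncations. In particular $f^*$ is $t$-exact.

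For (3), I would use that $f^*$ preserves colimits (being a left adjoint), is $t$-exact, and is conservative: given a filtered system $\{\cF_i\}$ in $\QCoh(X)_{\leq 0}$, the object $f^*(\colim_i\cF_i)\simeq\colim_i f^*\cF_i$ lies in $(\Mod_A)_{\leq 0}$, which is closed under filtered colimits, and since a conservative $t$-exact functor detects coconnectivity we conclude $\colim_i\cF_i\in\QCoh(X)_{\leq 0}$. For (4), I would descend left and right completeness from $\Mod_A$: conservativity of $f^*$ together with left and right completeness of $\Mod_A$ forces $\bigcap_n\QCoh(X)_{\geq n}=0=\bigcap_n\QCoh(X)_{\leq -n}$, and, using that the descent equivalence is compatible with Postnikov towers and the relevant countable limits and colimits, the canonical maps $\cF\to\lim_n\tau_{\leq n}\cF$ and $\colim_n\tau_{\geq -n}\cF\to\cF$ become equivalences after applying $f^*$, hence are equivalences; alternatively, having checked that these intersections vanish and that $\QCoh(X)$ admits countable products and coproducts, one invokes the completeness criteria of \cite{lurie2017higher}*{\S 1.2.1}.

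The step I expect to be the main obstacle is the bookkeeping underlying (2) and (4): verifying that the limit of the level-wise t-structures is genuinely a t-structure with $\QCoh(X)_{\geq 0}$ presentable (so that truncation functors exist), and that the descent equivalence interacts correctly with Postnikov towers and their countable limits and colimits — precisely where quasi-affineness of the $U_n$ is used. Since all of this is carried out in \cite{SAG}*{\S 9.1.3}, in practice I would simply cite the statements there.
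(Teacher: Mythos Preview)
Your sketch is essentially the argument from \cite{SAG}*{\S 9.1.3}, and it is correct in outline. However, note that the paper does not give its own proof of this proposition: it is stated without proof as a recollection of known facts from Lurie's \cite{SAG} (the paper introduces it with ``We collect some key properties\ldots''). So there is nothing to compare against beyond confirming that your approach matches the source being cited, which it does.
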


\begin{rmk}
 Let $\cF \in \QCoh(X)$, and let $u: \Spec(A) \to X$ be a faithfully flat cover. Then $\cF \in \QCoh(X)_{\geq 0}$ if and only if $u^*(\cF)) \in (\Mod_A)_{\geq 0}$.
\end{rmk}

\begin{proposition} \label{proposition t structure and heart of truncation}
  Let $X$ be a quasi-geometric stack whose restriction as a functor to discrete commutative rings is denoted by  $X^{\on{cl}}$.
  There is an equivalence
  \[
  \QCoh(X)^{\heartsuit} \simeq \QCoh(X^{\on{cl}})^{\heartsuit} \,.
  \]
\end{proposition}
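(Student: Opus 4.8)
The plan is to realise the comparison as an explicit adjoint equivalence coming from the inclusion $\CAlg^\heartsuit \hookrightarrow \CAlg^{\cn}$. Restricting quasi-coherent sheaves along this inclusion (i.e.\ remembering only the discrete affine charts) defines a symmetric monoidal functor $r \colon \QCoh(X) \to \QCoh(X^{\on{cl}})$, with right adjoint $r_*$; on a chart $\Spec(A) \to X$ the functor $r$ corresponds to base change along $A \to \pi_0 A$, so $r$ is right $t$-exact. Its right adjoint $r_*$ is then left $t$-exact, and it is moreover connective-preserving — hence $t$-exact — since on any chart $r_*\cG$ is the restriction of scalars of a pullback of $\cG$. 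Finally $r_*$ is fully faithful: over a \emph{discrete} chart $\Spec(R)$ of $X^{\on{cl}}$ we have $\pi_0 R = R$, so $r r_*\cG$ evaluates to $\cG$ there, whence $r r_* \simeq \id$. Passing to hearts we obtain an adjunction $\bar r \dashv \bar r_*$, with $\bar r := \pi_0 \circ r \colon \QCoh(X)^\heartsuit \to \QCoh(X^{\on{cl}})^\heartsuit$ and $\bar r_* := r_*$, and its counit is an equivalence.

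It remains to prove that the unit $\id \to \bar r_* \bar r$ is an equivalence. I would check this by faithfully flat descent: pick a faithfully flat cover $u \colon \Spec(A) \to X$, which exists because $X$ is quasi-geometric, and recall that $u^*$ is then $t$-exact and conservative; its restriction $u^{\on{cl}} \colon \Spec(\pi_0 A) \to X^{\on{cl}}$ is again faithfully flat. Pulling the unit at $\cF \in \QCoh(X)^\heartsuit$ back along $u$, and using that $u^* r_*$ is the restriction of scalars along $A \to \pi_0 A$ of $(u^{\on{cl}})^*$ together with the $t$-exactness of $(u^{\on{cl}})^*$, the map $u^*(\text{unit})$ is identified with the canonical comparison
\[
N \longto \pi_0\!\left( N \otimes_A \pi_0 A \right), \qquad N := u^*\cF \in (\Mod_A)^\heartsuit,
\]
which is an isomorphism by right-exactness of $- \otimes_A \pi_0 A$; this is precisely the classical identification $(\Mod_A)^\heartsuit \simeq (\Mod_{\pi_0 A})^\heartsuit$. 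Conservativity of $u^*$ then upgrades this to an equivalence $\cF \simeq \bar r_* \bar r\cF$, and $\bar r$ is an adjoint equivalence $\QCoh(X)^\heartsuit \simeq \QCoh(X^{\on{cl}})^\heartsuit$.

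The inputs I am invoking are the properties of the $t$-structure on $\QCoh$ of a quasi-geometric stack recorded above — in particular the remark that connectivity is detected on a faithfully flat cover, along with its coconnective counterpart, which together make $u^*$ exact — and faithfully flat descent, giving conservativity of $u^*$ and the identification of $u^* r_*$. The genuinely delicate step, and the main obstacle, is the unit computation in the second paragraph: the $t$-exactness and base-change bookkeeping has to be arranged so that it collapses cleanly onto the affine statement. Conceptually all that is being said is that a discrete quasi-coherent sheaf is nothing more than a module over $\pi_0$ of the structure sheaf, which visibly depends only on $X^{\on{cl}}$. An alternative implementation runs the same idea through the \v{C}ech nerve of $u$ — whose terms are quasi-affine by quasi-affineness of the diagonal, and whose classical truncation is the \v{C}ech nerve of the classical cover since $(-)^{\on{cl}}$ preserves limits — reducing degreewise to the affine case, at the cost of verifying that passing to hearts commutes with the descent limit, which holds because the face maps in the \v{C}ech nerve are flat.
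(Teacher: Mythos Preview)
The paper does not actually prove this proposition; it is stated as a standard fact (implicitly from \cite{SAG}) and then interpreted in the sentence that follows. Your sketch therefore supplies a proof where the paper gives none, and the argument is essentially correct.

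A few remarks on the execution. Your functor $r$ is pullback along the closed immersion $i\colon X^{\on{cl}}\hookrightarrow X$, which on an affine chart is $\Spec(\pi_0 A)\hookrightarrow\Spec(A)$; the formula $(r_*\cG)_A \simeq \cG_{\pi_0 A}$ (via restriction of scalars) is exactly the statement that this immersion is affine, so that $i_*$ is computed chartwise. Once that is granted, the counit computation and the $t$-exactness of $r_*$ are immediate, and your unit computation collapses, as you say, to the elementary isomorphism $N\xrightarrow{\sim}\pi_0(N\otimes_A \pi_0 A)$ for $N\in(\Mod_A)^\heartsuit$, which is just $\pi_0$ of a tensor product of connective objects. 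The one step worth spelling out more carefully is that $u^{\on{cl}}\colon \Spec(\pi_0 A)\to X^{\on{cl}}$ is again faithfully flat: this follows because fiber products of affines over $X$ are quasi-affine (by quasi-affineness of the diagonal), and faithful flatness survives passage to $\pi_0$ on such a fiber product. Your alternative \v{C}ech-nerve implementation is also fine and is closer in spirit to how one typically deduces such statements directly from the definitions in \cite{SAG}.
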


We can interpret the above proposition as saying that the heart of the t-structure on $\QCoh(X)$ is insensitive to the derived structure on $X$, and is precisely the standard abelian category of quasi-coherent sheaves on the classical truncation $X^{\on{cl}}$.

\subsection{Affine stacks} \label{sec affine stacks}
Fix a discrete commutative ring $R$, and focus for the moment on the class of (higher) stacks; in particular our category of affine objects is discrete commutative rings.  In this section, we briefly review the theory of \emph{affine stacks}, set forth by To\"{e}n in \cite{toen2006champs}.

\begin{definition}
We write $\mathrm{coSCR}_R$ for the $\infty$-category of cosimplicial commutative rings over $R$. This refers to the underlying $\infty$-category of the combinatorial model structure  on cosimplicial commutative rings established in     \cite[Théorème 2.1.2]{toen2006champs}. We remark that as such, this category contains all small limits and colimits.
\end{definition}

\begin{rmk}
It is expected that this $\infty$-category is equivalent to the $\infty$-category of coconnective derived rings, in the sense of \cite[Section 4]{Rak20}.
This is confirmed in work of Mathew--Mondal~\cite{MM24}.
\end{rmk}

The natural inclusion functor $\CAlg_{R}^{\heartsuit} \to \mathrm{coSCR}_R$ gives rise to a functor $\on{Aff}_R \to (\mathrm{coSCR}_R)^{\oop}$, which may be right Kan extended to the $\infty$-category $\on{Stk}_R$ of stacks over $\Spec(R)$.
This is the functor $R\Gamma(-,\cO)$ of taking global sections.
There is an adjunction
\[
R\Gamma(-,\cO) :\on{Stk} \longto \mathrm{coSCR}^{\oop} : \Spec^{\Delta} \,,
\]
where the right adjoint is given as
\[
\Spec^{\Delta}(A)(B) = \Map_{\mathrm{coSCR}}(A,B) \,.
\]
This functor is fully faithful, and we refer to its essential image as the $\infty$-category of \emph{affine stacks} over $R$.
We denote this as $\mathrm{AffStk}_R$.

\begin{definition}
Modulo some set theoretic technicalities, we may view the $\infty$-category of affine stacks $\on{AffStk}$ as a localization of the category of (pre)-stacks.
In particular, any $X \in \on{Stk}$ that can be written as a small colimit of affine schemes possesses an initial map
\[
X \to \Spec^{\Delta}(R \Gamma(X, \cO))
\]
to an affine stack which is an equivalence on derived global sections.
The right hand side is often referred to as \emph{affinization} of $X$.
\end{definition}

\begin{rmk} \label{rmk:cartesian_square_affine_stacks}
    We isolate a key phenomenon in the theory which will be important to our constructions. Suppose
     \[
  \begin{tikzcd}
    X \arrow[r] \arrow[d] & Y \arrow[d] \\
    Z \arrow[r] & W
  \end{tikzcd}
  \]
  is a Cartesian square of affine stacks.
  Then, by the equivalence of $\on{AffStk}_R$ with  $\mathrm{coSCR}_{R}$, one obtains a cocartesian square
  \[
 \begin{tikzcd}
    R \Gamma( W, \cO) \arrow[r] \arrow[d] & R \Gamma(Y, \cO) \arrow[d] \\
    R \Gamma(Z, \cO) \arrow[r] & R \Gamma( X, \cO)
  \end{tikzcd}
  \]
of cosimplicial commutative rings.
\end{rmk}

\begin{example} \label{ex: BG as an affine stack}
    Suppose that $X= BG$ is a pointed connected affine stack.
    Then the Cartesian diagram of affine stacks
    \[
    \begin{tikzcd}
    G \arrow[r] \arrow[d] & * \arrow[d] \\
    * \arrow[r] & B G
    \end{tikzcd}
    \]
implies an equivalence
\[
R \Gamma( G, \cO) \simeq \bZ \otimes_{R \Gamma( BG, \cO) } \bZ \,.
\]
\end{example}

\begin{rmk} Let $S \in \on{Stk}_{\bZ}$ be a fixed base stack.
One may study a notion of \emph{relative affine stacks} over $S$.
This is studied in \cite{Mou24} in some depth, particularly when $S = \filstack$.
Specializing to this particular case, one may study the $\infty$-category $\on{coSCR}_{\filstack}$ of cosimplicial commutative algebras over $\filstack$.
By the Rees equivalence, see Section~\ref{sec: geom of filt}, this is the same as cosimplicial commutative algebras in $\Fil(\Mod_\bZ)$.
There will be a fully faithful embedding
\[
\Spec^{\Delta}: \on{coSCR}_{\filstack} \to \on{Stk}_{\filstack}
\]
with essential image given by the $\infty$-category of affine stacks over $\filstack$.
Remark~\ref{rmk:cartesian_square_affine_stacks} holds true when replacing $R$ with $S$.
\end{rmk}

\begin{example}
    We give a parametrized variant of Example~\ref{ex: BG as an affine stack}.
    Let $X= BG$ be a pointed connected affine stack over $\filstack$.
    Then there is a Cartesian diagram of affine stacks
    \[
     \begin{tikzcd}
    G \arrow[r] \arrow[d] & \filstack \arrow[d] \\
    \filstack \arrow[r] & B G
    \end{tikzcd}
    \]
which will imply an equivalence
\[
R\Gamma(G, \cO) \simeq L_0 \bZ  \otimes_{R\Gamma(BG, \cO)} L_0 \bZ
\]
of cosimplicial commutative algebras in filtered $\bZ$-modules.
We will see in Section \ref{sec: affineness of deloopings of hilbert} that the filtered Hilbert additive group will be of this form and thus also the degree filtration on integer-valued polynomials.
\end{example}

\begin{rmk}
  As mentioned in the introduction, from a more conceptual point of view, one could argue that this paper is part of a larger study of trying to understand exactly what the proper definition of an affine stack in the setting of spectral algebraic geometry should be.
\end{rmk}

\subsection{The geometry of filtrations} \label{sec: geom of filt}

In this section, we summarize some of the main results of~\cite{geometryoffiltr} for the convenience of the reader.
The main players of what follows are the affine schemes:
\[
\bA^1 = \Spec(\bS[\bN]) \och \bG_m = \Spec(\bS[\bZ]) \,.
\]
Note that the group completions map $\bN \to \bZ$ induces a map $\bG_m \to \bA^1$ of spectral schemes.
Moreover, the natural numbers is a comodule over the integers via the coaction
\[
\bN \longto \bN \times \bZ \,, \quad n \mapsto (n,n) \,,
\]
and this induces an action
\[
\bG_m \times \bA^1 \to \bA^1
\]
of stacks.
We will define $\bA^1 / \bG_m$ to be the quotient, in the $\infty$-category of fpqc sheaves on connective commutative algebras, with respect to this action.
See for details~\cite{SAG}*{Appendix B}.
The quotient $\bA^1 / \bG_m$ is a geometric stack, so its $\infty$-category of quasi-coherent sheaves is well-behaved; in particular, it is presentable stable $\infty$-category with a symmetric monoidal structure by Proposition \ref{prop geometricstack}.

Since $\bG_m$ is a group scheme, we can also consider its classifying stack $B\bG_m$.
The second author proves the following equivalences.

\begin{proposition}[Moulinos]
  We have equivalences
      \[
      \QCoh(B\bG_m) \simeq \Mod_{\bS}^{\gr} \och \QCoh(\bA^1 / \bG_m) \simeq \Filt(\Sp)
      \]
  of symmetric monoidal $\infty$-categories.
\end{proposition}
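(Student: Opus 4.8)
The plan is to reduce both statements to $\bG_m$-equivariant quasi-coherent sheaves and then read off the combinatorics. Since $B\bG_m$ is a geometric stack, fpqc descent along the atlas $\Spec(\bS)\to B\bG_m$ gives
\[
\QCoh(B\bG_m)\simeq\lim_{\Delta}\Mod_{\cO(\bG_m)^{\otimes\bullet}}(\Sp)\simeq\on{coMod}_{\cO(\bG_m)}(\Mod_{\bS})\,,
\]
comodules over the bialgebra $\cO(\bG_m)=\bS[\bZ]$. Now $\bS[\bZ]\simeq\bigoplus_{n\in\bZ}\bS$ with the weight-one generator grouplike, so a coaction $M\to M\otimes\bS[\bZ]\simeq\bigoplus_n M$ is, by counitality and coassociativity, forced to be the inclusion of the summands of a decomposition $M\simeq\bigoplus_n M_n$; this is exactly a graded spectrum. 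Tracing the (diagonal-coaction) tensor product of comodules through this dictionary yields $(M\otimes N)_n\simeq\bigoplus_{i+j=n}M_i\otimes N_j$, i.e. Day convolution on $\Mod_{\bS}^{\gr}$.

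For the second equivalence, the projection $\bA^1/\bG_m\to B\bG_m$ is affine --- it is $\Spec$ relative to $B\bG_m$ of the equivariant algebra $\cO(\bA^1)$ --- so
\[
\QCoh(\bA^1/\bG_m)\simeq\Mod_{\cO(\bA^1)}(\QCoh(B\bG_m))\simeq\Mod_{\bS[\bN]}(\Mod_{\bS}^{\gr})\,,
\]
where, under the first equivalence, $\cO(\bA^1)=\bS[\bN]$ becomes the graded $\bE_\infty$-ring $\bigoplus_{n\ge 0}\bS(n)$: the generator dual to $1\in\bN$ sits in weight one since $\bN$ is a $\bZ$-comodule via $n\mapsto(n,n)$. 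One then invokes the (spectral) Rees equivalence: a graded module over $\bigoplus_{n\ge 0}\bS(n)$ is a $\bZ$-indexed diagram of spectra with all transition maps raising weight by one, which, after the reindexing $\Fun(\bZ,\Sp)\simeq\Fun(\bZ^{\oop},\Sp)$, is precisely a filtered spectrum; moreover this equivalence carries $-\otimes_{\bS[\bN]}-$ to the filtered Day convolution $(X\otimes Y)^n\simeq\colim_{i+j\le n}X^i\otimes Y^j$. The units match: $\bS$ in weight $0$, respectively the constant filtration $L_0\bS$.

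The load-bearing step, where I would take the most care, is the monoidal bookkeeping: that the descent presentation of $\QCoh(B\bG_m)$ is symmetric monoidal and carries the tensor of sheaves to Day convolution on gradings, and that the Rees equivalence $\Mod_{\bS[\bN]}(\Mod_{\bS}^{\gr})\simeq\Filt(\Sp)$ is itself symmetric monoidal --- here the appearance of a \emph{colimit} in the convolution formula (rather than a direct sum) must be matched against the relative tensor product over $\bS[\bN]$. The remaining ingredients --- fpqc descent for geometric stacks, affineness of $\bA^1/\bG_m\to B\bG_m$, and the comodule-versus-grading dictionary --- are routine once the geometricity facts recalled above are in place.
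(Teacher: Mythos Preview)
Your approach is correct and is essentially the standard one: descent along the atlas $\Spec(\bS)\to B\bG_m$ identifies $\QCoh(B\bG_m)$ with $\bS[\bZ]$-comodules, which unwind to gradings; then affineness of $\bA^1/\bG_m\to B\bG_m$ reduces the second statement to the Rees equivalence $\Mod_{\bS[\bN]}(\Mod_{\bS}^{\gr})\simeq\Filt(\Sp)$. Note that the paper does not actually supply its own proof of this proposition---it is quoted from \cite{geometryoffiltr} and the surrounding section explicitly omits proofs in favor of referring to that source---so there is nothing to compare against beyond saying your outline matches the argument in the cited reference. Your caveat about the symmetric monoidal bookkeeping is well placed: the identification of $\bS[\bZ]$-comodules with gradings in the $\infty$-categorical setting, and the monoidality of the Rees dictionary, are where the genuine work lies and are handled carefully in \cite{geometryoffiltr}.
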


We will refer to the functor which assigns to a filtered spectrum a quasi-coherent sheaf over $\bA^1 / \bG_m$ as the \emph{Rees construction}.
In particular, we will denote the symmetric monoidal functor by
\[
\mathrm{Rees} : \Filt(\Sp) \longto \QCoh(\filstack) \,.
\]
Geometrically, the stack $\bA^1 / \bG_m$ is built out of two points:
\begin{itemize}
  \item There is a unique map of stacks $1 : \Spec(\bS) \to \bA^1 / \bG_m$ which induces a functor $1^* : \QCoh(\bA^1 / \bG_m) \to \Sp$ which corresponds to the colimit functor under the Moulinos equivalence. This point is called the \emph{generic} point.
  \item There is a unique map of stacks $0 : B\bG_m \to \bA^1 / \bG_m$ which induces a functor $0^* : \QCoh(\bA^1 / \bG_m) \to \QCoh(B\bG_m)$ which corresponds to the total associated graded under the Moulinos equivalences. This point is called the \emph{closed} point.
\end{itemize}

Motivated by the above, we make the following definition.

\begin{definition}
A \emph{graded stack} is a stack over $B\bG_m$.
A \emph{filtered stack} is a stack over $\bA^1 / \bG_m$.
If $X$ is a filtered stack, then the \emph{associated graded stack} of $X$, denoted $X^{\mathrm{gr}}$, is the base-change of $X$ along the closed point.
The \emph{underlying stack} $X^u$ is the pullback along the generic point.
\end{definition}

\begin{definition}
  \leavevmode
  \begin{itemize}
    \item We will denote the composition
    \[
    \CAlg(\Filt(\Sp)) \overset{\mathrm{Rees}}\longto \CAlg(\QCoh(\bA^1 / \bG_m)) \overset{\Spec}\longto \mathrm{Stk}_{\bA^1 / \bG_m}
    \]
    by $\mathrm{fSpec}$.
    \item We will denote the composition
    \[
    \CAlg(\Mod_{\bS}^{\gr}) \longto \CAlg(\QCoh(B\bG_m)) \overset{\Spec}\longto \mathrm{Stk}_{B\bG_m}
    \]
    by $\mathrm{grSpec}$.
  \end{itemize}

\end{definition}

In particular, we will be applying this construction to the synthetic sphere spectrum, and will often be working over this base.
This will fit into the following diagram of pullback squares:
  \[
  \begin{tikzcd}
    \Spec(\bS) \arrow[r] \arrow[d,equals] & \mathrm{fSpec}(\bS^{\syn}) \arrow[d] & \arrow[l] \mathrm{grSpec}(\bS^{\syn}) \arrow[d] \\
    \Spec(\bS) \arrow[r] & \filstack & \arrow[l] B\bG_m \,.
  \end{tikzcd}
  \]

\section{Formal Groups and Cartier Duality} \label{section cartier duality}

In this section, we recall some basics on formal groups, mainly following~\cite{elliptic2}*{Section 1}.
In \S \ref{sec: coalgebras}, we review the notion of smooth coalgebras and formal hyperplanes.
In \S \ref{sec: formal groups and shit}, we review the definition of formal groups as abelian group objects in formal hyperplanes.
Finally in \S \ref{sec: Cartier duality}, we recall the duality of Cartier between affine groups and formal group schemes.
We will later use this to see that the Hilbert additive group is Cartier dual to the formal multiplicative group, even when working over the sphere.

\subsection{Coalgebras} \label{sec: coalgebras}

We begin with some brief recollections on coalgebras.
We state our definitions in the generality of working over a fixed connective $\bE_\infty$-ring $R$.
Any coalgebras that we work with are assumed to be commutative\footnote{This is often called cocommutative, but since we use~\cite{elliptic2} as our primary source we follow Lurie's terminology and call these commutative rather than cocommutative.}, and we will consistently write \emph{coalgebra} when we mean \emph{cocommutative coalgebra}.
Recall that the $\infty$-category of commutative coalgebras over $R$ is defined as
\[
\mathrm{cCAlg}_R = \CAlg(\Mod_{R}^{\oop})^{\oop} \,.
\]

\begin{definition} \label{definition smooth coalgebra}
Let $C$ be a coalgebra over $R$.
We will say that $C$ is \emph{smooth} if it is flat as an $R$-module\footnote{Recall that this means that $\pi_0 C$ is flat over $\pi_0 R$ in the classical sense and that $\pi_n C \cong \pi_0 C \otimes_{\pi_0 R} \pi_n R$.} and if the coalgebra $\pi_0 C$ is smooth as a coalgebra over $\pi_0 R$.
Recall that smoothness of $\pi_0 C$ over $\pi_0 R$ means that it is isomorphic to the divided power algebra
\[
\Gamma^*_{\pi_0 R}(M) = \bigoplus_{n \geq 0} \Gamma^n_{\pi_0 R}(M) = \bigoplus_{n \geq 0} (M^{\otimes_{\pi_0 R} n})^{\Sigma_n}
\]
for some finitely generated projective $\pi_0 R$-module $M$.
The $\infty$-category of flat coalgebras over $R$ will be denoted~$\mathrm{cCAlg}_R^{\flat}$ and the $\infty$-category of smooth coalgebras over $R$ is denoted $\mathrm{cCAlg}_R^{\mathrm{sm}}$.
These are both full subcategories of $\mathrm{cCAlg}_R$.
\end{definition}

Note that if $C$ is a coalgebra over $R$, then the $R$-linear dual
\[
C^{\vee} = \hom_R(C,R)
\]
inherits the structure of an $\bE_\infty$-algebra over $R$.
The passage from a coalgebra to its linear dual involves a loss of information, which can be dealt with by additionally equipping the linear dual with a topology.
In the case that $C$ is smooth, Lurie shows that $C^{\vee}$ can be equipped with the structure of an adic $\bE_\infty$-algebra by using the \emph{nilradical topology} on $\pi_0 C^{\vee}$.
He then shows that
\[
(-)^{\vee} : (\mathrm{cCAlg}_R^{\mathrm{sm}})^{\oop} \longto \CAlg_R^{\mathrm{ad}}
\]
 is a fully faithful embedding, where the target denotes the $\infty$-category of adic $\bE_\infty$-algebras over $R$.
 We will not cover this in detail and instead refer the interested reader to~\cite{elliptic2}*{Section 1.3}.

\subsection{Formal groups} \label{sec: formal groups and shit}

Having covered coalgebras, we now want to give a definition of a formal group over $R$.
We first note that for every morphism $f: R \to A$ of connective $\bE_\infty$-rings, the extension of scalars functor
\[
f_! : \Mod_R \to \Mod_A \,, \quad M \mapsto A \otimes_R M
\]
is symmetric monoidal, so if $C$ is a coalgebra over $R$, then $A \otimes_R C$ can be regarded as a coalgebra over $A$.

\begin{definition}
Let $C$ be a flat coalgebra over $R$.
We define the \emph{cospectrum of $C$} to be the functor
\[
\on{cSpec}(C): \on{CAlg}_R \to \cS \,, \quad A \mapsto \on{GLike}(A \otimes_{R}C) = \on{Map}_{\on{cCAlg}_A}(A, A \otimes_R C) \,.
\]
\end{definition}

The cospectrum construction on a smooth coalgebra can be compared to the formal spectrum construction on its linear dual.
Recall that if $A$ is an adic $\bE_\infty$-algebra over $R$, then its \emph{formal spectrum} $\Spf(A)$ denote the subfunctor of
\[
\Spec(A) : \CAlg_R \to \cS \,, \quad B \mapsto \Map_{\CAlg_R}(A,B)
\]
spanned by those maps $A \to B$ for which the underlying ring homomorphism $\pi_0 A \to \pi_0 B$ is continuous, viewing~$\pi_0 B$ as equipped with the discrete topology.
If $C$ is a smooth coalgebra, so that its linear dual $C^{\vee}$ is an adic $\bE_\infty$-algebra, one has a natural equivalence $\mathrm{cSpec}(C) \simeq \mathrm{Spf}(C^{\vee})$.
This fact can be used to prove the following.

\begin{proposition} (\cite[Proposition 1.5.9]{elliptic2})
The cospectrum functor induces a fully faithful embedding
\[
\on{cSpec}: \on{cCAlg}^{\flat}_R \to \Fun(\CAlg_{R}, \cS)
\]
of $\infty$-categories when restricted to smooth coalgebras.
\end{proposition}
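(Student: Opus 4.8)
The plan is to factor $\on{cSpec}$ through the linear-dual embedding and reduce the statement to one about formal spectra of adic $\bE_\infty$-algebras. Recall from the preceding discussion that for a smooth coalgebra $C$ there is a natural equivalence $\on{cSpec}(C) \simeq \Spf(C^{\vee})$, and that $(-)^{\vee} \colon (\mathrm{cCAlg}_R^{\mathrm{sm}})^{\oop} \to \CAlg_R^{\mathrm{ad}}$ is fully faithful. Hence $\on{cSpec}$ is the composite
\[
\mathrm{cCAlg}_R^{\mathrm{sm}} \overset{(-)^{\vee}}{\longto} \bigl(\CAlg_R^{\mathrm{ad}}\bigr)^{\oop} \overset{\Spf}{\longto} \Fun(\CAlg_R, \cS) \,,
\]
so it suffices to show that $\Spf$ is fully faithful on the essential image of $(-)^{\vee}|_{\mathrm{sm}}$. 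Concretely, for smooth coalgebras $C$ and $D$ one must show that the canonical map
\[
\Map_{\CAlg_R^{\mathrm{ad}}}(D^{\vee}, C^{\vee}) \longto \Map_{\Fun(\CAlg_R, \cS)}\bigl(\Spf(C^{\vee}), \Spf(D^{\vee})\bigr)
\]
is an equivalence; equivalently, one must recover the adic $\bE_\infty$-algebra $C^{\vee}$, together with its topology, functorially from $\Spf(C^{\vee})$.

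The tool for this is the ind-affine presentation of formal spectra. Smoothness of $C$ forces $C^{\vee}$ to be complete with respect to a finitely generated ideal of definition $I$ --- locally on $\Spec \pi_0 R$ one has $\pi_0 C^{\vee} \cong \pi_0 R[[x_1, \dots, x_m]]$ with $I$ the augmentation ideal --- and $C^{\vee} \simeq \lim_n C^{\vee}/I^n$, the quotients $C^{\vee}/I^n$ being finite flat $R$-algebras with nilpotent ideal of definition; dually, $C \simeq \colim_n C_{\le n}$ is the filtered colimit of its finite flat sub-coalgebras, e.g.\ the truncations $\bigoplus_{j \le n}\Gamma^j$ in the divided-power grading. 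This yields a presentation
\[
\Spf(C^{\vee}) \simeq \colim_n \Spec\bigl(C^{\vee}/I^n\bigr)
\]
of the formal spectrum as a filtered colimit of affine schemes along closed immersions, and likewise for $D^{\vee}$.

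Granting this, the mapping-space computation is formal. Each $\Spec(C^{\vee}/I^n)$ is corepresentable, so the Yoneda lemma gives
\[
\Map\bigl(\Spec(C^{\vee}/I^n), \Spf(D^{\vee})\bigr) \simeq \Spf(D^{\vee})\bigl(C^{\vee}/I^n\bigr) \simeq \Map_{\CAlg_R^{\mathrm{ad}}}\bigl(D^{\vee}, C^{\vee}/I^n\bigr) \,,
\]
the last identification holding because $\Spf(D^{\vee})$ is the subfunctor of $\Spec(D^{\vee})$ selecting the maps that carry the ideal of definition of $D^{\vee}$ into the nilradical of the target --- which, for $C^{\vee}/I^n$ with its nilpotent (hence discrete) adic topology, is exactly the condition of continuity. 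Taking the limit over $n$, using the displayed presentation on the source, $C^{\vee} \simeq \lim_n C^{\vee}/I^n$ on the target, and the fact that a continuous map into a complete adic algebra is the same datum as a compatible family of maps into its discrete quotients, one obtains $\Map(\Spf C^{\vee}, \Spf D^{\vee}) \simeq \Map_{\CAlg_R^{\mathrm{ad}}}(D^{\vee}, C^{\vee})$; a check of naturality identifies this with the canonical map above, and combining with the full faithfulness of $(-)^{\vee}$ proves the proposition. One can also bypass $\Spf$ altogether: a grouplike element of $A \otimes_R C$ spans a rank-one sub-coalgebra, so it is already detected at some finite stage $C_{\le n}$, giving directly $\on{cSpec}(C) \simeq \colim_n \Spec(C_{\le n}^{\vee})$, after which the same Yoneda argument applies.

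The main obstacle is setting up, and then legitimately exploiting, the ind-affine presentation of $\Spf(C^{\vee})$ in the derived setting: one must know that $\colim_n \Spec(C^{\vee}/I^n)$ genuinely computes $\Spf(C^{\vee})$ as a functor on $\CAlg_R$ --- the step where flatness of $C$ is indispensable, since it keeps the quotients $C^{\vee}/I^n$ and their $\mathrm{Tor}$-behavior classical and makes the sub-coalgebra filtration of $C$ one by flat pieces --- and that morphisms from these Artinian quotients into $\Spf(D^{\vee})$ are precisely the continuous ring maps. The remaining ingredients, namely the Yoneda lemma for $\Spec$ viewed as a functor to spaces and the coalgebra/adic-algebra duality, are either formal or already recorded in the discussion above.
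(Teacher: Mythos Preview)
The paper does not prove this proposition at all: it is stated with a bare citation to \cite[Proposition 1.5.9]{elliptic2} and no argument is given. So there is nothing in the paper to compare your approach against beyond the two facts recorded in the surrounding text (the natural equivalence $\on{cSpec}(C)\simeq\Spf(C^{\vee})$ and the full faithfulness of $(-)^{\vee}$ on smooth coalgebras), both of which you correctly invoke.

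Your sketch is the standard route and is morally correct, but one step deserves more care than you give it. The presentation $\Spf(C^{\vee})\simeq\colim_n\Spec(C^{\vee}/I^n)$ as a colimit \emph{in the functor category} $\Fun(\CAlg_R,\cS)$ is not automatic in the derived setting: a map $C^{\vee}\to A$ whose image of $I$ is nilpotent in $\pi_0 A$ need not factor through any $C^{\vee}/I^n$, since nilpotence in $\pi_0$ does not supply a nullhomotopy of the relevant power. What actually makes the argument go through is the coalgebra side, which you mention only as an afterthought: writing $C\simeq\colim_n C_{\le n}$ as a filtered colimit of \emph{dualizable} sub-coalgebras and showing that $\on{GLike}(A\otimes_R-)$ commutes with this particular colimit (because a grouplike element, being a coalgebra map from the unit, factors through a finite stage). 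That is the substantive point in Lurie's proof, and it is where flatness and the divided-power structure are genuinely used. Your ``bypass $\Spf$'' paragraph is therefore closer to the actual argument than your main line; promoting it and justifying the commutation of $\on{GLike}$ with the filtered colimit would close the gap.
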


\begin{definition}
The essential image of the fully faithful functor $\on{coSpec}: \on{cCAlg}^{\on{sm}}_R \to \Fun(\CAlg_{R}, \cS)$ is denoted by $\on{Hyp}(R)$ and referred to as the $\infty$-category of \emph{formal hyperplanes} over $R$.
By construction, the cospectrum functor hence furnishes an equivalence
\[
\mathrm{cSpec} : \on{cCAlg}^{\on{sm}}_R \simeq \Hyp(R)
\]
between smooth coalgebras and formal hyperplanes.
\end{definition}

We now recall the definition of a formal group.

\begin{definition}
A \emph{formal group} over $R$ is a functor
\[
\formalgroup: \CAlg_R \to \Mod_{\bZ}^{\on{cn}}
\]
with the property that the composition
\[
\CAlg_R \xrightarrow{\formalgroup} \Mod_{\bZ}^{\on{cn}} \xrightarrow{\Omega^{\infty}} \cS
\]
is a formal hyperplane.
We let $\mathrm{FGroup}(R)$ denote the full subcategory of $\Fun(\CAlg_R, \Mod_{\bZ}^{\on{cn}})$ spanned by the formal groups over $R$.
\end{definition}

Given a formal group $\formalgroup$ over $R$, we refer to the functor $ X = \Omega^\infty \circ \formalgroup$ as its underlying formal hyperplane.
By definition, we can write $X = \mathrm{cSpec}(C)$ for some smooth coalgebra $C$.
We will write $\mathcal{O}_{\formalgroup}$ for the linear dual of $C$ regarded as an adic $\bE_\infty$-algebra.
We refer to $\mathcal{O}_{\formalgroup}$ as the \emph{$\bE_\infty$-ring of functions} of $\formalgroup$.

\begin{rmk} \label{rmk hopf algebra of formal group}
Let $\formalgroup$ be a formal group.
By construction, there exists a smooth coalgebra $C$ such that
\[
\on{cSpec}(C) \simeq \Omega^{\infty} \circ \formalgroup \,.
\]
The group structure on the functor $\formalgroup$, i.e.  the fact that it lands in connective $\bZ$-modules, implies that $C$ is in fact a bicommutative bialgebra.
\end{rmk}

\begin{example}
Recall the multiplicative formal group $\widehat{\bG_m}$ over $\bZ$.
As a formal hyperplane, this is of the form
$\Spf(\bZ [[ t ]])$, where $\bZ [[ t ]]$ is  endowed with comultiplication
\[
t \mapsto x + y + xy \,.
\]
This lifts to a formal group over $\bS$, also unambiguously denoted as $\widehat{\bG_m}$, whose values on $R \in \CAlg^\cn$ are given by
\[
\widehat{\bG_m}(R) = \fib(\bG_m(R) \to \bG_m(R^{\on{red}}))
\]
Here, the fiber is taken in connective $\bZ$-modules, and $\bG_m \simeq \Spec(\bS[\bZ])$ is the \emph{strict} multiplicative group.
\end{example}

\begin{example} \label{example formal additive}
Recall the additive formal group $\widehat{\bG_a}$.
This is well--known to be equipped with group law $ t \mapsto x + y$.
By \cite[Proposition 1.6.20]{elliptic2}, this does not lift to a formal group over $\bS$.
Namely it is shown there that there is no formal hyperplane over $\bS$, equipped with a unital multiplication, which base-changes via the map $ \bS \to \bZ$ to the underlying formal hyperplane of $\widehat{\bG_a}$.
In fact, this is shown  even over $\tau_{\leq 1} \bS$.
\end{example}

\begin{rmk} \label{remark : obstruction to lift}
For us, the fact that $\widehat{\bG_a}$ does not lift to the sphere spectrum presents obstructions to lifting the various $\bZ$-filtrations of \cite{moulinos2019universal, Mou24Cartier} na{\"\i}vely to the sphere spectrum.
\end{rmk}

\subsection{Cartier duality} \label{sec: Cartier duality}
Let $R$ be a commutative ring, in the classical sense.
Cartier duality is an anti-equivalence of categories between formal groups over $R$ and affine group schemes over $R$, and originally appeared in~\cite{Car62}.
We sketch how to informally think of this anti-equivalence using the cospectrum functor introduced in the previous section.
Let $\formalgroup$ be formal group over $R$.
By Remark~\ref{rmk hopf algebra of formal group}, we can extract a bicommutative bialgebra $H$, whose underlying coalgebra is smooth, such that $\Omega^\infty \circ \formalgroup \simeq \mathrm{cSpec}(H)$.
We take the perspective in this paper that Cartier duality is implemented by the assignment
\[
 \mathbf{D} : \mathrm{FGroup}(R)^{\oop} \longto \mathrm{AffGrp}_R \,, \quad \formalgroup \mapsto \Spec(H) \,.
\]
Let us unpack this from a different vantage point.
Then its algebra of functions $\cO_{\formalgroup}$ is a complete adic algebra, equipped with a comultiplication making it a complete Hopf algebra.
Taking the continuous $\bZ$-linear dual of this gives rise to a coalgebra
\[
H_{\formalgroup} = \hom_{\mathrm{cont}}(\cO_{\formalgroup}, \bZ),
\]
where $\bZ$ is endowed with the discrete topology. This coalgebra will be equivalent to $H$.
Since $\cO_{\formalgroup}$ is moreover equipped with a coalgebra structure, this gives rise to a Hopf algebra structure on $H_{\formalgroup} \simeq H$.
We will often refer to this object as the Hopf algebra of \emph{distributions of  $\formalgroup$}.

\begin{theorem} (Cartier duality)
There exists an anti-equivalence of categories  between the category of formal groups and the category of affine group schemes whose underlying Hopf algebra of functions is smooth as a coalgebra.
\end{theorem}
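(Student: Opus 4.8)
The plan is to identify both categories in the statement with a single intermediate category --- that of bicommutative Hopf algebras over $R$ whose underlying coalgebra is smooth in the sense of Definition~\ref{definition smooth coalgebra} --- covariantly on the side of formal groups and contravariantly on the side of affine group schemes, and then to observe that the resulting composite anti-equivalence is exactly the functor $\mathbf{D}\colon \formalgroup \mapsto \Spec(H)$ described above.

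First I would treat the formal-group side. By \cite[Proposition 1.5.9]{elliptic2}, the cospectrum functor restricts to an equivalence $\on{cSpec}\colon \mathrm{cCAlg}_R^{\mathrm{sm}} \simeq \Hyp(R)$, and I would upgrade this to an equivalence on abelian group objects. The key point is that $\on{cSpec}$ preserves finite products: the product of two formal hyperplanes, computed in $\Fun(\CAlg_R,\cS)$, corresponds under $\on{cSpec}$ to the relative tensor product $C\otimes_R D$ of the associated smooth coalgebras, which is again smooth since a divided power algebra on a direct sum is the tensor product of the divided power algebras on the summands; in particular $\mathrm{cCAlg}_R^{\mathrm{sm}}$ is closed under finite products in $\mathrm{cCAlg}_R$. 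Combining this with the fact --- built into the notion of formal group used in \cite{elliptic2} --- that a commutative group structure on a formal hyperplane refines uniquely to a lift along $\Omega^\infty\colon \Mod_\bZ^{\mathrm{cn}}\to\cS$, I obtain
\[
\mathrm{FGroup}(R)\ \simeq\ \Ab\big(\Hyp(R)\big)\ \simeq\ \Ab\big(\mathrm{cCAlg}_R^{\mathrm{sm}}\big),
\]
the last category being precisely that of bicommutative Hopf algebras over $R$ with smooth underlying coalgebra --- an abelian group structure on a coalgebra being exactly a bialgebra structure together with an antipode, cf.\ Remark~\ref{rmk hopf algebra of formal group}. Unwinding, this equivalence sends $\formalgroup$ to the distribution bialgebra $H$ characterised by $\Omega^\infty\circ\formalgroup\simeq\on{cSpec}(H)$.

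Next I would treat the affine-group-scheme side, which is more elementary. Since $\Spec\colon(\CAlg_R)^{\oop}\to\mathrm{Stk}_R$ is fully faithful, an affine group scheme over $R$ is the same datum as a Hopf algebra over $R$ --- its Hopf algebra of functions --- contravariantly. The hypothesis that this Hopf algebra of functions be smooth as a coalgebra forces it, in particular, to be cocommutative, hence a bicommutative Hopf algebra with smooth underlying coalgebra; conversely every such algebra arises this way. Thus, using once more that $\mathrm{cCAlg}_R^{\mathrm{sm}}$ is closed under finite products, the full subcategory of $\mathrm{AffGrp}_R$ appearing in the statement is anti-equivalent to $\Ab\big(\mathrm{cCAlg}_R^{\mathrm{sm}}\big)$, the group scheme $\Spec(A)$ corresponding to $A$. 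Composing the two identifications yields an anti-equivalence between $\mathrm{FGroup}(R)$ and the full subcategory of $\mathrm{AffGrp}_R$ spanned by the group schemes whose Hopf algebra of functions is smooth as a coalgebra, and by construction it carries $\formalgroup$ to $\Spec(H)$; that is, it is the functor $\mathbf{D}$.

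The step I expect to be the main obstacle is the claim that $\on{cSpec}$ upgrades to an equivalence on abelian group objects. Concretely this reduces to two inputs from \cite{elliptic2}: that $\on{cSpec}$ preserves finite products --- cleanest to see via the identification $\on{cSpec}(C)\simeq\Spf(C^\vee)$ together with the behaviour of continuous linear duality on tensor products of smooth coalgebras --- and that commutative group objects in $\Hyp(R)$ are precisely formal groups. Granting these, the remainder is bookkeeping: matching up the two smoothness conditions and checking that the antipodes are transported compatibly.
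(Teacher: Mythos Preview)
The paper does not prove this theorem; it is stated as a classical result (with a pointer to Cartier's original work \cite{Car62}) and serves purely as background for the discussion of $\mathbf{D}$ that follows. There is therefore no argument in the paper to compare your proposal against.

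That said, your outline is the standard way to see the statement and is correct. Both sides are identified with bicommutative Hopf algebras over $R$ whose underlying coalgebra is smooth---the formal-group side covariantly via $\on{cSpec}$, the affine-group-scheme side contravariantly via $\Spec$---and the composite is exactly the functor $\mathbf{D}$ described just before the theorem. One small mismatch worth flagging: the paper places this theorem explicitly in the classical setting (``Let $R$ be a commutative ring, in the classical sense''), so your appeals to \cite{elliptic2} for the product-preservation of $\on{cSpec}$ and for the passage from commutative group objects in $\Hyp(R)$ to $\Mod_\bZ^{\mathrm{cn}}$-valued functors are more machinery than is strictly needed; in the $1$-categorical situation these reduce to elementary facts about divided power coalgebras and abelian group objects in a cartesian category. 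But the structure of the argument is right, and nothing in it would fail.
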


\begin{rmk}
Let $R$ be a connective $\bE_\infty$-ring.
In this case, given a formal group $\formalgroup = \on{coSpec}(H)$, we still obtain a Hopf algebra structure on $H$, so it makes sense to consider $\Spec(H)$ as the Cartier dual of $\formalgroup$.
However, this does not capture all the structure alone.
Indeed, recall that $\formalgroup$ is an abelian group object, so that it takes values in connective $\bZ$-modules.
This structure is not a priori detected by the representable functor $\Spec(H)$.
\end{rmk}

We now recall another perspective of Cartier duality, namely that of duality between group-valued functors, as studied in \cite[Chapter 3]{elliptic1}.
We may use this perspective to keep track of the  Cartier dual $\mathbf{D}(\formalgroup)$ as a functor valued in connective $\bZ$-modules, and not just a functor valued in spaces, addressing the potential issue raised above.

\begin{const}
    The $\infty$-category  $\Fun(\CAlg, \Mod^{\cn}_\bZ)$, being presentable, acquires  a closed monoidal structure from the monoidal structure on $\Mod^{\cn}_{\bZ}$.
    In particular, for any object $X \in \Fun(\CAlg, \Mod^{\cn}_\bZ)$, there exists another object
    \[
    \mathbf{D}(X) = \hom_{\Fun(\CAlg, \Mod^{\cn}_\bZ)}(X, \bG_m)
    \]
    equipped with a map $\alpha: X \wedge \mathbf{D}(X) \to \bG_m$ along with the following universal property: for every functor $Y \in \Fun(\CAlg, \Mod^{\cn}_\bZ)$, composition with $\alpha$ induces the following homotopy equivalence:
    \[
    \Map_{\Fun(\CAlg, \Mod^{\cn}_\bZ)}(Y, \mathbf{D}X) \simeq \Map_{\Fun(\CAlg, \Mod^{\cn}_\bZ)}(Y \wedge X, \bG_m) \,.
    \]
\end{const}

\begin{rmk}
In \cite{elliptic1}, Lurie introduces Cartier duality for group-like commutative monoid objects as opposed to abelian group objects in stacks.
In this setting, one sets $\hom_{\Fun(\CAlg, \on{CMon}^{\on{grp}})}(-, \GL_1)$ as opposed to $\hom_{\Fun(\CAlg, \Mod^{\cn}_\bZ)}(X, \bG_m)$.
However, the requisite formal properties of the two categories for forming internal mapping objects are the same, and so the above holds in the setting of abelian group objects.
Moreover, when the input $X$ is a $\Mod^{\cn}_{\bZ}$-valued functor, there will be a natural factorization $X(R) \to \bG_m(R) \to \GL_1(R)$ for all $R \in \CAlg^{\cn}$ via~\cite[Remark 1.6.12]{elliptic2}, so the underlying stacks of morphisms will be equivalent.
\end{rmk}

\begin{rmk}
    It is shown in \cite[Proposition 3.5.6]{elliptic1} that if $X \simeq \Spec(H)$ for a bialgebra $H$, then
    \[\mathbf{D}(X) \simeq \on{cSpec}(H) \,.
    \]
    If moreover the underlying module of $H$ is dualizable, then $\on{cSpec}(H) \simeq \Spec(H^\vee)$, and we can conclude that $\mathbf{D}^{2}(X) \simeq X$.
    The anti-equivalence in  \cite[Theorem 1.3.15]{elliptic2} of smooth coalgebras and adic $\bE_{\infty}$-algebras given by $C \mapsto C^{\vee}$ implies that there is also an equivalence
    \[
    \mathbf{D}^2(X) \simeq  X \simeq \Spec(H)
    \]
    when $H$ is a bialgebra whose underlying coalgebra is smooth.
    Thus we infer that  $\mathbf{D}(\on{cSpec}(H_{\formalgroup})) \simeq \Spec(H_{\formalgroup})$ for a formal group $\formalgroup$. By the previous remark, this $\Spec(H_{\formalgroup})$ inherits an abelian group structure, as well.
\end{rmk}

\section{The Hilbert Additive Group Scheme}

In this section, we recall the integral story of the Hilbert additive group scheme.
In \S \ref{sec:Hilb_add_grp_sch}, we describe how it appears and how it corresponds to the formal multiplicative group under Cartier duality.
In \S \ref{hilbert scheme and filtered cicle}, we recall how this object can be lifted to a filtered stack $\bH_{\fil}$, following ideas from \cite{moulinos2019universal}.
In \S \ref{sec: witt vector model}, motivated by To\"{e}n's work in \cite{Toe20}, we give a new Witt vector model for $\bH_{\fil}$.
In \S \ref{sec: affineness of deloopings of hilbert}, we show that the deloopings $K(\bH_{\fil}, n)$ are all affine stacks over $\filstack$.
We use this to give an alternate description of the degree filtration on the integer-valued polynomials as the bar construction on the Postnikov filtration on $\bZ^{S^1}$.

\subsection{The Hilbert additive group scheme}
\label{sec:Hilb_add_grp_sch}

Consider the free binomial ring on one generator, denoted $\on{Int}(\bZ)$.
This is simply the subring of $\bQ[x]$ consisting of those polynomial $f$ such that $f(\bZ) \subset \bZ$.
Because of this reason, we will refer to it as the \emph{ring of integer-valued polynomials}.
This is a free $\bZ$-module with basis being the Hilbert polynomials
\[
{ x \choose n } = \frac{x(x-1)\cdots(x-n+1)}{n!} \,.
\]
Note that this inherits a Hopf algebra structure from the usual coproduct on $\bQ[x]$ in which $x$ is primitive.

\begin{definition}
  The \emph{Hilbert additive group scheme} is the commutative group scheme
  \[
  \bH = \Spec\left(\on{Int}(\bZ)\right) \,
  \]
  over $\Spec(\bZ)$.
\end{definition}

\begin{proposition}
  The Hilbert additive group scheme is the Cartier dual of the multiplicative formal group $\widehat{\bG}_m$.
  \begin{proof}
    See for example~\cite[Proposition C.1.2]{Dri23}.
  \end{proof}
\end{proposition}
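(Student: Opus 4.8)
The plan is to identify $\Int(\bZ)$, together with its bicommutative bialgebra structure, with the distribution bialgebra $H_{\widehat{\bG}_m}$ of the multiplicative formal group; the proposition then follows, since with the conventions of Section~\ref{section cartier duality} the Cartier dual of $\widehat{\bG}_m = \cSpec(H_{\widehat{\bG}_m})$ is $\Spec(H_{\widehat{\bG}_m})$, and an isomorphism of bialgebras induces an isomorphism of the associated affine group schemes, compatible with their group structures.

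First I would make $H_{\widehat{\bG}_m}$ explicit. The ring of functions of $\widehat{\bG}_m$ over $\bZ$ is $\cO_{\widehat{\bG}_m} = \bZ[[t]]$, endowed with its $t$-adic topology, the usual power-series multiplication, the comultiplication determined by $\Delta(1+t) = (1+t)\,\widehat{\otimes}\,(1+t)$, and the counit $t \mapsto 0$, making it a complete topological Hopf algebra. Its continuous $\bZ$-linear dual (with $\bZ$ discrete), $H_{\widehat{\bG}_m} = \hom_{\mathrm{cont}}(\bZ[[t]],\bZ)$, is free on the topological dual basis $\{\gamma_n\}_{n \geq 0}$ of $\{t^n\}_{n \geq 0}$. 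Dualizing the multiplication $t^a t^b = t^{a+b}$ shows that the coalgebra structure on $H_{\widehat{\bG}_m}$ is the divided-power coalgebra, with $\Delta(\gamma_n) = \sum_{a+b=n} \gamma_a \otimes \gamma_b$ and $\epsilon(\gamma_n) = \delta_{0,n}$; dualizing the group law gives the product $\gamma_i \cdot \gamma_j = \sum_n c^{ij}_n \gamma_n$, where $c^{ij}_n$ is the coefficient of $t^i \otimes t^j$ in $\Delta(t^n) = \bigl((1+t)\,\widehat{\otimes}\,(1+t) - 1\bigr)^n$, namely $c^{ij}_n = \sum_k (-1)^{n-k}\binom{n}{k}\binom{k}{i}\binom{k}{j}$ (a finite sum, vanishing for $n > i+j$), with unit $\gamma_0$.

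Next I would check that the map $\Int(\bZ) \to H_{\widehat{\bG}_m}$, $\binom{x}{n} \mapsto \gamma_n$, is an isomorphism of bicommutative bialgebras. It is a $\bZ$-module isomorphism, since the $\binom{x}{n}$ form a $\bZ$-basis of $\Int(\bZ)$. For the coalgebra structures: as $x$ is primitive in $\Int(\bZ)$, the Chu--Vandermonde identity $\binom{a+b}{n} = \sum_{i+j=n}\binom{a}{i}\binom{b}{j}$, viewed as a polynomial identity in $a$ and $b$, gives $\Delta\binom{x}{n} = \sum_{i+j=n}\binom{x}{i}\otimes\binom{x}{j}$ and $\epsilon\binom{x}{n} = \binom{0}{n} = \delta_{0,n}$, matching the divided-power structure. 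For the algebra structures: expanding the degree-$(i+j)$ polynomial $\binom{x}{i}\binom{x}{j}$ in the basis $\{\binom{x}{n}\}$ by Newton's forward-difference formula yields coefficients $d^{ij}_n = \sum_k(-1)^{n-k}\binom{n}{k}\binom{k}{i}\binom{k}{j} = c^{ij}_n$, and $1 = \binom{x}{0} \mapsto \gamma_0$. Applying $\Spec$ then gives $\bH = \Spec(\Int(\bZ)) \cong \Spec(H_{\widehat{\bG}_m}) = \mathbf{D}(\widehat{\bG}_m)$. As a consistency check, $\Int(\bZ)$ is the divided-power coalgebra on a free rank-one $\bZ$-module, hence a smooth coalgebra, so this fits the Cartier duality equivalence recalled in Section~\ref{section cartier duality}, and double-dualizing recovers $\cSpec(\Int(\bZ)) \simeq \widehat{\bG}_m$.

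The main difficulty is bookkeeping rather than anything deep: keeping straight which of the two structures on $\bZ[[t]]$ dualizes to the algebra, and which to the coalgebra, structure on $H_{\widehat{\bG}_m}$, and then matching the two combinatorial identities — Chu--Vandermonde on the coalgebra side, the finite-difference expansion on the algebra side. One genuine technical subtlety is that $\Int(\bZ)$ is free of countably infinite rank, hence not dualizable as a $\bZ$-module, so throughout one must work with the continuous (topological) dual of $\bZ[[t]]$ and the equivalence $\cSpec(C) \simeq \Spf(C^\vee)$ for smooth $C$ from Section~\ref{section cartier duality}, rather than with the naive linear dual.
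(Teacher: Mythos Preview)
Your proof is correct and complete. The paper itself gives no argument at all here --- it simply cites Drinfeld's \cite[Proposition C.1.2]{Dri23} --- so you have supplied substantially more than the paper does. In fact your argument is precisely the content sketched in the remark immediately following the proposition in the paper: identify $\Int(\bZ)$ with the continuous dual of $\bZ[[t]]$ by sending $\binom{x}{n}$ to the functional dual to $t^n$ (the paper uses the coordinate $t-1$ on $\bG_m$, you use $t$ on $\widehat{\bG}_m$, which is the same identification up to the change of variable $t \leftrightarrow t-1$). Your explicit verification of the bialgebra structures --- Chu--Vandermonde for the comultiplication, Newton's forward-difference expansion for the multiplication, with the matching structure constants $\sum_k (-1)^{n-k}\binom{n}{k}\binom{k}{i}\binom{k}{j}$ --- is exactly the computation one needs to turn that remark into an honest proof, and it is carried out correctly.
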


\begin{rmk}
Given that the Hilbert additive group scheme is the Cartier dual of the multiplicative formal group, we may recover $\Int(\bZ)$ as the continuous dual $\hom^{\mathrm{cont}}(\bZ[[ x ]], \bZ)$, where $\bZ$ is given the discrete topology and where  we identify $\bZ [[ x ]]$ with the completion $\bZ[t, t^{-1}]^{\wedge}_{(t-1)}$.
As described in \cite[Remark 2.12]{kubrak2023derived}, we may identify each Hilbert polynomial $x \choose n$ as a functional $\delta_{n}: \bZ[t,t^{-1}]/(t-1)^{n+1} \to  \bZ$, which vanishes on $(t-1)^{k}$ for $k < n$ and for which  $\delta_{n}((t-1)^{n})=1$.
Thus, we may think of the basis $\{ {x \choose n} \}_{n \geq 1}$ as giving a dual basis to the topological basis $\{(t-1)^n\}_{n \geq 0}$.
\end{rmk}

\subsection{The Hilbert additive group scheme and the filtered circle} \label{hilbert scheme and filtered cicle}

Recall that the Hilbert additive group scheme was defined as the spectrum of the free binomial ring on one generator, as in Section~\ref{sec:Hilb_add_grp_sch}.
We now note that as $\on{Int}(\bZ)$ is a subring of $\bQ[x]$, it inherits a natural filtration by the degree of the polynomial.
We will write $\fil^*_{\deg}(\mathrm{Int}(\bZ))$ for this filtration.
The total associated graded of this filtration is the free divided power algebra on one generator:
\[
\gr^*_{\deg}(\mathrm{Int}(\bZ))) \cong \bZ \langle x \rangle \,.
\]
This in particular exhibits the cohomology of the Hilbert additive group as the underlying object of some filtration.
Explicitly, we will write
\[
\bH_{\fil} = \mathrm{fSpec}(\fil^*_{\deg}(\mathrm{Int}(\bZ)))
\]
for this filtered stack.

\begin{definition}
The \emph{filtered circle} is the filtered stack given by the classifying stack of $\bH_{\fil}$ relative to $\bA^1 / \bG_m$.
We denote this filtered stack as $S^1_{\Fil}$.
\end{definition}

\begin{rmk} \label{remark: MRT is incomplete!}
    We remark that this was defined only $p$-locally in \cite{moulinos2019universal}, and therefore, not in terms of the degree filtration on $\Int(\bZ)$. Instead, the filtration is displayed as a certain degeneration between the fixed points and kernel of the Frobenius on the $p$-typical Witt vector scheme $\bW_p(-)$. In the following subsection, we will describe $\bH_{\fil}$ in an analogous manner.
\end{rmk}

\begin{theorem}[Moulinos--Robalo--To{\"e}n]
  We have the following diagram
  \[
  \begin{tikzcd}
    (S^1_{\Fil})^u \arrow[r] \arrow[d] & S^1_{\Fil} \arrow[d] & \arrow[l] (S^1_{\Fil})^{\gr} \arrow[d] \\
    \Spec(\bZ) \arrow[r] & \filstack & \arrow[l] B\bG_m
  \end{tikzcd}
  \]
  where the squares are pullbacks and we have
  \[
  R\Gamma((S^1_{\Fil})^u , \cO) \simeq \bT^{\vee}
  \]
  as a bicommutative bialgebra and
  \[
  R\Gamma((S^1_{\Fil})^{\gr}, \cO) \simeq H^*(S^1, \bZ) \simeq \bZ \oplus \bZ[-1](-1) \,.
  \]
\end{theorem}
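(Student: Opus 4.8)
The plan is to build everything from the theory of affine stacks reviewed in \S\ref{sec affine stacks}, together with the key input from \cite{Toe20} that $\bH$ is Cartier dual to $\widehat{\bG_m}$ and the calculation $\Int(\bZ) \simeq \bZ \otimes_{\bT^\vee} \bZ$. First I would recall that the filtered circle $S^1_{\Fil}$ is by definition $B_{\filstack}\bH_{\fil}$, and that base-change of classifying stacks along a map of bases commutes with pullback; hence the two pullback squares in the statement are automatic once one knows $(\bH_{\fil})^u \simeq \bH$ over $\Spec(\bZ)$ and $(\bH_{\fil})^{\gr}$ over $B\bG_m$. The underlying stack is computed by pulling back $\fil^*_{\deg}\Int(\bZ)$ along the generic point, which under the Moulinos equivalence is the colimit functor; since $\fil^*_{\deg}\Int(\bZ)$ is an exhaustive filtration of $\Int(\bZ)$ its colimit is $\Int(\bZ)$, so $(\bH_{\fil})^u \simeq \bH$. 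Likewise the associated graded stack is $\mathrm{grSpec}(\gr^*_{\deg}\Int(\bZ)) = \mathrm{grSpec}(\bZ\langle x\rangle)$, the divided power Hopf algebra on a generator in weight $1$.

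The two cohomology computations then reduce to the affine-stack formalism. For the underlying fiber: $\bH$ is an affine scheme, hence an affine stack, and it is pointed and connected as a group; so by Example~\ref{ex: BG as an affine stack} applied to $G = \bH$ we get $R\Gamma((S^1_{\Fil})^u,\cO) \simeq R\Gamma(B\bH,\cO) \simeq \bZ \otimes_{R\Gamma(\bH,\cO)} \bZ = \bZ \otimes_{\Int(\bZ)} \bZ$. Now one uses Cartier duality: since $\bH = \mathbf{D}(\widehat{\bG_m})$ with $\Int(\bZ) = \hom^{\cont}(\bZ[[x]],\bZ)$ the bialgebra of distributions, the bar construction $\bZ \otimes_{\Int(\bZ)} \bZ$ is the cobar-dual of $\bZ[[x]]$, and one identifies this with $\bT^\vee = \bZ^{S^1}$; concretely, $\bT^\vee \simeq \bZ \otimes_{\Int(\bZ)} \bZ$ is exactly the bar-cobar statement from \cite{Toe20}*{Corollaire 3.5} (equivalently $\Int(\bZ) \simeq \bZ\otimes_{\bT^\vee}\bZ$ dualized using that $\bT,\bT^\vee$ are dualizable). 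One must track that this equivalence respects the bicommutative bialgebra structures, which follows because bar and cobar are symmetric monoidal and the relevant objects are dualizable $\bZ$-modules. For the graded fiber one argues entirely in parallel: $(S^1_{\Fil})^{\gr} = B_{B\bG_m}\mathrm{grSpec}(\bZ\langle x\rangle)$, and applying the graded analogue of Example~\ref{ex: BG as an affine stack} gives $R\Gamma((S^1_{\Fil})^{\gr},\cO) \simeq \bZ \otimes_{\bZ\langle x\rangle}\bZ$, the bar construction on the free divided-power Hopf algebra on a weight-$1$ generator. Since $\bZ\langle x\rangle$ is the function ring of $\widehat{\bG_a}$ in the graded/Cartier-dual picture, its bar construction is the free exterior algebra on a generator of cohomological degree $-1$ and weight $-1$, i.e. $\bZ \oplus \bZ[-1](-1) \cong H^*(S^1;\bZ)$; here one also notes that the degree filtration on $\Int(\bZ)$ degenerates $\widehat{\bG_m}$ to $\widehat{\bG_a}$ at the special fiber, consistent with \cite{Dri23, Mou24Cartier}.

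The main obstacle, I expect, is the bookkeeping of gradings, shifts, and completeness rather than any deep input. Concretely: (i) verifying that the relevant stacks $\bH_{\fil}$ and its deloopings are genuinely affine stacks over $\filstack$ in the sense of \S\ref{sec affine stacks}, so that the Cartesian-to-cocartesian principle for global sections applies — this is addressed in \S\ref{sec: affineness of deloopings of hilbert} and must be invoked carefully; (ii) pinning down the internal degree/weight conventions so that the bar construction on the weight-$1$ divided-power coalgebra produces precisely $\bZ\oplus\bZ[-1](-1)$ with the stated shift $(-1)$, and matching this with the classical computation $H^*(S^1;\bZ)$; and (iii) upgrading all of these equivalences from equivalences of $\bE_\infty$-algebras (or cosimplicial commutative rings) to equivalences of bicommutative bialgebras on the underlying fiber, which requires knowing that the comultiplication on $\bT^\vee$ corresponds under bar-cobar duality to the multiplication on $\Int(\bZ)$ — this is precisely where one leans on \cite{Toe20} and the dualizability statements. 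None of these steps is hard in isolation, but together they constitute the technical heart of the proof.
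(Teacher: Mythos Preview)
The paper does not prove this theorem; it is quoted as background from \cite{moulinos2019universal}. So there is no paper proof to compare against, but your argument contains a genuine error worth flagging.

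You have misread Example~\ref{ex: BG as an affine stack}. That example says: if $BG$ is a pointed connected affine stack, then the Cartesian square $G \simeq * \times_{BG} *$ yields
\[
R\Gamma(G,\cO) \;\simeq\; \bZ \otimes_{R\Gamma(BG,\cO)} \bZ,
\]
i.e.\ it computes the cohomology of $G$ from that of $BG$, not the other way around. You have written the reverse equation $R\Gamma(B\bH,\cO) \simeq \bZ \otimes_{\Int(\bZ)} \bZ$, and this is false on its face: the right-hand side is the bar construction on a discrete connective ring, hence connective, whereas $\bT^\vee = \bZ^{S^1}$ has $\pi_{-1} \cong \bZ$. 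The same error recurs in your graded computation, where $\bZ \otimes_{\bZ\langle x\rangle} \bZ$ is again connective but the claimed answer $\bZ \oplus \bZ[-1](-1)$ is not. Your attempted fix via ``$\Int(\bZ) \simeq \bZ \otimes_{\bT^\vee} \bZ$ dualized'' does not repair this: dualizability of $\bT^\vee$ as a $\bZ$-module does not make $\mathrm{Bar}$ and $\mathrm{Cobar}$ interchangeable.

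The correct computation of $R\Gamma(B\bH,\cO)$ uses the \v{C}ech nerve of $* \to B\bH$, giving the \emph{cobar} construction on the coalgebra $\Int(\bZ)$, not the bar construction. One then needs the unit $\bT^\vee \to \mathrm{Cobar}(\mathrm{Bar}(\bT^\vee)) = \mathrm{Cobar}(\Int(\bZ))$ to be an equivalence, which is exactly the content of $B\bH$ being an affine stack. Alternatively, and this is the route taken in \cite{Toe20} and implicitly in the paper's later Proposition~\ref{prop: our guy is affine}, one shows directly that $B\underline{\bZ} \to B\bH$ is the affinization map, whence $R\Gamma(B\bH,\cO) \simeq R\Gamma(B\underline{\bZ},\cO) = \bZ^{S^1}$ immediately. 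You gesture at affineness in your obstacle (i), but it is not a bookkeeping detail here: it is the entire mechanism that makes the cohomology computable, and it must be invoked in the correct (cobar) direction.
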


\begin{rmk}
   The filtered circle is related to the topological circle $S^1$ via affinization.
Indeed, let $\underline{\bZ}$ denote the constant group scheme with value $\bZ$.
There are canonical group morphisms $\underline{\bZ} \to \bW_p$  factoring through $\bH$, which after passing to classifying stacks gives us a morphism
\[
S^1 = B\bZ \longto (S^1_{\Fil})^u = B\Fix \,.
\]
This displays the target as the affinization of $S^1$ over $\Spec(\bZ_{(p)})$.
\end{rmk}

\begin{rmk}
  Due to the discussion above, we can think of the filtered circle $S^1_{\Fil}$ as a filtered stack which interpolates between the topological circle and its graded cohomology ring, corresponding to its associated graded stack.
  Given a derived stack $X$, the mapping stack from the filtered circle to $X$, gives us an interpolation between Hochschild homology and (a filtered version of) cyclic homology on one side and derived de Rham cohomology of $X$ on the other.
  The generic fibre of the filtered circle recovers the topological circle cohomologically, and the special fibre recovers the trivial square zero extension, which morally is the cohomology of the circle viewed as a graded ring.
\end{rmk}

\subsection{Witt vector model of $\bH_{\fil}$} \label{sec: witt vector model}

As mentioned in Remark \ref{remark: MRT is incomplete!}, the construction of the filtered circle (locally at the prime $p$) in Moulinos--Robalo--Toën does not go via the additive Hilbert group scheme, but via $p$-typical Witt vectors.
In this section, inspired by some of the constructions in \cite{Toe20, Dri23}, we give a Witt vector model for~$\bH_{\fil}$ over $\Spec(\bZ)$.
Let $\mathbb{W}$ denote the big Witt vector group scheme
\[
\mathbb{W}: \CAlg_{\bZ}^{\heartsuit} \longto \Ab \,, \quad R \mapsto \mathbb{W}(R) \,.
\]
We refer the reader to~\cite{Haz78}*{Section 17} for more information about this object.
For now, we just remind the reader that $\bW(R)$ comes equipped with a natural action through ring homomorphisms by the multiplicative monoid of positive integers via Frobenii.
In other words, for each positive integer $n$, the group scheme $\bW$ is equipped with a Frobenius endomorphism $\mathrm{Frob}_n : \bW \longto \bW$ which relate to one-another as $\mathrm{Frob}_n \circ \mathrm{Frob}_m = \mathrm{Frob}_{nm}$.
To relate this to the Hilbert additive group scheme, one notes that there is a monomorphism
\[
\mathbb{H} \hookrightarrow \bW
\]
which identifies $\bH$ with the subfunctor of $\bW$ which is simultaneously fixed by all the Frobenius endomorphisms~\cite{Toe20}*{Proposition 2.2}.
This allows us to understand the Hilbert additive group scheme locally at a prime $p$.
Indeed, the restriction of $\bW$ to $\Spec(\bZ_p)$ decomposes as the product
\[
\bW_{\bZ_{(p)}} = \prod_{p \not\mid n} \bW_{p}
\]
where $\bW_p$ denotes the group scheme of $p$-typical Witt vectors and the product is taken over all natural numbers that are not divisible by $p$.
The restriction of $\bH$ to $\Spec(\bZ_{(p)})$ is then canonically isomorphic to the scheme $\mathsf{Fix}$:
\[
\bH_{\bZ_{(p)}} \cong \mathsf{Fix}= \ker(\mathrm{Frob}_p -1 : \bW_{p} \to \bW_p)  \,.
\]
With this in mind, we recall from \cite[Section 3]{Toe20} that there is a tower
  \[
  \bH \subset \cdots \subset \bH_k \subset \cdots \subset \bH_2 \subset \bH_1 \subset \bH_0 = \bW
  \]
  where
  \[
  \bH_k = \bigcap_{i \leq k} \ker(\bW \overset{\mathrm{Frob}_{p_i}-1}\longto \bW)
  \]
  and $p_i$ denotes the $i$th prime number. Then
  \[
  \bH = \lim \bH_k,
  \]
  where the limit is taken in abelian group valued sheaves in the fpqc topology.
  Inspired by the above, let us study the case of the filtered Hilbert additive group scheme.

\begin{const}
    There is a $\bG_m$-action on the ring scheme of big Witt vectors, given by multiplication via the Teichmüller representative $[-]: A \to \bW(A)$.
    Thus, for every positive integer $n$ we may form the map
    \[
    \mathfrak{F}_n = \on{Frob}_n - [t]^{n-1}\id: \bW \times \bA^1 \to \bW \times \bA^1 \,,
    \]
such that for every $A \in \CAlg^{\heartsuit}_{\bZ}$, it is given by
\[
(f, t) \mapsto (\on{Frob}_n(f) - [t]^{n-1} \cdot f, f) \,.
\]
This map may be made $\bG_m$-equivariant by endowing the source with the diagonal $\bG_m$-action and the target by another diagonal action, but where the action of $\bG_m$ on the factor $\bW$ is twisted by the character $(-)^{n}: \bG_m \to \bG_m$.
\end{const}

\begin{definition}
    We define
    \[
    \bH^{t} \hookrightarrow \bW \times \bA^1
    \]
   to be the subfunctor of all simultaneous fix points  of $\mathfrak{F}_n$.
   As each $\mathfrak{F}_n$ is $\bG_m$-equivariant, the subfunctor $\bH^t$ also admits a $\bG_m$-action such that the inclusion into $\bW \times \bA^1$ is equivariant.
   We set
   \[
   \bH^{t}/ \bG_m \to \filstack
   \]
   to be the quotient over $\filstack$.
   We remark that this object admits an commutative group structure  over $\filstack$.
\end{definition}

\begin{const}  \label{const: other version of Hfil}
  We give another description of $\bH^{t}/ \bG_m$, akin to the aforementioned description of $\bH$ in \cite{Toe20} as the limit of a certain tower of group schemes.
  Again $p_i$ denotes the $i$th prime number.
  We recursively define a decreasing tower
  \[
    \bH^{t} \subset \cdots \subset \bH^{t}_{k} \subset \bH^{t}_{k-1} \subset \cdots \subset \bH^{t}_{1} \subset \bH^{t}_{0}= \bW \times \bA^1 \,,
  \]
of group-valued sheaves, where for each $p_k$ we define
\[
\bH^{t}_{k} = \ker \left(\mathfrak{F}_{p_{k}} : \bH^{t}_{k-1}  \to \bH^{t}_{k-1}\right) \,.
\]
Alternatively, each $\bH^{t}_{k}$  may be thought of as the kernel of all $\mathfrak{F}_{p_{i}}$ for all primes $p_i$ for $i \leq k$. By construction then, we have an equivalence
\[
\bH^{t}\simeq \lim_{i} \bH^{t}_{i} \,,
\]
where the limit is taken in fpqc sheaves over $\bA^1$.
Everything in sight is $\bG_m$-equivariant, so this descends to a  limit description of $\bH^{t}/ \bG_m$ as well.
\end{const}

\begin{rmk} \label{remark about faithful flatness of H^t}
We claim that $\bH^{t}_{k-1} \xrightarrow{\mathfrak{F}_{p_{k}}} \bH^{t}_{k-1}$ is an fpqc cover, so that there is an exact sequence
\[
0 \to \bH^{t}_{k} \to  \bH^{t}_{k-1} \xrightarrow{\mathfrak{F}_{p_{k}}} \bH^{t}_{k-1} \to 0.
\]
For this we apply Lemma \ref{lemma: technical result about flatness} below, which shows that we can check this, via $\bG_m$-equivariance, by pulling back to the generic and special fiber of the associated map over $\filstack$.
 By \cite{Toe20}, this is an fpqc cover over the generic fiber (i.e. when $t=1$). When $t=0$ it is also not hard to see, since each Frobenius on the big Witt vectors is faithfully flat.
 This can be seen, for example, by then applying \cite[Lemme 3.2]{Toe20} to reduce to the $p$-typical case.
 It follows that $\mathfrak{F}_{p_{k}}$ is surjective, and so, the above sequence is indeed exact.

\end{rmk}

\begin{proposition} \label{prop: equivalence of two presentations of Hfil}
    There is an equivalence
    \[
    \bH^{\fil} \simeq \bH^{t}/ \bG_m
    \]
    of commutative group schemes over $\filstack$.
\end{proposition}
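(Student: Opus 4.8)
The plan is to identify the filtered bicommutative bialgebras of global sections of the two stacks over $\filstack$ and to check the resulting morphism on the generic fibre and on the associated graded. First I would record that $\bH^{t}/\bG_m$ is relatively affine over $\filstack$: in Construction~\ref{const: other version of Hfil} the bottom of the tower $\bH^{t}_{0}=\bW\times\bA^{1}$ is affine, each $\bH^{t}_{k}\hookrightarrow\bH^{t}_{k-1}$ is the kernel of an fpqc cover of affine group schemes by Remark~\ref{remark about faithful flatness of H^t} and hence a closed immersion of affine schemes, and $\bH^{t}=\lim_{k}\bH^{t}_{k}$ is therefore affine; everything here is $\bG_m$-equivariant, so $\bH^{t}/\bG_m$ is relatively affine over $\filstack$. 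Since $\bH^{\fil}=\mathrm{fSpec}(\fil^{*}_{\deg}\Int(\bZ))$ is relatively affine by construction, the Rees equivalence reduces the proposition to an equivalence of filtered bicommutative bialgebras between $\fil^{*}_{\deg}\Int(\bZ)$ and $R\Gamma(\bH^{t}/\bG_m,\cO)$; both are complete filtrations, so such an equivalence can be detected on the associated graded, and at the level of stacks the $\bG_m$-equivariance principle of Lemma~\ref{lemma: technical result about flatness} exploited in Remark~\ref{remark about faithful flatness of H^t} lets one check that a morphism of relatively affine group schemes over $\filstack$ is an equivalence by pulling back along the generic point $\Spec(\bZ)\to\filstack$ and the closed point $B\bG_m\to\filstack$.

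To build the comparison morphism, I would use that the closed immersion $\bH^{t}\hookrightarrow\bW\times\bA^{1}$ is $\bG_m$-equivariant and hence descends to a closed immersion $\bH^{t}/\bG_m\hookrightarrow(\bW\times\bA^{1})/\bG_m$ of group schemes over $\filstack$ whose generic fibre is Toën's embedding $\bH\hookrightarrow\bW$ of~\cite{Toe20}*{Proposition 2.2}. On the other side, the degree filtration on $\Int(\bZ)=\cO(\bH)$ is precisely the filtration induced by the $\bG_m$-weight grading on $\cO(\bW)$ attached to the multiplication-by-$[t]$ action, so the surjection $\cO(\bW)\twoheadrightarrow\Int(\bZ)$ promotes to a surjection of filtered bialgebras $R\Gamma((\bW\times\bA^{1})/\bG_m,\cO)\twoheadrightarrow\fil^{*}_{\deg}\Int(\bZ)$; since each $\mathfrak{F}_{p_{k}}$ is a morphism of filtered bialgebras, this surjection annihilates the ideals cutting out the tower $\{\bH^{t}_{k}\}$ and so factors through $R\Gamma(\bH^{t}/\bG_m,\cO)$, yielding the comparison. (Alternatively, one could produce the comparison from the filtered refinement of Cartier duality in~\cites{Dri23, Mou24Cartier}, exhibiting both stacks as the filtered Cartier dual of the degeneration $\widehat{\bG_m}\leadsto\widehat{\bG_a}$.)

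It then remains to check this morphism on the two fibres. Setting $t=1$ makes $[t]^{n-1}$ the unit of $\bW$, so $\mathfrak{F}_{n}$ becomes $\on{Frob}_{n}-\id$ and $\bH^{t}|_{t=1}=\bigcap_{n}\ker(\on{Frob}_{n}-\id)=\bH$ with the tower degenerating to Toën's tower of~\cite{Toe20}*{Section 3}; there the comparison is the identity on the underlying stack $\bH$, compatibly with the bialgebra structures. Setting $t=0$ collapses the twisted Frobenii: $[0]^{n-1}=0$ in the Witt multiplication for $n\geq 2$ and $\mathfrak{F}_{1}\equiv 0$, so $\bH^{t}|_{t=0}=\bigcap_{p}\ker(\on{Frob}_{p}\colon\bW\to\bW)$ endowed with the weight grading coming from the twisted $\bG_m$-action; using $\bW_{\bZ_{(p)}}=\prod_{p\nmid n}\bW_{p}$ together with~\cite{Toe20}*{Lemme 3.2} one reduces to the $p$-typical situation and identifies $\bH^{t}|_{t=0}$, as a graded scheme, with the special fibre $\mathrm{grSpec}(\gr^{*}_{\deg}\Int(\bZ))$ of $\bH^{\fil}$, matching the description of the special fibre of the filtered circle in~\cite{moulinos2019universal} in terms of $p$-typical Witt vectors and the Frobenius degeneration. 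By the reduction step the comparison is then an equivalence, and the two group structures agree since they do on both fibres.

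The hardest part will be the interaction of the twisted $\bG_m$-equivariance --- the characters $(-)^{n}$ on the target copy of $\bW$ --- with the structure of the special fibre: one must verify that the resulting weight grading on $\bH^{t}|_{t=0}$ matches exactly the weights recorded by $\gr^{*}_{\deg}\Int(\bZ)$, rather than a reindexing of them, and that the factorization of the comparison through the tower is compatible with the bialgebra structures at every finite stage. Pinning down that the filtration induced on $\cO(\bH)$ by the $[t]$-action on $\bW$ really is the degree filtration --- and not merely some complete exhaustive filtration with the same associated graded --- is the other point requiring care.
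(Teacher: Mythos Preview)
Your parenthetical alternative is essentially what the paper does, and your main line of argument is genuinely different. The paper does not construct a comparison map and check it fibrewise. Instead it introduces the explicit $\bZ[t]$-algebra $\Int(\bZ)^t\subset\bQ[x,t]$ generated by $\frac{x(x-t)\cdots(x-t(n-1))}{n!}$, cites \cite[Section D.2.5]{Dri23} to identify $\Int(\bZ)^t$ simultaneously with the Rees algebra of the degree filtration and with the Cartier dual of the formal group over $\bA^1$ with law $x+y+txy$, and then invokes the uniqueness theorem \cite[Theorem 1.4, Corollary 6.1]{Mou24Cartier}: any filtered formal group giving a degeneration $\widehat{\bG_m}\leadsto\widehat{\bG_a}$ arises from the $I$-adic filtration on $\cO_{\widehat{\bG_m}}$ and is therefore unique. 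Since $\bH^{t}/\bG_m$ is by construction the Cartier dual of another such degeneration, the two agree. No explicit comparison map is ever written down.

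Your direct approach has a real gap at precisely the point you flag in your last paragraph: the assertion that the surjection $\cO(\bW)\twoheadrightarrow\Int(\bZ)$ promotes to a surjection of \emph{filtered} bialgebras, with the $[t]$-weight filtration on $\cO(\bW)$ inducing the degree filtration on $\Int(\bZ)$, is doing all the work and is not justified. Tracking the Teichm\"uller $\bG_m$-action through the infinite intersection of Frobenius kernels and showing it lands on the Hilbert-polynomial grading is exactly the content of Drinfeld's identification of $\Int(\bZ)^t$; without it you have no reason to believe the induced filtration is the degree filtration rather than some other complete filtration with the same underlying object. The special-fibre check has the same issue: identifying $\bigcap_p\ker(\on{Frob}_p\colon\bW\to\bW)$ with $\mathrm{grSpec}(\bZ\langle x\rangle)$ as a \emph{graded} group scheme again comes down to Cartier duality for $\widehat{\bG_a}$. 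So while your fibrewise strategy is sound in outline, filling it in honestly requires the same Cartier-duality input the paper simply cites, at which point the uniqueness argument is shorter.
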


\begin{proof}
First, we let $\Int(\bZ)^{t}$ be a deformation of $\Int(\bZ)$ along $\bA^1 \to \Spec(\bZ)$,  defined to be the subring of $\bQ[x,t]$, generated over $\bZ[t]$ by the polynomials
\[
\frac{x(x-t)\cdots (x-t(n-1))}{n!}\,, \quad n \geq 0 \,.
\]
It is shown in \cite[Section D.2.5]{Dri23} that this is precisely the Cartier dual of the formal group $\formalgroup$ over $\bA^1$, with group law given by $x + y + txy$, and as such it acquires a unique bialgebra structure over $\bZ[t]$.
Moreover, it is shown in loc. cit. that there is an isomorphism
\[
\Int(\bZ)^{t} \cong \bigoplus_{n \in \bZ} h^{-n} \fil^{n}_{\deg}(\Int(\bZ)) \,;
\]
by giving $t$ the relevant grading, this identifies the bialgebra  $\Int(\bZ)^{t}$  with the Rees construction applied to the  degree filtration.
In the language of \cite{Mou24Cartier}, the group object $\Spec(\Int(\bZ)^{t})/ \bG_m$ will be the Cartier dual of a filtered formal group over $\filstack$, which gives a degeneration from the formal multiplication group $\widehat{\bG_m}$ to the formal additive group $\widehat{\bG_a}$.
Furthermore, by \cite[Theorem 1.4, Corollary 6.1]{Mou24Cartier}, such an object is uniquely characterized as arising from the $I$-adic filtration $\bZ [[ t ]]$, the coordinate ring of $\widehat{\bG_m}$.
As the group $\bH^{t}/ \bG_m$ also corresponds to a degeneration between $\widehat{\bG_m}$ and $\widehat{\bG_a}$, there must then exist an equivalence $\bH^{\fil} \simeq \bH^{t}/ \bG_m$.
\end{proof}

\subsection{Affineness of $K(\bH_{\fil},n)$} \label{sec: affineness of deloopings of hilbert}

In this section, we show that $K(\bH_{\fil},n)$ is an affine stack. This is shown $p$-locally in \cite{moulinos2019universal}.
For the sake of completeness, we prove the general statement over $\Spec(\bZ)$ here.
We follow very closely the strategy used in \cite{Toe20} to prove that $K(\bH, n)$ is affine.
First, we state the following lemma, which we will use for the proof, involving a criterion for checking flatness.
This is a modification of \cite[Lemme 3.2]{Toe20}.

\begin{lemma} \label{lemma: technical result about flatness}
Let $X \to Y$ be a $\bG_m$-equivariant map  over $\bA^1$, which is  representable by relatively affine schemes, flat over~$\filstack$.
Then it is flat (resp. faithfully flat) if its base-change to the generic and special fiber over $\filstack$ is flat (resp. faithfully flat).
\end{lemma}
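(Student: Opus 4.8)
The plan is to reduce the statement to a commutative-algebra criterion for flatness along the nonzerodivisor $t$, in the spirit of \cite[Lemme 3.2]{Toe20}. Since $X\to Y$ is $\bG_m$-equivariant and representable by relatively affine schemes over $\bA^1$, write $X=\Spec_{\bA^1}(\mathcal B)$, $Y=\Spec_{\bA^1}(\mathcal A)$ and put $A=\Gamma(\bA^1,\mathcal A)$, $B=\Gamma(\bA^1,\mathcal B)$, so that we are looking at a map $A\to B$ of $\bZ[t]$-algebras. By fpqc descent along the faithfully flat atlas $\bA^1\to\filstack$ (a $\bG_m$-torsor), flatness (resp. faithful flatness) of $X/\bG_m\to Y/\bG_m$ over $\filstack$ is equivalent to flatness (resp. faithful flatness) of $X\to Y$, hence to that of $A\to B$; and the same base change turns the hypothesis ``flat over $\filstack$'' into ``$A$ and $B$ are flat over $\bZ[t]$'', so in particular $t$ is a nonzerodivisor on both $A$ and $B$.

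Next I would unwind the two fibral hypotheses. The open locus $\bG_m=\{t\neq 0\}\hookrightarrow\bA^1$ maps to $\filstack$ through the generic point $\Spec(\bZ)\to\filstack$ via the faithfully flat projection $\bG_m\to\Spec(\bZ)$, and pulling the generic fiber of $X/\bG_m\to Y/\bG_m$ back along this projection recovers $\Spec(B[t^{-1}])\to\Spec(A[t^{-1}])$; so flatness of the generic fiber implies $A[t^{-1}]\to B[t^{-1}]$ is flat. Likewise, pulling the special fiber $X^{\gr}\to Y^{\gr}$ back along the faithfully flat atlas $\Spec(\bZ)\to B\bG_m$ recovers $\Spec(B/tB)\to\Spec(A/tA)$, so flatness of the special fiber implies $A/tA\to B/tB$ is flat as a map of ungraded rings. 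It therefore remains to prove: if $t$ is a nonzerodivisor on $A$ and on $B$ and both $A/t\to B/t$ and $A[t^{-1}]\to B[t^{-1}]$ are flat, then $A\to B$ is flat.

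For this I would run a short $\mathrm{Tor}$ argument. Fix an $A$-module $N$ and put $C=B\otimes_A^L N$; we must show $\pi_iC=\mathrm{Tor}_i^A(B,N)=0$ for $i\ge 1$. Since $t$ is a nonzerodivisor on $B$, $B\otimes_A^LA/t\simeq B/tB$ is flat over $A/t$ by assumption, so base change gives
\[
C\otimes_A^L A/t\;\simeq\;(B/tB)\otimes_{A/t}\bigl(N\otimes_A^L A/t\bigr),
\]
whose homotopy is concentrated in degrees $0$ and $1$ (namely $(B/tB)\otimes_{A/t}(N/tN)$ and $(B/tB)\otimes_{A/t}\,{}_tN$). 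On the other hand $C[t^{-1}]\simeq B[t^{-1}]\otimes_{A[t^{-1}]}N[t^{-1}]$ sits in degree $0$ by flatness of $B[t^{-1}]$ over $A[t^{-1}]$, so $\pi_iC$ is $t$-power torsion for $i\ge 1$. Feeding this into the long exact sequence of $C\xrightarrow{t}C\to C\otimes_A^L A/t$ forces $t\colon\pi_iC\to\pi_iC$ to be an isomorphism for $i\ge 2$ and injective for $i=1$; a $t$-power-torsion group on which $t$ acts injectively vanishes, so $\pi_iC=0$ for all $i\ge 1$ and $B$ is flat over $A$. The faithfully flat refinement then follows formally: $A\to B$ is flat by the above, and $\Spec(B)\to\Spec(A)$ is surjective because a prime of $A$ avoiding $t$ lifts through $\Spec(B[t^{-1}])\to\Spec(A[t^{-1}])$ and one containing $t$ lifts through $\Spec(B/t)\to\Spec(A/t)$. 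The main obstacle is exactly this last commutative-algebra step: the textbook local criterion for flatness carries Noetherian or finiteness hypotheses we do not want to impose, and the crucial extra datum ``flat after inverting $t$'' — which for a $\bG_m$-equivariant object is automatic once one knows flatness on the generic fiber — is what makes the clean $\mathrm{Tor}$ argument go through in full generality.
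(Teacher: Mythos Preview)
Your proof is correct and follows essentially the same route as the paper: reduce along the faithfully flat atlas $\bA^1\to\filstack$ to a map $A\to B$ of $\bZ[t]$-algebras with $t$ a nonzerodivisor on both, then run a $\mathrm{Tor}$ argument using the cofiber sequence $C\xrightarrow{t}C\to C/t$ together with the generic-fiber input that $\pi_iC$ is $t$-power torsion for $i\ge 1$. The only cosmetic difference is that the paper base-changes along $\bZ[t]\to\bZ[t,t^{-1}]$ and $\bZ[t]\to\bZ$ rather than $A\to A[t^{-1}]$ and $A\to A/t$, and phrases the $\pi_1$ step as a $\mathrm{Tor}_1^{\bZ[t]}(\pi_1,\bZ)$ contribution to $\pi_2(C/t)$ rather than as injectivity of $t$ on $\pi_1C$; these are the same long exact sequence read two ways. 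Your treatment of faithful flatness and of how the fibral hypotheses unwind through the atlas is a bit more explicit than the paper's, but the arguments are the same.
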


\begin{proof}
It amounts to checking this after pulling back along the flat cover $\bA^1 \to \filstack$. The hypothesis ensures that we have a map $A \to B$ of commutative algebra over $\bZ[t]$. Let $M$ be an $A$-module; we would like to show that the \emph{derived} tensor product  $N = M \otimes_{A} B$ is  concentrated in degree zero.
Noting that $\bZ[t, t^{-1}]$ is flat over $\bZ[t]$ and using the hypothesis of flatness over the generic fiber, we see that
\[
N \otimes_{\bZ[t]} \bZ[t,t^{-1}] = (M \otimes_A B) \otimes_{\bZ[t]} \bZ[t,t^{-1}]
\]
is concentrated in degree zero.
Thus $N$ is necessarily $t$-torsion in non-zero degree.
Now, if we base-change $N$ along $\bZ[t] \xrightarrow{t \mapsto 0} \bZ$ and use flatness of the map after base-changing to the special fiber, together with flat base-change, we see that $N \otimes_{\bZ[t]} \bZ $ is concentrated in degrees $[0,1]$, from which we may conclude that $N$ itself is concentrated in these degrees.
Let us suppose by contradiction that $\pi_1(N) \nsimeq 0$, so that there exists a $t$-torsion element there.
Now, note that $\pi_2(N \otimes_{\bZ} \bZ[t])$ contains $\on{Tor}_{1}^{\bZ[t]}(\pi_1(N), \bZ)$ as a direct summand.
Thus, it is itself non-trivial, which gives rise to a contradiction.
Thus, $\pi_1(N)$ vanishes, for all $N$ and so we can conclude that the map $f$ is flat.
Faithful flatness follows similarly, as the property of being an epimorphism may be checked fiber-wise.
\end{proof}

\begin{proposition} \label{prop: our guy is affine}
The stacks $K(\bH_{\fil},n)$ are affine stacks over $\filstack$.
\end{proposition}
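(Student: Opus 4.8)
The plan is to adapt To\"{e}n's proof that $K(\bH,n)$ is an affine stack over $\Spec(\bZ)$ \cite{Toe20} to the filtered setting, using the Witt vector presentation $\bH_{\fil}\simeq\bH^{t}/\bG_m$ of Proposition~\ref{prop: equivalence of two presentations of Hfil} together with the tower of Construction~\ref{const: other version of Hfil}; throughout I assume $n\geq 1$, the case $n=0$ being the relatively affine group scheme $\bH_{\fil}$ itself. First I would reduce to a statement over $\bA^1=\Spec(\bZ[t])$: the quotient map $\bA^1\to\filstack$ is a faithfully flat cover, and since being an affine stack is fpqc-local on the base (cf.\ \S\ref{sec affine stacks} and \cite{Mou24}), it is enough to show that the pullback of $K(\bH_{\fil},n)$ along $\bA^1\to\filstack$ is affine over $\bA^1$. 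Classifying stacks are compatible with base change and $(\bH^{t}/\bG_m)\times_{\filstack}\bA^1\simeq\bH^{t}$, so this pullback is $K(\bH^{t},n)$, and the goal becomes: $K(\bH^{t},n)$ is an affine stack over $\bA^1$.

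To see this, I would use the equivalence $\bH^{t}\simeq\lim_i\bH^{t}_i$ of fpqc abelian sheaves over $\bA^1$ from Construction~\ref{const: other version of Hfil}, which gives $K(\bH^{t},n)\simeq\lim_i K(\bH^{t}_i,n)$. Since affine stacks over $\Spec(\bZ[t])$ are closed under limits (the functor $\Spec^{\Delta}$ being a fully faithful right adjoint), it suffices to show that each $K(\bH^{t}_i,n)$ is affine over $\bA^1$, which I would do by induction on $i$. For $i=0$ we have $\bH^{t}_0=\bW\times\bA^1$, so $K(\bH^{t}_0,n)\simeq K(\bW,n)\times_{\Spec(\bZ)}\bA^1$ is a base change of $K(\bW,n)$; as base change preserves affine stacks, this reduces to the affineness of $K(\bW,n)$ over $\Spec(\bZ)$, which is established in \cite{Toe20} (via the Verschiebung filtration $\bW\simeq\lim_m\bW/V^m\bW$, whose finite truncations are iterated extensions of copies of $\bG_a$, together with the affineness of the stacks $K(\bG_a,n)$ and the stability of affine stacks under fibrations and limits from \cite{toen2006champs}). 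For the inductive step, Remark~\ref{remark about faithful flatness of H^t} supplies a short exact sequence of fpqc abelian sheaves over $\bA^1$
\[
0\longto\bH^{t}_{k}\longto\bH^{t}_{k-1}\xrightarrow{\ \mathfrak{F}_{p_k}\ }\bH^{t}_{k-1}\longto 0,
\]
to which applying the Eilenberg--MacLane functor $K(-,n)$ yields a fiber sequence of stacks over $\bA^1$
\[
K(\bH^{t}_{k},n)\longto K(\bH^{t}_{k-1},n)\xrightarrow{\ \mathfrak{F}_{p_k}\ }K(\bH^{t}_{k-1},n).
\]
Hence $K(\bH^{t}_{k},n)$ is the fiber of this map over the basepoint section $\bA^1\to K(\bH^{t}_{k-1},n)$, that is, a finite limit of affine stacks by the inductive hypothesis and the affineness of $\bA^1$, and therefore affine. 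This closes the induction.

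The hard parts will be the two genuinely non-formal ingredients. First, the passage between $\filstack$ and $\bA^1$ relies on fpqc descent for affineness along $\bA^1\to\filstack$, and --- more essentially --- the displayed sequences are short exact only because each $\mathfrak{F}_{p_k}$ is an fpqc cover; this is exactly the content of Remark~\ref{remark about faithful flatness of H^t}, which in turn rests on the fiberwise flatness criterion of Lemma~\ref{lemma: technical result about flatness}. Second, the base case $K(\bW,n)$, imported from \cite{Toe20} and \cite{toen2006champs}, ultimately depends on the affineness of the stacks $K(\bG_a,n)$ and on the good closure properties (under limits and fibrations) of the class of affine stacks.
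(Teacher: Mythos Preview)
Your overall strategy matches the paper's: reduce to showing $K(\bH^{t},n)$ is affine over $\bA^1$, then use the tower $(\bH^{t}_k)_k$ together with closure of affine stacks under fibers and limits. However, there is a genuine gap in the step where you pass from $\bH^{t}\simeq\lim_i\bH^{t}_i$ to the claimed equivalence $K(\bH^{t},n)\simeq\lim_i K(\bH^{t}_i,n)$. The Eilenberg--MacLane functor $K(-,n)$ is a delooping and preserves colimits, not inverse limits, of abelian sheaves: the limit $\lim_i K(\bH^{t}_i,n)$ is a priori only $n$-truncated, and its $\pi_{n-1}$ receives a contribution from ${\lim_i}^1\,\bH^{t}_i$. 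Closure of affine stacks under limits tells you only that $\lim_i K(\bH^{t}_i,n)$ is affine, not that it agrees with $K(\bH^{t},n)$.

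This is precisely the technical core of the paper's proof, isolated as a separate lemma. There one first notes that the higher ${\lim}^{j}$ for $j>1$ vanish by \cite[Appendice B]{Toe20}, and then shows ${\lim_k}^1\,\bH^{t}_k=0$ by fitting the tower into the short exact sequence $0\to\bH^{t}_*\to\bW\times\bA^1\to(\bW\times\bA^1)/\bH^{t}_*\to 0$, passing to the long exact sequence of derived inverse limits, and proving that $\bW\times\bA^1\to\lim_k(\bW\times\bA^1)/\bH^{t}_k$ is an fpqc epimorphism. This last step again invokes Lemma~\ref{lemma: technical result about flatness} with a fiberwise check at $t=0$ and $t=1$. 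You correctly flag Remark~\ref{remark about faithful flatness of H^t} and Lemma~\ref{lemma: technical result about flatness} as inputs, but only for the surjectivity of each individual $\mathfrak{F}_{p_k}$; the same circle of ideas must be applied once more, at the level of the whole tower, to kill the ${\lim}^1$ obstruction.
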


We first prove the following lemma involving the tower of group schemes
\[
    \bH^{t} \subset \cdots \subset \bH^{t}_{k} \subset \bH^{t}_{k-1} \subset \cdots \subset \bH^{t}_{1} \subset \bH^{t}_{0}= \bW \times \bA^1
  \]
introduced in Construction \ref{const: other version of Hfil}.

\begin{lemma}
    For all $n \geq 1$, the natural map
    \[
    K(\bH^t, n) \to \lim_{k }K(\bH^t_{k}, n)
    \]
is an equivalence.
\end{lemma}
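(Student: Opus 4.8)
The plan is to compare homotopy sheaves on the two sides and reduce the claim to the vanishing of a single derived inverse limit sheaf, which is then controlled by the faithful flatness recorded in Remark~\ref{remark about faithful flatness of H^t}. First I would use that remark, which gives for each $k$ a short exact sequence of fpqc sheaves of abelian groups
\[
0 \to \bH^{t}_{k} \to \bH^{t}_{k-1} \xrightarrow{\mathfrak{F}_{p_{k}}} \bH^{t}_{k-1} \to 0 \,,
\]
with $\mathfrak{F}_{p_{k}}$ an fpqc cover, hence an effective epimorphism. Because the last map is surjective, this presents $\bH^{t}_{k} \to \bH^{t}_{k-1} \xrightarrow{\mathfrak{F}_{p_{k}}} \bH^{t}_{k-1}$ as a fibre sequence of connective sheaves of $\bZ$-modules (the cokernel term, which is all that could obstruct this, vanishes). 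For an abelian group object, $K(-,n)$ is computed as $\Omega^{\infty}$ of the $n$-fold shift in connective $\bZ$-module sheaves, and both $\Omega^{\infty}$ and the shift preserve limits; hence applying $K(-,n)$ produces a fibre sequence of sheaves of spaces $K(\bH^{t}_{k},n) \to K(\bH^{t}_{k-1},n) \to K(\bH^{t}_{k-1},n)$. In particular every $K(\bH^{t}_{k},n)$ is $(n-1)$-connected with $\pi_{n}\cong\bH^{t}_{k}$ and all other homotopy sheaves trivial.

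Next I would feed this, together with $\bH^{t}\simeq\varprojlim_{k}\bH^{t}_{k}$ from Construction~\ref{const: other version of Hfil} and the naturality of $K(-,n)$, into the Milnor exact sequence for the homotopy sheaves of the limit of a tower of sheaves of spaces. Since $\pi_{i}(K(\bH^{t}_{k},n))$ is $\bH^{t}_{k}$ for $i=n$ and trivial otherwise, this yields $\pi_{n}(\varprojlim_{k}K(\bH^{t}_{k},n))\cong\varprojlim_{k}\bH^{t}_{k}\cong\bH^{t}$ and $\pi_{n-1}(\varprojlim_{k}K(\bH^{t}_{k},n))\cong\varprojlim^{1}_{k}\bH^{t}_{k}$, with all remaining homotopy sheaves vanishing. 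The canonical comparison map $K(\bH^{t},n)\to\varprojlim_{k}K(\bH^{t}_{k},n)$ is then an isomorphism on $\pi_{n}$, and both sides have trivial homotopy sheaves outside degrees $n-1$ and $n$; so it is an equivalence if and only if $\varprojlim^{1}_{k}\bH^{t}_{k}=0$ as an fpqc sheaf.

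The main obstacle is this last $\varprojlim^{1}$-vanishing: a priori the $\bH^{t}_{k}$ merely form a tower of subsheaf inclusions, for which there is no formal reason that the derived inverse limit is discrete, and the content is precisely that the transition maps occur as kernels of the \emph{faithfully flat} endomorphisms $\mathfrak{F}_{p_{k}}$. To exploit this I would note that the sheaves $\bH^{t}_{k}$, and hence $\varprojlim^{1}_{k}\bH^{t}_{k}$, are $\bG_{m}$-equivariant over $\bA^{1}$ and flat over $\filstack$, and argue fibrewise over $\filstack$ in the spirit of Lemma~\ref{lemma: technical result about flatness}: over the generic fibre $t=1$ the tower specialises to To\"{e}n's tower $\bH_{k}$, and $\varprojlim^{1}_{k}\bH_{k}=0$ is exactly the input behind the affineness of $K(\bH,n)$ in \cite{Toe20}; over the special fibre $t=0$ the maps $\mathfrak{F}_{p_{k}}$ degenerate to the faithfully flat Frobenii $\mathrm{Frob}_{p_{k}}$ on the big Witt vectors, and the same reasoning — each stage being the kernel of a faithfully flat endomorphism — gives the vanishing there. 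Combining the two fibres yields $\varprojlim^{1}_{k}\bH^{t}_{k}=0$, and hence the lemma.
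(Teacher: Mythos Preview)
Your overall strategy---reduce to a comparison of homotopy sheaves and then show $\varprojlim^{1}_{k}\bH^{t}_{k}=0$---is exactly the paper's. Two points need attention.

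First, a minor one: invoking a ``Milnor sequence'' in the fpqc topos presupposes that the higher $\varprojlim^{i}$, $i>1$, of the tower of discrete abelian sheaves vanish. This is not automatic in a Grothendieck topos; the paper handles it by citing \cite[Appendice B]{Toe20}, where it is shown that these derived limits commute with fpqc sheafification and hence can be checked (and seen to vanish) presheaf-wise.

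Second, and this is the real gap: your argument for $\varprojlim^{1}_{k}\bH^{t}_{k}=0$ is not justified as written. You assert that $\varprojlim^{1}_{k}\bH^{t}_{k}$ is ``flat over $\filstack$'' and then propose to ``combine the two fibres''; but there is no reason that this $\varprojlim^{1}$ is representable by a flat affine scheme, so Lemma~\ref{lemma: technical result about flatness} does not apply to it, and fibrewise vanishing of an arbitrary fpqc sheaf does not imply global vanishing. The paper's manoeuvre is to reformulate the question as one to which the lemma \emph{does} apply: from the short exact sequences $0\to\bH^{t}_{k}\to\bW\times\bA^{1}\to(\bW\times\bA^{1})/\bH^{t}_{k}\to 0$ one extracts the exact sequence
\[
\bW\times\bA^{1}\;\longrightarrow\;\varprojlim_{k}(\bW\times\bA^{1})/\bH^{t}_{k}\;\longrightarrow\;\varprojlim^{1}_{k}\bH^{t}_{k}\;\longrightarrow\;0,
\]
so the vanishing of $\varprojlim^{1}$ is equivalent to the first map being an fpqc epimorphism. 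The key point is that $\varprojlim_{k}(\bW\times\bA^{1})/\bH^{t}_{k}$ \emph{is} a flat affine scheme over $\bA^{1}$ (argued inductively from Remark~\ref{remark about faithful flatness of H^t}), so now Lemma~\ref{lemma: technical result about flatness} legitimately reduces the surjectivity to the fibres $t=1$ (To\"{e}n) and $t=0$ (where, after passing to a field of characteristic $p$, the tower is shown to stabilise at $\mathsf{Ker}$, making the map $\bW\to\bW/\mathsf{Ker}$). Your sketch is missing precisely this reformulation.
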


\begin{proof}
 Since $K(\bH^t, n)$ is an Eilenberg-MacLane object, it is uniquely characterized by the fact that
 \[
 \pi_n (K(\bH^t, n)) \simeq \bH^t
 \]
 and that all other homotopy sheaves vanish.
 Thus, we need to show that the same is true for $\lim_{k }K(\bH^t_{k}, n)$, i.e. that
\[
\pi_n (\lim_{k} K(\bH_k^{\fil}, n)) \simeq \bH_k^{\fil}
\]
and that $\pi_i(\lim_{k} K(\bH_k^{\fil}, n))$ vanishes otherwise.
As these homotopy sheaves will be related to $\lim^{i}$ terms of the tower of the $\bH^t_k$'s, it amounts to show that these vanish.
By \cite[Appendice B]{Toe20}, all higher $\lim^{i}$ for $i > 1$ terms vanish; this is a consequence of the fact that taking such $\lim^{i}$'s commutes with fpqc sheafification, so this can be verified at the level of presheaves where they are seen to vanish.
Thus the only non-trivial homotopy sheaves will be
\[
\pi_n (\lim_{k} K(\bH_k^{\fil}, n)) \simeq \bH^{\fil} \och \pi_{n-1} (\lim_{k} K(\bH_k^{\fil}, n)) \simeq {\lim_{k}}^1 \bH^{\fil}_k,
\]
so we need only prove that ${\lim_{k}}^1 \bH^{\fil}_k\simeq 0$.

 For this, we let $\bN^+$ be the partially ordered set of positive natural numbers, but with the reverse ordering. We will focus on the following three $\bN^+$-indexed diagrams
 \[
 k \mapsto \bH^{\fil}_{k}\,,  \quad k \mapsto \bW \times \bA^1\,, \quad k \mapsto (\bW \times \bA^1) / \bH^{\fil}_{k},
 \]
 where we emphasize that the middle assignment is just the constant diagram. We have the following short exact sequence
 \[
  0 \to \bH^{\fil}_{*} \to \bW \times \bA^1 \to (\bW \times \bA^1)/\bH^{\fil}_* \to 0
 \]
Taking limits now gives the following exact sequence
\begin{equation} \label{equation some technical exact sequence of limits}
  \bH^{\fil}= \lim_k \bH^{\fil}_k \to \bW \times \bA^1 \to \lim_k (\bW \times \bA^1)/\bH^{\fil}_k \to {\lim_k}^1 \bH^{\fil}_k \to 0
\end{equation}
We will be done if we can show that the map $\bW \times \bA^1 \to \lim_k (\bW \times \bA^1)/\bH^{\fil}_k $ is an fpqc epimorphism over $\bA^1$.
For this, we would like to apply Lemma \ref{lemma: technical result about flatness}.
First, we claim that  $\lim_k (\bW \times \bA^1)/\bH^{\fil}_k$ is representable by a flat affine scheme over $\bA^1$, which follows from Remark \ref{remark about faithful flatness of H^t}.
Indeed, for each $k$ we have the short exact sequence
\[
0 \to \bH^{t}_{k} \to  \bH^{t}_{k-1} \xrightarrow{\mathfrak{F}_{p_{k}}} \bH^{t}_{k-1} \to 0
\]
where $\mathfrak{F}_{p_{k}}$ is an fpqc cover, meaning that each $ \bH^{t}_{k-1} / \bH^{t}_{k} $ will be representable by a flat affine group scheme.
By argument via recurrence, we conclude that $\lim_k (\bW \times \bA^1)/\bH^{\fil}_k$ is flat affine.
Thus, we have a $\bG_m$-equivariant map
\[
\bW \times \bA^1 \to \lim_k (\bW \times \bA^1)/\bH^{\fil}_k,
\]
between flat affine schemes over $\bA^1$.
Applying Lemma \ref{lemma: technical result about flatness}, it amounts to checking this in the cases when $t=0$ and when~{$t=1$}, by way of transitivity of the $\bG_m$-action over all other fibers.
When $t=1$, this follows from the argumentation at the end of the proof of \cite[Lemme 3.3]{Toe20}.
Let us argue about  the $t=0$ case.
For this, we also argue similarly, noting that by \cite[Lemme 3.2]{Toe20}, it is enough to base-change to all fields and verify that it is a fpqc cover there.
In this case, recall first that when $p=p_k$ and $n$ are coprime, for any $p$-local ring $A$, that the $n$th Frobenius acts on $\bW|_{\bF_p}(A) \simeq \prod_{n : p \nshortmid n} \bW_p(A)$ just by shifting; that is $\on{Frob}_n(a_*)_i = a_{ni}$.
This allows us to conclude that $\bH_{i}^{t}|_{t=0} \simeq \bW_p$ for all $i<k$,  and that $\bH_{i}^{t}|_{t=0} \simeq \mathsf{Ker}$ for all $i \geq k$.
This implies that the limit $(\lim_{k} (\bW \times \bA^1 )/\bH^{t}_k)|_{t=0}$ is eventually constant, and it is just $\bW / \mathsf{Ker}$.
Thus we get the projection map
\[
\bW \to \bW / \mathsf{Ker}
\]
which is clearly an fpqc epimorphism and thus a faithfully flat map.
Thus, we conclude the desired claim for $t=0$.
To summarize, we have verified that the conditions of Lemma \ref{lemma: technical result about flatness} hold for the map $\bW \times \bA^1 \to \lim_k (\bW \times \bA^1)/\bH^{\fil}_k $ in order for us to conclude that it is faithfully flat.
Thus looking at the sequence \ref{equation some technical exact sequence of limits}, we conclude that ${\lim_k}^1 \bH^{\fil}_k$ vanishes.
Thus, we obtain an equivalence
\[
\pi_{n-1} (\lim_{k} K(\bH_k^{\fil}, n)) \simeq {\lim_{k}}^1 \bH^{\fil}_k \simeq 0,
\]
and so, the only non-zero homotopy sheaf of the limit $\lim_{k} K(\bH_k^{\fil}, n)$  is in $\pi_n$, so we conclude the desired result.
\end{proof}

\begin{proof} [Proof of Proposition \ref{prop: our guy is affine}]
Recall from Proposition \ref{prop: equivalence of two presentations of Hfil} that there is an equivalence
\[
K(\bH_{\fil}, n) \simeq K(\bH^{t}, n)/\bG_m
\]
We claim therefore that it is enough to show that $K(\bH^{t}, n)$ is affine over $\bA^1$.
Indeed, suppose that this is the case; then there exists some $A \in \mathrm{coSCR}_{\bZ[t]}$ for which $K(\bH^{t}, n) \simeq \Spec^{\Delta}(A)$.
The fact that $K(\bH^{t}, n)$ descends to an object $K(\bH^{t}, n) / \bG_m$ means that there exists a canonical grading on the cohomology, i.e. on $A \in \mathrm{coSCR}_{\bZ[t]}$, compatible with its $\bZ[t]$-algebra structure, for which $t$ has weight $1$.
In other words, via the Rees equivalence, $A$ corresponds to a cosimplicial commutative algebra in $\QCoh(\filstack) \simeq \Filt(\Mod_{\bZ})$.

Hence we show that $K(\bH^{t}, n)$ is affine. For this, recall from Remark \ref{remark about faithful flatness of H^t} that there is a fiber sequence
\[
K(\bH^{t}_{k},n) \to  K(\bH^{t}_{k-1},n) \xrightarrow{\mathfrak{F}_{p_{k}}} K(\bH^{t}_{k-1},n).
\]
Using that $K(\bH^t_{0}, n) = K(\bW \times \bA^1, n)$ is affine, and the fact that affine stacks are closed under taking fibers, we conclude inductively that each $K(\bH^{t}_{k},n)$ is affine. Finally, since
\[
  K(\bH^t, n) \to \lim_{k }K(\bH^t_{k}, n)
\]
by the lemma above, we conclude that $K(\bH^t, n)$, using again the fact that affine stacks are closed under all (small) limits.
\end{proof}

Using the fact that $B \bH_{\fil}$ is affine over $\filstack$, we now give an alternative description of the degree filtration $\fil^*_{\deg}(\Int(\bZ))$ as a relative tensor product in filtered $\bZ$-modules.

\begin{proposition} \label{prop:toen_Z_ZS1_Z_is_IntZ}
  \leavevmode
  \begin{enumerate}
    \item There is an equivalence of bicommutative bialgebras
    \[
    \mathrm{Int}(\bZ) \simeq \bZ \otimes_{\bT^{\vee}} \bZ
    \]
    in the category $\Mod_{\bZ}$.
    \item There is an equivalence of bicommutative bialgebras
    \[
    \fil^*_{\deg} \Int(\bZ) \simeq L_0 \bZ \otimes_{\bT^{\vee}_{\fil}} L_0 \bZ \,.
    \]
    in the category $\Fil(\Mod_{\bZ})$.
  \end{enumerate}
\end{proposition}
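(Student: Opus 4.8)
The plan is to deduce both statements from the affineness of the deloopings of the Hilbert additive group, using the unfiltered statement (1) — which is \cite[Corollaire~3.5]{Toe20} — as an input for (2), so there is no circularity. For orientation, recall the proof of (1): the group morphism $\underline{\bZ}\hookrightarrow\bH$ of \cite{Toe20} induces $S^1=B\underline{\bZ}\to B\bH$, which exhibits $B\bH$ as the affinization of $S^1$, whence $R\Gamma(B\bH,\cO)\simeq R\Gamma(S^1,\cO)=\bZ^{S^1}=\bT^{\vee}$; applying Example~\ref{ex: BG as an affine stack} to the Cartesian square $\bH\simeq\Spec(\bZ)\times_{B\bH}\Spec(\bZ)$ of affine stacks gives $\Int(\bZ)=R\Gamma(\bH,\cO)\simeq\bZ\otimes_{\bT^{\vee}}\bZ$, and since $\bH$ and $B\bH$ are abelian group objects this is an equivalence of bicommutative bialgebras.

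To prove (2) I would run the same argument over $\filstack$. By Proposition~\ref{prop: our guy is affine} (the case $n=1$), $B\bH_{\fil}=S^1_{\Fil}$ is an affine stack over $\filstack$; write $\mathcal{A}:=R\Gamma(B\bH_{\fil},\cO)$, which is a bicommutative bialgebra in $\Fil(\Mod_{\bZ})$ because $\bH_{\fil}$ — hence $B\bH_{\fil}$ — is an abelian group object over $\filstack$ (Proposition~\ref{prop: equivalence of two presentations of Hfil}). Applying the parametrized version of Example~\ref{ex: BG as an affine stack} to the Cartesian square $\bH_{\fil}\simeq\filstack\times_{B\bH_{\fil}}\filstack$ of affine stacks produces a cocartesian square of cosimplicial commutative algebras in $\Fil(\Mod_{\bZ})$, i.e. an equivalence of bicommutative bialgebras
\[
\fil^*_{\deg}\Int(\bZ)\;=\;R\Gamma(\bH_{\fil},\cO)\;\simeq\;L_0\bZ\otimes_{\mathcal{A}}L_0\bZ .
\]
The whole problem is thus reduced to identifying $\mathcal{A}$ with $\bT^{\vee}_{\fil}=\tau_{\geq\bullet}\bZ^{S^1}$.

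For that identification I would establish three points. (i) $\mathcal{A}$ is a complete filtration: it is the totalization of the cobar construction on $\fil^*_{\deg}\Int(\bZ)$, whose terms are finite tensor powers of a filtration vanishing in all sufficiently positive weights, hence complete, and complete filtrations are closed under limits. (ii) Its underlying object is $\mathcal{A}^u=R\Gamma((S^1_{\Fil})^u,\cO)\simeq\bT^{\vee}=\bZ^{S^1}$ by (1), base change of global sections along the generic point being available for affine stacks over $\filstack$. (iii) Its associated graded is $\mathcal{A}^{\gr}=R\Gamma((S^1_{\Fil})^{\gr},\cO)\simeq H^*(S^1;\bZ)=\bZ\oplus\bZ[-1](-1)$, by the special-fibre assertion of the Moulinos--Robalo--To\"{e}n theorem above; alternatively, $\mathcal{A}^{\gr}$ is the cobar construction on the divided power coalgebra $\gr^*_{\deg}\Int(\bZ)=\bZ\langle x\rangle$, whose Koszul dual is the exterior algebra on a generator in degree $-1$ and weight $-1$. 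Granting (i)--(iii): since $\mathcal{A}$ is complete with $\gr^n\mathcal{A}=0$ for $n\neq 0,-1$, we obtain $\mathcal{A}^n=0$ for $n\geq1$, then $\mathcal{A}^0\simeq\gr^0\mathcal{A}\simeq\bZ$, then $\mathcal{A}^n\simeq\mathcal{A}^u\simeq\bZ^{S^1}$ for $n\leq-1$, and the attaching map $\mathcal{A}^0\to\mathcal{A}^{-1}$ has cofiber $\gr^{-1}\mathcal{A}\simeq\bZ[-1]$, whose homotopy vanishes in non-negative degrees; hence $\mathcal{A}^0\to\mathcal{A}^{-1}$ is the connective-cover inclusion $\tau_{\geq0}\bZ^{S^1}\hookrightarrow\bZ^{S^1}$, so $\mathcal{A}\simeq\tau_{\geq\bullet}\bZ^{S^1}=\bT^{\vee}_{\fil}$ as filtered objects. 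Finally, transporting the bialgebra structure of $\mathcal{A}$ across this equivalence yields a bicommutative bialgebra structure on $\bT^{\vee}_{\fil}$ lifting the standard one on $\bT^{\vee}$; by the uniqueness of such a lift (the proposition of Raksit recalled in \S\ref{sec:filt_int_(co)chain}) it is the standard one, so $\mathcal{A}\simeq\bT^{\vee}_{\fil}$ as bicommutative bialgebras, which combined with the displayed equivalence proves (2). Statement (1) is then also recovered by applying the symmetric monoidal underlying-object functor $\Fil(\Mod_{\bZ})\to\Mod_{\bZ}$.

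The step I expect to be the main obstacle is precisely the identification $\mathcal{A}\simeq\bT^{\vee}_{\fil}$: it rests on the affineness of $B\bH_{\fil}$ over $\filstack$ (Proposition~\ref{prop: our guy is affine}), which is what makes the bar-construction description of $\fil^*_{\deg}\Int(\bZ)$ available, together with enough control on the completeness and associated graded of $R\Gamma(B\bH_{\fil},\cO)$ to recognize it as a Postnikov filtration. By comparison the bialgebra bookkeeping is soft, riding on the uniqueness in Raksit's proposition.
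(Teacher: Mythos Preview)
Your approach matches the paper's: both reduce (2) to the affineness of $B\bH_{\fil}$ over $\filstack$ (Proposition~\ref{prop: our guy is affine}) and then translate the Cartesian square $\bH_{\fil}\simeq\filstack\times_{B\bH_{\fil}}\filstack$ of affine stacks into a cocartesian square of cosimplicial filtered $\bZ$-algebras. The paper simply asserts the identification $R\Gamma(B\bH_{\fil},\cO)\simeq\tau_{\geq\bullet}\bZ^{S^1}$, whereas you derive it from completeness, the generic and special fibre computations, and Raksit's uniqueness---so your extra work fills in precisely what the paper leaves implicit.
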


\begin{proof}
  The first statement is already known~\cite{Toe20}*{Corollaire 3.5}, so the only thing we need to prove is the second statement.
 The filtered circle $S^1_{\fil}$ of \cite{moulinos2019universal} may be defined as the classifying stack $B\bH_{\fil}$ over $\filstack$.
  By Proposition \ref{prop: our guy is affine} this is an affine stack over $\filstack$, given by
  \[
  B\bH \simeq \Spec^{\Delta}(\bT^{\vee}_{\fil}) = \Spec^{\Delta}(\tau_{\geq \bullet}\bZ^{S^1}) \,,
  \]
  whose structure over $\filstack$ can equivalently be seen as arising from the Postnikov filtration on $\bZ^{S^1}$.
  Now, by ~\cite{Mou24}*{Section 4}, there is an anti-equivalence of affine stacks over $\filstack$ and cosimplicial commutative rings over $\filstack$, where we note that the latter are equivalently derived commutative rings in $\Filt(\Mod_{\bZ})$. As a result, the Cartesian square
  \[
\begin{tikzcd}
  \bH_{\fil} \arrow[r] \arrow[d] & \filstack \arrow[d] \\
  \filstack \arrow[r] & S^1_{\fil} \,
\end{tikzcd}
\]
corresponds to the cocartesian square
\[
\begin{tikzcd}
  \tau_{\geq \bullet}\bZ^{S^1} \arrow[r] \arrow[d] & L_0\bZ \arrow[d] \\
  L_0 \bZ \arrow[r] & \fil^*_{\deg}(\Int(\bZ)) \,.
\end{tikzcd}
\]
in $\on{DAlg}^{\on{ccn}}_{\Filt(\Mod_\bZ)}$.
Thus we may describe the degree filtration on integer-valued polynomials as the bar construction applied to the Postnikov filtration on $\bZ^{S^1}$, so that
\[
\fil^*_{\deg}(\Int(\bZ)) \simeq L_0 \bZ \otimes_{\tau_{\geq \bullet}\bZ^{S^1}} L_0 \bZ \,.
\]
\end{proof}

\section{The Spherical Hilbert Group Scheme}

The main purpose of this section is to introduce a spherical lift of the Hilbert group scheme, without the filtration structure.
We present enough of the necessary ingredients to make sense of this object, and tie it in with what we have discussed so far.
In particular, in \S \ref{sec:spherical Witt}, we discuss the spherical Witt vectors.
In \S \ref{sec:spherical lifts}, we glue together spherical Witt vectors for all primes to produce a spherical lift of the integer-valued polynomials which is then used to produce the spherical Hilbert additive group scheme.
In \S \ref{sec: Cartier duality spherical}, we prove that the Cartier duality between the Hilbert additive group and the formal multiplicative group lifts to the sphere spectrum.
Lastly, in \S \ref{sec:cochain description of SIntZ}, we give a bar construction description of the spherical lift of $\Int(\bZ)$.

\subsection{The Spherical Witt Vectors}
\label{sec:spherical Witt}

Let $R$ be a perfect $\bF_p$-algebra.
The spherical Witt vectors $\bS \mathbb{W}(R)$ gives a canonical way of lifting the Witt vector construction to the spectral setting~\cites{elliptic2,BSY22,antieau2023spherical}.
This provides us with \emph{spherical Witt vectors} functor
\[
\bS \mathbb{W} : \CAlg^{\perf}_{\bF_p} \to \CAlg_{\bS_p}^{\on{cn},\wedge}
\]
from the category of perfect $\bF_p$-algebras to the category of connective $p$-complete $\bE_\infty$-algebras.
The following result lists some of its important properties that we will use.

\begin{proposition}
  Let $k$ be a perfect $\bF_p$-algebra. Then $\bS \mathbb{W}(k)$ is $p$-completely flat over $\bS^{\wedge}_p$ and there are natural equivalences
  \[
  \bS \mathbb{W}(k) \otimes_{\bS_p} \bF_p \simeq k \och (\bS \mathbb{W}(k) \otimes_{\bS_p} \bZ_p)^{\wedge}_p \simeq \mathbb{W}_p(k) \,.
  \]
  Moreover, the spherical Witt vector functor is fully faithful. The essential image of $\bS \mathbb{W}$ consists of those $R \in \CAlg(\Sp^{\wedge}_p)$ which are connective and such that $\bF_p \otimes R$ is a discrete perfect ring.
\end{proposition}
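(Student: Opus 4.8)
The plan is to treat this proposition as a recollection: the substantive input — $p$-complete flatness of $\bS\mathbb{W}(k)$ over $\bS^\wedge_p$ together with the universal property of $\bS\mathbb{W}$ — is established in \cites{elliptic2, BSY22, antieau2023spherical}, and I would cite it rather than reprove it. Concretely, one lifts the $p$-complete flat $\bZ_p$-algebra $\mathbb{W}_p(k)$ to a $p$-completely flat connective $\bE_\infty$-algebra over $\bS^\wedge_p$ with $\pi_0 \cong \mathbb{W}_p(k)$; such a lift exists and is unique because $k$ being perfect forces the relevant deformation-theoretic obstruction groups to vanish, and this exhibits $\bS\mathbb{W}$ as left adjoint to $(-)^\flat \colon \CAlg_{\bS_p}^{\cn,\wedge} \to \CAlg^{\perf}_{\bF_p}$, $R \mapsto (\pi_0(R)/p)^{\perf}$. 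Flatness is then part of the construction, so the first assertion requires no further argument.

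Granting $p$-complete flatness, I would deduce the two base-change equivalences formally. The spectrum $\bS\mathbb{W}(k) \otimes_{\bS_p} \bF_p$ is flat over the discrete ring $\bF_p$, hence discrete, with $\pi_0 = \mathbb{W}_p(k) \otimes_{\bZ_p} \bF_p \cong \mathbb{W}_p(k)/p \cong k$; this gives the first equivalence. Similarly $(\bS\mathbb{W}(k) \otimes_{\bS_p} \bZ_p)^\wedge_p$ is $p$-completely flat over the discrete ring $\bZ_p$, hence discrete, with $\pi_0 = \mathbb{W}_p(k)$, which is already $p$-adically complete since $k$ is perfect; this gives the second. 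One could alternatively observe that all three constructions commute with filtered colimits in $k$ and reduce to a free perfect algebra $\bF_p[x_i^{1/p^\infty}]$, where $\bS\mathbb{W}$ is computed explicitly as a free $p$-complete $\bE_\infty$-algebra.

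For full faithfulness I would use the $(-)^\flat$-adjunction: it suffices that the unit $k \to \bS\mathbb{W}(k)^\flat$ be an equivalence, and the first base-change identity gives $\bS\mathbb{W}(k)^\flat = (\pi_0\bS\mathbb{W}(k)/p)^{\perf} = (\mathbb{W}_p(k)/p)^{\perf} \cong k^{\perf} \cong k$, with the unit realising this identification. For the essential image, if $R \simeq \bS\mathbb{W}(k)$ then $R$ is connective and $p$-complete by construction of the functor, and $\bF_p \otimes R \simeq k$ is discrete and perfect by the first base-change equivalence. Conversely, given $R \in \CAlg(\Sp^\wedge_p)$ connective with $\bF_p \otimes R \simeq R/p$ discrete and perfect, one has $R^\flat = (\pi_0(R)/p)^{\perf} = (R/p)^{\perf} \cong R/p$, so the counit $\bS\mathbb{W}(R/p) \to R$ becomes, after $-\otimes_{\bS_p} \bF_p$, a ring map $R/p \to R/p$ that one identifies with the identity; since both $\bS\mathbb{W}(R/p)$ and $R$ are $p$-complete, a map between them is an equivalence as soon as it is so mod $p$, so the counit is an equivalence and $R$ lies in the essential image.

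The only genuine obstacle is the first step — the existence, flatness, and universal property of $\bS\mathbb{W}(k)$ — but this is precisely the content imported from \cites{elliptic2, BSY22, antieau2023spherical}, so in practice the write-up is bookkeeping. The one point to handle carefully is the ``detect equivalences mod $p$'' argument in the essential-image part: it uses that both sides are genuinely $p$-complete, which holds for $R$ by hypothesis and for $\bS\mathbb{W}(R/p)$ by construction; without $p$-completeness the argument would fail, which is why the statement is phrased with $p$-complete coefficients throughout.
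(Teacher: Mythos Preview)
Your proposal is correct and matches the paper's treatment: the paper states this proposition without proof, introducing it purely as a recollection of properties established in \cites{elliptic2, BSY22, antieau2023spherical}. Your write-up actually goes further than the paper by sketching why the base-change equivalences, full faithfulness, and essential-image description follow formally from the cited input, so there is nothing to correct.
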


\begin{rmk}
Given a discrete $k \in \CAlg^{\on{perf}}_{\bF_p}$, the spherical Witt vectors $\mathbb{SW}(k)$ will be the unique lift of $k$ to a $p$-complete algebra over the $p$-complete sphere spectrum.
This is not surprising given the perfectness of $k$, but $\mathbb{SW}(-)$ packages this in a functorial way.
\end{rmk}

\subsection{Spherical lifts of perfect algebras}
\label{sec:spherical lifts}
We will now lift the additive Hilbert group scheme to the setting of spectral algebraic geometry.
To do this, we will make use the spherical Witt vectors that we introduced in the previous section.
Note that $\on{Int}(\bZ)$ is an example of a \emph{perfect commutative ring} in the sense that $\Int(\bZ)/p$ is perfect over $\bF_p$ for all primes~$p$.

\begin{proposition}
  The algebra $\Int(\bZ)/p$ is perfect over $\bF_p$ for all primes $p$.
  \begin{proof}
    This follows from the fact that $\Int(\bZ)$ is a binomial algebra, and hence $\Int(\bZ)/p$ is $p$-Boolean in the sense of~\cite{antieau2023spherical}*{Definition 4.6}.
  \end{proof}
\end{proposition}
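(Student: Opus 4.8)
The plan is to prove something slightly stronger than perfectness: that $\Int(\bZ)/p$ is \emph{$p$-Boolean}, i.e.\ satisfies $x^p = x$ identically, from which perfectness is automatic. The one structural input is that $\Int(\bZ)$ is a binomial ring --- a $\bZ$-torsion-free commutative ring $A$ in which $\binom{a}{n} = \frac{a(a-1)\cdots(a-n+1)}{n!}$ (a priori only an element of $A \otimes \bQ$) lies in $A$ for all $a \in A$ and $n \geq 0$ --- and indeed $\Int(\bZ)$ is by definition the free binomial ring on one generator.

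First I would record the integral identity $\prod_{i=0}^{p-1}(X - i) = X^p - X + p\,g_p(X)$ in $\bZ[X]$ for a suitable $g_p \in \bZ[X]$; this holds because the two monic degree-$p$ polynomials $\prod_{i=0}^{p-1}(X-i)$ and $X^p - X$ agree modulo $p$, the latter splitting as $\prod_{c \in \bF_p}(X - c)$ over $\bF_p$. Next, for $a$ in a binomial ring $A$, since $\binom{a}{p} \in A$ and $p \mid p!$ one gets $\prod_{i=0}^{p-1}(a - i) = p!\,\binom{a}{p} \in pA$; here $\bZ$-torsion-freeness of $A$ is what allows identifying this product, computed in $A$, with $p!\,\binom{a}{p}$ computed in $A \otimes \bQ$. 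Substituting $a$ into the displayed identity then yields $a^p - a \in pA$, so $A/p$ satisfies $x^p = x$ for all $x$; that is, $A/p$ is $p$-Boolean.

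The final step is the observation that a $p$-Boolean $\bF_p$-algebra $R$ is perfect, since its Frobenius $R \to R$, $x \mapsto x^p = x$, is literally the identity map and in particular an isomorphism. Taking $A = \Int(\bZ)$ completes the argument. (Alternatively one could verify $x^p = x$ directly on the $\bZ$-basis $\{\binom{x}{n}\}_{n \geq 0}$ of $\Int(\bZ)$, but the binomial-ring route is cleaner and matches the cited reference.) I do not anticipate any genuine obstacle here: the only point needing a little care is the bookkeeping between $A$ and $A \otimes \bQ$, which is harmless given torsion-freeness.
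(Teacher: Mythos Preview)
Your proposal is correct and follows essentially the same approach as the paper: both argue that $\Int(\bZ)/p$ is $p$-Boolean (in the sense of \cite{antieau2023spherical}) because $\Int(\bZ)$ is a binomial ring, and deduce perfectness from this. The paper simply cites this implication, while you have unpacked the underlying computation; your argument is exactly the standard proof of the cited fact.
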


Since $\Int(\bZ)/p$ is perfect, we can apply the spherical Witt vectors functor to it.
Note that by~\cite{elliptic2}*{Remark 5.2.2}, this lift is unique in the way it sits in the pushout diagram
\[
\begin{tikzcd}
  \bS_p \arrow[r] \arrow[d] & \mathbb{SW}(\Int(\bZ)/p) \arrow[d] \\
  \bF_p \arrow[r] & \Int(\bZ)/p \,.
\end{tikzcd}
\]
Now, we assemble these together for all primes to form the spherical lift of $\Int(\bZ)$.

\begin{definition}
  We let $\bS_{\mathrm{Int}(\bZ)}$ denote the pullback in the diagram
  \[
  \begin{tikzcd}
    \bS_{\mathrm{Int}(\bZ)} \arrow[r] \arrow[d] & \mathrm{Int}(\bZ) \arrow[d] \\
    \prod_{p} \bS \mathbb{W} (\on{Int}(\bZ)/ p) \arrow[r,"\pi_0"] & \prod_p \mathrm{Int}(\bZ)^{\wedge}_p \,.
  \end{tikzcd}
  \]
  We call this the \emph{spherical integer-valued polynomials}.
\end{definition}

\begin{rmk}
  Note that the pullback square above fits into the larger diagram
  \[
  \begin{tikzcd}
    \bS_{\mathrm{Int}(\bZ)} \arrow[r] \arrow[d] & \mathrm{Int}(\bZ) \arrow[d] \arrow[r] & \Int(\bZ)_{\bQ} \arrow[d] \\
    \prod_{p} \bS \mathbb{W} (\on{Int}(\bZ)/ p) \arrow[r,"\pi_0"] & \prod_p \mathrm{Int}(\bZ)^{\wedge}_p \arrow[r] & \Int(\bZ)_{\bA}
  \end{tikzcd}
  \]
  where the outer rectangle is the arithmetic fracture square.
  The subscript appearing in the lower right corner is the ring of adèles over $\bQ$, given by
  \[
  \bA = \bQ \otimes_{\bZ} \left( \bR \times \prod_{p} \bZ_p \right) \,.
  \]
\end{rmk}

\begin{proposition}
  The spherical integer--valued polynomials provides a flat lift of the integer--valued polynomials in the sense that
  \[
  \pi_i \bS_{\mathrm{Int}(\bZ)} \cong \mathrm{Int}(\bZ) \otimes_{\bZ} \pi_i \bS
  \]
  for all $i \geq 0$.
\end{proposition}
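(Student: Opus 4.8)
The plan is to read off the homotopy groups directly from the defining pullback square by means of its associated Mayer--Vietoris long exact sequence, using the properties of spherical Witt vectors recorded above. Write $X = \bS_{\mathrm{Int}(\bZ)}$, $Y = \Int(\bZ)$, $Z = \prod_p \mathbb{SW}(\Int(\bZ)/p)$ and $W = \prod_p \Int(\bZ)^{\wedge}_p$, so that $X \simeq Y \times_W Z$. I would begin by pinning down the three known corners. The ring $\Int(\bZ)$ is torsion free, so each $\Int(\bZ)^{\wedge}_p$ is classical and discrete, and hence so is $W$; moreover, since $\Int(\bZ)/p$ is perfect, uniqueness of $p$-torsion free $p$-complete lifts of perfect $\bF_p$-algebras gives a canonical isomorphism $\Int(\bZ)^{\wedge}_p \cong \mathbb{W}_p(\Int(\bZ)/p) = \pi_0 \mathbb{SW}(\Int(\bZ)/p)$, under which the bottom map of the square is the truncation $\mathbb{SW}(\Int(\bZ)/p) \to \pi_0 \mathbb{SW}(\Int(\bZ)/p)$ and is therefore an isomorphism on $\pi_0$. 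Finally, since $\mathbb{SW}(\Int(\bZ)/p)$ is connective, $p$-complete and $p$-completely flat over $\bS^{\wedge}_p$, and since each $\pi_i(\bS^{\wedge}_p)$ is a finitely generated $\bZ_p$-module (finite for $i\geq 1$) while $\mathbb{W}_p(\Int(\bZ)/p)$ is $p$-torsion free, the tensor product $\pi_i(\bS^{\wedge}_p) \otimes_{\bZ_p} \mathbb{W}_p(\Int(\bZ)/p)$ is already $p$-complete and computes $\pi_i \mathbb{SW}(\Int(\bZ)/p)$; hence $\pi_i Z \cong \prod_p \pi_i(\bS^{\wedge}_p) \otimes_{\bZ_p} \mathbb{W}_p(\Int(\bZ)/p)$.

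With these inputs I would run the long exact sequence degree by degree. For $i < 0$ every side term vanishes except those in degree $0$, and surjectivity (in fact bijectivity) of $\pi_0 Z \to \pi_0 W$ forces $\pi_{-1} X = 0$, so $X$ is connective. In degree $0$ the sequence yields $\pi_0 X \cong \pi_0 Y \times_{\pi_0 W} \pi_0 Z$, which collapses to $\Int(\bZ)$ because $\pi_0 Z \xrightarrow{\sim} \pi_0 W$; this is exactly $\Int(\bZ) \otimes_{\bZ} \pi_0 \bS$. For $i \geq 1$ the groups $\pi_i Y$, $\pi_i W$ and $\pi_{i+1} W$ all vanish, so $\pi_i X \cong \pi_i Z \cong \prod_p \pi_i(\bS^{\wedge}_p) \otimes_{\bZ_p} \mathbb{W}_p(\Int(\bZ)/p)$.

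It then remains to match this with $\Int(\bZ) \otimes_{\bZ} \pi_i \bS$. For $i \geq 1$ the stem $\pi_i \bS$ is finite, so $\pi_i \bS \cong \bigoplus_p \pi_i(\bS^{\wedge}_p)$ with only finitely many primes contributing; in particular the product over $p$ may be rewritten as a direct sum. Each summand $\pi_i(\bS^{\wedge}_p) \otimes_{\bZ_p} \mathbb{W}_p(\Int(\bZ)/p)$ is $p$-power torsion, hence unchanged upon replacing $\mathbb{W}_p(\Int(\bZ)/p) \cong \Int(\bZ)^{\wedge}_p$ by $\Int(\bZ)$ and tensoring over $\bZ$; summing over $p$ and using that $\Int(\bZ)$ is flat (indeed free) over $\bZ$ gives $\pi_i X \cong \bigl(\bigoplus_p \pi_i(\bS^{\wedge}_p)\bigr) \otimes_{\bZ} \Int(\bZ) \cong \pi_i \bS \otimes_{\bZ} \Int(\bZ)$, as desired. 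The one genuinely delicate ingredient is the identification $\Int(\bZ)^{\wedge}_p \cong \mathbb{W}_p(\Int(\bZ)/p)$ used in the first step, since it is precisely what guarantees that the bottom map of the square is an isomorphism on $\pi_0$, and hence that the Mayer--Vietoris sequence degenerates in the expected way; everything else is routine bookkeeping with $p$-completions and the finiteness of the stable stems.
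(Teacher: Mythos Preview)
The paper states this proposition without proof, presumably regarding it as a routine consequence of the properties of spherical Witt vectors already recalled (namely $p$-complete flatness of $\mathbb{SW}(k)$ over $\bS^{\wedge}_p$ and $\pi_0\mathbb{SW}(k) \cong \bW_p(k)$), together with the finiteness of the positive stable stems. Your argument fills in exactly these details and is correct: the Mayer--Vietoris analysis of the defining pullback square degenerates as you describe once one knows that the bottom arrow is an isomorphism on $\pi_0$, and the matching of $\pi_i Z$ with $\Int(\bZ)\otimes_{\bZ}\pi_i\bS$ for $i\geq 1$ is handled cleanly by the finiteness of $\pi_i\bS$. The one point you rightly flag as delicate, the identification $\Int(\bZ)^{\wedge}_p \cong \bW_p(\Int(\bZ)/p)$, does hold: $\Int(\bZ)$ is torsion-free, so its derived $p$-completion is classical and remains $p$-torsion-free, whence uniqueness of $p$-torsion-free $p$-complete lifts of perfect $\bF_p$-algebras applies.
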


In a larger generality, one has a functor
\[
\bS_{(-)} : \CAlg^{\perf}_{\bZ} \longto \CAlg(\Sp)
\]
which assigns an $\bE_\infty$-ring to a perfect commutative ring, precisely in the same way as above.
This functor is fully faithful and admits the structure of a symmetric monoidal functor per~\cite{antieau2023spherical}*{Proposition 7.3}.
This means that the bicommutative bialgebra structure on $\Int(\bZ)$ lifts to a bicommutative bialgebra structure on $\bS_{\Int(\bZ)}$.
Hence, the following definition does indeed determine an affine group scheme over $\Spec(\bS)$.

\begin{definition}
  The \emph{spherical Hilbert affine group scheme} is defined as
  \[
  \bH_{\bS} = \Spec\left(\bS_{\Int(\bZ)}\right) \,.
  \]
\end{definition}

\subsection{Duality with the formal multiplicative group}
\label{sec: Cartier duality spherical}

One can note that the spherical lift of $\Int(\bZ)$ is a smooth coalgebra in the sense of~\cite{elliptic2}*{Definition 1.2.4}.
Hence, it determines a formal hyperplane which supports a formal group structure.
We want identify this formal group with $\widehat{\bG_m}$.
For this, we will need the following.

\begin{proposition} \label{prop : unicity of cartier dual}
  The Cartier dual of $\widehat{\bG_m}$ over $\Spec(\bZ)$ lifts uniquely to $\Spec(\bS)$.
  That is, if $H$ is any bicommutative bialgebra such that
  \[
  H \otimes_{\bS} \bZ \simeq \Int(\bZ)
  \]
  then $H \simeq \bS_{\Int(\bZ)}$.
  \begin{proof}
    Recall that the functor $\bS_{(-)} : \CAlg_{\bZ}^{\perf} \to \CAlg(\Sp)$ is fully faithful.
    We show that $H$ lies in the essential image of this functor.
    For this, note that by hypothesis,
    \[
    H \otimes \bF_{p} \simeq \Int(\bZ)/p
    \]
    and hence that
    \[
    H^{\wedge}_{p} \simeq \bS \bW(\Int(\bZ)/p) \,,
    \]
    which follows from relative perfectness of $\Int(\bZ)/p$ over $\bF_p$.
    Moreover, we have
    \[
     H \otimes \bQ \simeq  (H \otimes \bZ) \otimes_{\bZ} \otimes \bQ \simeq \Int(\bZ) \otimes_{\bZ}  \bQ \simeq \bQ[x] \,;
    \]
    we can now assemble $H$ via a fracture square.
    This is exactly the definition of the  $\bS_{(-)}$, so we conclude that $H$ is in the essential image of the functor $\bS_{(-)}$, and moreover that $H \simeq \bS_{(\Int(\bZ))}$.
    We remark that since the functor is symmetric monoidal, this also accounts for the coalgebra structure on $H$, so we obtain uniqueness as a bicommutative bialgebra.
  \end{proof}
\end{proposition}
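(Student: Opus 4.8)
The plan is to route everything through the assembly functor $\bS_{(-)}\colon\CAlg_{\bZ}^{\perf}\to\CAlg(\Sp)$, which is fully faithful and strong symmetric monoidal by \cite{antieau2023spherical}*{Proposition 7.3}. Because it is fully faithful, a preimage of $H$ is unique if it exists; because it is strong symmetric monoidal, such a preimage automatically transports the bicommutative bialgebra structure from $\Int(\bZ)$ — which is itself a bicommutative bialgebra — onto $H$. So the entire statement reduces to proving that $H$ lies in the essential image of $\bS_{(-)}$, i.e. that $H\simeq\bS_{\Int(\bZ)}$ as an $\bE_\infty$-ring. As preparation I would base-change the hypothesis further: along $\bZ\to\bQ$ this gives $H\otimes_{\bS}\bQ\simeq\Int(\bZ)\otimes_{\bZ}\bQ\simeq\bQ[x]$, and along $\bZ\to\bF_p$ it gives $H\otimes_{\bS}\bF_p\simeq\Int(\bZ)/p$, which is a discrete perfect $\bF_p$-algebra.

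I would then identify the $p$-complete and rational localizations of $H$ separately. For each prime $p$ we have $H^{\wedge}_p\otimes_{\bS_p}\bF_p\simeq H\otimes_{\bS}\bF_p\simeq\Int(\bZ)/p$, a discrete perfect ring; since $H^{\wedge}_p$ is connective, the characterization of the essential image of $\mathbb{SW}$ recalled above — equivalently, the uniqueness of the $p$-complete lift of a perfect $\bF_p$-algebra, \cite{elliptic2}*{Remark 5.2.2} — gives $H^{\wedge}_p\simeq\mathbb{SW}(\Int(\bZ)/p)$, compatibly with the reduction map to $\Int(\bZ)^{\wedge}_p$. Rationally there is nothing to do, as $H\otimes_{\bS}\bQ\simeq\bQ[x]\simeq\Int(\bZ)\otimes_{\bZ}\bQ$. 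Feeding these identifications into the arithmetic fracture square for the (bounded-below) spectrum $H$,
\[
H\;\simeq\;(H\otimes_{\bS}\bQ)\times_{(\prod_p H^{\wedge}_p)\otimes_{\bS}\bQ}\prod_p H^{\wedge}_p,
\]
I would observe that this is precisely the pullback diagram defining $\bS_{\Int(\bZ)}$ (equivalently, the outer rectangle of the larger fracture diagram displayed earlier). Hence $H\simeq\bS_{\Int(\bZ)}$ as $\bE_\infty$-rings, and then as bicommutative bialgebras by the first paragraph.

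The main obstacle, I expect, is passing from an equivalence of $\bE_\infty$-rings to an equivalence of bicommutative bialgebras: one must know the comultiplication on $H$ is pinned down by the fracture data and not merely the multiplication, and a direct attack on the coalgebra structure is unpleasant. The clean way around this is exactly the device built into the plan: never argue with the coalgebra structure at all, but exhibit $H$ as an object of the essential image of a strong symmetric monoidal fully faithful functor and let that functor carry the bialgebra structure across — which is why I would organise the proof through $\bS_{(-)}$ from the start. A secondary technical point is the connectivity of $H$, needed both so that the arithmetic fracture square applies and so that the $\mathbb{SW}$ essential-image criterion is available; this is either part of the standing hypotheses on bialgebras of functions on affine group schemes over $\bS$, or can be extracted from the hypothesis by a Postnikov-type argument using the discreteness of $\Int(\bZ)$ and $\Int(\bZ)/p$.
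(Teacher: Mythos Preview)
Your proposal is correct and follows essentially the same argument as the paper: identify $H^{\wedge}_p$ with $\mathbb{SW}(\Int(\bZ)/p)$ via perfectness, identify $H\otimes\bQ$ with $\bQ[x]$, reassemble via the arithmetic fracture square to land in the essential image of $\bS_{(-)}$, and then invoke full faithfulness and strong symmetric monoidality of $\bS_{(-)}$ to handle the bialgebra structure. Your added remarks on connectivity and on why one should route the coalgebra structure through the functor rather than argue directly are well taken and make explicit points the paper leaves implicit.
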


As a consequence of Proposition \ref{prop : unicity of cartier dual}, the Cartier duality between $\bH$ and $\widehat{\bG_m}|_{\bZ}$ lifts to spectral algebraic geometry, and so we may unambiguously refer to $\bH_{\bS}$ as the Cartier dual of $\widehat{\bG_m}$.
We will exploit this duality between~$\bH_{\bS}$ and $\widehat{\bG_m}$ in the following construction.

\begin{const} \label{const: cartier duality map}
Let
\[
\iota: \widehat{\bG}_m \to \bG_m
\]
be the inclusion of the spectral formal multiplicative group into the spectral multiplicative group.
Note that this is a map of abelian group objects in stacks; this essentially follows from the definition
\[
\widehat{\bG_m}(R) \simeq \fib(\bG_m(R) \to  \bG_m(R^{\on{red}}))
\]
where the fiber is taken in connective $\bZ$-modules.
We apply Cartier duality as described in \S \ref{sec: Cartier duality} to this group map to obtain a map
\[
\underline{\bZ} \simeq \mathbf{D}(\bG_m) \to  \mathbf{D}(\widehat{\bG_m}) \simeq \bH_{\bS}.
\]
We refer to this as the Cartier duality map.
\end{const}

\begin{rmk}
We note that the equivalence $\underline{\bZ} \simeq \mathbf{D}(\bG_m)$ above follows from the fact that for any $R \in \CAlg$, there is an equivalence
\[
\Map_{\CAlg_{R}}(R[\bZ], R[\bZ]) \simeq \hom_{\on{Ab}}(\bZ, \bZ) \simeq \bZ\,.
\]
This is guaranteed by \cite[Theorem 1.1]{carmeli2024maps}.
Furthermore, this can be promoted to an equivalence between \emph{bialgebra} maps and all abelian group maps, which follows from the fact that every map of abelian groups $\bZ \to \bZ$ will be a map of coalgebras, and that the coalgebra structure on $R[\bZ]$ is determined, via suspension, from the one on~$\bZ$.
Hence every such $\bE_\infty$-algebra map $R[\bZ] \to R[\bZ]$ will also be a coalgebra map.
\end{rmk}

\subsection{Cochain description of the spherical integer-valued polynomials}
\label{sec:cochain description of SIntZ}

Recall the equivalence
\[
\Int(\bZ) \simeq \bZ \otimes_{\bT^{\vee}} \bZ
\]
asserted in Proposition~\ref{prop:toen_Z_ZS1_Z_is_IntZ}.
We will now show that this identification lifts to the sphere.
This will give an alternative description of the ring of functions on the spherical Hilbert additive group scheme, which will be helpful when endowing this with a filtration later on.

\begin{proposition} \label{prop:toen_thm_lifts_to_S}
    There is an equivalence
    \[
    \bS_{\Int(\bZ)} \simeq \bS \otimes_{\bS^{S^1}} \bS
    \]
    of bicommutative bialgebras.
\end{proposition}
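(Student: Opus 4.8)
The plan is to reduce to the uniqueness statement of Proposition~\ref{prop : unicity of cartier dual}. Set $H := \bS \otimes_{\bS^{S^1}} \bS$, the bar construction formed along the augmentation $\bS^{S^1}\to\bS$. It then suffices to (i) exhibit $H$ as a bicommutative bialgebra, and (ii) produce an equivalence $H \otimes_{\bS}\bZ \simeq \Int(\bZ)$ of bicommutative bialgebras; with these in hand, Proposition~\ref{prop : unicity of cartier dual} forces $H \simeq \bS_{\Int(\bZ)}$ as bicommutative bialgebras, which is the claim.

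For (i), I would first recall that $\bS[S^1]$ is a bicommutative bialgebra — the comultiplication coming from the diagonal of $S^1$, the multiplication from the abelian group structure on $S^1$ — and that it is dualizable over $\bS$, since $\bS[S^1]\simeq \bS\oplus\bS[1]$. Dualizing, $\bS^{S^1}=\hom_{\bS}(\bS[S^1],\bS)$ is again a bicommutative bialgebra, whose counit is precisely the augmentation $\bS^{S^1}\to\bS$ (dual to the unit of $\bS[S^1]$, equivalently restriction along the basepoint of $S^1$). Since that basepoint is the identity of the group $S^1$, the relative tensor product $H$ is the bar construction of the bialgebra $\bS^{S^1}$ along its own counit. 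I would then apply the functor $\bS\otimes_{(-)}\bS$ to the comultiplication $\bS^{S^1}\to\bS^{S^1}\otimes_{\bS}\bS^{S^1}$, using the identification $\bS\otimes_{A\otimes_{\bS}B}\bS\simeq (\bS\otimes_A\bS)\otimes_{\bS}(\bS\otimes_B\bS)$ for augmented $\bE_\infty$-rings $A,B$: this yields a comultiplication $H\to H\otimes_{\bS}H$, which together with the evident ring structure makes $H$ a bicommutative bialgebra, with all coherences inherited from $\bS^{S^1}$. (Conceptually, this is the assertion that $\Spec(H)\simeq\Spec(\bS)\times_{\Spec(\bS^{S^1})}\Spec(\bS)$ is the loop group of the affine group scheme $\Spec(\bS^{S^1})$, which is Cartier dual to $\Spec(\bS[S^1])$.)

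For (ii), base-change $-\otimes_{\bS}\bZ\colon\CAlg(\Sp)\to\CAlg(\Mod_{\bZ})$ is symmetric monoidal and preserves pushouts, so $H\otimes_{\bS}\bZ\simeq\bZ\otimes_{\bS^{S^1}\otimes_{\bS}\bZ}\bZ$ as bicommutative bialgebras. Because $\bS[S^1]$ is dualizable over $\bS$, linear duality commutes with this base-change, giving $\bS^{S^1}\otimes_{\bS}\bZ\simeq\hom_{\bZ}(\bZ[S^1],\bZ)=\bT^{\vee}$ as bicommutative bialgebras, compatibly with the counits. Feeding this in and then quoting Proposition~\ref{prop:toen_Z_ZS1_Z_is_IntZ}(1), which identifies $\bZ\otimes_{\bT^{\vee}}\bZ\simeq\Int(\bZ)$ as bicommutative bialgebras, delivers (ii). Combining (i) and (ii) with Proposition~\ref{prop : unicity of cartier dual} then finishes the argument.

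I expect the only genuine friction to be organizational: carrying the coalgebra structures faithfully through all of these identifications, and in particular checking that the bar-construction comultiplication on $H$ matches, under $-\otimes_{\bS}\bZ$, the comultiplication on $\bZ\otimes_{\bT^{\vee}}\bZ$ that appears in Proposition~\ref{prop:toen_Z_ZS1_Z_is_IntZ}. I would also want to confirm that Proposition~\ref{prop : unicity of cartier dual} genuinely applies to $H$ — that is, that beyond the equivalence $H\otimes_{\bS}\bZ\simeq\Int(\bZ)$ no hidden connectivity or flatness hypothesis on $H$ is needed; this should be formal, but deserves a sentence.
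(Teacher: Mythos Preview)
Your proposal is correct and follows essentially the same route as the paper: base-change $H=\bS\otimes_{\bS^{S^1}}\bS$ along $-\otimes_{\bS}\bZ$, identify the result with $\bZ\otimes_{\bT^{\vee}}\bZ\simeq\Int(\bZ)$ via Proposition~\ref{prop:toen_Z_ZS1_Z_is_IntZ}(1), and invoke Proposition~\ref{prop : unicity of cartier dual}. The paper is terser about your step (i) --- it does not spell out the bar-construction comultiplication on $H$ and instead leans on the uniqueness clause of Proposition~\ref{prop : unicity of cartier dual} to pin down the bialgebra structure --- and it justifies $\bS^{S^1}\otimes_{\bS}\bZ\simeq\bZ^{S^1}$ by finiteness of $S^1$ rather than dualizability of $\bS[S^1]$, but these are cosmetic differences.
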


\begin{proof}
The extension of scalars functor $(-) \otimes_{\bS} \bZ: \Sp \to \Mod_{\bZ}$
is symmetric monoidal and colimit-preserving, so that the induced functor $(-) \otimes_{\bS} \bZ : \CAlg \to \CAlg_{\bZ}$ is colimit-preserving.
The symmetric monoidal structure on $\CAlg$ is cocartesian, so that the tenor product $\bS \otimes_{\bS^{S^1}} \bS$ fits into the pushout
\[
\begin{tikzcd}
  \bS^{S^1} \arrow[r] \arrow[d] & \bS \arrow[d] \\
  \bS \arrow[r] & \bS \otimes_{\bS^{S^1}} \bS \,.
\end{tikzcd}
\]
Since the extension of scalars functor is colimit-preserving, the diagram we get after base-changing is still a pushout square in $\CAlg_{\bZ}$.
We note that
\[
\bS^{S^1} \otimes_{\bS} \bZ \simeq \bZ^{S^1}
\]
since $S^1$ is a finite space.
Combining this with by Proposition~\ref{prop:toen_Z_ZS1_Z_is_IntZ}, we conclude that there are equivalences
\[
(\bS \otimes_{\bS^{S^1}} \bS) \otimes_{\bS} \bZ \simeq \bZ \otimes_{\bZ^{S^1}} \bZ \simeq \Int(\bZ) \,.
\]
We may now apply Proposition~\ref{prop : unicity of cartier dual}  to obtain the desired equivalence with $\bS_{\Int(\bZ)}$. As we saw in the proof of Proposition~\ref{prop : unicity of cartier dual}, the bialgebra structure will be uniquely determined, so that this is indeed an equivalence of bicommutative bialgebras.
\end{proof}

\section{The Synthetic Hilbert Additive Group Scheme}
In this section, we introduce the eponym for this paper, the synthetic Hilbert additive group scheme.
This is the generalization of the filtered Hilbert additive groups scheme to the setting of derived algebraic geometry over synthetic spectra.
The desiderata for such an object is as follows:
\begin{enumerate}
    \item It gives a lift to the synthetic sphere spectrum of the filtered Hilbert additive group scheme in the sense that it lives over $\mathrm{fSpec}(\bS^{\syn})$, which itself lives over $\filstack$.
    \item The filtration arising from the $\bS^{\syn}$-module structure on its global sections is an spectral lift of the degree filtration $\fil^*_{\deg}(\Int(\bZ))$ on the integer-valued polynomials.
\end{enumerate}
In \S \ref{sec: synth integer valued polynomials}, we give a lift of the integer-valued polynomials to the setting of synthetic spectra, inspired by the bar construction description showed above. We also show that this lift is the correct one in the sense that it base-changes to the degree filtration $\fil^*_{\deg}(\Int(\bZ))$.
In \S \ref{sec:synth Hilbert additive group}, we introduce the synthetic Hilbert additive group scheme.
In \S \ref{sec:special_fiber_Hsyn_add_formal_group}, we investigate the special fiber of the synthetic Hilbert additive group and connect it to the kernel of the Frobenius on the p-typical Witt vectors, studied previously in \cite{moulinos2019universal}.

\subsection{The synthetic integer-valued polynomials} \label{sec: synth integer valued polynomials}

Recall that in Section~\ref{Section synthetic cochains}, we introduced a filtration on spherical cochains $\bS^{S^1}$ using the even filtration.
More precisely, we studied the two dual bicommutative bialgebras
\[
\bT_{\syn} = \fil^*_{\ev} \bS[S^1] \och \bT_{\syn}^\vee= \hom_{\bS^{\syn}}(\fil^*_{\ev} \bS[S^1],\bS^{\syn})
\]
over the synthetic sphere spectrum.
Moreover, we saw that these filtrations base-change along the synthetic Hurewicz maps
to the Postnikov filtrations on $\bZ[S^1]$ and $\bZ^{S^1}$, respectively, per Proposition~\ref{prop:Tsyn_basechange_to_Post}.
Inspired by the second statement of Proposition~\ref{prop:toen_Z_ZS1_Z_is_IntZ}, we hence make the following definition.

\begin{definition}
    We define the \emph{synthetic integer-valued polynomials} to be the  $\bE_{\infty}$-algebra
    \[
    \bS^{\syn}_{\Int(\bZ)}= \bS^{\syn} \otimes_{\bT^{\vee}_{\syn}} \bS^{\syn}
    \]
    in $\bS^{\syn}$-modules.
\end{definition}

Our desire is of course that this is indeed the sought after spectral lift of the degree filtration.
We will ultimately prove this at the end of this section.
Before that, we remark that the synthetic integer-valued polynomials inherits a bicommutative bialgebra structure, as it is bar construction applied to the augmented bicommutative bialgebra~$\bT^{\vee}_{\syn}$, by definition.

\begin{rmk}
Since $\bS^{\syn}$ is an $\bE_\infty$-algebra in $\Filt(\Sp)$ with $\colim(\bS^{\syn}) \simeq \bS$, there is an induced symmetric monoidal functor
\[
\colim: \mathrm{SynSp} \simeq \Mod_{\bS^\syn}\to \Mod_{\colim(\bS^\syn)}(\Sp) \simeq \Sp.
\]
Hence, we obtain an equivalence
\[
\colim \left( \bS^{\syn}_{\Int(\bZ)} \right) \simeq \colim\left(\bS^{\syn} \otimes_{\bT^{\vee}_{\syn}} \bS^{\syn}\right)  \simeq \bS \otimes_{\bS^{S^1}} \bS,
\]
thus recovering the spherical ring of integer-valued polynomials.
\end{rmk}

We show that this is a ``lift" of the degree filtration on the integer-valued polynomials.

\begin{proposition} \label{prop: base change of synthetic intZ}
There is an equivalence
\[
\bS^{\syn}_{\Int(\bZ)} \otimes_{\bS^{\syn}} \bZ^{\syn} \simeq \fil^*_{\deg}(\Int(\bZ))
\]
of filtered objects in $\bZ$-modules.
Here, the right hand side denotes the ring of integer-valued polynomials, equipped with its degree filtration.
\end{proposition}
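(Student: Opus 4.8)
The plan is to apply the base-change functor $(-)\otimes_{\bS^{\syn}}\bZ^{\syn}\colon\CAlg_{\bS^{\syn}}\to\CAlg_{\bZ^{\syn}}$ directly to the bar-construction defining $\bS^{\syn}_{\Int(\bZ)}$. This functor is a symmetric monoidal left adjoint, and the symmetric monoidal structure on $\CAlg$ is cocartesian, so it carries the pushout square
\[
\begin{tikzcd}
\bT^{\vee}_{\syn}\arrow[r]\arrow[d] & \bS^{\syn}\arrow[d]\\
\bS^{\syn}\arrow[r] & \bS^{\syn}_{\Int(\bZ)}
\end{tikzcd}
\]
to a pushout square in $\CAlg_{\bZ^{\syn}}$; here both structure maps $\bT^{\vee}_{\syn}\to\bS^{\syn}$ are the augmentation of the bialgebra $\bT^{\vee}_{\syn}$. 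Hence
\[
\bS^{\syn}_{\Int(\bZ)}\otimes_{\bS^{\syn}}\bZ^{\syn}\;\simeq\;\bZ^{\syn}\otimes_{\,\bT^{\vee}_{\syn}\otimes_{\bS^{\syn}}\bZ^{\syn}\,}\bZ^{\syn},
\]
and it suffices to identify the augmented $\bE_\infty$-algebra $\bT^{\vee}_{\syn}\otimes_{\bS^{\syn}}\bZ^{\syn}$ with the filtered cochain bialgebra $\bT^{\vee}_{\fil}=\tau_{\geq\bullet}\bZ^{S^1}$, and then invoke the second part of Proposition~\ref{prop:toen_Z_ZS1_Z_is_IntZ}.

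For this identification I would use that $\bT_{\syn}$ is a dualizable $\bS^{\syn}$-module — this is Proposition~\ref{prop: dualizability of stuff}, and it is visible directly from the splitting $\bT_{\syn}\simeq\bS^{\syn}\oplus\bS^{\syn}(1)[1]$ of Lemma~\ref{lemma : module structure on T}. A symmetric monoidal functor preserves dualizable objects and their duals, so
\[
\bT^{\vee}_{\syn}\otimes_{\bS^{\syn}}\bZ^{\syn}\;\simeq\;\hom_{\bZ^{\syn}}\!\bigl(\bT_{\syn}\otimes_{\bS^{\syn}}\bZ^{\syn},\,\bZ^{\syn}\bigr).
\]
By Proposition~\ref{prop:Tsyn_basechange_to_Post} we have $\bT_{\syn}\otimes_{\bS^{\syn}}\bZ^{\syn}\simeq\tau_{\geq\bullet}\bZ[S^1]=\bT_{\fil}$, so the right-hand side is the filtered $\bZ$-linear dual of $\bT_{\fil}$, which by the (essentially strict) symmetric monoidality of the Postnikov filtration on finite sums of shifts of $\bZ$ is precisely $\bT^{\vee}_{\fil}=\tau_{\geq\bullet}\bZ^{S^1}$. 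The augmentation, algebra and coalgebra structures match because all the maps in play are induced by symmetric monoidal functors and the integral bialgebra structures on $\bT_{\fil}$ and $\bT^{\vee}_{\fil}$ are unique among lifts of those on $\bT$ and $\bT^{\vee}$.

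Combining these with Proposition~\ref{prop:toen_Z_ZS1_Z_is_IntZ}(2) gives
\[
\bS^{\syn}_{\Int(\bZ)}\otimes_{\bS^{\syn}}\bZ^{\syn}\;\simeq\;L_0\bZ\otimes_{\bT^{\vee}_{\fil}}L_0\bZ\;\simeq\;\fil^{*}_{\deg}(\Int(\bZ)),
\]
as desired. The one point that requires care, and which I regard as the main obstacle, is that the base-change of $\bT^{\vee}_{\syn}$ must \emph{not} be confused with the even filtration $\fil^{*}_{\ev}(\bZ^{S^1})$: by the Warning in Section~\ref{sec:even_filt_(co)chain} the latter is the constant filtration $L_0\bZ^{S^1}$, which would give the wrong answer. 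It is precisely the dualizability of $\bT_{\syn}$ — equivalently, that $\bT^{\vee}_{\syn}$ is \emph{defined} as the $\bS^{\syn}$-linear dual of $\bT_{\syn}$ rather than as $\fil^{*}_{\ev}\bS^{S^1}$ — that makes base-change commute with dualization and land on the filtered dual $\bT^{\vee}_{\fil}$ rather than on the even filtration of the cochains. Everything else is a formal manipulation of pushouts along symmetric monoidal colimit-preserving functors.
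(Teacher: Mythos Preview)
Your argument is correct and follows essentially the same route as the paper's proof: both base-change the bar-construction pushout along the symmetric monoidal functor $(-)\otimes_{\bS^{\syn}}\bZ^{\syn}$, identify the result with $L_0\bZ\otimes_{\bT^{\vee}_{\fil}}L_0\bZ$, and invoke Proposition~\ref{prop:toen_Z_ZS1_Z_is_IntZ}(2). If anything, your version is more explicit than the paper's about the one nontrivial step—showing $\bT^{\vee}_{\syn}\otimes_{\bS^{\syn}}\bZ^{\syn}\simeq\bT^{\vee}_{\fil}$ via dualizability of $\bT_{\syn}$ and Proposition~\ref{prop:Tsyn_basechange_to_Post}—which the paper compresses into the phrase ``via symmetric monoidality of the extension of scalars functor''; your remark about the Warning is also well-placed.
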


\begin{proof}

By Proposition \ref{prop:toen_Z_ZS1_Z_is_IntZ} we have an identification
\[
\fil^*_{\deg}(\Int(\bZ)) \simeq L_0 \bZ \otimes_{\tau_{\geq \bullet}\bZ^{S^1}} L_0 \bZ\, .
\]
From this, one concludes, via symmetric monoidality of the extension of scalars functor  $(-)\otimes_{\bS^{\syn}} \bZ^{\syn}$, an equivalence
\[
\bS^{\syn}_{\Int(\bZ)} \otimes_{\bS^{\syn}} \bZ^{\syn} \simeq  \left( \bS^{\syn} \otimes_{\bT^{\vee}_{\syn}} \bS^{\syn} \right) \otimes_{\bS^{\syn}} \bZ^{\syn} \simeq L_0 \bZ \otimes_{\tau_{\geq \bullet}\bZ^{S^1}} L_0 \bZ \simeq \fil^*_{\deg}(\Int(\bZ))\, .
\]

\end{proof}

\begin{rmk}
  Using the machinery of~\cite{antieau2023spherical} one can construct another filtration on $\bS_{\Int(\bZ)}$ by using the notion of \emph{synthetic spherical Witt vectors}.
  Indeed, in~\cite{antieau2023spherical}*{Definition 2.27}, Antieau defines a functor
  \[
  \mathbb{SW}_{\syn} : \CAlg^{\perf}_{\bF_p} \longto \CAlg(\SynSp^{\wedge}_p)
  \]
  by simply post-composing the spherical Witt vectors described above with the $p$-complete even filtration functor.
  We could apply this to the perfect $\bF_p$-algebra $\Int(\bZ)/p$.
  However, this does not produce the same filtration on~$\mathbb{SW}(\Int(\bZ)/p)$ that we we obtain by $p$-completing $\bS^{\syn}_{\Int(\bZ)}$.
  One can see this by how the two filtrations base-change over $\bF_p$.
  The filtration constructed using the synthetic spherical Witt vectors base-changes to the trivial filtration on $\Int(\bZ)/p$ over $\bF_p$:
  \[
  \mathbb{SW}_{\syn}(\Int(\bZ)/p) \otimes_{\bS^{\syn}_p} \bF_p^{\syn} \simeq L_0(\Int(\bZ)/p) \,.
  \]
  We compare this to our construction which instead base-changes to the degree filtration on $\Int(\bZ)/p$.
  Relatedly, one obtains a filtration on the spherical cochains of $S^1$, at least up to $p$-completion, and this will also base-change to the trivial filtration if one constructed it via synthetic Witt vectors.
\end{rmk}

\subsection{The synthetic Hilbert additive group scheme}
\label{sec:synth Hilbert additive group}

We now introduce the synthetic Hilbert additive group scheme, an avatar in the realm of geometry of the constructions in Section \ref{sec: synth integer valued polynomials}.

\begin{definition}
The \emph{synthetic Hilbert additive group scheme} is
\[
\bH_\syn : = \mathrm{fSpec}\left(\bS^{\syn}_{\Int(\bZ)} \right) \,.
\]
This acquires a group structure from the bicommutative bialgebra structure on $\bS^{\syn}_{\Int(\bZ)}$.
\end{definition}

\begin{rmk} \label{rmk pullback}
 The synthetic Hilbert additive group scheme fits into the following diagram of pullback squares:
 \[
  \begin{tikzcd}
   \bH_{\bS} \arrow[r] \arrow[d] & \bH_\syn \arrow[d] & \arrow[l]  \bH^{\gr}_{\syn} \arrow[d] \\
    \Spec(\bS) \arrow[r] \arrow[d,equals] & \mathrm{fSpec}(\bS^{\syn}) \arrow[d] & \arrow[l] \mathrm{grSpec}(\bS^{\syn}) \arrow[d] \\
    \Spec(\bS) \arrow[r] & \filstack & \arrow[l] B\bG_m \, .
  \end{tikzcd}
  \]
In particular, we recover the spherical Hilbert additive group as the fiber of $\bH_{\syn}$ over $\Spec(\bS)$.
\end{rmk}

\begin{rmk}
By Proposition \ref{prop: base change of synthetic intZ}, the synthetic Hilbert additive groups scheme base-changes along the map
\[
\on{fSpec}(\bS^{\syn}) \to \on{fSpec}(\bZ^{\syn}) \simeq \filstack \times \Spec(\bZ)
\]
to the filtered group scheme $\bH_{\fil}$.
\end{rmk}

\subsection{The special fiber of the synthetic Hilbert additive group}
\label{sec:special_fiber_Hsyn_add_formal_group}
We now study in closer detail the special fiber of the synthetic Hilbert additive group scheme.
The main upshot is that the associated graded stack $\bH^{\gr}_{\syn}$, which is a stack over $\gr\Spec(\bS^\syn)$, arises as the base change of the scheme $\mathsf{Ker}$ over $B\bG_m|_{\bZ}$. However, this does not account for the group structure, which is twisted by the relation $\eta d = d^2$ in $\pi_*(\bS[S^1])$. Hence, while the Cartier dual of $\bH^{\gr}_{\syn}$ is a lift of the additive formal group to $\gr^*_{\ev}(\bS)$, it is not the additive formal group itself.

\begin{lemma}
    There is an equivalence
\[
\gr^*_{\ev}(\bS) \oplus \gr^*_{\ev}(\bS)[-1](-1) \simeq \gr^*(\bT_{\syn}^\vee)
\]
  of $\bE_\infty$-algebras in $\Mod_{\gr^*_{\ev}(\bS)}$.
\end{lemma}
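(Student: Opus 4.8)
The plan is to identify $\gr^*(\bT_{\syn}^\vee)$ with the trivial square-zero extension of $\gr^*_{\ev}(\bS)$ by $\gr^*_{\ev}(\bS)[-1](-1)$, by first understanding the simpler cocommutative coalgebra $\gr^*(\bT_{\syn})$ and then dualizing. At the level of modules nothing is needed beyond what is already available: by Lemma~\ref{lemma : module structure on T} we have $\bT_{\syn}\simeq\bS^{\syn}\oplus\bS^{\syn}[1](1)$, which is dualizable by Proposition~\ref{prop: dualizability of stuff}, so $\bT_{\syn}^\vee\simeq\bS^{\syn}\oplus\bS^{\syn}[-1](-1)$; and since the total associated graded functor $\gr$ is symmetric monoidal it commutes with passage to duals of dualizable objects, so that $\gr^*(\bT_{\syn}^\vee)\simeq(\gr^*\bT_{\syn})^\vee$ as commutative algebras in $\Mod_{\gr^*_{\ev}(\bS)}$, with underlying module $\gr^*_{\ev}(\bS)\oplus\gr^*_{\ev}(\bS)[-1](-1)$. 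What remains is to pin down the multiplication, equivalently the coalgebra structure of $\gr^*\bT_{\syn}$.

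The heart of the argument is to check that $\gr^*\bT_{\syn}$ is the \emph{trivial} square-zero cocommutative coalgebra over $\gr^*_{\ev}(\bS)$ on $\gr^*_{\ev}(\bS)[1](1)$, i.e.\ that the weight-$1$, degree-$1$ generator is primitive and there is nothing else. The comultiplication on $\bT_{\syn}$ was produced in Proposition~\ref{prop: bialgebras} by applying $\fil^*_{\ev}(-)$ — which is strongly symmetric monoidal on the subcategory $\cC$ of $\bE_\infty$-algebras of the form $\bS[(S^1)^n]$ (the Proposition preceding Proposition~\ref{prop: bialgebras}) — to the cocommutative comonoid $\bS[S^1]$ in $\cC$ given by the diagonal $S^1\to S^1\times S^1$ and the projection $S^1\to\ast$. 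Since $S^1$ is a suspension its reduced diagonal is nullhomotopic, so $\bS[S^1]\simeq\bS\oplus\bS[1]$ is the trivial square-zero cocommutative coalgebra over $\bS$, with $\bS[1]$ primitive — this is the content of the Remark following Proposition~\ref{prop: bialgebras}, cf.\ \cite{HR24}*{Proposition 3.3}. Strongly symmetric monoidal functors carry trivial square-zero coalgebras to trivial square-zero coalgebras, so $\bT_{\syn}$ is trivial square-zero over $\bS^{\syn}$ on $\bS^{\syn}[1](1)$, and applying the symmetric monoidal functor $\gr$ gives the claim for $\gr^*\bT_{\syn}$. (One can double-check this directly: already the counit axioms, together with the module splitting, force the comultiplication of $\gr^*\bT_{\syn}$ to send the generator $d$ to $d\otimes 1+1\otimes d$ without correction.)

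Finally, the $R$-linear dual of the trivial square-zero cocommutative coalgebra on a dualizable $R$-module $M$ is the trivial square-zero commutative $R$-algebra on $M^\vee$: linear duality restricted to dualizable objects is a symmetric monoidal anti-equivalence taking $R\oplus M$ to $R\oplus M^\vee$, and the multiplication dual to the trivial comultiplication is manifestly the trivial square-zero one. Taking $R=\gr^*_{\ev}(\bS)$ and $M=\gr^*_{\ev}(\bS)[1](1)$, so $M^\vee\simeq\gr^*_{\ev}(\bS)[-1](-1)$, and combining with the reductions above, we obtain
\[ \gr^*(\bT_{\syn}^\vee)\;\simeq\;(\gr^*\bT_{\syn})^\vee\;\simeq\;\gr^*_{\ev}(\bS)\oplus\gr^*_{\ev}(\bS)[-1](-1) \]
as $\bE_\infty$-algebras in $\Mod_{\gr^*_{\ev}(\bS)}$.

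The step I expect to be the main obstacle is the middle one, because "trivial square-zero" has to hold at the level of $\bE_\infty$-(co)algebras and not just on homotopy groups; routing the argument through the strong symmetric monoidality of $\fil^*_{\ev}(-)$ on tori and the suspension description of $\bS[S^1]$ is what keeps this under control, rather than checking higher coherences by hand. A more computational alternative is to work directly with the augmentation ideal $I\simeq\gr^*_{\ev}(\bS)[-1](-1)$ of $\gr^*(\bT_{\syn}^\vee)$ and show its nonunital $\bE_\infty$-operations all vanish: the binary multiplication $I\otimes_{\gr^*_{\ev}(\bS)}I\to\gr^*(\bT_{\syn}^\vee)$ is automatically null, since $I\otimes_{\gr^*_{\ev}(\bS)}I$ is free of rank one over $\gr^*_{\ev}(\bS)$ on a generator of weight $-2$ while $\gr^*(\bT_{\syn}^\vee)$ is supported in weights $\geq-1$ (using connectivity of $\gr^*_{\ev}(\bS)$ in the neutral $t$-structure), but the higher operations would need a more careful weight-and-degree bookkeeping.
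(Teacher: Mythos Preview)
Your main route---pass to the coalgebra $\gr^*\bT_{\syn}$, show it is trivial square-zero by transporting this property from $\bS[S^1]$ through $\fil^*_{\ev}$ and $\gr$, then dualize---is genuinely different from the paper's argument, which instead produces an $\bE_\infty$-algebra map $\bZ\oplus\bZ[-1](-1)\to\gr^*(\bT_{\syn}^\vee)$ directly (using that the generator $\delta$ squares to zero by weight reasons) and then extends scalars along $\bZ\to\gr^*_{\ev}(\bS)$.

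The transport step in your main argument has a real gap. The slogan ``strongly symmetric monoidal functors carry trivial square-zero coalgebras to trivial square-zero coalgebras'' needs the functor to be defined on an additive category where the splitting $\mathbf{1}\oplus M$ and the vanishing of the reduced comultiplication are visible, and to preserve direct sums. But $\fil^*_{\ev}$ is only strongly symmetric monoidal on the tiny full subcategory $\cC\subset\CAlg$ spanned by the $\bS[(S^1)^n]$; this category is not additive, and the coaugmentation coideal $\bS[1]$ of $\bS[S^1]$ does not live in it. So even granting that $\bS[S^1]$ is the trivial square-zero $\bE_\infty$-coalgebra in $\Sp$, that statement is not internal to $\cC$ and cannot be pushed through $\fil^*_{\ev}|_\cC$ as written. (The forgetful functor $\CAlg\to\Sp$ is only lax symmetric monoidal, so knowing the coalgebra in $\Sp$ does not pin down the $\bS^{\syn}$-linear coalgebra in $\mathrm{SynSp}$.)

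Your alternative at the end is, by contrast, essentially the mechanism behind the paper's proof, and you are being too cautious about it. With $R=\gr^*_{\ev}(\bS)$ concentrated in weights $\geq 0$ and $I=R[-1](-1)$, one has $I^{\otimes_R n}\simeq R[-n](-n)$, so
\[
\Map_{\Mod_R}(I^{\otimes_R n},I)\;\simeq\;\Omega^\infty\bigl(\hom_R(R[-n](-n),R[-1](-1))\bigr)^0\;\simeq\;\Omega^\infty\,R^{1-n}[n-1]\;=\;\ast
\]
for every $n\geq 2$, because $R^{1-n}=\gr^{1-n}_{\ev}(\bS)=0$. Since the \emph{mapping spaces} (not just their $\pi_0$) are contractible, the entire nonunital $\bE_\infty$-$R$-algebra structure on $I$ is forced to be the zero one---all coherences come for free---so $\gr^*(\bT_{\syn}^\vee)\simeq R\oplus I$ is the trivial square-zero extension. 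No extra ``weight-and-degree bookkeeping'' is needed beyond the $n=2$ case you already wrote down; the same line works uniformly in $n$. This is exactly the connectivity the paper invokes when it asserts $\delta^2=0$ and proceeds to build the map from $\bZ\oplus\bZ[-1](-1)$.
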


\begin{proof}
We already know that such an equivalence holds at the level of modules.
Let $\delta \in \pi_{-1}(\gr^*_{\ev}(\bS)[-1](-1) )$ be the generator in homotopy of the graded spectrum $\gr^*_{\ev}(\bS)[-1](-1)$. By connectivity of $\gr^*_{\ev}(\bS)$, this will square to zero. Thus we may define an $\bE_\infty$-algebra map
\[
\bZ \oplus \bZ[-1](-1) \to \gr^*(\bT_{\syn}^\vee)
\]
in graded $\bZ$-modules, as the map $\bZ \to \gr^0_{\ev}(\bS)$, the inclusion of the weight zero summand, together with its shift $\bZ[-1](-1) \to \gr^*_{\ev}(\bS)[-1](-1)$. Tensoring up to $\gr^*_{\ev}(\bS)$ gives, by adjunction,  a $\gr^*_{\ev}(\bS)$-linear  algebra map
\[
\gr^*_{\ev}(\bS) \oplus \gr^*_{\ev}(\bS)[-1](-1) \to \gr^*(\bT_{\syn}^\vee);
\]
whose underlying map of modules is the equivalence referred to above. This gives the desired equivalence of $\bE_\infty$-algebras.
\end{proof}

\begin{proposition} \label{prop identify H_syn^gr with Ker}
    There is an equivalence
    \[
    \bH_{\syn}^{\gr} \simeq \mathsf{Ker}|_{\gr\Spec(\bS^\syn)}
    \]
of spectral schemes over $\gr\Spec(\bS^\syn)$ where the right hand side denotes the restriction to  $\gr\Spec(\bS^\syn)$ of $\mathsf{Ker} \to B \bG_m |_\bZ$.
\end{proposition}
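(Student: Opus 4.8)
The plan is to unwind the definition of the associated graded stack and reduce the claim to a statement about the bar construction computing $\gr^*(\bS^\syn_{\Int(\bZ)})$. By construction $\bH^\gr_\syn$ is the base change of $\bH_\syn = \mathrm{fSpec}(\bS^\syn_{\Int(\bZ)})$ along the closed point $0\colon B\bG_m\to\filstack$, which under the Rees equivalence of~\cite{geometryoffiltr} corresponds to applying the total associated graded functor; hence $\bH^\gr_\syn\simeq\mathrm{grSpec}\bigl(\gr^*(\bS^\syn_{\Int(\bZ)})\bigr)$. Since $\gr\colon\Fil(\Sp)\to\Mod^\gr_\bS$ is symmetric monoidal it commutes with relative tensor products, so
\[
\gr^*(\bS^\syn_{\Int(\bZ)})\simeq\gr^*_{\ev}(\bS)\otimes_{\gr^*(\bT^\vee_\syn)}\gr^*_{\ev}(\bS).
\]

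Next I would use the previous lemma — and, crucially, the way it is proved. There the equivalence $\gr^*(\bT^\vee_\syn)\simeq\gr^*_{\ev}(\bS)\oplus\gr^*_{\ev}(\bS)[-1](-1)$ of $\bE_\infty$-algebras is obtained by extending scalars along $\bZ\to\gr^*_{\ev}(\bS)$ from the $\bE_\infty$-algebra $\bZ\oplus\bZ[-1](-1)$, which is precisely $\gr^*(\bT^\vee_\fil)=\gr(\tau_{\geq\bullet}\bZ^{S^1})\simeq H^*(S^1;\bZ)$, a trivial square-zero extension. Because the base-change functor $(-)\otimes_\bZ\gr^*_{\ev}(\bS)\colon\Mod^\gr_\bZ\to\Mod_{\gr^*_{\ev}(\bS)}$ is symmetric monoidal and colimit-preserving, it commutes with the bar construction, and therefore
\[
\gr^*(\bS^\syn_{\Int(\bZ)})\simeq\bigl(\bZ\otimes_{\gr^*(\bT^\vee_\fil)}\bZ\bigr)\otimes_\bZ\gr^*_{\ev}(\bS)\simeq\gr^*_{\deg}(\Int(\bZ))\otimes_\bZ\gr^*_{\ev}(\bS),
\]
where the last identification is Proposition~\ref{prop:toen_Z_ZS1_Z_is_IntZ}(2) after applying $\gr$ (equivalently, the fact that $\gr^*_{\deg}\Int(\bZ)=\bZ\langle x\rangle$ is the bar construction on $H^*(S^1;\bZ)$).

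Finally I would apply $\mathrm{grSpec}$. Relative $\Spec$ carries pushouts of algebras to fiber products of relatively affine stacks, so the displayed pushout square yields
\[
\bH^\gr_\syn\simeq\mathrm{grSpec}\bigl(\gr^*_{\deg}\Int(\bZ)\bigr)\times_{B\bG_m|_\bZ}\gr\Spec(\bS^\syn).
\]
It then remains to identify $\mathrm{grSpec}(\gr^*_{\deg}\Int(\bZ))$ with the graded scheme $\mathsf{Ker}\to B\bG_m|_\bZ$: this is the special fiber $\bH^\gr_\fil$ of $\bH_\fil$, and by Proposition~\ref{prop: equivalence of two presentations of Hfil} together with the computation of the $t=0$ fiber of $\bH^t$ in Construction~\ref{const: other version of Hfil} and in the proof of Proposition~\ref{prop: our guy is affine}, one has $\bH^\gr_\fil\simeq\mathsf{Ker}/\bG_m$. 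Substituting gives the asserted equivalence $\bH^\gr_\syn\simeq\mathsf{Ker}|_{\gr\Spec(\bS^\syn)}$.

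The main obstacle is the middle step: one must be sure that the lemma furnishes $\gr^*(\bT^\vee_\syn)$ not merely as some $\gr^*_{\ev}(\bS)$-algebra abstractly equivalent to $\gr^*_{\ev}(\bS)\oplus\gr^*_{\ev}(\bS)[-1](-1)$, but specifically as the extension of scalars of the fixed graded $\bZ$-algebra $H^*(S^1;\bZ)$ — this is exactly what licenses commuting the bar construction past $(-)\otimes_\bZ\gr^*_{\ev}(\bS)$. Concretely one re-reads the lemma's proof to confirm that the map $\bZ\oplus\bZ[-1](-1)\to\gr^*(\bT^\vee_\syn)$ it constructs is a morphism of graded $\bZ$-algebras whose adjoint is the claimed equivalence. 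Keeping the grading and internal-degree shifts straight so that the comparison really takes place over $B\bG_m|_\bZ$ with the correct weights is then routine bookkeeping.
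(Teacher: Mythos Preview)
Your proposal is correct and follows essentially the same route as the paper: both arguments reduce to the identification $\gr^*(\bS^{\syn}_{\Int(\bZ)}) \simeq \bZ\langle x\rangle \otimes_{\bZ} \gr^*_{\ev}(\bS)$ obtained by writing each side as a bar construction and using that the $\bE_\infty$-algebra equivalence $\gr^*(\bT^\vee_{\syn}) \simeq \gr^*_{\ev}(\bS)\oplus\gr^*_{\ev}(\bS)[-1](-1)$ from the preceding lemma is, by its very construction, the extension of scalars of $\bZ\oplus\bZ[-1](-1)$ along $\bZ\to\gr^*_{\ev}(\bS)$. The only point you omit is that the paper also observes this equivalence respects the $\bE_1$-coalgebra structures coming from the bar constructions (so that one obtains an equivalence of $\bE_1$-group schemes, used later), though this is not strictly required for the statement as written.
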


\begin{proof}
Recall that
\[
\mathsf{Ker} = \Spec \left(\bZ \langle x \rangle \right) / \bG_m
\]
where the divided power algebra $\bZ \langle x \rangle$ obtains its grading as the associated graded of the degree filtration on~$\Int(\bZ)$.
We first define a map
\[
\bZ \langle x \rangle \to \gr^* \left(\bS^{\syn}_{\Int(\bZ)} \right)
\]
in $\CAlg(\Mod_{\bZ}^{\gr})$.
For this, recall that each of the above objects admits a description as a push-out in $\CAlg(\Mod_{\bZ}^{\gr})$.
Indeed by Proposition \ref{prop:toen_Z_ZS1_Z_is_IntZ}(2), there is an equivalence
\[
\bZ \langle x \rangle \simeq \bZ \otimes_{\bZ \oplus \bZ[-1](-1)}  \bZ \, ;
\]
since pushouts are given by relative tensor products in  $\CAlg(\Mod_{\bZ}^{\gr})$, this gives the following pushout square description in this category:
\[
\begin{tikzcd}
  \bZ \oplus \bZ[-1](-1) \arrow[r] \arrow[d] & \arrow[d]  \bZ\\
  \bZ \arrow[r] & \bZ \langle x \rangle \,.
\end{tikzcd}
\]
Similarly, we have by definition an equivalence
\[
\gr^* \left(\bS^{\syn}_{\Int(\bZ)} \right)  \simeq \gr^*_{\ev}( \bS) \otimes_{\gr^*_{\ev}(\bS) \oplus \gr^*_{\ev}(\bS)[-1](-1)}  \gr^*_{\ev}( \bS)
\]
and so, the following pushout diagram:
\[
\begin{tikzcd}
  \gr^*_{\ev}(\bS) \oplus \gr^*_{\ev}(\bS)[-1](-1) \arrow[r] \arrow[d] & \arrow[d]  \gr^*_{\ev}(\bS) \\
  \gr^*_{\ev}(\bS) \arrow[r] & \gr^* \left(\bS^{\syn}_{\Int(\bZ)} \right)\,.
\end{tikzcd}
\]
We now define a map of the two pushout diagrams. The $\bZ$-algebra structure on $\gr^*_{\ev}(\bS)$ corresponds to a map $\bZ \to \gr^*_{\ev}(\bS)$ which is simply the inclusion of the weight zero component, so this gives the map on the upper right and lower left  edges.
This map moreover induces a morphism of (trivial) square zero algebras
\[
\bZ \oplus \bZ[-1](-1) \to \gr^*_{\ev}(\bS) \oplus \gr^*_{\ev}(\bS)[-1](-1).
\]
which is by definition, compatible with the map $\bZ \to \gr^*_{\ev}( \bS)$.
This gives a map of pushout diagrams and hence, upon taking colimits, a map
\[
\bZ \langle x \rangle \to \gr^* \left(\bS^{\syn}_{\Int(\bZ)} \right) \, .
\]
Thus we have produced a $\bZ$-linear map of graded algebras. By adjunction, this induces a $\gr^*_{\ev}(\bS)$-linear map
\[
\bZ \langle x \rangle  \otimes_{\bZ} \gr^*_{\ev}(\bS) \to  \gr^* \left(\bS^{\syn}_{\Int(\bZ)} \right),
\]
which will be an equivalence since pushouts commute with tensor products.
By construction, this equivalence is compatible with coalgebra structures induced on $\bZ\langle x \rangle$ and $\gr^* \left(\bS^{\syn}_{\Int(\bZ)} \right)$ from being bar constructions.
Hence, we conclude the desired equivalence of $\bE_1$-coalgebras in $\CAlg(\Mod^{\gr}_{\gr^*(\bS)})$, which by passing to relative affine schemes, gives an equivalence
    \[
    \bH_{\syn}^{\gr} \simeq \mathsf{Ker}|_{\gr\Spec(\bS^\syn)}
\]
of relatively affine schemes over $\gr\Spec(\bS_\syn)$.
\end{proof}

As a consequence of the above proposition, the group scheme $\bH_{\syn}^{\gr}$  corresponding to the associated graded of the filtration on the synthetic additive group scheme fits into the following pullback square of spectral stacks over~$B \bG_m|_{\bZ}$:
\[
\begin{tikzcd}
\bH_{\syn}^{\gr} \arrow[r] \arrow[d] & \arrow[d]  \mathsf{Ker}\\
\gr \Spec(\bS_{\syn}) \arrow[r] & B \bG_m|_{\bZ} \,.
\end{tikzcd}
\]
We remark that this is all made possible by the fact that $\gr^*_{\ev}(\bS)$ is $\bZ$-linear, which is a key theme of this paper.

\begin{warning}
The above proposition accounts for the structure of $\bH^{\gr}_{\syn}$ as an $\bE_1$-group in spectral stacks over $\gr\Spec(\bS^{\syn})$.
This is because we have displayed its $\bE_\infty$-algebra of functions as a bar construction, and the proposition displayed an equivalence of bar constructions, which accounts for $\bE_1$-coalgebra structures.
However, this does not account for the $\bE_\infty$-group structure on  $\bH^{\gr}_{\syn}$.
In fact, the relation $d^2 = \eta d$ in the homotopy of $\bS[S^1]$ manifests itself already in $\pi_*(\gr^*(\bT_\syn))$, which gives the dual $\gr^*(\bT_\syn^\vee)$, and hence its bar construction, a different $\bE_\infty$-coalgebra structure from the expected one.
\end{warning}

\section{The Synthetic Circle and its Further Deloopings}

We have so far introduced the synthetic additive group scheme as a group stack over the evenly filtered sphere.
In this section, we will deloop this construction with the aim of recovering a stack, whose cohomology is exactly~$\bT_{\syn}^{\vee}$; that is, the dual of the even filtration on $\bS[S^1]$.
We propose this object to be a spectral lift of the filtered circle in \cite{moulinos2019universal}.
This section is organized as follows.
In \S \ref{sec: synthetic circle}, we introduce the synthetic circle as the classifying stack of $\bH_{\syn}$ over $\on{fSpec}(\bS^\syn)$.
In \S \ref{sec: generic fiber of synthetic circle}, we study the generic fiber of the synthetic circle, recovering the cochain algebra $\bS^{S^1}$ for its cohomology.
In \S \ref{sec: special fiber of synthetic circle}, we compute the cohomology of its special fiber.
Finally, in \S \ref{sec: deloopings and discussion}, we describe further deloopings of $\bH_{\bS}$ and make some speculative remarks about affine stacks in spectral algebraic geometry.

\subsection{The synthetic circle} \label{sec: synthetic circle}

Recall,from Construction \ref{const: classifying stack of group}, that given a general grouplike $\bE_\infty$-monoid $G \in \on{sStk}_S$, we can form its classifying stack over $S$.
We will now specialize to $S= \on{fSpec}(\bS^\syn)$ and take $G$ to be the synthetic Hilbert additive group.

\begin{definition}
The \emph{synthetic circle} is the stack over $\on{fSpec}(\bS^{\syn})$ given by the classifying stack over $\on{fSpec}( \bS^{\syn})$ of the synthetic additive group scheme:
\[
S^1_{\syn}= B \bH_{\syn} \to \on{fSpec}( \bS^{\syn}) \,.
\]
\end{definition}

\begin{rmk} By the compatibility of pullbacks with taking classifying stacks, the synthetic circle fits into the following diagram of pullback squares:
 \[
  \begin{tikzcd}
  B \bH_{\bS} \arrow[r] \arrow[d] & S^1_{\syn} \arrow[d] & \arrow[l] B \bH^{\gr}_{\syn} \arrow[d] \\
    \Spec(\bS) \arrow[r] \arrow[d,equals] & \mathrm{fSpec}(\bS^{\syn}) \arrow[d] & \arrow[l] \mathrm{grSpec}(\bS^{\syn}) \arrow[d] \\
    \Spec(\bS) \arrow[r] & \filstack & \arrow[l] B\bG_m \,.
  \end{tikzcd}
  \]
\end{rmk}

\subsection{The generic fiber of the synthetic circle} \label{sec: generic fiber of synthetic circle}

In \cite{moulinos2019universal, Toe20}, it was shown that the stack $K(\bH, n)$ is the \emph{affinization} of the stack $K(\bZ, n)$ for every $n$.
In particular, the natural map $K(\bZ, n) \to K(\bH, n)$ induces an equivalence on cohomology
\[
R \Gamma(K(\bH, n), \cO) \overset{\simeq}\longto R \Gamma(K(\bZ, n), \cO) \,.
\]
While there is currently no notion of an affine stack in spectral algebraic geometry, we would like to show that working with the spectral Hilbert additive group scheme $\bH_{\bS}$ gives rise to a similar phenomenon in this context.

\begin{theorem} \label{thm cohomology of generic fiber}
The map $B \bZ \to B \bH_{\bS}$  of spectral stacks induces an equivalence
\[
R\Gamma( B \bH_{\bS}, \cO) \simeq \bS^{S^1}
\]
of bicommutative bialgebras between the cohomology of $B\bH_{\bS}$ and the spherical cochain algebra $\bS^{S^1}$.
\end{theorem}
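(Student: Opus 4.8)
The plan is to identify the global sections of $B\bH_{\bS}$ with a cobar construction and then recognize the answer through a bar--cobar (Koszul duality) argument, bootstrapping from the $\bZ$--linear case. Recall that $B\bH_{\bS}$ is the geometric realization, in fpqc spectral stacks, of the simplicial bar construction $[n]\mapsto\bH_{\bS}^{\times n}$ attached to the group scheme $\bH_{\bS}$. Since $R\Gamma(-,\cO)$ carries colimits of stacks to limits of $\bE_\infty$-rings, we obtain
\[
R\Gamma(B\bH_{\bS},\cO)\simeq\lim_{\Delta}\bigl([n]\mapsto\cO(\bH_{\bS}^{\times n})\bigr)\simeq\lim_{\Delta}\bigl([n]\mapsto\bS_{\Int(\bZ)}^{\otimes_{\bS}n}\bigr),
\]
the cobar construction of the coalgebra $\bS_{\Int(\bZ)}$, with comultiplication dual to the group law on $\bH_{\bS}$. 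By Proposition~\ref{prop:toen_thm_lifts_to_S} we have $\bS_{\Int(\bZ)}\simeq\bS\otimes_{\bS^{S^1}}\bS$ \emph{as bicommutative bialgebras}, so this is precisely the cobar construction of the bar coalgebra of the augmented $\bE_\infty$-algebra $\bS^{S^1}$. On the other hand, for the constant stack on a finite space $X$ one has $R\Gamma(\underline{X},\cO)\simeq\bS^{X}$, since $\underline{X}=\colim_{X}\Spec(\bS)$; in particular $R\Gamma(\underline{S^1},\cO)\simeq\bS^{S^1}$, and $\underline{S^1}=B\underline{\bZ}$ receives the delooping of the Cartier duality map of Construction~\ref{const: cartier duality map}. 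Thus the assertion to prove is that the induced map $R\Gamma(B\bH_{\bS},\cO)\to R\Gamma(\underline{S^1},\cO)\simeq\bS^{S^1}$ is an equivalence, which conceptually is exactly the statement that bar--cobar duality holds for $\bS^{S^1}$.

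To prove this I would reduce to the already-known $\bZ$--linear statement. The cosimplicial $\bE_\infty$-ring $[n]\mapsto\bS_{\Int(\bZ)}^{\otimes_{\bS}n}$ is degreewise connective, because $\bS_{\Int(\bZ)}$ is a flat lift of $\Int(\bZ)$; exploiting this one identifies
\[
R\Gamma(B\bH_{\bS},\cO)\otimes_{\bS}\bZ\simeq\lim_{\Delta}\bigl([n]\mapsto\Int(\bZ)^{\otimes_{\bZ}n}\bigr)\simeq R\Gamma(B\bH,\cO),
\]
and by Proposition~\ref{prop: our guy is affine} together with Proposition~\ref{prop:toen_Z_ZS1_Z_is_IntZ} the stack $B\bH$ is the affine stack $\Spec^{\Delta}(\bZ^{S^1})$, so $R\Gamma(B\bH,\cO)\simeq\bZ^{S^1}\simeq\bS^{S^1}\otimes_{\bS}\bZ$ and the base change of the comparison map is an equivalence. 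One then upgrades this to an equivalence over $\bS$ by a fracture argument exactly parallel to the construction of $\bS_{\Int(\bZ)}$ and to Proposition~\ref{prop : unicity of cartier dual}: the map is bounded below, it is an equivalence rationally (where $\bH_{\bS}\otimes\bQ\simeq\bG_{a,\bQ}$ and $R\Gamma(B\bG_{a,\bQ},\cO)\simeq\bQ\oplus\bQ[-1]$ is classical), and it is an equivalence $p$-adically for each $p$ (where $\bS_{\Int(\bZ)}$ is $\mathbb{SW}(\Int(\bZ)/p)$, a $p$-completely flat lift with perfect mod-$p$ reduction, so the mod-$p$ computation reduces to the coconnective $\bF_p$-linear case and lifts), and gluing along the arithmetic fracture square gives $R\Gamma(B\bH_{\bS},\cO)\simeq\bS^{S^1}$.

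The main obstacle is precisely the convergence of the cobar totalization and its compatibility with the base change $-\otimes_{\bS}\bZ$ (and with the fracture square): unlike in the $\bZ$--linear situation, $\bS^{S^1}$ is not coconnective, its homotopy being unbounded above, so $B\bH_{\bS}$ is not an affine stack and the totalization cannot simply be evaluated by invoking the theory of affine stacks. I expect this to be the technical heart of the argument, to be handled by a careful Postnikov/Mittag--Leffler analysis of the cosimplicial object $[n]\mapsto\bS_{\Int(\bZ)}^{\otimes_{\bS}n}$ using its degreewise connectivity, after which the reduction to $\bZ$ and the fracture argument sketched above go through routinely.
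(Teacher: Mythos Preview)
Your overall strategy---reduce to the known $\bZ$--linear statement via base change---is exactly the paper's, but the paper resolves the ``main obstacle'' you isolate with two concrete inputs that your sketch lacks, and it avoids the fracture detour entirely.

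First, rather than analyzing Tot--convergence of the cobar cosimplicial object by a Postnikov/Mittag--Leffler argument (which is delicate here: the terms $\bS_{\Int(\bZ)}^{\otimes n}$ are all connective with no increasing connectivity, so there is no a priori control on the Tot tower), the paper proves that $B\bH_{\bS}\to\Spec(\bS)$ has \emph{finite cohomological dimension}. This is checked by reducing to the classical truncation $B\bH\to\Spec(\bZ)$ (a general lemma: a geometric spectral stack has finite cohomological dimension iff its truncation does), where it is already known. Finite cohomological dimension is precisely what makes $R\Gamma$ commute with the base change $-\otimes_{\bS}\bZ$, yielding $R\Gamma(B\bH_{\bS},\cO)\otimes_{\bS}\bZ\simeq R\Gamma(B\bH,\cO)$ on the nose.

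Second, to invoke conservativity of $-\otimes_{\bS}\bZ$ one needs both source and target of the comparison map to be bounded below. For $\bS^{S^1}$ this is immediate; for $R\Gamma(B\bH_{\bS},\cO)$ the paper uses the synthetic filtration constructed earlier, whose associated graded is $\gr^*_{\ev}(\bS)\oplus\gr^*_{\ev}(\bS)[-1](-1)$, to deduce bounded--below--ness inductively on the layers. You do not mention this, and it is not obvious from the cobar description alone.

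With these two inputs in hand, the argument finishes in one step: the base--changed map is To\"en's affinization equivalence $R\Gamma(B\bH,\cO)\simeq\bZ^{S^1}$, and conservativity of $-\otimes_{\bS}\bZ$ on bounded--below spectra gives the result. No rational/$p$-adic fracture is needed; your fracture sketch would work in principle but requires the same bounded--below and base--change compatibility facts at each prime, so it only adds complexity.
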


\begin{lemma}
    Let $X$ be a spectral geometric stack, with truncation $X_0$. Then $f: X \to \Spec(\bS) $ is of finite cohomological dimension if and only $f^{\on{cl}}: X_0 \to \Spec(\bZ)$ is.
\end{lemma}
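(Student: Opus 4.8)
The plan is to reduce the assertion to a comparison of derived global sections of \emph{discrete} quasi-coherent sheaves, in which the derived structure on $X$ is invisible. Recall that $f\colon X\to\Spec(\bS)$ has finite cohomological dimension exactly when there is an integer $n$ with $\pi_{-i}(f_*\cF)=0$ for all $i>n$ and all $\cF\in\QCoh(X)^{\heartsuit}$, and that the corresponding condition for $f^{\on{cl}}\colon X_0\to\Spec(\bZ)$ asks for an $n$ with $H^i(X_0,\cG)=0$ for all $i>n$ and all $\cG\in\QCoh(X_0)^{\heartsuit}$. By Proposition~\ref{proposition t structure and heart of truncation} the two abelian categories $\QCoh(X)^{\heartsuit}$ and $\QCoh(X_0)^{\heartsuit}$ are equivalent, the equivalence being realized by restriction along a faithfully flat affine cover. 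It therefore suffices to prove: if $\cF\in\QCoh(X)^{\heartsuit}$ corresponds to $\cG\in\QCoh(X_0)^{\heartsuit}$, then the spectrum underlying the $\bS$-module $f_*\cF$ coincides with the spectrum underlying the complex $R\Gamma(X_0,\cG)$. Granting this, the two boundedness conditions agree for every fixed $n$, since the forgetful functor $\Mod_{\bZ}\to\Sp$ is $t$-exact and conservative; taking ``there exists $n$'' then yields the equivalence claimed in the lemma.

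For the comparison, I would use that $X$ is geometric: it admits a faithfully flat cover $u\colon U=\Spec(A)\to X$ by an affine, and its diagonal is affine, so the \v{C}ech nerve $U_\bullet$ of $u$ consists of affines, say $U_m=\Spec(A_m)$. Flat descent for quasi-coherent sheaves gives $f_*\cF\simeq\operatorname{Tot}\bigl(m\mapsto R\Gamma(U_m,u_m^*\cF)\bigr)$, where $u_m\colon U_m\to X$ denotes the structure map. Each $u_m$ is a composite of base changes of the faithfully flat map $u$, hence flat, so $u_m^*\cF$ is again discrete; being a discrete sheaf on the affine scheme $U_m$, its cohomology $R\Gamma(U_m,u_m^*\cF)$ is concentrated in degree zero, equal to the $\pi_0(A_m)$-module $\pi_0(u_m^*\cF)$.

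It then remains to recognize the cosimplicial abelian group $m\mapsto\pi_0(u_m^*\cF)$ as the \v{C}ech complex of $\cG$ along a cover of $X_0$. Indeed $u^{\on{cl}}\colon\Spec(\pi_0 A)\to X_0$ is faithfully flat, and its \v{C}ech nerve is obtained from $U_\bullet$ by applying $(-)^{\on{cl}}$ in each simplicial degree: the functor $(-)^{\on{cl}}$ is restriction of presheaves along $\CAlg^{\heartsuit}\hookrightarrow\CAlg^{\cn}$ and so preserves limits, while the fibre products $U_m=U\times_X\cdots\times_X U$ are already representable --- hence honest limits of presheaves --- because the diagonal of $X$ is affine. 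Thus the $m$-th term of the \v{C}ech nerve of $u^{\on{cl}}$ is $\Spec(\pi_0 A_m)$, and $m\mapsto\pi_0(u_m^*\cF)$ is precisely the cosimplicial module computing $R\Gamma(X_0,\cG)$ by flat descent on $X_0$. Since $\Mod_{\bZ}\to\Sp$ preserves totalizations, the spectrum underlying $R\Gamma(X_0,\cG)$ is $f_*\cF$, completing the comparison.

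The one point requiring genuine care is the compatibility in the last paragraph between the \v{C}ech nerve on $X$ and on $X_0$, that is, that $\pi_0$ of the iterated fibre power of $A$ over $\cO_X$ computes the iterated fibre power of $\pi_0 A$ over $\cO_{X_0}$. Everything else is formal --- flat descent for $\QCoh$, $t$-exactness of flat pullback, vanishing of higher cohomology of a discrete sheaf on an affine scheme, and conservativity and $t$-exactness of $\Mod_{\bZ}\to\Sp$. The geometric hypothesis on $X$ enters precisely to provide an affine faithfully flat cover together with an affine diagonal, which is what turns the whole argument into a statement about ordinary commutative rings.
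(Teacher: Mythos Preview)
Your argument is correct and reaches the same conclusion as the paper --- namely, that for $\cF\in\QCoh(X)^{\heartsuit}$ corresponding to $\cG\in\QCoh(X_0)^{\heartsuit}$, the underlying spectrum of $f_*\cF$ agrees with that of $R\Gamma(X_0,\cG)$ --- but you get there by a more hands-on route. You unpack everything via a \v{C}ech cover and identify the resulting cosimplicial abelian groups on $X$ and $X_0$ term by term, which forces you to worry about whether $(-)^{\on{cl}}$ commutes with the relevant fibre products (the ``one point requiring genuine care'' you flag).

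The paper bypasses this entirely by working with the truncation morphism $\iota_X\colon X_0\to X$ directly. One considers the commutative square
\[
\begin{tikzcd}
\QCoh(X_0)^{\heartsuit} \arrow[r,"f^{\on{cl}}_*"] \arrow[d,"(\iota_X)_*"] & (\Mod_{\bZ})_{\leq 0} \arrow[d,"(\iota_{\bS})_*"] \\
\QCoh(X)^{\heartsuit} \arrow[r,"f_*"] & (\Mod_{\bS})_{\leq 0}
\end{tikzcd}
\]
and observes that both vertical arrows are pushforward along \emph{affine} morphisms (truncation of the structure sheaf), hence $t$-exact; the left one is the equivalence of hearts from Proposition~\ref{proposition t structure and heart of truncation}, and the right one is the forgetful functor. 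The comparison $f_*\cF\simeq(\iota_{\bS})_*f^{\on{cl}}_*\cG$ then drops out immediately, with no cover and no \v{C}ech nerve. What your approach buys is explicitness: one sees concretely that only $\pi_0$ of the cover matters. What the paper's approach buys is brevity and the avoidance of any delicate compatibility check --- the affineness of $\iota_X$ does all the work.
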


\begin{proof}
    Consider the following commutative square
      \[
  \begin{tikzcd}
    \QCoh(X_0)^{\heartsuit} \arrow[r, "f^{\on{cl}}_{*}"] \arrow[d, "{\iota_{X}}_*"] & (\Mod_{\bZ})_{\leq 0} \arrow[d, "{\iota_{\bS}}_*"] \\
    \QCoh(X)^{\heartsuit} \arrow[r, "f_{*}"] & (\Mod_{\bS})_{\leq 0} \,  .
  \end{tikzcd}
  \]
  Both vertical arrows here are $t$-exact since they are actually affine morphisms as they arise as truncation maps of structure sheaves.
  Moreover, the left arrow is an equivalence, by Proposition \ref{proposition t structure and heart of truncation}.
  Thus, we see that for any $\cF \in \QCoh(X)^{\heartsuit} \simeq \QCoh(X_0)^{\heartsuit}$,
  \[
  f_*(\cF) \in (\Mod_{\bS})_{\geq -n} \quad \text{if and only if} \quad f^{\on{cl}}_*(\cF) \in (\Mod_{\bZ})_{\geq -n} \, ,
  \]
  which concludes the proof.
\end{proof}

\begin{proposition}
The spectral stack $B \bH_{\bS}$ is of finite cohomological dimension over $\Spec(\bS)$.
\end{proposition}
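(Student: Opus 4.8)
The plan is to reduce, via the preceding lemma, from $B\bH_{\bS}$ over $\Spec(\bS)$ to the classical Hilbert additive group scheme over $\Spec(\bZ)$, where finiteness of the cohomological dimension will be a reflection of the fact that $\widehat{\bG_m}$ is one-dimensional. First, $B\bH_{\bS}$ is a spectral geometric stack: $\bH_{\bS}=\Spec(\bS_{\Int(\bZ)})$ is affine, and in the bar presentation of Construction~\ref{const: classifying stack of group} the face map $d_0\colon \bH_{\bS}\to\Spec(\bS)$ is affine and faithfully flat, so the groupoid criterion applies. By the preceding lemma it then suffices to show that the classical truncation $(B\bH_{\bS})^{\on{cl}}$ has finite cohomological dimension over $\Spec(\bZ)$. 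Since $\bS_{\Int(\bZ)}$ is connective with $\pi_0(\bS_{\Int(\bZ)})\cong\Int(\bZ)$, we have $(\bH_{\bS})^{\on{cl}}\simeq\bH=\Spec(\Int(\bZ))$, and because restriction along $\CAlg^{\heartsuit}\hookrightarrow\CAlg^{\cn}$ preserves colimits and is compatible with fpqc-sheafification it commutes with forming classifying stacks, so $(B\bH_{\bS})^{\on{cl}}\simeq B\bH$.

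It remains to bound the cohomological dimension of $B\bH\to\Spec(\bZ)$, i.e.\ to produce $n$ with $H^{i}(B\bH,\cF)=0$ for all $i>n$ and all $\cF\in\QCoh(B\bH)^{\heartsuit}$. Such an $\cF$ is a discrete $\bH$-representation, equivalently a discrete comodule over the coalgebra $\Int(\bZ)$, and $R\Gamma(B\bH,\cF)$ is the derived $\bH$-invariants, computed by the cobar complex of $\Int(\bZ)$. Here one uses that $\bH=\mathbf{D}(\widehat{\bG_m})$, so that as a coalgebra $\Int(\bZ)$ is the divided power coalgebra on one generator, with continuous linear dual $\cO_{\widehat{\bG_m}}\cong\bZ[[t]]$; via the smooth-coalgebra formalism of \S\ref{sec: coalgebras}, comodules over $\Int(\bZ)$ are identified with $t$-power-torsion modules over $\bZ[[t]]$, and under this identification $R\Gamma(B\bH,-)$ becomes $\mathrm{fib}\!\bigl(t\colon(-)\to(-)\bigr)$, coming from the two-term Koszul resolution of $\bZ=\bZ[[t]]/(t)$ over the regular ring $\bZ[[t]]$. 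Hence $H^{i}(B\bH,\cF)=0$ for $i>1$, uniformly in $\cF$, and therefore $B\bH$ — and with it $B\bH_{\bS}$ — is of finite cohomological dimension.

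The one delicate point is this last step: extracting a bound valid for every quasi-coherent sheaf on $B\bH$, not merely for $\cO$ (for which $R\Gamma(B\bH,\cO)\simeq\bZ^{S^1}$ is already known). This is exactly where the special nature of $\bH$ must be used — that it is Cartier dual to a one-dimensional formal group, equivalently that $\Int(\bZ)/p$ is perfect over $\bF_p$ for every prime $p$, so that the special fibres of $\bH$ are pro-\'{e}tale rather than carrying Frobenius kernels; for a general affine group scheme over $\bZ$ the cobar cohomology of a representation need not be bounded, so some such input is unavoidable.
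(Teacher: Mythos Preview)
Your reduction is exactly the paper's: invoke the preceding lemma to pass from $B\bH_{\bS}$ over $\Spec(\bS)$ to its classical truncation $B\bH$ over $\Spec(\bZ)$. The paper then simply cites \cite[Proposition~3.3.7]{moulinos2019universal} for the finite cohomological dimension of $B\bH$, whereas you supply an argument, so the only difference lies in how the classical input is justified.

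Your argument for the classical case is correct and in fact unpacks what underlies the cited reference: identifying $\QCoh(B\bH)^{\heartsuit}$ with $\Int(\bZ)$-comodules, passing via Cartier duality to $t$-power-torsion $\bZ[[t]]$-modules, and reading off cohomological dimension $\leq 1$ from the two-term Koszul resolution of $\bZ$ over $\bZ[[t]]$. One point you leave implicit is why the derived invariants computed in the comodule category agree with $R\Hom_{\bZ[[t]]}(\bZ,-)$; this is standard for smooth coalgebras (injective comodules are cofree, so the cobar resolution computes $\on{Cotor}$, which dualizes to $\Ext$ over the completed dual), but it is the one step where a reader might want a word of justification. Your closing remark about perfectness and pro-\'{e}tale special fibres is correct as heuristics, though the operative mechanism in your own argument is really the one-dimensionality of $\widehat{\bG_m}$ rather than perfectness per se.
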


\begin{proof}
    By the above lemma, $B \bH_{\bS}$ is of finite cohomological dimension over $\Spec(\bS)$ if and only if its truncation is.
    Its truncation will be a stack over $\bZ$ which is of finite cohomological dimension by \cite[Proposition 3.3.7]{moulinos2019universal}.
\end{proof}

\begin{proof}[Proof of Theorem \ref{thm cohomology of generic fiber}]
Recall that we have a naturally defined map
\[
R \Gamma(B \bH_{\bS}, \cO) \to \bS^{S^1}
\]
induced by the morphism of spectral abelian group stacks $B \underline{\bZ} \to B \bH_{\bS} $.
Note that the target is bounded below, since it can be written as a finite limit (ranging over the finite CW complex $S^1$) of connective spectra.
We will show that the source is itself bounded below as a spectrum.  For this, recall that by construction, $R \Gamma(B \bH_{\bS}, \cO)$ admits an exhaustive filtration with associated graded given by
\[
 \gr^*( \bS^\syn) \oplus \gr^*(\bS^\syn)[-1](-1)
\]
as a module over $\gr^*( \bS^\syn)$.
Using this description, we see that $R \Gamma(B \bH_{\bS}, \cO)$ is itself bounded below.
Indeed, this can be seen inductively on the (finite) layers of the filtration.

Now, given that this is a map of $\bE_\infty$-rings which are bounded below, we may tensor with $\bZ$; this functor is conservative on bounded below spectra and thus may be used to detect equivalences.
We thus obtain a map
\begin{equation} \label{equation base change map}
   R\Gamma( B \bH, \cO)  \simeq R \Gamma(B \bH_{\bS}, \cO) \otimes_{\bS} \bZ  \to \bS^{S^1} \otimes \bZ \simeq \bZ^{S^1} \, ,
\end{equation}
where we remind the reader that $\bH = \Spec(\Int(\bZ))$ is the Hilbert additive group over $\Spec(\bZ)$.
The first equivalence is not immediate and follows from the fact that $B \bH_{\bS}$ is of finite cohomological dimension.
Thus, the tensor product of the global sections with $\bZ$ recovers the global sections of the pullback of $B \bH_{\bS}$ to $\Spec(\bZ)$ which is by construction $B \bH$.
Finally, the last equivalence holds since $\bS^{S^1}$ is a finite limit which hence commutes with tensoring with $\bZ$.
The map given by (\ref{equation base change map}) is an equivalence by \cite[Corollaire 3.4]{Toe20}, since $B \bH$ is the affinization of $B \bZ$ over $\Spec(\bZ)$ and this is just the affinization morphism.
Hence, by conservativity of $\otimes_{\bS} \bZ$, we conclude that the map in the statement is an equivalence.
\end{proof}

\subsection{The special fiber of the synthetic circle} \label{sec: special fiber of synthetic circle}

In this section, we study the pullback of the spectral stack $ B \bH_{\syn} =S^1_{\Syn}$ over the special fiber $ B \bG_m$ of $\filstack$.

\begin{definition}
    Let
    \[
    S^1_{\syn}|_{B \bG_m} = S^1_{\syn }\times_{\filstack} B \bG_m
    \]
    denote the pullback to $B \bG_m$ of $S^1_{\syn}$.
\end{definition}

\begin{rmk}
    By construction, this pulls back to the \emph{graded circle} of \cite{moulinos2019universal}, or rather its integral version. Namely, the map $\bS^\syn \to \bZ^{\syn}$ induces a map $\gr^*_{\ev}(\bS) \to \gr^*_{\ev}(\bZ) \simeq \bZ$ on the associated graded which in turn induces a map
    \[
    B \bG_m|_{\bZ} \to \on{fSpec}(\bS_\syn)|_{B \bG_m} =: \gr\Spec(\gr^*_{\ev}(\bS))
    \]
    of spectral stacks over $\Spec(\bZ)$.
Now, since pullback commutes with colimits, we may identify
\[
B \bH_{\syn}|_{B \bG_m}\times_{\gr\Spec(\gr^*_{\ev}(\bS))} B \bG_m|_{\bZ} \simeq B \bH^{\gr}= S^1_{\gr} \,.
\]
\end{rmk}

Being that $S^1_{\syn}|_{B\bG_m}$ gives a lift of $S^1_{\gr}$, we would like to study its cohomology, as an $\bE_{\infty}$-algebra in graded spectra.

\begin{rmk}
Recall from Section \ref{sec:even_filt} that $\gr^*_{\ev}(\bS)$ is a $\bZ$-linear $\bE_{\infty}$-ring.
Thus there is a map
\[
 \gr\Spec(\gr^*_{\ev}(\bS))  \to B \bG_m|_\bZ
\]
such that the composition
\[
B \bG_m|_\bZ  \to  \gr\Spec(\gr^*_{\ev}(\bS))  \to B \bG_m|_\bZ
\]
is the identity map on $B \bG_m|_{\bZ}$.
\end{rmk}

\begin{notation}
For a stack $f: \mathcal{X} \to B\bG_m$, the pushforward to its structure sheaf $f_{*}\cO_{\mathcal{X}}$ is an $\bE_\infty$-algebra in $\QCoh(B \bG_m)$. Via the symmetric monoidal equivalence
\[
\QCoh(B \bG_m) \simeq \Mod_{\bS}^{\gr},
\]
this corresponds to an $\bE_\infty$-algebra in graded spectra.
We use the notation
\[
R\Gamma(\mathcal{X}, \cO^{\gr})
\]
to denote this object.

\end{notation}

\begin{theorem}
There is an equivalence
\[
R\Gamma(S^1_{\syn}|_{B\bG_m}, \cO^{\gr}) \simeq \gr^*(\bS^\syn) \oplus \gr^*(\bS^{\syn})[-1](-1)
\]
of $\bE_\infty$-algebras in $\Mod_{\gr^*(\bS^\syn)}(\Filt(\Sp))$.
That is, we recover, $ \gr^*( \bT_{\syn}^\vee)$ as the cohomology of the stack  $S^1_{\syn}|_{B\bG_m}$.
\end{theorem}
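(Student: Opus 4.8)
The plan is to reduce the computation to that of the integral (filtered) circle by base change, using crucially that $\gr^*_{\ev}(\bS)$ is $\bZ$-linear. First, since the formation of classifying stacks commutes with base change --- the simplicial object of Construction~\ref{const: classifying stack of group} is preserved by pullback --- we have $S^1_{\syn}|_{B\bG_m} \simeq B\bH^{\gr}_{\syn}$ as a stack over $\mathrm{grSpec}(\bS^{\syn})$, so it suffices to compute $R\Gamma(B\bH^{\gr}_{\syn},\cO^{\gr})$. By Proposition~\ref{prop identify H_syn^gr with Ker} and the pullback square following it, $\bH^{\gr}_{\syn}$ is, as an $\bE_1$-group scheme over $\mathrm{grSpec}(\gr^*_{\ev}(\bS))$, the base change of $\mathsf{Ker}\to B\bG_m|_{\bZ}$ along the canonical section $\mathrm{grSpec}(\gr^*_{\ev}(\bS))\to B\bG_m|_{\bZ}$; equivalently, its bialgebra of functions $\gr^*(\bS^{\syn}_{\Int(\bZ)})$ is identified, in $\CAlg(\Mod^{\gr}_{\gr^*(\bS^{\syn})})$, with the base change $\bZ\langle x\rangle\otimes_{\bZ}\gr^*_{\ev}(\bS)$ of the divided-power bialgebra $\bZ\langle x\rangle\simeq\gr^*_{\deg}\Int(\bZ)$ along $\bZ\to\gr^*_{\ev}(\bS)$. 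Since $B(-)$ depends only on the $\bE_1$-group structure, this gives $B\bH^{\gr}_{\syn}\simeq (S^1_{\fil})^{\gr}\times_{B\bG_m|_{\bZ}}\mathrm{grSpec}(\gr^*_{\ev}(\bS))$ as stacks, where $(S^1_{\fil})^{\gr} = B\mathsf{Ker}$.

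Next I would invoke cohomology-and-base-change for the morphism $(S^1_{\fil})^{\gr}\to B\bG_m|_{\bZ}$. As $\bH^{\gr}_{\syn}$ and $\mathsf{Ker}$ are relatively affine, $R\Gamma$ of these classifying stacks is the totalization of the cobar cosimplicial $\bE_\infty$-algebra $[n]\mapsto (\bZ\langle x\rangle^{\otimes_{\bZ} n})\otimes_{\bZ}\gr^*_{\ev}(\bS)$, respectively $[n]\mapsto\bZ\langle x\rangle^{\otimes_{\bZ} n}$; here the comultiplication is an $\bE_\infty$-algebra map since it is part of the bialgebra structure. Granting that this totalization commutes with $-\otimes_{\bZ}\gr^*_{\ev}(\bS)$ (discussed below), one gets
\[
R\Gamma(B\bH^{\gr}_{\syn},\cO^{\gr})\simeq R\Gamma\bigl((S^1_{\fil})^{\gr},\cO\bigr)\otimes_{\bZ}\gr^*_{\ev}(\bS)
\]
as $\bE_\infty$-algebras in $\Mod_{\gr^*(\bS^{\syn})}$. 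The Moulinos--Robalo--To\"{e}n theorem recalled above gives $R\Gamma((S^1_{\fil})^{\gr},\cO)\simeq H^*(S^1;\bZ)\simeq\bZ\oplus\bZ[-1](-1)$, so distributing the tensor product produces $\gr^*_{\ev}(\bS)\oplus\gr^*_{\ev}(\bS)[-1](-1)$. Finally, the Lemma preceding Proposition~\ref{prop identify H_syn^gr with Ker} identifies this with $\gr^*(\bT^{\vee}_{\syn})$ as $\bE_\infty$-algebras over $\gr^*_{\ev}(\bS)$, which is the assertion.

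The step I expect to be the main obstacle is the base-change equivalence displayed above: since $\gr^*_{\ev}(\bS)$ is neither flat nor perfect over $\bZ$ (its homotopy is the first-quadrant Adams--Novikov $E_2$-page), it cannot be pulled through the cobar totalization for free. I see two ways to secure it. One is to show that $(S^1_{\fil})^{\gr}\to B\bG_m$ is of finite cohomological dimension --- the graded analogue of \cite[Proposition 3.3.7]{moulinos2019universal}, checkable after the faithfully flat base change $\Spec(\bZ)\to B\bG_m$ together with the lemma of \S\ref{sec: generic fiber of synthetic circle} --- so that $R\Gamma$ commutes with arbitrary base change. The other is a direct argument: the normalized cobar object has, in each fixed weight, a totalization computed by a finite limit, because the augmentation ideal $\overline{\bZ\langle x\rangle}$ is concentrated in weights bounded away from $0$ with each weight-summand a finitely generated free abelian group, while $\gr^*_{\ev}(\bS)$ lies in non-negative weights, so only finitely many cosimplicial degrees and weight-summands contribute in any total weight; finite limits commute with the colimit-preserving functor $-\otimes_{\bZ}\gr^*_{\ev}(\bS)$. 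With either input in place, the rest is formal, the essential structural fact being the $\bZ$-linearity of $\gr^*_{\ev}(\bS)$.
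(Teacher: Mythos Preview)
Your proposal is correct and follows essentially the same route as the paper: reduce to $B\mathsf{Ker}$ via Proposition~\ref{prop identify H_syn^gr with Ker} and compatibility of $B(-)$ with base change, then use cohomology-and-base-change (the paper invokes finite cohomological dimension of $B\mathsf{Ker}\to B\bG_m|_{\bZ}$, citing \cite[Lemma 3.4.9]{moulinos2019universal}) together with the known computation $R\Gamma(B\mathsf{Ker},\cO^{\gr})\simeq\bZ\oplus\bZ[-1](-1)$. Your weight-by-weight finiteness alternative for the base-change step is a pleasant bonus but not needed, and your explicit identification with $\gr^*(\bT_{\syn}^{\vee})$ via the lemma is slightly more detailed than the paper's write-up.
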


\begin{proof}
It was shown in Proposition \ref{prop identify H_syn^gr with Ker} that the underlying $\bE_1$-group scheme $\bH_{\syn}^{\gr}$ is none other than the pullback $\mathsf{Ker}|_{\gr\Spec(\bS^{\syn})}$ viewed naturally as a scheme over $B \bG_m|_{\bZ}$.
Since taking classifying stacks commutes with taking pullbacks, it follows that there is an equivalence
\[
B \bH^{\gr}_{\syn} \simeq (B \mathsf{Ker})|_{\gr\Spec(\bS^{\syn})},
\]
where it is understood that the classifying stack on the left is taken relative to the base $\gr\Spec(\bS^{\syn})$. Following \cite{moulinos2019universal, Rak20}, the cohomology  of $B \mathsf{Ker} $ over $\bZ$ is given by
\[
  R \Gamma(B\mathsf{Ker} , \cO^{\gr}) \simeq  \bZ \oplus \bZ[-1](-1)
\]
as a graded $\bZ$-algebra, i.e. the associated graded of the Postnikov filtration on $\bZ^{S^1}$.
Now, it was shown in \cite[Lemma 3.4.9]{moulinos2019universal} that $B \mathsf{Ker} \to B \bG_m|_{\bZ}$ is of finite cohomological dimension.
We remark that in loc. cit, this was shown over $\bZ_{(p)}$ for any prime $p$ but this is enough to conclude the answer integrally.
Hence, we have an equivalence
\[
R \Gamma(\bH^{\gr}_{\syn}, \cO^{\gr}) \simeq R \Gamma(B\mathsf{Ker}|_{\gr\Spec(\bS^{\syn})}, \cO^{\gr}) \simeq R \Gamma(B\mathsf{Ker} , \cO^{\gr}) \otimes_{\bZ} \gr^*(\bS^\syn) \simeq \gr^*(\bS^\syn) \oplus \gr^*(\bS^\syn)[-1](-1).
\]
This gives the proof of the desired equivalence.
\end{proof}

\subsection{Further deloopings of $\bH_{\syn}$ and affine stacks} \label{sec: deloopings and discussion}
Since $\bH_{\syn}$ is an $\bE_\infty$-group over $\on{fSpec}(\bS^\syn)$, it can be arbitrarily delooped.
We show in this section that the $n$-fold of its generic fiber $\bH_{\bS}$ recovers the singular cohomology of $B^n \bZ$, for every $\bZ$.

\begin{theorem} \label{thm: delooping has correct cohomology}
The Cartier duality map $\bZ \to \bH_{\bS}$ induces an equivalence on global sections
\[
R \Gamma(B^{n} \bH_{\bS}, \cO) \to R \Gamma(B^{n} \bZ, \cO),
\]
for each $n \geq 1$.
\end{theorem}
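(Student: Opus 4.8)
The plan is to prove, by induction on $n$, the following strengthening: for every $n\geq 1$ and every $k\geq 1$, the $n$-fold delooping of the $k$-fold product of the Cartier duality map of Construction~\ref{const: cartier duality map} induces an equivalence
\[
R\Gamma\bigl(B^n(\bH_{\bS}^{\times k}),\cO\bigr)\xrightarrow{\ \sim\ }R\Gamma\bigl(B^n(\underline{\bZ}^{\times k}),\cO\bigr)\simeq \bS^{K(\bZ^k,n)},
\]
the last identification being $B^n(\underline{\bZ}^{\times k})\simeq\underline{K(\bZ^k,n)}$ together with $R\Gamma(\underline{X},\cO)\simeq \map(\bS[X],\bS)$. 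The statement of the theorem is the case $k=1$, and the case $(n,k)=(1,1)$ is Theorem~\ref{thm cohomology of generic fiber}. Two structural facts will be used throughout: (a) the classifying stack $BG$ of Construction~\ref{const: classifying stack of group} is the geometric realization of the bar simplicial object $[m]\mapsto G^{\times m}$, and $R\Gamma(-,\cO)$ turns this colimit of stacks into the corresponding totalization (by fpqc descent for $\QCoh$, $R\Gamma(\mathop{\mathrm{colim}}X_i,\cO)\simeq\lim_i R\Gamma(X_i,\cO)$); and (b) $B^m(-)$ commutes with finite products, so $\bigl(B^n(\bH_{\bS}^{\times k})\bigr)^{\times m}\simeq B^n(\bH_{\bS}^{\times km})$, and likewise over $\bZ$, with all bar structure maps functorial in the group. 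The same two facts applied to the constant stack $\underline{K(\bZ^k,n)}$ give $\bS^{K(\bZ^k,n+1)}\simeq\lim_{\Delta}\bS^{K(\bZ^k,n)^{\times\bullet}}$.

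For the base case $n=1$ I would reduce to Theorem~\ref{thm cohomology of generic fiber} by a Künneth computation, $R\Gamma\bigl((B\bH_{\bS})^{\times k},\cO\bigr)\simeq R\Gamma(B\bH_{\bS},\cO)^{\otimes_{\bS}k}$. This uses that $B\bH_{\bS}\to\Spec(\bS)$ is of finite cohomological dimension — shown just before Theorem~\ref{thm cohomology of generic fiber} — so its pushforward commutes with the base changes in question, together with the projection formula; both are applicable because $R\Gamma(B\bH_{\bS},\cO)\simeq\bS^{S^1}$ is perfect over $\bS$, its underlying module being $\bS\oplus\bS[-1]$. Since $\bS[S^1]$ is dualizable with $\bS[S^1]^{\otimes k}\simeq\bS[(S^1)^{\times k}]$, this yields $R\Gamma(B\bH_{\bS},\cO)^{\otimes_{\bS}k}\simeq \bS^{(S^1)^{\times k}}=\bS^{K(\bZ^k,1)}$, and compatibility with the Cartier duality map is immediate from functoriality.

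For the inductive step, assume the strengthened statement at level $n$ for all $k$, fix $k$, and set $G=B^n(\bH_{\bS}^{\times k})$, so $B^{n+1}(\bH_{\bS}^{\times k})=BG$. By fact (a), $R\Gamma(B^{n+1}(\bH_{\bS}^{\times k}),\cO)\simeq\lim_{\Delta}R\Gamma(G^{\times\bullet},\cO)$, and by fact (b), $G^{\times m}\simeq B^n(\bH_{\bS}^{\times km})$. The induction hypothesis (applied to the ranks $km$, $m\geq0$) therefore gives an equivalence of cosimplicial spectra $R\Gamma(G^{\times\bullet},\cO)\xrightarrow{\sim}\bS^{K(\bZ^k,n)^{\times\bullet}}$, compatible with the bar structure maps because every arrow is obtained by applying the functor from the induction hypothesis to the homomorphisms $\bZ^{km}\to\bZ^{km'}$ coming from the simplicial operators. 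Totalizing and invoking the constant-stack computation $\lim_{\Delta}\bS^{K(\bZ^k,n)^{\times\bullet}}\simeq\bS^{K(\bZ^k,n+1)}$ produces the desired equivalence, and a diagram chase identifies it with the map induced by the $(n+1)$-fold delooped Cartier duality map. The main obstacle is the base-case Künneth identity — more precisely, checking that pushforward along $B\bH_{\bS}\to\Spec(\bS)$ commutes with the base changes used (the same input already exploited in the proof of Theorem~\ref{thm cohomology of generic fiber}); once this is in hand, the rest is bookkeeping about functoriality of the bar construction and the interaction of $R\Gamma(-,\cO)$ with geometric realizations.
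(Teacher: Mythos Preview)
Your proof is correct and follows the same overall strategy as the paper: induction on $n$ via the bar resolution, with the inductive step reducing to a K\"unneth statement at level $n$. The difference is in how K\"unneth is handled. The paper proves it separately at each step: to identify $R\Gamma((B^n\bH_{\bS})^{\times m},\cO)$ with $R\Gamma(B^n\bH_{\bS},\cO)^{\otimes m}$, it unwinds both sides of~(\ref{equation: kunneth}) as iterated bar constructions into bi-cosimplicial objects whose terms are global sections of products of the affine scheme $\bH_{\bS}$, and then compares diagonals. You instead build K\"unneth into a strengthened induction hypothesis, proving the statement simultaneously for all products $\bH_{\bS}^{\times k}$ and using that $B^n$ commutes with finite products, so that $(B^n\bH_{\bS}^{\times k})^{\times m}\simeq B^n(\bH_{\bS}^{\times km})$ is already covered by the hypothesis at level $n$. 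Your packaging is cleaner and avoids the bi-cosimplicial bookkeeping; the paper's version has the minor advantage of making the reduction to affine K\"unneth completely explicit. Both rest on the same inputs: Theorem~\ref{thm cohomology of generic fiber} for the base, finite cohomological dimension of $B\bH_{\bS}$, and dualizability of $\bS^{S^1}$ (which is what makes your base-case K\"unneth for $(B\bH_{\bS})^{\times k}$ go through).
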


\begin{proof}
We show this by induction on $n$.
The $n=1$ case is the content of Theorem \ref{thm cohomology of generic fiber}.
Now we assume this is true for $n$, and show it to be true for $n+1$.
By construction there will be a map $B^{n+1} \bZ \to B^{n+1} \bH_{\bS}$ of geometric stacks.
In particular there will be an induced morphism of simplicial diagrams
\[
\begin{tikzcd}
\Spec (\bS) \arrow[r, leftarrow] \arrow[d]
& B^n \bZ  \arrow[r, shift left, leftarrow]
\arrow[r, shift right, leftarrow]
\arrow[d]
& B^n \bZ \times B^n \bZ  \arrow[r, leftarrow]
\arrow[r, shift left=2, leftarrow]
\arrow[r, shift right=2, leftarrow]
\arrow[d]
& \cdots \\
\Spec (\bS) \arrow[r, leftarrow]
& B^n \bH_{\bS} \arrow[r, shift left, leftarrow]
\arrow[r, shift right, leftarrow]
& B^n \bH_{\bS} \times B^n \bH_{\bS}  \arrow[r, leftarrow]
\arrow[r, shift left=2, leftarrow]
\arrow[r, shift right=2, leftarrow]
& \cdots \\
\end{tikzcd}
\]
Taking cohomology, we get a morphism of cosimplicial diagrams in $\CAlg$:
\[
\begin{tikzcd}
\bS \arrow[r ] \arrow[d]
& R \Gamma( B^n \bH_{\bS}, \cO)  \arrow[r, shift left]
\arrow[r, shift right]
\arrow[d]
& R \Gamma(B^n \bH_{\bS} \times B^n \bH_{\bS})  \arrow[r]
\arrow[r, shift left=2]
\arrow[r, shift right=2]
\arrow[d]
& \cdots \\
\bS \arrow[r]
& R \Gamma( B^n \bZ, \cO) \arrow[r, shift left]
\arrow[r, shift right]
& R \Gamma( B^n \bZ \times B^n \bZ, \cO)  \arrow[r]
\arrow[r, shift left=2]
\arrow[r, shift right=2]
& \cdots \\
\end{tikzcd}
\]
Note that, from topology, we know that
\[
R \Gamma( B^n \bZ \times \cdots  B^n \bZ, \cO) \simeq R \Gamma(B^n \bZ, \cO) \otimes \cdots \otimes R \Gamma(B^n \bZ, \cO).
\]
Moreover, by the induction hypothesis, there is an equivalence
\[
R \Gamma( B^n \bH_{\bS}, \cO) \simeq R \Gamma( B^n \bZ, \cO)
\]
Thus, we just need to show that such a Künneth formula holds for the cohomology of $B^n \bH_{\bS}$. We show this for the two fold delooping, the general case can be handled similarly.

It remains therefore to show that there is an equivalence
\begin{equation} \label{equation: kunneth}
  R \Gamma(B^2 \bH_{\bS} \times B^2 \bH_{\bS}, \cO )\simeq R \Gamma(B^2 \bH_{\bS}, \cO) \otimes R \Gamma(B^2 \bH_{\bS}, \cO )
\end{equation}

Note that the left hand side, as a two-fold delooping, may be computed as the totalization of a bi-cosimplicial object, where each $k$th- column computes the cohomology
\[
R \Gamma(\underbrace{B( \bH_{\bS} \times \bH_{\bS}) \times \cdots \times B( \bH_{\bS} \times \bH_{\bS})}_{k \, \on{  times} }, \cO ),
\]
that is the global sections of the $k$-fold product of $B(\bH_{\bS} \times \bH_{\bS})$. Recall that the totalization may be computed by restricting to and totalizing along the diagonal; each term of the diagonal will just be of the form
\[
R\Gamma( \bH_{ \bS})^{ \times 2m} )^{ \otimes k}  \simeq (\bS_{\Int(\bZ)})^{\otimes 2mk }
\]
The other side of (\ref{equation: kunneth})  may also be computed by totalizing along the diagonal of a bi-cosimplicial object, there the terms will be of the form
\[
R\Gamma( \bH_{ \bS})^{ \times m} )^{ \otimes 2k} \simeq  (\bS_{\Int(\bZ)})^{\otimes 2mk }
\]
Thus, essentially we have reduced both sides to totalizations involving tensor products of cohomologies of affine schemes, which agree because $\bH_{\bS}$, being affine, satisfies the Künneth formula on global sections. Thus both cosimplicial objects are equivalent, and hence so will be their totalizations.
\end{proof}

We now speculate a bit about the notion of affine stack in spectral algebraic geometry.
This is of course a bit of a na{\"\i}ve, yet to be defined, notion.
However, whatever class of stacks this eventually is, we expect that it contains the collection of deloopings $\{ B^n \bH_{\bS} \}$ for all $n >0$.
We give some evidence for these expectations, thus elucidating a key feature of affine stacks that we expect will generalize to spectral algebraic geometry.

Let $A = R \Gamma( BG, \cO)$ be an augmented $\bE_{\infty}$-algebra, which we would like to think of as the global sections of some classifying spectral stack $BG$. Recall from \cite[Section 5.2]{lurie2017higher}, the bar--cobar adjunction
\[
\on{Bar}: \on{CAlg}_{/1} \rightleftarrows \on{coAlg}_{1/}  : \on{Cobar}
\]
between augmented algebras and (co)augmented coalgebras.
One might ask when this is an equivalence.
For example, when  working over $\Spec(\bZ)$ and $A$ is the cohomology of an affine stack, then the natural map
\[
A \to \on{Cobar}(\on{Bar}(A))
\]
will be an equivalence by definition.
In particular, suppose we are working in the filtered setting, and $A = \tau_{\geq \bullet} \bZ^{S^1}$, that is, the Postnikov filtration on $\bZ^{S^1}$.
Then we recover the description
\[
\on{Cobar}(\on{Bar}(\tau_{\geq \bullet} \bZ^{S^1})) \simeq R \Gamma(B \bH^{\fil}, \cO^{\fil} ) \simeq \tau_{\geq \bullet} \bZ^{S^1}
\]
of cosimplicial commutative algebras in filtered $\bZ$-modules.

The $\bS$-linear version of the above equivalence  for $A = \bT_{\syn}^{\vee}$, as in Section \ref{sec:filt_int_(co)chain}, will also hold.
This follows from our definition of the synthetic Hilbert additive group as a bar construction on $\bT_{\syn}^\vee$ and by our computations of the cohomology of $B \bH_{\syn}$.
Furthermore, this can be iterated, by Theorem \ref{thm: delooping has correct cohomology}, so that the iterated bar--cobar adjunction of \cite{lurie2017higher} is an equivalence as well.
After all, this is how we were led to the eventual definition of~$\bH_\syn$.

Thus, we can at least say that a necessary condition for a class of algebras $\cC$ to correspond to the correct notion of affine stacks is to be such that the unit of the bar--cobar adjunction is an equivalence for every $A \in \cC$.
One can also demand that, when applicable, the unit of the iterated bar--cobar adjunction $ A \to \on{Cobar}^{(k)} \on{Bar}^{(k)}(A)$ is an equivalence as well.

\section{Representations of the Synthetic Circle}

In this section, we study the $\infty$-category of representations of the synthetic circle.
In \S \ref{sec: rep of synth circle}, we systematically study the $\infty$-categories of quasicoherent sheaves on the deloopings $B\bH_{\syn}$ and $B^{2}\bH_{\syn}$.
In the former case, we obtain a quasi-coherent sheaf model for filtered unipotent $\bZ$-representations.
In the latter case, we obtain lifts of the $\infty$-category of $S^1_{\fil}$-representations studied in \cite{moulinos2019universal} to the synthetic world.
In particular, we obtain a filtration on the $\infty$-category of $S^1$-equivariant spectra.
Finally, in \S \ref{sec: filtration on THH}, we describe a filtration on topological Hochschild homology obtained via the synthetic circle, and informally discuss its relationship with the motivic filtration on THH.

\subsection{Representations of the synthetic circle} \label{sec: rep of synth circle}

In \cite{moulinos2019universal}, a systematic investigation was undertaken of the $\infty$-category of representations of the filtered circle. The structure of $S^1_{\fil}$ as an abelian group scheme over $\filstack$ was shown to give rise to a degeneration, at the level of symmetric monoidal stable $\infty$-categories
\[
\Fun(B S^1, \Mod_{\bZ}) \leadsto \on{coMod}_{\bZ[d]}
\]
from $S^1$-equivariant $\bZ$-modules to the stable $\infty$-category of graded mixed complexes\footnote{Homotopy coherent chain complexes in the language of \cite{Rak20}}.
In this section, we initiate a study of the $\infty$-category of quasi-coherent sheaves on $B^2 \bH_{\syn}$, and show that it gives a similar degeneration between $S^1$-equivariant spectra and graded mixed complexes with coefficients in~$\gr^*(\bS^{\syn})$.
First, we would like to show that $\QCoh(B \bH_{\bS})$ gives a model for unipotent local systems on the topological space $B \bZ \simeq S^1$.

\begin{proposition}
    There is a symmetric monoidal equivalence
    \[
    \QCoh(B \bH_{\bS}) \simeq \Mod_{\bS^{S^1}} \,.
    \]
\end{proposition}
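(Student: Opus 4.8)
I would realize the equivalence as the global-sections functor and prove it is an equivalence using the finite cohomological dimension of $B\bH_{\bS}$ established above, together with a conilpotence argument. Let $\Phi = R\Gamma(B\bH_{\bS},-)\colon \QCoh(B\bH_{\bS})\to \Sp$ be the pushforward to the point. It is lax symmetric monoidal and sends $\cO$ to $\bS^{S^1}$ by Theorem~\ref{thm cohomology of generic fiber}, so it refines to a functor $\Phi\colon \QCoh(B\bH_{\bS})\to\Mod_{\bS^{S^1}}$ carrying the unit to the unit. First I would check that $\Phi$ preserves all small colimits: since $B\bH_{\bS}$ is of finite cohomological dimension over $\Spec(\bS)$, and the $t$-structure on $\QCoh(B\bH_{\bS})$ is left and right complete and compatible with filtered colimits, $\Phi$ has bounded cohomological amplitude and hence commutes with colimits by the standard d\'evissage argument. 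Being also limit-preserving and accessible, $\Phi$ then admits a left adjoint $F\colon \Mod_{\bS^{S^1}}\to\QCoh(B\bH_{\bS})$, namely the realization functor $M\mapsto M\otimes_{\bS^{S^1}}\cO$, where one uses that $\bS^{S^1}\simeq \End_{\QCoh(B\bH_{\bS})}(\cO)$ acts on $\cO$.

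Second, I would show $F$ is fully faithful. The unit $M\to\Phi F(M)$ is the identity when $M=\bS^{S^1}$, since $\Phi F(\bS^{S^1})=R\Gamma(B\bH_{\bS},\cO)\simeq\bS^{S^1}$; as $F$ and $\Phi$ both preserve colimits and $\bS^{S^1}$ generates $\Mod_{\bS^{S^1}}$ under colimits, the unit is an equivalence for every $M$, so $F$ is fully faithful. It then suffices to prove that $F$ is essentially surjective, equivalently that its image $\cO$ generates $\QCoh(B\bH_{\bS})$ under colimits, equivalently that $\Phi$ is conservative.

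For this generation step I would pass to the comodule picture. Faithfully flat descent along the affine cover $\Spec(\bS)\to B\bH_{\bS}$ (equivalently, the bar construction presenting $B\bH_{\bS}$) gives a symmetric monoidal identification $\QCoh(B\bH_{\bS})\simeq \on{coMod}_{\bS_{\Int(\bZ)}}(\Sp)$, with the coalgebra structure on $\bS_{\Int(\bZ)}$ induced by the group law. Now $\bS_{\Int(\bZ)}$ is a flat $\bS$-lift of the coaugmented connected coalgebra $\Int(\bZ)=\colim_n \Int(\bZ)_{\le n}$, an exhaustive union of finite free sub-coalgebras; consequently $\bS_{\Int(\bZ)}$ is conilpotent, every $\bS_{\Int(\bZ)}$-comodule admits an exhaustive filtration whose associated graded is a sum of copies of the trivial comodule $\cO$, and hence $\cO$ generates $\QCoh(B\bH_{\bS})$ under colimits and extensions. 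This makes $\Phi$ conservative, hence an equivalence; being a lax symmetric monoidal equivalence preserving units, it is symmetric monoidal. Equivalently, the content of the last two paragraphs may be packaged as the bar--cobar (Koszul) duality of \cite{lurie2017higher} applied to the augmented $\bE_\infty$-algebra $\bS^{S^1}$, whose augmentation ideal $\bS[-1]$ is coconnective: this identifies $\Mod_{\bS^{S^1}}$ with conilpotent comodules over $\on{Bar}(\bS^{S^1})\simeq \bS_{\Int(\bZ)}$, which by conilpotence is all of $\QCoh(B\bH_{\bS})$.

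\textbf{Main obstacle.} The crux is the generation/conilpotence step. It is precisely where one uses that $\bH_{\bS}$ is a ``unipotent'', connected group scheme --- reflected in $\bS_{\Int(\bZ)}$ being a connected coalgebra and in $\bS^{S^1}$ having a coconnective augmentation ideal --- and without it the statement fails (for instance with a torus in place of $\bH_{\bS}$). The remaining points, namely that finite cohomological dimension forces $\Phi$ to preserve colimits and that the descent identification and the Koszul-duality equivalence can be arranged symmetric monoidally, are routine but should be recorded.
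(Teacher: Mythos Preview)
Your overall architecture matches the paper's: establish that $\Phi=R\Gamma$ preserves colimits (via finite cohomological dimension), that its left adjoint $F$ is fully faithful (because the unit map is an equivalence on the generator $\bS^{S^1}$), and that the structure sheaf $\cO$ generates $\QCoh(B\bH_{\bS})$. The divergence is entirely in how the last, generation/conservativity, step is handled.

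The paper does \emph{not} argue via conilpotence of $\bS_{\Int(\bZ)}$. Instead it reduces to the known $\bZ$-linear case. By finite cohomological dimension the base-change square for $B\bH\to B\bH_{\bS}$ is right-adjointable, so conservativity of $\pi_*$ on bounded-below objects reduces to conservativity of the pullback $\tilde\iota^*\colon\QCoh(B\bH_{\bS})\to\QCoh(B\bH)$, and this in turn follows from conservativity of $(-)\otimes_{\bS}\bZ$ on bounded-below spectra together with the already-known equivalence $\QCoh(B\bH)\simeq\Mod_{\bZ^{S^1}}$. One then passes from bounded-below to arbitrary objects using right-completeness of the $t$-structure on $\QCoh(B\bH_{\bS})$.

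Your conilpotence route is a legitimate alternative, but two steps are more delicate than your sketch suggests. First, you assume the degree filtration $\Int(\bZ)=\colim_n\Int(\bZ)_{\le n}$ lifts to an exhaustive filtration of $\bS_{\Int(\bZ)}$ by dualizable sub-coalgebras over $\bS$; this is true (indeed the paper constructs such a lift via the even filtration) but is not automatic from flatness. Second, and more seriously, the inference ``$\bS_{\Int(\bZ)}$ is conilpotent, hence every comodule is exhausted by trivial ones'' is a coradical-filtration argument that is standard in abelian categories but does not carry over verbatim to the stable setting, particularly for unbounded objects. Your Koszul-duality packaging has the same residual content: bar--cobar identifies $\Mod_{\bS^{S^1}}$ with the \emph{conilpotent} comodules, and it is exactly the assertion that every $\bS_{\Int(\bZ)}$-comodule is conilpotent that remains to be justified. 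The paper's reduction-to-$\bZ$ argument, combined with the $t$-structure bootstrap, is precisely a device for closing this gap without developing a spectral coradical filtration from scratch.
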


\begin{proof}
First, we note that, by finite cohomological dimension, the unit is a compact object, and that, moreover, global sections commutes with filtered colimits by \cite[Proposition 9.1.5.3]{SAG}.
Recall that, by construction, there is an equivalence
\[
R \Gamma(B \bH_{\bS}, \cO) \simeq \hom_{\QCoh(B \bH_{\bS})}(\mathbf{1}, \mathbf{1}) \simeq \bS^{S^1}.
\]
Thus, if we can show that every object $E \in \QCoh(B \bH_{\bS})$ is generated by colimits of the unit object, then we are done.

We now show that if $E \in \QCoh(B \bH_{\bS})$ is bounded below, then $E \simeq 0$ if the push-forward $\pi_*(E) \simeq 0$, for $\pi: B \bH_{\bS} \to \Spec(\bS)$.
This tells us that global sections is conservative on bounded below objects.
We will prove this claim by reducing it to the well-known fact that the functor $(-) \otimes_\bS \bZ : \Sp \to \Mod_{\bZ}$ is conservative when restricted to bounded below spectra.
For this, first note that, by finite cohomological dimension of $B \bH_{\bS}$, the diagram of pullbacks
\[
\begin{tikzcd}
\Sp \arrow[r, "\pi^*"] \arrow[d, "\iota^*" ] & \arrow[d, "\tilde{\iota}^* "]  \QCoh(B \bH_{\bS})\\
\Mod_\bZ \arrow[r, "\tilde{\pi}^*"] & \QCoh(B \bH)
\end{tikzcd}
\]
is right-adjointable, so that there is an equivalence
\[
\iota^* \circ \pi_* \simeq  \tilde{\pi}_* \circ \tilde{\iota}^* \,.
\]
Hence, since the global sections functor $\tilde{\pi}_*: \QCoh(B \bH) \to \Mod_{\bZ}$ is already known to be conservative, as $\QCoh(B \bH) \simeq \Mod_{\bZ^{S^1}}$, this implies that conservativity of $\pi_*$ is equivalent to the conservativity of the functor $\tilde{i}^*: \QCoh(B \bH_{\bS}) \to \QCoh(B \bH)$.
To see conservativity of the functor $\tilde{i}^*$, note from the following Cartesian diagram
\[
\begin{tikzcd}
    \Spec(\bZ) \arrow[r, "\iota"] \arrow[d, "\tilde{e}"] & \arrow[d, "e"] \Spec(\bS) \\
    B \bH \arrow[r, "\tilde{\iota}"] & B \bH_{\bS}  \, ,
\end{tikzcd}
\]
that conservativity of $\tilde{\iota}^*$ (on bounded below objects) follows from conservativity of the other three induced pullback functors.
Indeed, $e^*$ and $\tilde{e}^*$ are conservative since they are flat covers, and $\iota^* \simeq (-) \otimes_{\bS} \bZ$ which again, is conservative on bounded below spectra.

We have shown that if $E$ is bounded below that  $\hom(\mathbf{1}, E) \simeq \pi_*(E) \simeq  0$, implies $E \simeq 0$ where $\mathbf{1}$ denotes the unit in $\QCoh(B \bH_{\bS})$.
Thus every bounded below object is generated via shifts and colimits of $\mathbf{1}$. Now finally let $E$ be arbitrary.
By right-completeness of the t-structure on $\QCoh(B \bH_{\bS})$ we may write $E \simeq \colim_{n \in \bZ}\tau_{\geq n}(E)$.
But each $\tau_{\geq n}(E)$ is itself a colimit of $\mathbf{1}$, allowing us to conclude that $\mathbf{1}$ itself sits in the subcategory generated by $\mathbf{1}$.
\end{proof}

\begin{rmk}
If we are to think of  $B \bH_{\bS}$ as an affinization of $B \underline{\bZ}$ over $\Spec (\bS)$, then Toën's theory of affine stacks predicts that $\QCoh(B \bH_{\bS})$ should only keep track of \emph{unipotent} representations of $\bZ$ (see e.g. \cite[Definition 7.7]{mathew2016galois}).
 The above proposition confirms this expectation.
 Indeed, by  \cite[Proposition 7.8]{mathew2016galois}, the localizing subcategory
\[
\Mod_{\bS^{S^1}} \subset \Fun(S^1, \Sp)
\]
generated by the unit $\mathbf{1}$ in $S^1$-parametrized spectra is identified with the category of parametrized spectra $M$ for which for all $x \in S^1$ the action of $\pi_1(S^1, x) \cong \bZ$  on the fiber $M_x$ is ind-unipotent.
\end{rmk}

We now state one of the main theorems of the section which recovers $S^1$-equivariant spectra as quasi-coherent sheaves on $B^2 \bH_{\bS}$.

\begin{theorem} \label{theorem equivalence S^1 equivariant objects}
    There is a symmetric monoidal equivalence
    \[
    \QCoh(B^2 \bH_{\bS}) \simeq \Sp^{BS^1}
    \]
induced by applying $\QCoh(-)$ to the map  $B^2 \underline{\bZ} \to  B^2 \bH_{\bS}$ of stacks.
\end{theorem}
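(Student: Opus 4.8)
The plan is to deloop the identification $\QCoh(B\bH_{\bS}) \simeq \Mod_{\bS^{S^1}}$ from the previous subsection and recognize the result as comodules over a bialgebra, equivalently as local systems of spectra on $BS^1$. View $B\bH_{\bS}$ as an abelian group object over $\Spec(\bS)$. Since $\bH_{\bS}$ is flat and affine over $\bS$, the map $\Spec(\bS) \to B\bH_{\bS}$ is faithfully flat, and iterating, $\Spec(\bS) \to B^2\bH_{\bS}$ is a faithfully flat cover whose \v{C}ech nerve is the bar construction $[n] \mapsto (B\bH_{\bS})^{\times n}$, so by descent $\QCoh(B^2\bH_{\bS}) \simeq \lim_{[n] \in \Delta} \QCoh\big((B\bH_{\bS})^{\times n}\big)$. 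Now $\QCoh(B\bH_{\bS}) \simeq \Mod_{\bS^{S^1}}$ with $\bS^{S^1} = R\Gamma(B\bH_{\bS},\cO)$ is compactly generated — the unit is compact by finite cohomological dimension — hence dualizable, so a K\"unneth equivalence gives $\QCoh\big((B\bH_{\bS})^{\times n}\big) \simeq \Mod_{(\bS^{S^1})^{\otimes_{\bS} n}} \simeq \Mod_{\bS^{(S^1)^{\times n}}}$, the last step using that $S^1$ is a finite space. The cosimplicial diagram $[n] \mapsto \Mod_{\bS^{(S^1)^{\times n}}}$ is the cobar construction on the bialgebra $\bS^{S^1}$, the coface maps being dual to the group structure on $B\bH_{\bS}$, so
\[
\QCoh(B^2\bH_{\bS}) \simeq \on{coMod}_{\bS^{S^1}}(\Sp).
\]

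Next I would identify the right-hand side with $\Sp^{BS^1}$. Since $S^1$ is finite, $\bS^{S^1}$ is a dualizable $\bS$-module whose dual is the group bialgebra $\bS[S^1]$, giving a symmetric monoidal equivalence $\on{coMod}_{\bS^{S^1}}(\Sp) \simeq \Mod_{\bS[S^1]}(\Sp)$. As $\bS[S^1] \simeq \bS[\Omega(BS^1)]$ and $BS^1$ is connected, the standard identification of local systems with modules over the based-loop group algebra yields $\Mod_{\bS[S^1]}(\Sp) \simeq \Fun(BS^1,\Sp) = \Sp^{BS^1}$, again symmetric monoidally (with the unit corresponding to the augmentation comodule). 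It then remains to check that the resulting composite equivalence is the functor $\phi^*$ of the statement. The map $B^2\underline\bZ \to B^2\bH_{\bS}$ is the double delooping of the Cartier duality map $\underline\bZ \to \bH_{\bS}$; its once-delooped form $\underline{S^1} = B\underline\bZ \to B\bH_{\bS}$ induces on global sections the map $\bS^{S^1} = R\Gamma(B\bH_{\bS},\cO) \to R\Gamma(\underline{S^1},\cO) = \bS^{S^1}$, which is the equivalence of Theorem~\ref{thm cohomology of generic fiber}. Unwinding the two bar/cobar presentations, $\phi^*$ is therefore induced by an equivalence of bialgebras; this is precisely the once-delooped instance of the Bar--Cobar equivalence discussed in \S\ref{sec: deloopings and discussion}, fed by the computation $R\Gamma(B^n\bH_{\bS},\cO) \simeq \bS^{K(\bZ,n)}$ of Theorem~\ref{thm: delooping has correct cohomology}.

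The main obstacle is the first step: justifying that $\QCoh(B^2\bH_{\bS})$ is computed by the bar construction on $B\bH_{\bS}$ even though $B\bH_{\bS}$ is not literally affine. The point is that it is \emph{$\QCoh$-affine}, i.e.\ $\QCoh(B\bH_{\bS}) \simeq \Mod_{\cO(B\bH_{\bS})}$, so the descent and K\"unneth inputs behave as in the affine case — but this must be used carefully, in particular to obtain $\QCoh\big((B\bH_{\bS})^{\times n}\big) \simeq \Mod_{\cO(B\bH_{\bS})^{\otimes n}}$. As an alternative one can argue directly that $\phi^*$ is an equivalence by base change along the synthetic Hurewicz map: $\phi^*$ is symmetric monoidal and colimit-preserving, $B^2\bH_{\bS}$ is of finite cohomological dimension over $\Spec(\bS)$ by the truncation lemma together with finiteness of the cohomological dimension of $K(\bZ,2)$ over $\Spec(\bZ)$, and $(-)\otimes_{\bS}\bZ$ is conservative on bounded-below objects, so the statement reduces to the integral equivalence $\QCoh(B^2\bH) \simeq \Fun(BS^1,\Mod_{\bZ})$ underlying the study of $S^1_{\fil}$-representations in \cite{moulinos2019universal}.
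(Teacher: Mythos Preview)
Your primary argument is correct and takes a genuinely different route from the paper. Both proofs begin with the same \v{C}ech/bar decomposition
\[
\QCoh(B^2\bH_{\bS}) \simeq \lim_{[n]\in\Delta} \QCoh\big((B\bH_{\bS})^{\times n}\big) \simeq \lim_{[n]\in\Delta} \Mod_{(\bS^{S^1})^{\otimes n}},
\]
but diverge thereafter. The paper compares this cosimplicial diagram directly with the analogous one for $B^2\underline{\bZ}$, namely $[n]\mapsto \Fun((S^1)^{\times n},\Sp)$: each levelwise functor $u_n^*\colon \Mod_{(\bS^{S^1})^{\otimes n}}\to \Fun((S^1)^{\times n},\Sp)$ is shown to be fully faithful (using compactness of the constant diagram $\underline{\bS}$, since each torus is a finite CW complex), and essential surjectivity of the limit follows because every compatible system is pulled back from degree~$0$, where $u_0^*$ is the identity. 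Your approach instead identifies the totalization intrinsically as $\on{coMod}_{\bS^{S^1}}(\Sp)$ via Construction~\ref{const: comonadic stuff}, then uses dualizability of $\bS^{S^1}$ to rewrite this as $\Mod_{\bS[S^1]}(\Sp)\simeq \Fun(BS^1,\Sp)$. Your argument is more structural and meshes nicely with the comodule description used for the special fibre later in the section; the paper's is more elementary in that it avoids the (co)monadicity input and the check that the relevant bialgebra structures are dual. Note that the levelwise functors $u_n^*$ in the paper's argument are \emph{not} equivalences for $n\geq 1$ (unipotent versus all local systems on a torus), so the essential surjectivity step genuinely uses the passage to the limit --- this is the content your dualizability trick is absorbing.

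One caution about your alternative outline at the end: the claim that $B^2\bH_{\bS}$ has finite cohomological dimension is not as immediate as for $B\bH_{\bS}$. The truncation is (the affinization of) $K(\bZ,2)\simeq \bC P^{\infty}$, whose coherent cohomology need not be bounded in the same way; the paper only establishes and uses finite cohomological dimension for $B\bH_{\bS}$. If you want the conservativity-of-$(-)\otimes_{\bS}\bZ$ reduction to go through, you would need an independent argument here.
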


\begin{proof}
The argument is essentially verbatim the same as in the proof of the $\bZ_{(p)}$-analogue in \cite[Proposition 4.2.3]{moulinos2019universal}.
One begins with the fact that both $B^2 \bH_{\bS}$ and $B^2 \underline{\bZ}$ are presented as geometric realizations of simplicial diagrams
\[
[n] \mapsto  B\bH_{\bS}^{\times n} \, \, \, \, \, \, \mathrm{and}  \, \, \, \, \, \, [n] \mapsto  B\underline{\bZ}^{\times n}
\]
with a map $u_{\bullet}: B\underline{\bZ}^{\times \bullet} \to B \bH_{\bS}^{\times \bullet}$.
Applying $\QCoh(-)$ to these simplicial diagrams gives  a map of cosimplicial objects in $\Pr^{L}_{\St}$, which in degree $n$ is given by
\[
u_{n}^{*}:  \QCoh( B\bH_{\bS}^{\times n}) \simeq  \Mod_{(\bS^{S^1})^{\otimes n}}  \to \Fun((S^1)^{\times n}, \Sp) \simeq  \QCoh((S^1)^{\times n} \,.
\]
We would like to show that each $u_{n}^{*}$ is fully faithful.
This will follow from the fact that the constant diagram $\underline{\bS} \in \Fun((S^1)^{\times n}, \Sp)$  is a compact object, which is true since each $(S^1)^n$ is a compact space.
One uses this to check that the unit map of the adjunction
\[
M \mapsto (u_n)_* u_n^*(M)
\]
is an equivalence, where the right adjoint is given by  taking the limit over $(S^1)^{\times n}$ of a parametrized spectrum over $(S^1)^{\times n }$.
By compactness of the constant diagram, the right adjoint itself commutes with colimits, so it amounts to verifying the above equivalence
for $N \simeq  (\bS^{S^1})^{\otimes n}$ from which it follows from the fact that $u_n^*$ is symmetric monoidal and so sends $N$ to the constant diagram $\underline{\bS}$.

We now check essential surjectivity.
We recall that since $u^*$ is the limit of fully faithful functors $u_n^*$'s,  on each degree, an object $(E_n)_{n \in \Delta}$  on the right hand side is in the essential image if and only if each $E_n$ is in the image of $u^*_{n}$.
However, each $E_n$ is obtained as a pullback (along the simplicial structure maps) of $E_0 \in \Sp$ where $u_0$ is obviously an equivalence.
By the compatibility of the pullbacks $u_n^*$ with the cosimplicial structure maps, we conclude that $E_n$ is in the essential image $u_n^{*}$ giving us essential surjectivity.
\end{proof}

\begin{rmk}
The above theorem says that the $\infty$-category $\QCoh(B^2 \bH_{\bS})$ captures all $\bS$-linear
representations of the $\infty$-group $S^1 \simeq B \bZ$.
Contrast this to the $n=1$ case, where $\QCoh(B \bH_{\bS}) \simeq \Mod_{\bS^{S^1}}$, thus capturing only locally unipotent representations of $\bZ$.
This aligns with the paradigm from the theory of affine stacks in the $\bZ$-linear case, where the affinization of a space keeps track of the unipotent part of the fundamental group, i.e for $B \bZ \to B \bH$,  we only recover  $\bH$-representations.
The space $B^2 \bZ \simeq \bC P^\infty$ is of course simply connected, so we recover all $B\bZ$-equivariant spectra.
\end{rmk}

Now we analyze quasi-coherent sheaves on deloopings of $\bH_{\syn}^{\gr}$.

\begin{const} \label{const: comonadic stuff}
Let $\cC$ be a stable, presentably symmetric monoidal category and let $A \in \on{bAlg}(\cC)$ denote a bicommutative bialgebra in $\cC$. By
\cite[Corollary 4.7.5.3]{lurie2017higher} (see also \cite[Proposition 4.6]{geometryoffiltr} for the relevant comonadic version) the
stable $\infty$-category of $A$-comodules, $\on{coMod}_{A}(\cC)$, admits a description as the limiting cone in the following cosimplicial limit diagram
\[
\on{coMod}_{A}(\cC) \simeq \lim_\Delta \left(
\begin{tikzcd}
 \cC \arrow[r, shift left] \arrow[r, shift right] & \Mod_{A}(\cC)  \arrow[r] \arrow[r, shift left=2] \arrow[r, shift right=2] & \Mod_{A \otimes A}(\cC) \cdots
\end{tikzcd} \right) \,.
\]
From the above, given the fact that each term is symmetric monoidal and that this limit can be taken in $\CAlg(\Pr^{L}_{\St})$, we deduce a symmetric monoidal structure on $\on{coMod}_{A}(\cC)$.
\end{const}

We summarize with the following theorem.

\begin{theorem}
There are symmetric monoidal equivalences
\begin{equation} \label{sven 1}
    \QCoh(S^1_{\syn}|_{B \bG_m}) = \QCoh(B \bH_{\syn}^{\gr}) \simeq \Mod_{R \Gamma(B \bH_{\syn}^{\gr})} \simeq \Mod_{\gr^*(\bT_{\syn}^\vee)}
\end{equation}
\begin{equation} \label{sven 2}
    \QCoh(B^2 \bH_{\syn}^{\gr} ) \simeq \on{coMod}_{\gr^*(\bT_{\syn}^\vee)}(\Mod_{\gr^*_\ev(\bS)})
\end{equation}
Here, the symmetric monoidal structure on the right hand side of (\ref{sven 1}) is just the relative tensor product over $R \Gamma(B \bH_{\syn}^{\gr})$.
The symmetric monoidal structure on the right hand side of (\ref{sven 2}) is the one described in Construction \ref{const: comonadic stuff}.
\end{theorem}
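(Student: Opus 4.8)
The plan is to establish the two equivalences separately, in close parallel with the generic-fibre statements of Theorem~\ref{thm cohomology of generic fiber} and Theorem~\ref{theorem equivalence S^1 equivariant objects}.

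For (\ref{sven 1}), the first equality is a matter of definitions: $S^1_{\syn}|_{B\bG_m}$ is the base change of $B\bH_{\syn}$ along $B\bG_m\to\filstack$, and this coincides with $B\bH^{\gr}_{\syn}$ because forming classifying stacks commutes with base change; the last equivalence is the computation $R\Gamma(B\bH^{\gr}_{\syn},\cO^{\gr})\simeq\gr^*(\bT^{\vee}_{\syn})$ of the previous section. So the real content is the middle equivalence $\QCoh(B\bH^{\gr}_{\syn})\simeq\Mod_{R\Gamma(B\bH^{\gr}_{\syn})}$, which I would prove by running the argument that established $\QCoh(B\bH_{\bS})\simeq\Mod_{\bS^{S^1}}$, with $\bS$ replaced by $\gr^*_{\ev}(\bS)$ and the classical pair $B\bH/\Spec(\bZ)$ replaced by $B\mathsf{Ker}/B\bG_m|_{\bZ}$. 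Concretely: by Proposition~\ref{prop identify H_syn^gr with Ker} one has $B\bH^{\gr}_{\syn}\simeq(B\mathsf{Ker})|_{\gr\Spec(\bS^{\syn})}$, a base change of $B\mathsf{Ker}\to B\bG_m|_{\bZ}$; the latter is of finite cohomological dimension by \cite[Lemma 3.4.9]{moulinos2019universal}, and finite cohomological dimension is stable under base change, so the unit of $\QCoh(B\bH^{\gr}_{\syn})$ is compact and global sections preserves colimits. Conservativity of global sections on connective objects I would deduce, as in the $\bS$-linear proof, from right-adjointability of the square presenting $B\mathsf{Ker}|_{\gr\Spec(\bS^{\syn})}$ as a base change of $B\mathsf{Ker}$ over $\bZ$: this reduces the claim to conservativity of $\QCoh(B\mathsf{Ker})\to\Mod_{\bZ}^{\gr}$ (known from \cite{moulinos2019universal,Rak20}, where $\QCoh(B\mathsf{Ker})\simeq\Mod_{\bZ\oplus\bZ[-1](-1)}$) together with conservativity of $(-)\otimes_{\gr^*_{\ev}(\bS)}\bZ$ on connective graded modules, which holds since $\gr^*_{\ev}(\bS)$ is connective with $\pi_0\gr^0_{\ev}(\bS)\cong\bZ$. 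Barr--Beck then identifies $\QCoh(B\bH^{\gr}_{\syn})$, symmetric monoidally, with modules over $\End(\mathbf 1)=R\Gamma(B\bH^{\gr}_{\syn},\cO^{\gr})\simeq\gr^*(\bT^{\vee}_{\syn})$.

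For (\ref{sven 2}), I would write $B^2\bH^{\gr}_{\syn}$ as the geometric realization, in stacks over $\gr\Spec(\bS^{\syn})$, of the bar construction $[n]\mapsto(B\bH^{\gr}_{\syn})^{\times n}$ (fibre products over $\gr\Spec(\bS^{\syn})$) of the $\bE_\infty$-group $B\bH^{\gr}_{\syn}$, so that descent for quasi-coherent sheaves along colimits of quasi-geometric stacks gives
\[
\QCoh(B^2\bH^{\gr}_{\syn})\simeq\lim_{\Delta}\,\QCoh\bigl((B\bH^{\gr}_{\syn})^{\times n}\bigr).
\]
The key input is then a Künneth identification $\QCoh\bigl((B\bH^{\gr}_{\syn})^{\times n}\bigr)\simeq\Mod_{\gr^*(\bT^{\vee}_{\syn})^{\otimes n}}(\Mod_{\gr^*_{\ev}(\bS)})$, compatible with the cosimplicial structure maps: this follows from (\ref{sven 1}) once one knows $R\Gamma\bigl((B\bH^{\gr}_{\syn})^{\times n},\cO^{\gr}\bigr)\simeq\gr^*(\bT^{\vee}_{\syn})^{\otimes n}$, which I would prove by induction on $n$ exactly as in Theorem~\ref{thm: delooping has correct cohomology}, the finite cohomological dimension of $B\bH^{\gr}_{\syn}\to\gr\Spec(\bS^{\syn})$ ensuring that $R\Gamma$ commutes with the relevant relative tensor products. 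The resulting cosimplicial object $[n]\mapsto\Mod_{A^{\otimes n}}(\Mod_{\gr^*_{\ev}(\bS)})$, with cofaces assembled from comultiplication, unit and counit, is precisely the diagram of Construction~\ref{const: comonadic stuff} for the bicommutative bialgebra $A=\gr^*(\bT^{\vee}_{\syn})$ in $\Mod_{\gr^*_{\ev}(\bS)}$ (a bialgebra since $\bT^{\vee}_{\syn}$ is one by Proposition~\ref{prop: bialgebras} and $\gr^*$ is symmetric monoidal), whose totalization is $\on{coMod}_{A}(\Mod_{\gr^*_{\ev}(\bS)})$. As all the limits are formed in $\CAlg(\Pr^{L}_{\St})$, the equivalence is symmetric monoidal for the structure of Construction~\ref{const: comonadic stuff}.

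The step I expect to be the main obstacle is the Künneth identification entering (\ref{sven 2}): upgrading the levelwise equivalences to an equivalence of cosimplicial $\infty$-categories forces one to match the cosimplicial structure maps with the bialgebra operations on $\gr^*(\bT^{\vee}_{\syn})$, and there one must faithfully carry along the twisted comultiplication coming from the relation $\eta d=d^2$ in $\pi_*(\bS[S^1])$ (cf.\ the Warning of the previous section) --- this twist is exactly the datum distinguishing $\gr^*(\bT^{\vee}_{\syn})$ from the split bialgebra $\gr^*_{\ev}(\bS)\oplus\gr^*_{\ev}(\bS)[-1](-1)$, and it is why the target of (\ref{sven 2}) is genuine comodules rather than comodules over a trivial square-zero coalgebra. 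A secondary but still non-formal point is that, $B\bH^{\gr}_{\syn}$ being non-affine, the equivalence $\QCoh\simeq\Mod_{R\Gamma}$ in (\ref{sven 1}) really does rely on the finite-cohomological-dimension and conservativity inputs above rather than on any relative affineness.
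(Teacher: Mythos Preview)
Your proposal is correct, and for (\ref{sven 2}) it matches the paper's argument almost verbatim: both present $\QCoh(B^2\bH^{\gr}_{\syn})$ as the totalization of the Čech nerve of $\gr\Spec(\bS^{\syn})\to B^2\bH^{\gr}_{\syn}$, identify each level with $\Mod_{\gr^*(\bT^{\vee}_{\syn})^{\otimes n}}$ using (\ref{sven 1}), and recognize the resulting cosimplicial object as that of Construction~\ref{const: comonadic stuff}. Your worry about matching the cosimplicial structure maps with the twisted bialgebra structure is legitimate but is handled uniformly: the levelwise comparison maps come from the natural transformation $\Mod_{R\Gamma(-)}\Rightarrow\QCoh(-)$, which is automatically compatible with pullbacks and hence with the structure maps.

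For (\ref{sven 1}) you take a genuinely different route. The paper proves a single base-change formula
\[
\QCoh\bigl(X\times_{B\bG_m|_{\bZ}}\gr\Spec(\gr^*_{\ev}(\bS))\bigr)\;\simeq\;\QCoh(X)\otimes^L_{\Mod^{\gr}_{\bZ}}\Mod_{\gr^*_{\ev}(\bS)}
\]
by exploiting the dualizability of $\Mod_{\gr^*_{\ev}(\bS)}$ in $\Pr^L_{\St}(\Mod^{\gr}_{\bZ})$ to commute the Lurie tensor product past the cosimplicial limit computing $\QCoh(X)$, and then specializes to $X=B\mathsf{Ker}$ (and, in passing, to $X=B^2\mathsf{Ker}$). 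Your approach instead reruns the Barr--Beck argument from the generic-fibre proposition directly over $\gr^*_{\ev}(\bS)$: compactness of the unit from finite cohomological dimension, conservativity of $\pi_*$ on bounded-below objects via right-adjointability of the base-change square, then right-completeness to pass to arbitrary objects. Both are valid; the paper's argument is a bit more uniform (one base-change lemma feeds both equivalences and makes the $\bZ$-linear input from \cite{moulinos2019universal} completely explicit), while yours is more self-contained and closer in spirit to the generic-fibre computation. One small gap to patch in your write-up: you only state conservativity on connective (bounded-below) objects, so you should add the passage to all of $\QCoh$ via $E\simeq\colim_n\tau_{\geq n}E$, exactly as in the proof of the analogous proposition for $B\bH_{\bS}$.
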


\begin{proof}
 For the first equivalence, the key fact we use is the consequence of Proposition \ref{prop identify H_syn^gr with Ker}, stating that
    \[
    \bH_{\syn}^{\gr} \simeq \mathsf{Ker} \times_{\Spec(\bZ)} \gr \Spec(\gr^*_{\ev}(\bS))
    \]
so that $\bH_{\syn}^{\gr}$ is just the base-change of
\[
\mathsf{Ker} \simeq \bH^{t=0} / \bG_m
\]
to $\gr \Spec(\gr^*_{\ev}(\bS))$.
Equivalently, since everything here is relatively affine,  this can be written as an equivalence of
\[
(\bS_{\Int(\bZ)}^{\syn})^{\gr} \simeq \bZ \langle x \rangle  \otimes_{\bZ} \gr^*_{\ev}(\bS)
\]
in $\CAlg(\Mod_{\gr^*_{\ev}(\bS)})$, i.e. in $\gr^*_{\ev}(\bS)$-algebras in graded $\bZ$-modules.

Now, let $X = B \mathsf{Ker}$ as a stack over $B \bG_m = B \bG_{m}|_{\bZ}$.
We remark that the $\infty$-category of quasi-coherent sheaves on $X$ arises as a totalization
\[
\QCoh(X) \simeq \lim_{\Delta} \left( \begin{tikzcd}
 \QCoh(X_0)  \arrow[r, shift left]
\arrow[r, shift right]
& \QCoh(X_1)  \arrow[r]
\arrow[r, shift left=2]
\arrow[r, shift right=2]
& \cdots \\
\end{tikzcd} \right)
\]
arising from an fpqc cover $B \bG_m \to X$.
The desired equivalence  will follow if we can demonstrate an equivalence
\begin{equation} \label{bleh}
 \QCoh \left(X \times_{B \bG_m} \gr\Spec(\gr_{\ev}^*(\bS))\right) \simeq \QCoh(X) \otimes^{L}_{\Mod^{\gr}_{\bZ}}\Mod_{\gr^*_{\ev}(\bS)} \,,
\end{equation}
where the right hand side is the Lurie tensor product in $\Mod_{\bZ}^{\gr}$-linear presentable stable $\infty$-categories, which we denote by $\Pr^{L}_{\St}(\Mod_{\bZ}^{\gr})$.
Indeed, for $X = B \mathsf{Ker}$, this will mean that
\[
\QCoh(B \bH_{\syn}^{\gr}) \simeq \QCoh\left(B \mathsf{Ker} \times_{B \bG_m} \gr \Spec(\gr^*_{\ev}(\bS))\right) \simeq \Mod_{R \Gamma(B \mathsf{Ker}, \cO)}\otimes_{\Mod^{\gr}_{\bZ}} \Mod_{\gr^*_{\ev}(\bS)}\,,
\]
and for $X = B^2 \mathsf{Ker}$ this will mean that
\[
\QCoh(B^2 \bH_{\syn}^{\gr})  \simeq \on{coMod}_{\bZ[d]}(\Mod^{\gr}_\bZ) \otimes^L \Mod_{\gr^*_{\ev}(\bS)} \simeq  \on{coMod}_{\gr^*_{\ev}(\bS)[d]}(\Gr(\Mod_\bZ))\,.
\]
Now let us demonstrate the equivalence (\ref{bleh}).
We remark that since $\gr^*_{\ev}(\bS)$ is a $\bE_\infty$-algebra in graded $\bZ$-modules, the $\infty$-category $\Mod_{\gr^*_{\ev}(\bS)}$ is a dualizable object, so tensoring with respect to it will commute with small limits in $\Pr^{L}_{\St}(\Mod_{\bZ}^{\gr})$.
Hence, we have an equivalence
    \begin{align*}     \QCoh(X) \otimes^{L}_{\Mod^{\gr}_{\bZ}}\Mod_{\gr^*_{\ev}(\bS)}
     & \simeq \lim_{\Delta} \left(
     \begin{aligned}
     \begin{tikzcd}[ampersand replacement =\&]
 \QCoh(X_0)  \arrow[r, shift left]
\arrow[r, shift right]
\& \QCoh(X_1)  \arrow[r]
\arrow[r, shift left=2]
\arrow[r, shift right=2]
\& \cdots \\
\end{tikzcd}
\end{aligned}
\right)  \otimes^{L}_{\Mod^{\gr}_{\bZ}} \Mod_{\gr^*_{\ev}(\bS)} \\
 & \simeq     \lim_{\Delta} \left(
 \begin{aligned}
     \begin{tikzcd}[ampersand replacement =\&]
 \QCoh(X_0) \otimes^{L}_{\Mod^{\gr}_{\bZ}} \Mod_{\gr^*_{\ev}(\bS)}  \arrow[r, shift left]
\arrow[r, shift right]
\&  \QCoh(X_1) \otimes^{L}_{\Mod^{\gr}_{\bZ}} \Mod_{\gr^*_{\ev}(\bS)}   \arrow[r]
\arrow[r, shift left=2]
\arrow[r, shift right=2]
\& \cdots \\
\end{tikzcd}
\end{aligned}
\right)\\
& \simeq \lim_{\Delta} \left(
 \begin{aligned}
     \begin{tikzcd}[ampersand replacement =\&]
 \QCoh \left(X_0\times_{B \bG_m}\gr \Spec(\gr^*_{\ev}(\bS))\right)  \arrow[r, shift left]
\arrow[r, shift right]
\&  \QCoh \left(X_1\times_{B \bG_m}\gr \Spec(\gr^*_{\ev}(\bS))\right)   \arrow[r]
\arrow[r, shift left=2]
\arrow[r, shift right=2]
\& \cdots \\
\end{tikzcd}
\end{aligned}
\right) \,.
 \end{align*}
 Here, the second equivalence is the aforementioned dualizability of $\Mod_{\gr^*_{\ev}(\bS)}$ and the second equivalence just follows from the relevant Künneth formulas which are valid in this context (as in the proof of Theorem \ref{thm: delooping has correct cohomology}).
 Hence, the equivalence (\ref{bleh}) holds and we conclude the first equivalence in the statement.

Now we prove the second equivalence using the first one.
By Construction \ref{const: comonadic stuff}, we have an equivalence
\[
\on{coMod}_{\gr^*(\bT_{\syn}^\vee)} \left(\Mod_{\gr^*_{\ev}(\bS)} \right) \simeq \lim_\Delta \left(
\begin{tikzcd}
 \cC \arrow[r, shift left] \arrow[r, shift right] & \Mod_{\gr^*(\bT_{\syn}^\vee)}(\cC)  \arrow[r] \arrow[r, shift left=2] \arrow[r, shift right=2] & \Mod_{\gr^*(\bT_{\syn}^\vee) \otimes \gr^*(\bT_{\syn}^\vee)}(\cC) \cdots
\end{tikzcd} \right)
\]
for $\cC= \Mod^{\gr}_{\gr^*_{\ev}(\bS)}$.
We have an analogous cosimplicial description for $\QCoh(B^2 \bH_{\syn}^{\gr})$,
\[
\QCoh(B^2 \bH_{\syn}^{\gr})  \simeq \lim_\Delta \left(
\begin{tikzcd}
 \Mod_{\gr^*_{\ev}(\bS)} \arrow[r, shift left] \arrow[r, shift right] & \QCoh(B \bH_{\syn}^{\gr})   \arrow[r] \arrow[r, shift left=2] \arrow[r, shift right=2] & \QCoh(B \bH_{\syn}^{\gr} \times B \bH_{\syn}^{\gr}) \cdots
\end{tikzcd} \right) \,,
\]
arising from the fpqc cover $\gr \Spec \gr^*_{\ev}(\bS) \to B^2 \bH_{\syn}^{\gr}$.
We just need equivalences
\[
\Mod_{(\gr^*(\bT_{\syn}^\vee))^{\otimes n}} \to \QCoh(({B\bH_{\syn}^{\gr}})^{\times n})
\]
in each cosimplicial degree compatible with the cosimplicial structure maps.
But there always exist natural transformations from modules over the global sections to quasi-coherent sheaves, which by the first part of this proof, will be equivalences in this case.
\end{proof}

\begin{rmk}
Recall from \cite{moulinos2019universal} that the $\infty$-category $\QCoh(B^2 \mathsf{Ker})$ is equivalent to \emph{graded mixed complexes}, that is graded complexes $M^n$ equipped with a ``differential"
\[
\delta: M^n[1] \to M^{n+1}
\]
which squares to zero.
This differential corresponds to a strict coaction by the graded Hopf algebra $\bZ \oplus \bZ[-1](-1)$.
The above theorem displays a lift of graded mixed complexes to  the special fiber of even synthetic spectra, as comodules over $\gr^*(\bT_{\syn}^\vee)$. However, the relation $\eta d = d^2$ in $\pi_*( \bS[S^1])$ implies that $\delta$ does not square to zero. We expect that, in spite of the fact that we are now working in $\bZ$-modules, these ``$\eta$-deformed mixed complexes" retain interesting homotopical information.
\end{rmk}

\subsection{Filtration on Topological Hochschild Homology} \label{sec: filtration on THH}

We conclude with a few speculative remarks about filtrations on topological Hochschild homology.
Recall that in \cite{moulinos2019universal}, the HKR filtration on Hochschild homology was geometrized in terms of derived mapping stacks out of the filtered circle; as such, it acquired a universal property as an derived commutative algebra object in $S^1_{\fil}$-representations.
We will see that the constructions of this paper give rise to a filtration on topological Hochschild homology for connective $\bE_\infty$-rings in a similar manner.

\begin{const} \label{construction blah}
    Let $X= \Spec(A)$, for $A$ as above.
    Then we may form the mapping stack
    \[
    \mathcal{L}^{\syn}(X) =\Map_{\on{sStk}_{/\on{fSpec}(\bS^\syn)}}(B \bH_{\syn}, X \times \on{fSpec}(\bS^\syn)) \to \on{fSpec}(\bS^\syn) \to \filstack \,.
    \]
 As this is a relative construction over $\on{fSpec}(\bS^\syn)$, it admits a map to $\filstack$.
 Hence by the yoga of filtrations and quasi-coherent sheaves over $\filstack$, one obtains a filtration on the filtration of the generic fiber   of $\mathcal{L}^{\syn}(X)$.
\end{const}

Arguing exactly as in \cite[Theorem 9.9]{Mou24Cartier}, we  obtain  the following result:

\begin{proposition} \label{Proposition THH}
 Let $X = \Spec(A)$ be an affine spectral scheme.
 Then the map
 \[
B \underline{\bZ} \simeq \underline{S^1}  \to B \bH_{\bS}
 \]
 induces an equivalence
 \[
 \THH(A) \simeq R\Gamma(\underline{\Map}(S^1, X))  \simeq R\Gamma(\underline{\Map}(B\bH_{\bS}, X))
 \]
 Moreover, the $S^1$-action on $\THH(A)$ corresponds to the $B \bH_{\bS}$ action on $R\Gamma(\underline{\Map}(B\bH_{\bS}, X))$  via the equivalence of categories of Theorem \ref{theorem equivalence S^1 equivariant objects}.
\end{proposition}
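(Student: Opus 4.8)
The plan is to follow the strategy of \cite{Mou24Cartier}*{Theorem 9.9}, transported to the spherical setting, in three moves: identify $\THH(A)$ with functions on the free loop stack, replace the constant circle by $B\bH_{\bS}$ inside the mapping stack, and finally promote the resulting equivalence to one of equivariant objects. For the first move I would recall that the constant stack $\underline{S^1}$ is the colimit $\colim_{S^1}\Spec(\bS)$ in fpqc sheaves, so for $X=\Spec(A)$ affine one has $\underline{\Map}(\underline{S^1},X)\simeq\lim_{S^1}\Spec(A)\simeq\Spec\bigl(\colim_{S^1}A\bigr)\simeq\Spec(\THH(A))$, where the colimit is taken in $\CAlg$; hence $R\Gamma(\underline{\Map}(\underline{S^1},X),\cO)\simeq\THH(A)$ as $\bE_\infty$-rings, and the rotation action of $\underline{S^1}$ on the source induces precisely the cyclic $S^1$-action on $\THH(A)$.

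Next I would show the map $\underline{S^1}=B\underline{\bZ}\to B\bH_{\bS}$ (namely $B$ applied to the Cartier duality map $\underline{\bZ}\to\bH_{\bS}$ of Construction \ref{const: cartier duality map}) induces an equivalence $\underline{\Map}(B\bH_{\bS},X)\xrightarrow{\ \sim\ }\underline{\Map}(\underline{S^1},X)$. Evaluating on a connective $\bE_\infty$-ring $R$, the left-hand side is $\Map_{\CAlg_R}\bigl(A\otimes_{\bS}R,\,R\Gamma((B\bH_{\bS})_R,\cO)\bigr)$; since $B\bH_{\bS}$ is of finite cohomological dimension over $\Spec(\bS)$ (proved above), pushforward commutes with base change, so $R\Gamma((B\bH_{\bS})_R,\cO)\simeq R\Gamma(B\bH_{\bS},\cO)\otimes_{\bS}R\simeq\bS^{S^1}\otimes_{\bS}R\simeq R^{S^1}$ by Theorem \ref{thm cohomology of generic fiber} and finiteness of $S^1$. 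The identical computation gives $R\Gamma((\underline{S^1})_R,\cO)\simeq R^{S^1}$, and the comparison map is exactly the base change of the equivalence $R\Gamma(B\bH_{\bS},\cO)\to R\Gamma(\underline{S^1},\cO)$ of Theorem \ref{thm cohomology of generic fiber}; hence it is an equivalence for every $R$. Applying $R\Gamma$ yields the stated chain $\THH(A)\simeq R\Gamma(\underline{\Map}(\underline{S^1},X),\cO)\simeq R\Gamma(\underline{\Map}(B\bH_{\bS},X),\cO)$.

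For the \emph{moreover} clause I would package everything over the second deloopings. Both $\underline{S^1}$ (as $B\underline{\bZ}$) and $B\bH_{\bS}$ are grouplike $\bE_\infty$-monoids in stacks, acting on $\underline{\Map}(\underline{S^1},X)$ by rotation and on $\underline{\Map}(B\bH_{\bS},X)$ by translation respectively, and $\underline{S^1}\to B\bH_{\bS}$ is a map of such monoids, so the equivalence of the previous paragraph is equivariant along it. Forming quotients and mapping to the classifying stacks produces objects of $\QCoh(B^2\underline{\bZ})\simeq\Fun(BS^1,\Sp)=\Sp^{BS^1}$ and of $\QCoh(B^2\bH_{\bS})$ whose pullbacks along the covers $\Spec(\bS)\to B^2\underline{\bZ}$ and $\Spec(\bS)\to B^2\bH_{\bS}$ are $\THH(A)$ and $R\Gamma(\underline{\Map}(B\bH_{\bS},X),\cO)$, with descent data recording the $\Omega B^2(-)$-actions, i.e.\ the $S^1$- and $B\bH_{\bS}$-actions. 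Since the symmetric monoidal equivalence $\QCoh(B^2\bH_{\bS})\simeq\Sp^{BS^1}$ of Theorem \ref{theorem equivalence S^1 equivariant objects} is by construction pullback along $B^2\underline{\bZ}\to B^2\bH_{\bS}$, naturality of the construction $X\mapsto\bigl(\underline{\Map}(-,X)/(-)\to B^2(-)\bigr)$ in the group, together with compatibility with $\QCoh(-)$, gives the claimed matching of actions.

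The main obstacle I expect is this last paragraph: making precise that the bare equivalence of spectra upgrades coherently to an equivalence of $S^1$-equivariant (resp.\ $B\bH_{\bS}$-equivariant) objects, and that the two equivariance structures are intertwined by the equivalence of Theorem \ref{theorem equivalence S^1 equivariant objects}. This is a matter of carefully organizing the descent/bar-resolution bookkeeping rather than of any new input — the underlying-object statement is already forced by Theorem \ref{thm cohomology of generic fiber} and finite cohomological dimension, exactly as in \cite{Mou24Cartier}*{Theorem 9.9}.
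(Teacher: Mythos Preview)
Your proposal is correct and follows exactly the approach the paper indicates: the paper does not give an independent proof but simply says ``Arguing exactly as in \cite{Mou24Cartier}*{Theorem 9.9}'', and your three moves (identify $\THH(A)$ with functions on the loop stack, use finite cohomological dimension plus Theorem~\ref{thm cohomology of generic fiber} to replace $\underline{S^1}$ by $B\bH_{\bS}$, then upgrade equivariantly via Theorem~\ref{theorem equivalence S^1 equivariant objects}) are precisely that argument transported to the spherical setting. Your honest flagging of the bookkeeping in the equivariant step is appropriate, and indeed that is the only place requiring care; no new idea is missing.
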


By putting Construction \ref{construction blah} and Proposition \ref{Proposition THH} together, one obtains the desired filtration on $\THH(A)$.

\begin{rmk}
We do not expect this filtration to recover the motivic filtration on topological Hochschild cohomology, when defined.
Indeed, this can already be seen at the level of Hochschild homology, since motivic filtrations are compatible with the HKR filtration on Hochschild homology when forming the tensor product $-\otimes_{\THH(\bZ)}\bZ$.
Let $A \in \CAlg^{\cn}_{\bZ}$, and $\THH^{\fil}(A)$ be the resulting $\bE_\infty$-algebra in filtered spectra.
By extension of scalars to $\bZ$, one obtains a filtration of $\HH(A \otimes_{\bS} \bZ)$.
This filtration also arises, via the Rees equivalence, as the pushforward to $\filstack$ of the structure sheaf of
\[
\Map_{\bZ}(S^1_{\fil}, X|_{\bZ} \times \filstack) \,.
\]
This mapping stack is taken inside derived stacks based on $\bE_{\infty}$-$\bZ$-algebras.
In \cite[Conjecture 9.2]{Mou24}, it was conjectured that one obtains the same $\bE_\infty$-algebra of functions, regardless if one takes the mapping stack internally to the animated ring setting, or the $\bE_{\infty}$-algebra setting.
However, this is false; one can see this on the associated graded.
On the simplicial commutative ring side one obtains as global sections  $\mathbb{L}\mathrm{Sym}^{*}(L_{A|\bZ}[1](1))$; on $\bE_{\infty}$-derived geometry one obtains $\on{Sym}^*_{\bE_{\infty}}(L^{\on{top}}_{A|\bZ}[1])$.
Here, $\mathrm{Sym}^{*}$ and $\on{Sym}^*_{\bE_\infty}$ denote the derived symmetric algebra monad and $\bE_{\infty}$-algebra monad respectively, and the $L_{A|\bZ}[1]$ and $L^{\on{top}}_{A|\bZ}[1]$ denote the algebraic and topological cotangent complex respectively.
These are different hence it is typically the case that
\[
\on{Sym}^*_{\bE_{\infty}}(L^{\on{top}}_{A|\bZ}[1]) \nsimeq \mathbb{L}\mathrm{Sym}^{*}(L_{A|\bZ}[1](1)) \,.
\]
Putting all this together, we see that, when $A$ is an animated commutative ring, the filtration on $\THH(A)$ obtained here geometrically will not recover the motivic filtrations of~\cite{BMS2} and~\cite{BL22}.
\end{rmk}

\begin{rmk}
    In light of the above remark, we expect the synthetic circle $B \bH_{\syn}$ constructed here to still be useful in defining a universal property for the motivic filtration on $\THH$.
    In particular, we expect that, by working in a different variant of algebraic geometry over the sphere spectrum, not based on connective $\bE_\infty$-algebras but instead based on $T$-algebras for some monad $T$ on synthetic spectra, that the motivic filtration on THH can be geometrized by way of~$B \bH_{\syn}$.
\end{rmk}

\bibliographystyle{amsalpha}
\bibliography{henven}
\end{document}